\documentclass{article}

\usepackage{PRIMEarxiv}

\usepackage[utf8]{inputenc} 
\usepackage[T1]{fontenc}    
\usepackage{hyperref}       
\usepackage{url}            
\usepackage{booktabs}       
\usepackage{amsfonts}       
\usepackage{amsmath}
\usepackage{amssymb}
\usepackage{amsthm}
\usepackage{nicefrac}       
\usepackage{microtype}      
\usepackage{lipsum}
\usepackage{fancyhdr}       
\usepackage{graphicx}       
\graphicspath{{media/}}     

\usepackage{tikz}
\usetikzlibrary{positioning}
\usetikzlibrary{decorations,arrows}
\usetikzlibrary{decorations.markings}
\usepackage{capt-of}
\pagestyle{fancy}
\thispagestyle{empty}
\rhead{ \textit{ }} 

\fancyhead[LO]{}

\theoremstyle{plain}
\newtheorem{thm}{Theorem}

\newtheorem{prop}[thm]{Proposition}
\newtheorem{lemma}[thm]{Lemma}

\newtheorem{rem}[thm]{Remark}

\theoremstyle{definition}
\newtheorem{defn}[thm]{Definition}

\numberwithin{equation}{section}
\numberwithin{thm}{section}

\title{Wright-Fisher diffusion and coalescent with a continuum of seed-banks
}

\author{
 Likai Jiao \\
  Humboldt-Universit\"{a}t zu Berlin\\
  \texttt{likai.jiao@hu-berlin.de}\\
}
\begin{document}
\maketitle

\begin{abstract}
This paper generalizes the strong seed-bank model introduced in \cite{blath2016} to allow for more general dormancy time distributions, such as a type of Pareto distribution. Inspired by the method of approximation using models with countably many seed-banks proposed by \cite{greven2022spatial}, we introduce the Wright-Fisher diffusion and coalescent with a continuum of seed-banks. To this end, we first formulate an infinite-dimensional stochastic differential equation, and prove that it has a unique strong solution, refereed to as the \textit{continuum seed-bank diffusion}, which is a kind of Markovian lift of a non-Markovian Volterra process. In order to circumvent the technical difficulty arising from the lack of local compactness, we replace the topology induced by the norm on the state space of the solution with the weak-$^{\star}$ topology, and show that the continuum seed-bank diffusion is also a strong Markov process in this weak-$^{\star}$ setting. Then, we construct a discrete-time Wright-Fisher type model with finitely many seed-banks, and demonstrate that the continuum seed-bank diffusion under the weak-$^{\star}$ topology is the scaling limit of the allele frequency processes in a suitable sequence of such models. Finally, we establish the duality relation between the continuum seed-bank diffusion and a continuous-time continuous-state Markov jump process. The latter is the block counting process of a partition-valued Markov jump process which is referred to as the \textit{continuum seed-bank coalescent}. We prove that this new coalescent process is exactly the scaling limit of the ancestral processes in the sequence of discrete-time Wright-Fisher type models we constructed before.
\end{abstract}

\keywords{Wright-Fisher diffusion\and Coalescent\and Seed-banks \and Continuum \and Infinite-dimensional SDE}

\section{Introduction}
In this paper, we introduce the Wright-Fisher diffusion and coalescent with a continuum of seed-banks, which will be referred to as the \textit{continuum seed-bank diffusion} and the \textit{continuum seed-bank coalescent}, respectively. This pair of processes, interrelated by a certain duality relation, generalizes the single seed-bank model introduced in \cite{blath2016}, as well as the model with countably many \textit{colored} seed-banks proposed by \cite{greven2022spatial}. The concept of \textit{seed-bank} originates from the study of the widespread biological phenomenon of dormancy observed in nature. In the realm of mathematical modeling related to dormancy, \cite{blath2016} constructed a discrete-time Wright-Fisher type model with a single seed-bank, in which the time an individual speeds in the seed-bank, referred to as the \textit{dormancy time}, follows a geometric distribution. On the timescale of the active population size, the scaling limit of the allele frequency process in this model is known as the \textit{seed-bank diffusion}. From the corresponding \textit{seed-bank coalescent}, we can see that the limiting dormancy time follows an exponential distribution. In order to consider more general distributions, which typically imply non-Markovian mechanisms, \cite{greven2022spatial} proposed the method of approximation using models with countably many seed-banks labeled by the parameters of different exponential distributions. When an individual goes into dormancy, it randomly selects one of the seed-banks to enter. After appropriately adjusting the labels, the resulting dormancy time distribution exhibits an asymptotic power-law tail. Inspired by this idea, we will formulate a more general \textit{continuum} seed-bank model, in which the labels of the seed-banks take values in $(0,\infty)$. The probability of an individual entering different seed-banks is determined by some specific finite Borel measure $\mu$. Based on the different choices of the measure $\mu$, the range of dormancy time distributions is significantly expanded. For example, when $\mu$ is the Gamma distribution, the dormancy time distribution is a type of Pareto distribution, which basically corresponds to the heavy-tailed distribution given by \cite{greven2022spatial}. The main contributions of the present paper can be summarized as follows:
\begin{enumerate}
	\item By observing the equivalence between the seed-bank diffusion and a Volterra process, we consider the stochastic Volterra equation with a more general completely monotone kernel (Equation (\ref{eq:v})), which is equivalent to an infinite-dimensional stochastic differential equation (Equation (\ref{eq:inf})) based on the Bernstein-Widder theorem (see Theorem \ref{Bernstein}). We prove that Equation (\ref{eq:inf}) has a unique strong solution under the finite first moment condition on the measure $\mu$ (see Theorem \ref{th:wellposed}). The solution, referred to as the \textit{continuum seed-bank diffusion}, is a strong Markov process, thus Equation (\ref{eq:inf}) can be viewed as an infinite-dimensional Markovian lift of the non-Markovian Equation (\ref{eq:v}).
	\item The state space of the continuum seed-bank diffusion is generally not locally compact (see Proposition \ref{prop:original}), which makes it challenging to establish tightness arguments. To circumvent this technical difficulty, we replace the topology induced by the norm with the weak$^{\star}$ topology so that the new state space becomes a compact metrizable space. We show that the continuum seed-bank diffusion is also a strong Markov process in this weak$^{\star}$ setting. 	
	\item When the total mass of the measure $\mu$ is an integer, we construct a discrete-time Wright-Fisher type model with finitely many seed-banks, and we demonstrate that there exist a sequence of such models in which the allele frequency processes converge in distribution to the continuum seed-bank diffusion, on the timescale of the active population size, if the state space is endowed with the weak$^{\star}$ topology (see Theorem \ref{th:scaling}). This provides a scaling limit interpretation for the continuum seed-bank diffusion.
	\item We define the continuum seed-bank $K$-coalescent by (\ref{eq:K}) for any finite Borel measure $\mu$ on $(0,\infty)$, which is a continuous-time continuous-state Markov jump process, and it corresponds to the cases studied in \cite{blath2016} and \cite{greven2022spatial} when $\mu$ is discrete. We first establish the duality relation between the continuum seed-bank diffusion and the blocking counting process of the continuum seed-bank $K$-coalescent (see Theorem \ref{th:dual}). Then, by a projective limit argument, we construct the continuum seed-bank coalescent (see Proposition \ref{prop:projective}). Finally, we prove that this coalescent process is exactly the scaling limit of the ancestral process in the discrete-time Wright-Fisher type model we constructed before (see Proposition \ref{prop:ancestral}).	 
\end{enumerate}

The Wright-Fisher diffusion holds a significant position as a fundamental probabilistic tool in the field of mathematical population genetics. It represents the scaling limit
of the basic discrete-time Wright-Fisher model (\cite{fisher1923}, \cite{wright1931}) which characterizes the evolution of neutral allele frequencies in a fixed-size haploid population. Reversing time, if one samples a group of individuals at the same generation in the Wright-Fisher model and traces back their ancestors, the scaling limit of the corresponding partition-valued ancestral process is the well-known Kingman coalescent (\cite{kingman1982genealogy},\cite{kingman1982coalescent}). With the further incorporation of other evolutionary forces such as mutation, selection, recombination and migration into the basic Wright-Fisher model, the coalescent theory has become indispensable for understanding the effects of various evolutionary forces and their interactions during lineage formation. See \cite{wakeley2009coalescent} for an overview of the coalescent theory.

In recent years, dormancy has also been recognized as an important evolutionary force. When a population is exposed to unfavorable environmental conditions such as resource limitations, some individuals may enter the \textit{seed-bank}, where they remain in a dormant state of reduced metabolism until the environment becomes more hospitable. Through this buffering mechanism, dormant individuals can receive a certain degree of protection not only against harsh conditions but also against some classical evolutionary forces, which increases the persistence of genotypes and contributes to the maintenance of biodiversity. Seed-banks, as reservoirs of dormant individuals, are highly prevalent in microbial communities and can even play a dominant role within certain populations. See \cite{lennon2011microbial}, \cite{shoemaker2018evolution} and \cite{lennon2021principles} for overviews on the study of dormancy.

A mathematical model of the Wright-Fisher type with dormancy was first introduced by \cite{kaj2001coalescent} which allows individuals in the new generation to be descendants of the individuals that lived $B$ generations ago, where the dormancy time $B$ is a random variable taking values in $\{1,2,\cdots,m\}$ for some positive integer $m$. This mechanism delays the coalescence of ancestors in the ancestral process, resulting in a \textit{stretched} genealogy, and the corresponding coalescent process was shown to be a time-changed Kingman coalescent. Due to the boundedness of $B$, this seed-bank model is more suitable for populations of macro-organisms, where the dormancy period is usually negligible on the population-size scale. However, for certain micro-organisms that can remain in the dormant state for long periods of time, \cite{gonzalez2014strong} proposed to use a seed-bank model incorporating long-range ancestral jumps, which is referred to as the \textit{strong}  model in contrast to the \textit{weak} model with a bounded dormancy time. 

For the strong seed-bank models within the framework of \cite{kaj2001coalescent}, as indicated by the studies of two specific examples in \cite{blath2013ancestral} and \cite{blath2015genealogy}, their coalescent structures are outside the Kingman coalescent universality class. Therefore, a new strong seed-bank model along with its coalescent process were introduced in \cite{blath2016}. Different from the non-Markovian nature of the forward process in \cite{kaj2001coalescent}, the Wright-Fisher model with a \textit{geometric} seed-bank in \cite{blath2016} is still Markovian. The scaling limit of the allele frequency process is the Wright-Fisher diffusion with an \textit{exponential} seed-bank, which is characterized by a $2$-dimensional stochastic differential equation. Here, the \textit{geometric} and \textit{exponential} imply the distributions of the dormancy times. Although this seed-bank model can be regarded as a special two-island model where the population is divided into \textit{active} and \textit{dormant} subpopulations and no reproduction occurs in the latter, there are significant qualitative differences between the \textit{seed-bank coalescent} and the \textit{structured coalescent} for two islands. See Section 4 of \cite{blath2016} for more details.

Now, a natural question within the framework of \cite{blath2016} is how to generalize the dormancy time distribution from \textit{exponential} to more general ones, especially those with heavy tails, which will cause the seed-bank model to lose the Markov property again. When studying the impact of seed-banks on the long-time behavior of spatially structured populations, \cite{greven2022spatial} proposed the method of approximation by a countable number of \textit{colored} seed-banks, as the linear combinations of exponential survival functions can approximate survival functions with power-law tails. The main purpose of this paper is to mathematically refine this asymptotic idea. It leads to the introduction of the Wright-Fisher diffusion with a continuum of seed-banks, which is characterized by a more general infinite-dimensional SDE. 

Before proceeding with further elaboration, it is helpful to review more details of the Wright-Fisher diffusion with at most countably many seed-banks. Consider a haploid population where each individual carries an allele of type from $\{A, a\}$. In the absence of evolutionary forces except for genetic drift and dormancy, the SDE for the generalized Wright-Fisher diffusion is as follows:
\begin{equation}\label{eq:countable}
\left\{\begin{array}{l}
dX_t=\sum\limits_{i=1}^{k} c_i\left(Y_t^i-X_t\right) d t+\sqrt{X_t\left(1-X_t\right)} d W_{t},~t>0,\\
dY_t^i=c_i K_i\left(X_t-Y_t^i\right) d t,~t>0,~i=1,2,\cdots,k,\\
(X_0, \vec{Y}_0)=(x, \vec{y}) \in [0,1]^{k+1},
\end{array}\right.
\end{equation}
where $k\in \{1,2,\cdots, \infty\}$, $X$ and $\vec{Y}=\{Y^{1}, Y^{2}, \cdots, Y^{k}\}$ are the frequencies of type-$A$ alleles in the active population and the seed-banks, respectively, $W$ is a standard Brownian motion, and $c_{i}, K_{i}$ are positive parameters associated with the discrete-time model such that $c:=\sum\limits_{i=1}^{k} c_i<\infty$. The key observation, as already noticed by \cite{blath2019structural} in the differential form, is that Equation (\ref{eq:countable}) is equivalent to the following stochastic Volterra equation (SVE):
\begin{equation}\label{eq:countable_v}
	X_{t}=g_{m}(t)+c\int_{0}^{t}K_{m}(t-s)X_{s}ds-c\int_{0}^{t}X_{s}ds+\int_{0}^{t}\sqrt{X_{s}(1-X_{s})}dW_{s}, t\geq 0,
\end{equation}
where $g_{m}(t):=x+\sum\limits_{i=1}^{k}\frac{y_{i}}{K_{i}}(1-e^{-c_{i}K_{i}t})$, and  
$K_{m}(t):=\sum\limits_{i=1}^{k}\frac{c_{i}}{c}(1-e^{-c_{i}K_{i} t})$ 
is a mixture of exponential distribution functions. In the biological sense, equation (\ref{eq:countable_v}) can be roughly interpreted as follows: If the dormancy time $B$ does not exceed $t-s$ of which the probability is $P(B\leq t-s):=K_{m}(t-s)$, then the individuals entering dormancy at the earlier time $s$ should contribute to $X_{t}$. Similarly, $g_{m}(t)$ represents the contribution of the initial value $(x,\vec{y})$ to $X_{t}$.

Stochastic Volterra equations have been extensively studied since \cite{berger1980volterra}. For the convolution-type SVEs, if the kernel function is completely monotone (see Definition \ref{defn:completemonotone}), then the equation can be related to an infinite-dimensional Markov process under certain integrability conditions by the Bernstein-Widder theorem (see Theorem \ref{Bernstein}). This method was initially applied to fractional Brownian motion in \cite{coutin1998fractional}, and has been further extended to rough volatility models by \cite{abi2019multifactor}. In the context of the present paper, we generalize Equation (\ref{eq:countable_v}) to the following form:
\begin{equation}\label{eq:v}
    X_{t}=g(t)+c\int_{0}^{t}K(t-s)X_{s}ds-c\int_{0}^{t}X_{s}ds+\int_{0}^{t}\sqrt{X_{s}(1-X_{s})}dW_{s}, t\geq 0,
\end{equation}
where $g(t):=x+\int_{(0, \infty)}y(\lambda)\frac{1-e^{-\lambda t}}{\lambda}\mu(d\lambda)$, $K(t):=\int_{(0,\infty)}(1-e^{-\lambda t})\frac{\mu}{c}(d\lambda)$, and $\mu$ is a finite Borel measure on $(0, \infty)$ with total mass $c>0$. Specifically, taking $\mu=\sum\limits_{i=1}^{k}c_{i}\delta_{c_{i}K_{i}}$, one gets Equation (\ref{eq:countable_v}). The equivalence between this more general SVE and the following infinite-dimensional SDE can be directly verified:
\begin{equation}\label{eq:inf}
\left\{\begin{array}{l}
dX_{t}=\int_{(0,\infty)} Y_{t}(\lambda) \mu(d \lambda) d t-c X_{t} d t+\sqrt{X_{t}\left(1-X_{t}\right)} d W_{t}, t>0,\\
dY_{t}(\lambda)=\lambda\left(X_{t}-Y_{t}(\lambda)\right) dt, \lambda\in(0,\infty), t>0, \\
\left(X_{0}, Y_{0}(\lambda)\right)=(x, y(\lambda))\in D,
\end{array}\right.
\end{equation}
where $D=[0,1]\times \left\{y:(0, \infty) \rightarrow \mathbb{R} \text { is Borel measurable, and } 0 \leq y \leq 1, \mu \text {-a.e.}\right\}$. Under the condition that

\begin{equation}\label{condition}
c^{\prime}:=\int_{(0,\infty)}\lambda \mu(d\lambda)<\infty,	
\end{equation}

Equation (\ref{eq:inf}) will be formulated as a stochastic evolution equation (SEE) in $\mathbb{R} \times L^{1}((0,\infty),\mathcal{B}(0,\infty),\mu; \mathbb{R})$:
\begin{equation}\label{eq:SEE}
\left\{\begin{array}{l}
d Z_{t}=A Z_{t} d t+F\left(Z_{t}\right) d t+B\left(Z_{t}\right) d W_{t}, t>0, \\
Z_{0}=(\xi, \eta(\lambda)),
\end{array}\right.
\end{equation}
where $Z_{t}:=(X_{t}, Y_{t}(\lambda))$, $A(x, y(\lambda)):=(-cx, -\lambda y(\lambda))$, 
$F(x, y(\lambda)):=(\int_{(0,\infty)}y(\lambda)\mu(d\lambda), \lambda x)$, $B(x, y(\lambda)):=(\sqrt{x(1-x)}, 0)$, and $(\xi, \eta(\lambda))$ is a $D$-valued random variable. 

The well-posedness of equation (\ref{eq:SEE}) is provided by 

\begin{thm}\label{th:wellposed}~

\begin{enumerate}
	\item The equation (\ref{eq:SEE}) has a $D$-valued continuous unique strong solution;
	\item Let $Z^{1}$, $Z^{2}$ be two solutions to Equation (\ref{eq:SEE}) on some filtered probability space, then for any $T>0$, 
\begin{equation}
E[\sup_{t\in [0, T]}\|Z^{1}_{t}-Z^{2}_{t}\|]\leq C E[\|Z^{1}_{0}-Z^{2}_{0}\|],
\end{equation}
where $C$ is a positive constant depending only on $c$, $c^{\prime}$ and $T$.
\end{enumerate}
\end{thm}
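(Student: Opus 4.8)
The plan is to pass through the equivalent scalar stochastic Volterra equation \eqref{eq:v}. Under Condition \eqref{condition} the map $F$ is a bounded linear operator on $\mathbb{R}\times L^{1}(\mu)$ — the second component $\lambda\mapsto\lambda x$ lies in $L^{1}(\mu)$ exactly because $c'<\infty$ — and $A$ generates the contraction $C_{0}$-semigroup $(x,y)\mapsto(e^{-ct}x,e^{-\lambda t}y(\lambda))$, so \eqref{eq:SEE} is an ordinary semilinear evolution equation whose only non-Lipschitz ingredient is $B$. I would first make the equivalences rigorous: integrating the second line of \eqref{eq:inf} gives, for each $\lambda$, $Y_{t}(\lambda)=e^{-\lambda t}\eta(\lambda)+\lambda\int_{0}^{t}e^{-\lambda(t-s)}X_{s}\,ds$; inserting this into the first line and changing the order of integration (licit since $\int_{(0,\infty)}\lambda\,\mu(d\lambda)<\infty$) produces exactly \eqref{eq:v}, and conversely any $[0,1]$-valued solution $X$ of \eqref{eq:v} defines, through the same formula, a solution $(X,Y)$ of \eqref{eq:SEE}. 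Since $e^{-\lambda t}+\lambda\int_{0}^{t}e^{-\lambda(t-s)}\,ds=1$, $Y_{t}(\lambda)$ is a convex combination of $\eta(\lambda)$ and the values $\{X_{s}:0\le s\le t\}$, whence $(X,Y)\in D$; continuity of $t\mapsto Y_{t}$ in $L^{1}(\mu)$ follows from the formula by dominated convergence and continuity of $X$ is clear. It therefore suffices to prove for \eqref{eq:v} the existence of a $[0,1]$-valued continuous solution, pathwise uniqueness, and the stability estimate for $X$; the bound on $\|Y^{1}_{t}-Y^{2}_{t}\|_{L^{1}(\mu)}$ then drops out of the formula together with $\int_{0}^{t}\int_{(0,\infty)}\lambda e^{-\lambda(t-s)}|X^{1}_{s}-X^{2}_{s}|\,\mu(d\lambda)\,ds\le c'\int_{0}^{t}|X^{1}_{s}-X^{2}_{s}|\,ds$, which is where Condition \eqref{condition} re-enters.

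For existence I would regularise the diffusion coefficient: extend $\sigma(x):=\sqrt{x(1-x)}$ by zero off $[0,1]$ and approximate it by globally Lipschitz $\sigma_{\varepsilon}$. The kernel $K$ is bounded, continuous and vanishes at $0$, and $g$ is continuous, so the Volterra equation with $\sigma_{\varepsilon}$ has a unique strong solution $X^{\varepsilon}$ by a Banach fixed-point argument in $C([0,T])$ (the stochastic integral here is scalar-valued, so no vector-valued stochastic calculus is needed). Because $X^{\varepsilon}$ is bounded, uniform fourth-moment estimates on its increments (using $K'\in L^{\infty}$, again via Condition \eqref{condition}, and boundedness of $\sigma_{\varepsilon}$) give tightness of $\{X^{\varepsilon}\}$ in $C([0,T])$ by Kolmogorov's criterion; any limit point solves \eqref{eq:v}, and it stays in $[0,1]$ because the effective drift $\int_{(0,\infty)}Y_{t}(\lambda)\,\mu(d\lambda)-cX_{t}$ is $\ge0$ at $X_{t}=0$ (as $Y\ge0$) and $\le0$ at $X_{t}=1$ (as $Y\le1$) while $\sigma$ vanishes at both endpoints — which can be made precise by a comparison argument against the constant solutions $0$ and $1$, or alternatively by obtaining the limit from the $[0,1]^{k+1}$-valued solutions of the finite system \eqref{eq:countable}, i.e.\ \eqref{eq:v} with finitely supported $\mu$.

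Pathwise uniqueness and the quantitative estimate form the core. For two solutions $X^{1},X^{2}$ of \eqref{eq:v} driven by the same $W$, put $\Delta_{t}:=X^{1}_{t}-X^{2}_{t}$ and apply It\^{o}'s formula to $\phi_{n}(\Delta_{t})$ with the Yamada--Watanabe functions $\phi_{n}\uparrow|\cdot|$, $0\le\phi_{n}''(u)\le2/(n|u|)$: the It\^{o} correction is bounded by $\tfrac12\phi_{n}''(\Delta_{s})\bigl(\sigma(X^{1}_{s})-\sigma(X^{2}_{s})\bigr)^{2}\le\tfrac12\phi_{n}''(\Delta_{s})|\Delta_{s}|\le 1/n\to0$, using the elementary bound $\bigl(\sigma(a)-\sigma(b)\bigr)^{2}\le|a-b|$ on $[0,1]$; the affine drift together with $0\le K\le1$ then gives, after taking expectations and applying Gronwall's inequality, $\sup_{t\le T}E[|\Delta_{t}|]\le C\,E\|Z^{1}_{0}-Z^{2}_{0}\|$. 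Taking $Z^{1}_{0}=Z^{2}_{0}$ yields pathwise uniqueness for \eqref{eq:v}, and together with the weak existence of the previous step the Yamada--Watanabe theorem gives the unique $D$-valued strong solution of \eqref{eq:SEE}, i.e.\ part 1. I expect the genuine obstacle to be part 2: \emph{upgrading the $L^{1}$-in-time control of $\Delta$ to the supremum inside the expectation while keeping it linear in $E\|Z^{1}_{0}-Z^{2}_{0}\|$}. Indeed, feeding the above into the integral form of \eqref{eq:v} and estimating $E\sup_{t\le T}\bigl|\int_{0}^{t}(\sigma(X^{1}_{s})-\sigma(X^{2}_{s}))\,dW_{s}\bigr|$ by Burkholder--Davis--Gundy only yields $\sqrt{E\|Z^{1}_{0}-Z^{2}_{0}\|}$, because $\sigma$ is merely H\"{o}lder. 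To recover the linear rate one has to exploit more structure: the exact factorisation $\sigma(X^{1}_{s})-\sigma(X^{2}_{s})=\Delta_{s}\,(1-X^{1}_{s}-X^{2}_{s})/(\sigma(X^{1}_{s})+\sigma(X^{2}_{s}))$ exhibits $\Delta$ as the solution of a \emph{linear} stochastic Volterra equation whose noise coefficient is bounded except when $X^{1}_{s}$ and $X^{2}_{s}$ both lie near $0$ or both lie near $1$. I would then run the estimate on the processes stopped on exiting a small neighbourhood of the boundary, where this coefficient is bounded and ordinary linear-SVE bounds plus Gronwall apply, and thread together the factorisation, the dissipative $-c\Delta$ drift, the a priori bound $|\Delta|\le1$, and the $L^{1}$-estimate already obtained to remove the stopping. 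Transporting the resulting bound back through the equivalence of the first step then finishes part 2.
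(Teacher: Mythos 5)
Your part 1 is essentially the alternative route the paper itself records: reduce to the path-dependent SVE \eqref{eq:v}, obtain weak existence by approximation, show the solution stays in $D$, prove pathwise uniqueness with the Yamada--Watanabe functions $\phi_n$, and invoke the Yamada--Watanabe theorem. Two caveats. First, the $[0,1]$-invariance cannot literally be a ``comparison against the constant solutions $0$ and $1$'': neither is a solution of \eqref{eq:v} (when $X\equiv 0$ the drift equals $\int_{(0,\infty)}e^{-\lambda t}y(\lambda)\mu(d\lambda)\ge 0$, not $0$), and comparison theorems are delicate for Volterra/path-dependent drifts. The paper instead runs a McKean-type first-exit argument with the Lyapunov functions $1/(X+\epsilon)$ and $1/(1+\epsilon-X)$ (Proposition \ref{prop:D}); your sketch should be replaced by that. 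Second, the paper gets weak existence from an Euler scheme plus the factorization method for tightness rather than by mollifying $\sigma$, but your regularization route is workable in principle.

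The genuine gap is in part 2, and you identify it yourself but do not close it. The $\phi_n$-computation with expectations gives $\sup_{t\le T}E[|\Delta_t|]\le C\,E[\|Z^1_0-Z^2_0\|]$, whereas the theorem asserts the supremum inside the expectation with a bound \emph{linear} in $E[\|Z^1_0-Z^2_0\|]$. Your proposed repair --- factorizing $\sigma(X^1)-\sigma(X^2)=\Delta\,(1-X^1-X^2)/(\sigma(X^1)+\sigma(X^2))$ and localizing away from the corners of $[0,1]^2$ --- is left entirely as a plan, and as described it does not close: the factorized coefficient blows up like $x^{-1/2}$ precisely on the event (both processes near $0$, or both near $1$) that you cannot a priori exclude, and no mechanism is offered for controlling the time spent there or for removing the stopping afterwards. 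The paper takes no such detour: it keeps $\sup_{t\le T}$ inside the Yamada--Watanabe computation from the outset, applies the Burkholder--Davis--Gundy inequality directly to $\int_0^t\phi_n'(\Delta_s)\bigl(\sigma(X^1_s)-\sigma(X^2_s)\bigr)\,dW_s$, and uses $(\sigma(a)-\sigma(b))^2\le|a-b|$ together with the a priori bound $|\Delta|\le 1$ to absorb that term into a Gronwall iteration for $E[\sup_{s\le t}|\Delta_s|]$ (yielding the constant $c+3$), after which the bound for $Y^1-Y^2$ follows from the explicit convolution formula and $c'<\infty$. You would need either to reproduce that absorption or to actually carry out your localization; as written, part 2 is not proved.
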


Since Equation (\ref{eq:SEE}) is essentially a $1$-dimensional SDE combined with an infinite-dimensional ODE, the proof of Theorem \ref{th:wellposed} is mainly based on \cite{yamada1971uniqueness}, \cite{shiga1980infinite}, \cite{da2014stochastic} and \cite{gorajski2014equivalence}, see Section \ref{sec:wellposedness}. In the subsequent text, the solution $Z$ will be referred to as the \textit{continuum seed-bank diffusion}. As it is explained for equation (\ref{eq:countable_v}), the overall effect under the measure $\mu$ of a continuum of seed-banks with different rates $\lambda$ is reflected by the cumulative distribution function (CDF) $K$ of the dormancy time $B$. For example, if $\mu$ is the Gamma distribution $\Gamma(a, b)$ for $a>0,b>0$, then $K(t)=1-\frac{1}{(1+\frac{t}{b})^{a}},t\geq 0$, which is the CDF of a type of Pareto distribution. It is classical to show that the continuum seed-bank diffusion is a strong Markov process, and then we derive its martingale problem formulation, see Section \ref{sec:markov}. Moreover, it belongs to the infinite-dimensional polynomial process defined in \cite{cuchiero2021infinite}, see Remark \ref{rem:poly}. 

Recall that in \cite{blath2016}, the seed-bank diffusion was introduced as the scaling limit of the allele frequency process in a discrete-time Wright-Fisher type model, whereas in the previous discussion, it was obtained by generalizing the kernel function $K$. For a more intuitive understanding, one may also want to interpret the continuum seed-bank diffusion as the scaling limit of some discrete-time model. The second part of this paper is dedicated to providing such an interpretation. However, before doing so, there is a proposition indicating that the state space $D$ endowed with the subspace topology of $\mathbb{R}\times L^{1}(\mu)$ is locally compact if and only if the measure $\mu$ is discrete, which will make it challenging to demonstrate tightness in the proof of convergence. 

To circumvent this technical difficulty, the state space $D$ will be embedded into $\mathbb{R}\times\mathcal{M}(0,\infty)$ based on the isometry from $\mathbb{R}\times L^{1}(\mu)$ to $\mathbb{R}\times(\mathcal{M}(0,\infty),||\cdot||_{TV})$ given by $i_{\mu}: (x, f)\mapsto (x, f.\mu)$, where $\mathcal{M}(0,\infty)$ is the space of finite signed measures on $(0,\infty)$, $||\cdot||_{TV}$ denotes the total variation norm, and $f.\mu:=\int_{\cdot}fd\mu$ denotes the indefinite integral. Since $\mathcal{M}(0,\infty)$ can be viewed as the dual space of $C_{0}(0,\infty)$ (The space of continuous functions vanishing at $0$ and $\infty$ on $(0,\infty)$ equipped with the supremum norm) by the Riesz-Markov-Kakutani representation theorem (see e.g. \cite{cohn2013measure}), and $D$ is bounded in $L^{1}(\mu)$, then by the Alaoglu theorem (see e.g. \cite{schaefer1971locally}), the image $i_{\mu}(D)$ is weak$^{*}$ relatively compact. Moreover, $i_{\mu}(D)$ is metrizable and closed under the weak$^{*}$ topology. We regard $D$ and $i_{\mu}(D)$ as the same and denote the metric as $d$, then $(D, d)$ is a compact metric space according to the above statements.

After replacing the state space $D$ with $(D, d)$, Equation (\ref{eq:inf}) will be understood as
\begin{equation}\label{eq:measure}
\left\{\begin{array}{l}
d X_{t}=(\mu_{t}(0, \infty)-c X_{t}) dt+\sqrt{X_{t}\left(1-X_{t}\right)} d W_{t}, t>0,\\
d \mu_{t}=\lambda.\mu X_{t}dt-\lambda.\mu_{t} d t, t>0,\\
(X_{0}, \mu_{0})=(x, y.\mu),
\end{array}\right.	
\end{equation}
where the seed-bank component $\mu_{t}:=Y_{t}.\mu$ is measure valued. Under this coarser topology, the solution $Z$ is still a strong Markov process, and the corresponding martingale problem formulation will also be derived in Section \ref{sec:markov}. Moreover, it can be shown that $Z$ is a Feller process, which is generally difficult to verify under the original topology due to the lack of local compactness. The related scaling limit problem can be handled by the classical approach (see e.g. \cite{ethier2009markov}), what remains to be done is to construct suitable discrete-time models.

When the total mass $c$ of the measure $\mu$ is an integer, the required Wright-Fisher type model will be introduced in Section \ref{sec:scaling}. The difference from the model in \cite{blath2016} for finitely many seed-banks is that the number of individuals exchanged between the active population and seed-banks in each generation is a multinomial distributed random vector, instead of being deterministic. This is because the number of seed-banks keeps increasing during the approximation, while $c$ remains unchanged. As the result of the randomness, there is a low probability that all individuals entering dormancy will end up in the same seed-bank. Therefore, in this fixed-size model, the size of all seed-banks must be greater than $c$. Under these settings, the following theorem will be proved:

\begin{thm}\label{th:scaling}
For a sequence of Markov chains $\eta_{N_{r}, n_{r}}(X^{N_{r}}, Y^{\vec{M}^{N_{r}}})$, $r=1,2,\cdots$, as their initial distributions weakly converge to $\nu$, 
\begin{equation}
Z_{r}(t)=\eta_{N_{r}, n_{r}}(X^{N_{r}}(\lfloor N_{r}t\rfloor), Y^{\vec{M}^{N_r}}(\lfloor N_{r}t\rfloor)), t\geq 0,	
\end{equation}
converges in distribution to the continuum seed-bank diffusion $\{(X_{t}, Y_{t}.\mu)\}_{t\geq 0}$ with initial distribution $\nu$ on $D_{\mathbb{R}_{+}}(D,d)$, as $r\rightarrow\infty$.
\end{thm}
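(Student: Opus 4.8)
The plan is to apply the standard convergence theorem for a sequence of discrete‑time Markov chains whose suitably rescaled generators $N_r(P_r-I)$ converge to the generator of a Feller process (see e.g. \cite{ethier2009markov}, Ch.~1, Thm.~6.5 together with Ch.~4, Sect.~8), exploiting two facts established above: $(D,d)$ is a \emph{compact} metric space, and under the topology $d$ the continuum seed-bank diffusion is a Feller process with a well-posed martingale problem (uniqueness coming from Theorem~\ref{th:wellposed}). The first step is to fix a core $\mathcal D\subset C(D)$ for the generator $\mathcal A$ of the limit semigroup. I would take the algebra generated by the coordinate $(x,\rho)\mapsto x$ and the cylinder functions
$$F(x,\rho)=f\!\left(x,\ \textstyle\int\phi_1\,d\rho,\dots,\int\phi_m\,d\rho\right),\qquad f\in C^\infty(\mathbb R^{m+1}),\ \ \phi_1,\dots,\phi_m\in C_c(0,\infty),$$
where $C_c(0,\infty)$ denotes the continuous functions with compact support in $(0,\infty)$; these are $d$-continuous, separate points, and are preserved up to a controllable remainder by the dynamics~(\ref{eq:measure}). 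That $\mathcal D$ is a core would follow from the martingale problem formulation of $\mathcal A$ under the topology $d$ established earlier and from the polynomial structure recorded in Remark~\ref{rem:poly}. On such $F$ one has
$$\mathcal A F(x,\rho)=\bigl(\rho(0,\infty)-cx\bigr)\partial_x f+\tfrac12 x(1-x)\,\partial_{xx} f+\sum_{i=1}^m\Bigl(x\!\int\!\lambda\phi_i(\lambda)\,\mu(d\lambda)-\!\int\!\lambda\phi_i(\lambda)\,\rho(d\lambda)\Bigr)\partial_i f,$$
with the derivatives of $f$ evaluated at $\bigl(x,\int\phi_1\,d\rho,\dots\bigr)$ and $\partial_i$ the derivative in its $(i{+}1)$-st argument; here $\mathcal AF\in C(D)$ because $\lambda\phi_i(\lambda)$ is bounded and $\rho\mapsto\rho(0,\infty)$ is weak$^\star$ continuous on the uniformly tight set $i_\mu(D)$.

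\textbf{Generator convergence.} Let $P_r$ be the one-step transition operator of the embedded chain $Z_r$ on its finite state space, and put $G_r:=N_r(P_r-I)$. For $F\in\mathcal D$ I would use $F\circ\eta_{N_r,n_r}$ itself as the approximating function on that state space (so its sup-distance to $F\circ\eta_{N_r,n_r}$ is zero) and compute $G_r(F\circ\eta_{N_r,n_r})$ by conditioning on the current configuration and Taylor-expanding $f$. One generation consists of the multinomial exchange of individuals between the active population and the $n_r$ seed-banks followed by Wright--Fisher resampling in the active population; under $\eta_{N_r,n_r}$ the seed-bank configuration becomes an atomic measure supported on the seed-bank rates $\lambda_j$, which by the construction in Section~5 approximates $y.\mu$ weak$^\star$ while the weights converge to $\mu$. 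Collecting the $O(1/N_r)$ terms, the active coordinate produces the mean-reverting drift $(\rho(0,\infty)-cx)\partial_x f$ and the Wright--Fisher diffusion $\tfrac12 x(1-x)\partial_{xx} f$, and each $\int\phi_i\,d\rho$-coordinate produces $\bigl(x\int\lambda\phi_i\,d\mu-\int\lambda\phi_i\,d\rho\bigr)\partial_i f$, in agreement with~(\ref{eq:measure}); the higher-order and discretization remainders are $o(1)$ \emph{uniformly} over the state space since $x(1-x)$, the seed-bank masses, $c$ and $c'=\int\lambda\,\mu(d\lambda)<\infty$ are bounded and $f,\phi_i$ are uniformly continuous (in particular there is no $x$--$\rho$ cross term, the exchange increments being of order $1/N_r$ with conditional variance $O(1/N_r^2)$). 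This gives $\|G_r(F\circ\eta_{N_r,n_r})-(\mathcal AF)\circ\eta_{N_r,n_r}\|_\infty\to0$ for every $F\in\mathcal D$.

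\textbf{Conclusion.} Since $\mathcal D$ is a core for the Feller generator $\mathcal A$, the martingale problem for $\mathcal A$ is well posed (Theorem~\ref{th:wellposed}), the rescaled discrete generators converge on $\mathcal D$ as above, and the initial laws of $Z_r$ converge weakly to $\nu$, the cited limit theorem yields $Z_r\Rightarrow\{(X_t,Y_t.\mu)\}_{t\ge0}$ with initial law $\nu$ in $D_{\mathbb R_+}(D,d)$; the relative compactness of $\{Z_r\}$ needed along the way is supplied by the generator convergence together with the compactness of $(D,d)$.

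\textbf{Main obstacle.} The substantive work is the generator computation: one has to pin down the exact per-generation law of the multinomial exchange (subject to the constraints that the weights sum to $c$ and that every seed-bank is larger than $c$) so that $G_r$ converges precisely to~(\ref{eq:measure}), and, more delicately, to control all remainder terms \emph{uniformly} over the finite but increasingly high-dimensional state spaces as $N_r$ and the number of seed-banks $n_r$ tend to infinity together --- in particular one must check that the atomic seed-bank measures produced by $\eta_{N_r,n_r}$ converge weak$^\star$ in a manner compatible with the cylinder test functions, and that functionals which are not weak$^\star$ continuous on all of $\mathcal M(0,\infty)$, such as the total seed-bank mass (continuous on $i_\mu(D)$ only thanks to uniform tightness), enter solely through the limiting dynamics, where the a priori bound $0\le Y\le1$ keeps them under control.
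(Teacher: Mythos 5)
Your proposal follows essentially the same route as the paper: compactness of $(D,d)$, the dense subalgebra of cylinder functions $h_c(\langle z,f_1\rangle,\dots,\langle z,f_n\rangle)$ with $f_i\in\mathbb{R}\times C_c(0,\infty)$ as test functions, uniform convergence of the rescaled discrete generators $N_r(P_r-I)$ to $\mathcal{L}$ via Taylor expansion and conditional moment computations for the multinomial/hypergeometric exchange, and the Ethier--Kurtz Section~4.8 limit theorems combined with well-posedness of the martingale problem. The one load-bearing detail you leave to the ``main obstacle'' is resolved in the paper by a sequential (not joint) choice of parameters: for each $r$ one first fixes $n_r$ and the mesh $\{\lambda^{n_r}_i\}$ with $c^{n_r}_i=\mu(\lambda^{n_r}_{i-1},\lambda^{n_r}_i]$ so that the discretization error $c\max_i|\lambda^{n_r}_i-\lambda^{n_r}_{i-1}|+\int_{(\lambda^{n_r}_{n_r},\infty)}\lambda\,\mu(d\lambda)$ (finite precisely because $c'<\infty$) is below $2^{-(r+1)}$, and only then chooses $N_r$ with $M^{N_r}_i=c^{n_r}_iN_r/\lambda^{n_r}_i$ to make the sampling remainders $O(N_r^{-1/2})+O(1/M^{N_r})+O(N_r/(M^{N_r})^2)$ equally small.
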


Here $N_{r}$ and $\vec{M}^{N_r}=\{M^{N_r}_{1},M^{N_r}_{2},\cdots, M^{N_{r}}_{n_{r}}\}$ are sizes of the active population and the $n_{r}$ seed-banks, respectively, such that $N_{r}\geq c$, $\min\limits_{i=1,2,\cdots,n_{r}}M^{N_r}_{i}\geq c$, $\lim\limits_{r\rightarrow\infty}N_{r}=\infty$, and $\lim\limits_{r\rightarrow\infty}n_{r}=\infty$; $(X^{N_{r}}, Y^{\vec{M}^{N_r}})$ are the frequencies of type-$A$ alleles in the $r$-th model, which is a vector-valued Markov chain; $\lfloor \cdot \rfloor$ denotes the floor function, $D_{\mathbb{R}_{+}}(D,d)$ denotes the Skorokhod space of c\`{a}dl\`{a}g functions from $[0,\infty)$ to $(D,d)$, and $\eta_{N_{r}, n_{r}}$ is a measurable mapping which maps the vector $Y^{\vec{M_{r}}}(\lfloor N_{r}t\rfloor)$ to a certain discrete measure (see Definition \ref{defn:mapping}). Roughly speaking, Theorem \ref{th:scaling} states that there exists a sequence of discrete-time models in which the allele frequency processes converge to the continuum seed-bank diffusion on the timescale of order $N_{r}$, as the active population size $N_{r}$ and the number of seed-banks $n_{r}$ increase.

The final part of this paper focuses on the coalescent process of the continuum seed-bank diffusion, which will be referred to as \textit{continuum seed-bank coalescent}. The primary task is to identify the block counting process
(counting the number of blocks in partitions) of the coalescent, which is typically achieved by establishing the duality relation between two Markov processes. This duality method is a basic tool for proving the uniqueness of solutions to martingale problems, and understanding the long-time behavior of Markov processes. For a systematic study on the notion of duality, see e.g. \cite{jansen2014notion}. When the state space is replaced by $(D,d)$, the solution to Equation (\ref{eq:measure}) may be reminiscent of the Fleming-Viot process (\cite{fleming1979some}). However, a similar dual function seems to be inapplicable, and more importantly, when the measure $\mu$ is discrete, it can not degenerate into the case of at most countably many seed-banks.

In \cite{greven2022spatial}, the duality relation between the solution $(X, \vec{Y})$ to Equation (\ref{eq:countable}) and the block counting process $\{(N_{t}, \vec{M}_{t})\}_{t\geq 0}$ is given by (Put them into the product filtered probability space so that they are independent)
\begin{equation}
E[X_{t}^{n_{0}}\prod_{i=1}^{k}(Y^{i}_{t})^{m^{i}_0}]=E[x^{N_{t}}\prod_{i=1}^{k}(y^{i})^{M^{i}_t}],t\geq 0,	
\end{equation}
where $M_{t}$ and the $i$-th component $M^{i}_{t}$ of $M_{t}$ take values in $\{0, 1,\cdots\}$, $y^{i}$ is the $i$-th component of $\vec{y}$, and $(x,\vec{y}), (n_{0}, \vec{m}_{0})$ are the initial values to $(X, \vec{Y})$ and $(N,\vec{M})$, respectively.
It is worth noting that the moment dual function employed here has an equivalent expression
\begin{equation}\label{dualfunction}
F[(x, y(\lambda)),(n,m(d\lambda))]=x^{n}e^{\int_{(0,\infty)} ln y(\lambda)m(d\lambda)},
\end{equation}
where $m(d\lambda):=\sum\limits_{i=1}^{k}m_{i}\delta_{\lambda_{i}}(d\lambda)$, $m_{i}\in \{0,1,\cdots\}$, $\lambda_{i}\in (0,\infty)$, $y(\lambda):=\sum\limits_{i=1}^{k}y_{i}I_{\{\lambda_{i}\}}(\lambda)$, and $e^{-\infty}:=0$. In addtion, if the initial value $y(\lambda)$ in equation (\ref{eq:inf}) is defined everywhere without dependence on a measure $\mu$, then it can be shown that $0\leq Y_{t}(\lambda)\leq 1$ for all $t>0$ and $\lambda\in (0,\infty)$ i.e. the solution $Z$ takes values in $\mathbb{R}\times L^{\infty}((0,\infty),\mathcal{B}(0,\infty))$ of which the dual space can be viewed as $\mathbb{R}\times ba((0,\infty),\mathcal{B}(0,\infty))$. Here $ba((0,\infty),\mathcal{B}(0,\infty))$ denotes the space of finitely additive set functions on $((0,\infty),\mathcal{B}(0,\infty))$ equipped with the total variation norm, which is large enough to include all finite measures. Moreover, any finite integer-valued measure on $((0,\infty),\mathcal{B}(0,\infty))$ can be represented as a weighted sum of Dirac measures. Based on the above observations, if in (\ref{dualfunction}), $m$ is an arbitrary finite measure, then $F$ is well-defined in the case when $y$ is a non-negative bounded measurable function, and the differentiation with respect to $y$ is still feasible by adding a perturbation. Specifically, when $m$ takes integer values, $F$ is of the same form as (\ref{dualfunction}) except that $k$ should be finite.

First add suitable perturbations, then apply It\^{o}'s formula, and finally take the limit. Through this procedure, we recognize the generator of the dual process, and it corresponds to the following Markov jump process $\{(N_{t}, M_{t})\}_{t\geq 0}$ described by its transition rates:
\begin{equation}\label{eq:char1}
(n, m) \mapsto (n^{\prime}, m^{\prime}) \text { at rate }\left\{\begin{array}{cl}
n\mu(B), & (n^{\prime}, m^{\prime})=(n-1, m+\delta_{\lambda}), \lambda\in B, \text{for}~B\in\mathcal{B}(0,\infty),\\
\lambda m(\{\lambda\}), & (n^{\prime}, m^{\prime})=(n+1, m-\delta_{\lambda}),\\
C^{2}_{n}, & (n^{\prime}, m^{\prime})=(n-1, m),
\end{array}\right.
\end{equation}
where $n\in \{0,1,\cdots\}$, $m$ is a finite integer-valued measure, and $C^{2}_{n}=\frac{n(n-1)}{2}$. The state space for $(n,m)$ will be denoted by $\mathbb{N}_{0}\times \bigoplus\limits_{(0,\infty)}\mathbb{N}_{0}$\footnote{The direct sum $\bigoplus\limits_{(0,\infty)}\mathbb{N}_{0}$ is the subspace of $\mathbb{N}_{0}^{(0,\infty)}$ for whose elements there are finitely many non-zero components.} due to the aforementioned special form of $m$. From two martingale statements about $(X, Y)$ and $(N, M)$, respectively, we obtain the following result:

\begin{thm}\label{th:dual}
If $0\leq Y_{0}(\lambda)\leq 1$ is $\mathcal{F}_{0}$-measurable for all $\lambda\in (0,\infty)$, then 
\begin{equation}
E[F((X_t, Y_t),(N_{0}, M_{0}))]=E[F((X_{0}, Y_{0}),(N_{t}, M_{t}))].
\end{equation}
In particular, if $(X_{0}, Y_{0})=(x, y)$ and $(N_{0}, M_{0})=(n, \sum\limits_{i=1}^{K_{0}}m_{i}\delta_{\lambda_{i}})$, then
\begin{equation}
E[X_t^n\prod_{i=1}^{K_{0}}Y_{t}(\lambda_{i})^{m_{i}}]=E[x^{N_{t}}\prod_{i=1}^{K_{t}}y(\Lambda_{i,t})^{M_{i,t}}],
\end{equation}
where $M_{t}=\sum\limits_{i=1}^{K_{t}}M_{i,t}\delta_{\Lambda_{i,t}}.$
\end{thm}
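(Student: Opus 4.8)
The plan is to prove Theorem~\ref{th:dual} by the standard generator-comparison (duality) argument, following the scheme sketched in the paragraph preceding the statement. Let $L_{X}$ denote the generator of the continuum seed-bank diffusion $(X,Y)$ (acting in the first pair of variables of $F$) and let $L_{N}$ denote the generator of the Markov jump process $(N,M)$ with transition rates \eqref{eq:char1} (acting in the second pair of variables of $F$). The key identity to establish is the pointwise \emph{duality relation of generators}
\begin{equation}\label{eq:genequal}
\bigl(L_{X}F(\cdot,(n,m))\bigr)\bigl((x,y)\bigr)=\bigl(L_{N}F((x,y),\cdot)\bigr)\bigl((n,m)\bigr)
\end{equation}
for all admissible $(x,y)\in D$ with $0\le y\le 1$ everywhere, all $n\in\mathbb{N}_{0}$, and all finite integer-valued $m=\sum_{i=1}^{K_{0}}m_{i}\delta_{\lambda_{i}}$. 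Once \eqref{eq:genequal} holds, the theorem follows from a classical argument: define $H(s):=E[F((X_{s},Y_{s}),(N_{t-s},M_{t-s}))]$ on the product space where the two processes are independent; writing the two martingale problems (for $(X,Y)$ via It\^o's formula, for $(N,M)$ via the jump-process Dynkin formula) and using \eqref{eq:genequal} shows $H$ is constant on $[0,t]$, and comparing $H(0)$ with $H(t)$ gives the claim. The ``in particular'' statement is then just the substitution $F((x,y),(n,m))=x^{n}\prod_{i}y(\lambda_{i})^{m_{i}}$ into the equality.

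The main work is verifying \eqref{eq:genequal}, and here the subtlety flagged in the text is that $y$ may vanish, so $F$ involves $e^{\int \ln y\,m(d\lambda)}$ with the convention $e^{-\infty}=0$ and is not smooth in $y$ at the boundary. First I would fix $n$ and $m$ and write $F((x,y),(n,m))=x^{n}\prod_{i=1}^{K_{0}}y(\lambda_{i})^{m_{i}}$; on the interior $\{x\in(0,1),\,y(\lambda_{i})>0\}$ this is a polynomial, so I would compute $L_{X}F$ directly from the SDE \eqref{eq:inf}: the drift $\int Y_{t}(\lambda)\mu(d\lambda)-cX_{t}$ contributes $n x^{n-1}\bigl(\int y(\lambda)\mu(d\lambda)-cx\bigr)\prod_{i}y(\lambda_{i})^{m_{i}}$, the diffusion term $\tfrac12 x(1-x)\partial_{x}^{2}$ contributes $\tfrac{n(n-1)}{2}x^{n-2}(1-x)\,x\prod_{i}y(\lambda_{i})^{m_{i}}$, and the deterministic ODE $dY_{t}(\lambda)=\lambda(X_{t}-Y_{t}(\lambda))dt$ contributes $\sum_{i}m_{i}y(\lambda_{i})^{m_{i}-1}\lambda_{i}(x-y(\lambda_{i}))\prod_{j\ne i}y(\lambda_{j})^{m_{j}}$. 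Grouping terms, the $\int y(\lambda)\mu(d\lambda)$ part matches the ``$n\to n-1,\ m\to m+\delta_{\lambda}$ at rate $n\mu(B)$'' move of $L_{N}$ (integrating $x^{n-1}y(\lambda)\prod_{i}y(\lambda_{i})^{m_{i}}$ against $n\mu(d\lambda)$); the $\sum_{i}m_{i}\lambda_{i}x$ part matches the ``$n\to n+1,\ m\to m-\delta_{\lambda}$ at rate $\lambda m(\{\lambda\})$'' move; the $-cx$ and $-\sum_{i}m_{i}\lambda_{i}y(\lambda_{i})$ diagonal parts cancel against the corresponding diagonal contributions of $L_{N}$ together with the $\tfrac{n(n-1)}{2}$ coalescence; and the remaining $\tfrac{n(n-1)}{2}x^{n-1}(1-x)\prod y(\lambda_{i})^{m_{i}}$ piece of $L_{X}F$ pairs with $C_{n}^{2}$ term of $L_{N}F$ (the $-\tfrac{n(n-1)}{2}x^{n}$ from $-cx$-type bookkeeping supplies the $+x^{n}$ half). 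This is a routine but somewhat lengthy term-by-term match; I would organize it as a single displayed computation.

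To handle the non-smoothness at $y(\lambda_{i})=0$, I would use the perturbation device mentioned in the text: replace $y$ by $y^{\varepsilon}:=y\vee\varepsilon$ (or work with $\ln(y+\varepsilon)$), for which $F$ is smooth and the It\^o computation above is rigorous, derive \eqref{eq:genequal} for $F^{\varepsilon}$, and then let $\varepsilon\downarrow0$. One must check that $F^{\varepsilon}\to F$ and $L_{X}F^{\varepsilon}\to L_{X}F$, $L_{N}F^{\varepsilon}\to L_{N}F$ in a sense strong enough to pass to the limit in $H_{\varepsilon}(s)=H_{\varepsilon}(0)$ — dominated convergence suffices since all quantities are uniformly bounded (recall $X_{t},Y_{t}\in[0,1]$ and the state space $(D,d)$ is compact, and $(N_{t},M_{t})$ is a pure-jump process with finitely many lineages so $K_{t}$ and the total mass $N_{t}+M_{t}(0,\infty)$ are a.s.\ finite). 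I expect the \textbf{main obstacle} to be precisely this boundary passage: near $y(\lambda_{i})=0$ the term $m_{i}y(\lambda_{i})^{m_{i}-1}\lambda_{i}(x-y(\lambda_{i}))$ is bounded (since $m_{i}\ge1$), but one must confirm that the formal cancellations survive the limit and that no contribution is lost when $Y_{t}(\lambda_{i})$ hits $0$; here one uses that $dY_{t}(\lambda)=\lambda(X_{t}-Y_{t}(\lambda))dt\ge0$ whenever $Y_{t}(\lambda)=0$, so the process $Y(\lambda)$ cannot cross below $0$ and, started in $[0,1]$, stays there, making the $e^{-\infty}=0$ convention consistent with the dynamics. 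Finally I would note that integrability of $\int y(\lambda)\,n\mu(d\lambda)$ and of the coalescence rates is automatic from $\mu(0,\infty)=c<\infty$ and the boundedness of $y$, so Dynkin's formula applies to $(N,M)$ without extra conditions.
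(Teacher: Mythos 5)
Your proposal is correct and follows essentially the same route as the paper: the paper likewise perturbs $F$ by an additive constant $\epsilon_{k}$ to make it Fr\'{e}chet differentiable in $y$, applies It\^{o}'s formula and lets $\epsilon_{k}\downarrow 0$ to obtain the martingale statement of Proposition \ref{prop:firststate}, whose drift $\mathcal{G}F$ is exactly the jump generator $\mathcal{H}$ acting on $F$ in the dual variables, and then concludes via the two martingale statements and Theorem 4.4.11 in \cite{ethier2009markov} (which is precisely your interpolation argument that $H(s)$ is constant, with boundedness supplying the integrability hypotheses). The only cosmetic difference is that you also float the truncation $y\vee\varepsilon$, which is less convenient than the additive perturbation actually used since $Y_{t}(\lambda)\vee\varepsilon$ no longer solves a clean ODE.
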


Now that the blocking counting process $(N,M)$ is known, the coalescent process of a finite-sized sample can be constructed directly. For $K\in \{1,2,\cdots\}$, let
$\mathcal{P}_{K}$ be the set of partitions of $\{1,2,\cdots,K\}$, and define the space of marked partitions as $\mathcal{P}_{K}^{f}=\{(P_{K}, f)|P_{K}\in \mathcal{P}_{K}, f\in [0,\infty)^{|P_{K}|}\},$
where $f$ is the \textit{flag}, $0$ represents \textit{active}, $\lambda\in (0,\infty)$ represents \textit{dormant} with rate $\lambda$, and $|\cdot|$ denotes the number of blocks. The continuum seed-bank K-coalescent process $\{\Pi^{K}_{t}\}_{t\geq 0}$ is a $\mathcal{P}_{K}^{f}$-valued Markov jump process defined as follows:
\begin{equation}\label{eq:K}
\pi \mapsto \pi^{\prime} \text { at rate }\left\{\begin{array}{cl}
\mu(B), & \pi \leadsto \pi^{\prime}, \text{a}~0~\text{becomes}~\lambda\in B,\text{ for } B\in\mathcal{B}(0,\infty),\\
\lambda, & \pi \leadsto \pi^{\prime}, \text{a}~\lambda ~\text{becomes}~0,\\
1, & \pi \sqsupset \pi^{\prime},
\end{array}\right.
\end{equation}

where $\pi\leadsto\pi^{\prime}$ denotes that $\pi^{\prime}$ is obtained by changing the flag of one block of $\pi$, and $\pi\sqsupset \pi^{\prime}$ denotes that  $\pi^{\prime}$ is obtained by merging two $0$-blocks in $\pi$.
It is obvious that $\{(N_{t}, M_{t})\}_{t\geq 0}$ is indeed the block counting process of $\{\Pi_{t}^K\}_{t\geq 0}$ for $N_{0}+||M_{0}||_{TV}=K$. 

The state space $\mathcal{P}^{f}_{K}$ is locally compact and Polish since it can be regarded as a closed subset of $\{1,2,\cdots,B_{K}\}\times[0,\infty)^{K}$, where $B_{K}$ is the $K$-th Bell number i.e. the number of different ways to partition a set with $K$-elements. As the result, we prove the existence and uniqueness (in distribution) of a coalescent process $\{\Pi^{\infty}_{t}\}_{t\geq 0}$ which has the same distribution as $\{\Pi^{K}_{t}\}_{t\geq 0}$ when it is restricted to take values in  $\mathcal{P}^{f}_{K}$by a  projective limit argument. Furthermore, as the dual of Theorem \ref{th:scaling}, we show that the scaling limit of the
ancestral process for a sample of size $K$ from the discrete-time model is exactly the continuum seed-bank $K$-coalescent process. It is reasonable to believe that some existing results in the literature on the properties of the seed-bank coalescent process still hold for the more general situation discussed in this paper, such as \textit{not coming down from infinity} demonstrated in \cite{blath2016}. They will be left for future work.

This paper is organized as follows: In the current Section 1, the motivation and main results have been presented. Section 2 serves as a compilation of some notations, definitions and theorems, providing necessary preliminaries for the subsequent sections. In Section 3, equation (\ref{eq:SEE}) is formulated as an SEE, and its well-posedness is proved. Section 4 focuses on the Markov properties of the solution and the corresponding martingale problem formulations, in both cases when the state space is endowed with the original and the weak$^{\star}$ topology. In Section 5, a discrete-time Wright-Fisher type model is constructed, and it is shown that the scaling limit of the allele frequency process is the continuum seed-bank diffusion in the weak$^{\star}$ topology setting. Finally, in Section 6, the dual process of the solution is derived, and then the continuum seed-bank coalescent process is obtained. 

\section{Preliminaries}
Throughout this paper, the filtered probability space $(\Omega, \mathcal{F},P, \mathfrak{F})$ is assumed to be normal i.e. $(\Omega, \mathcal{F},P)$ is complete and the filtration $\mathfrak{F}$ satisfies the usual conditions.
$W$ is a $1$-dimensional standard $\mathfrak{F}$-Brownian motion on $(\Omega, \mathcal{F},P, \mathfrak{F})$ and this is represented by the $5$-tuple $(\Omega, \mathcal{F},P, \mathfrak{F}, W)$. The filtration generated by the process $X$ is denoted by $\{\mathcal{F}^{X}_{t}\}_{t\geq 0}$ of which the augmentation is $\mathfrak{F}^X$=$\{\overline{\mathcal{F}}^{X}_{t+}\}_{t\geq 0}$. In addition, $\mathbb{N}_{0}=\{0,1,\cdots\}$,
 $\mathbb{N}=\{1,2,\cdots\}$ and $\mathbb{R}_{+}=[0,\infty)$. $I$ denotes the indicator function, $\delta_{i,j}$ denotes the Kronecker delta, and $\delta_{\lambda}$ is the Dirac measure at point $\lambda$. $\vee$ and $\wedge$ are taking the maximum and the minimum, respectively, and $x^{+}=x\vee 0$. $\subsetneq$ denotes proper inclusion, $O(\cdot)$ is the big $O$ notation, and $\sum\limits_{i=0}^{-1}=0$ by convention. The constants $c$ and $c^{\prime}$ will always represent the total mass and the first order moment of the measure $\mu$, respectively.
 
For any metric space $V$, the Borel $\sigma$-algebra $\mathcal{B}(V)$ and real-valued functions will be considered, and
$B(V)$ denotes the space of all bounded measurable functions on $V$, $C_{b}(V)$ denotes the space of all bounded continuous functions. When $V$ is locally compact, $C_{0}(V)$ denotes the space of all continuous functions vanishing at infinity, and $C_{c}(V)$ represents those with compact supports. All of the above are equipped with the supremum norm. Especially, for the Euclidean space $\mathbb{R}^{n}$, $C_{c}^{\infty}(\mathbb{R}^{n})$ denotes the space of all smooth functions with compact support, and the support is denoted by $supp$. Note that $C_{c}(0,\infty)$ is dense in $C_{0}(0,\infty)$. In addition, $C([0,T];V)$ and $C_{\mathbb{R}_+}(V)$ are the spaces of $V$-valued continuous paths, and $D_{\mathbb{R}_+}(V)$ is the Skorohod space of $V$-valued c\`{a}dl\`{a}g paths.

When a ``measure" is mentioned, it implies that it is non-negative. $\mathcal{M}(0,\infty)$ is the space of all finite signed measures on $((0,\infty),\mathcal{B}(0,\infty))$ equipped with the total variation norm $||\cdot||_{TV}$, and its elements are Radon measures since $(0,\infty)$ is Polish. For the finite measure $\mu$ on $((0,\infty),\mathcal{B}(0,\infty))$, and $1\leq p\leq \infty$, $L^{p}(\mu)$
is the shorthand for the space $L^{p}((0,\infty),\mathcal{B}(0,\infty),\mu;\mathbb{R})$, and the norm is denoted by $||\cdot||_{L^{p}}$. $L^{\infty}$ is the shorthand for the space $L^{\infty}((0,\infty),\mathcal{B}(0,\infty);\mathbb{R})$ composed of all bounded Borel measurable functions, and the norm is denoted by $||\cdot||_{\infty}$. $l^{1}(w)$ is the space of all $w$-weighted absolutely summable real sequences. $\mathcal{S}$ denotes the space of all simple measurable functions on $((0,\infty),\mathcal{B}(0,\infty))$. $ba((0,\infty),\mathcal{B}(0,\infty))$ denotes the space of all finitely additive set functions on $((0,\infty),\mathcal{B}(0,\infty))$ equipped with the total variation norm. In addition, for any random variable $\zeta$, $\sigma(\zeta)$ denotes the $\sigma$-algebra generated by $\zeta$. $\mathcal{L}(\zeta)$ denotes its distribution. $\sim$ refers to following the distribution, and $\overset{d}=$ refers to of the same distribution. $Bin$ represents the binomial distribution, and $Hyp$ represents the hypergeometric distribution.  

For real-valued Banach spaces $V$ and $W$, $V\cong M$ denotes that $V$ is isometrically isomorphic to $M$. $\langle,\cdot,\rangle:V\times W\rightarrow \mathbb{R}$ denotes the pairing by which the duality between $V$ and $W$ can be defined e.g. $\langle y, h\rangle=\int_{(0,\infty)}y(\lambda)h(\lambda)\mu(d\lambda)$ for $y\in L^{1}(\mu)$ and $h\in L^{\infty}(\mu)$. For a semigroup $\{S(t)\}_{t\geq 0}$ on some Banach space $V$, it is called ``strongly continuous" if $\lim\limits_{t\rightarrow 0}S(t)f=f$ for all $f\in V$, and it is called ``compact'' if $S(t)$ is a compact operator for all $t>0$. The resolvent set of its generator $A$ defined on $D(A)$ will be denoted by $\rho(A)$, and $R(\alpha,A)=(\alpha-A)^{-1}$ for $\alpha\in \rho(A)$ is the resolvent operator. The adjoint semigroup of $\{S(t)\}_{t\geq 0}$ is denoted by $\{S(t)^{\star}\}_{t\geq 0}$, and the adjoint of $A$ is denoted by $A^{\star}$. By Theorem 1.10.4 in \cite{pazy2012semigroups}, $S(t)^{\star}$ is a strongly continuous when it is restricted to $\overline{D(A^{\star})}$, and the generator $A^+$ is defined as follows:
\begin{defn}\label{defn:part}
$A^{+}$ is the part of $A^{*}$ in $\overline{D(A^{\star})}$, which is defined as $D(A^{+})=\{f\in D(A^{\star}): A^{\star}f\in \overline{D(A^{\star})}\}$, and $A^{+}f=A^{*}f$ for $f\in D(A^{+})$.
\end{defn}

Let $V$ be a separable Banach space, suppose that $A$ generates a strongly continuous semigroup $\{S(t)\}_{t\geq 0}$ on $V$, $F$ and $B$ are mappings from $V$ to itself. For the following SEE 
\begin{equation}\label{example}
	dZ_{t}=AZ_{t}dt+F(Z_{t})dt+B(Z_{t})dW_{t}
\end{equation}
on some $(\Omega, \mathcal{F}, P, \mathfrak{F}, W)$ and the state space $V$, various notions of solutions are defined as follows:

\begin{defn}\label{defn:mild}
An $E$-valued progressively measurable process $Z=\{Z_{t}\}_{t\geq 0}$ on some $(\Omega, \mathcal{F}, P, \mathfrak{F}, W  )$ is called a mild solution to the SEE (\ref{example}) if for $t\geq 0$, 
\begin{equation}\label{eq:mild}
Z_{t}=S(t) Z_{0}+\int_{0}^{t} S(t-s) F(Z_{s}) d s+\int_{0}^{t} S(t-s) B(Z_{s}) d W_{s}, a.s. 
\end{equation}
\end{defn}

\begin{defn}\label{defn:weakened}
An $E$-valued progressively measurable process $Z$ on some $(\Omega, \mathcal{F}, P, \mathfrak{F}, W)$ is called a weakened solution to the SEE (\ref{example}) if for $t\geq 0$, $Z$ is Bochner integrable on $(0,t)$, $\int_{0}^{t}Z_{s}ds \in D(A)$, and
\begin{equation}\label{eq:weakened}
Z_{t}=Z_{0}+A \int_{0}^{t} Z_{s} d s+\int_{0}^{t} F(Z_{s}) d s+\int_{0}^{t} B(Z_{s}) d W_{s}, a.s.
\end{equation}
\end{defn}

\begin{defn}\label{defn:weaka}
An $E$-valued progressively measurable process $Z$ on some $(\Omega, \mathcal{F}, P, \mathfrak{F}, W)$ is  called an analytically weak solution to the SEE (\ref{example}) if for $t\geq 0$ and $f\in D(A^{+})$, $Z$ is Bochner integrable on $(0,t)$ and 
\begin{equation}\label{eq:weaka}
\langle Z_{t}, f \rangle=\langle Z_{0}, f \rangle +\int_{0}^{t} \langle Z_{s}, A^{+}f\rangle ds+\int_{0}^{t} \langle F(Z_{s}), f\rangle d s+\int_{0}^{t} \langle B(Z_{s}) d W_{s}, f\rangle, a.s.
\end{equation}
\end{defn}

\begin{defn}\label{label:stronga}
An $E$-valued progressively measurable process $Z$ on some $(\Omega, \mathcal{F}, P, \mathfrak{F}, W)$ is called an analytically strong solution to the SEE (\ref{example}) if for $t\geq 0$, $Z_{t}\in D(A)$, $AZ$ is Bochner integrable on $(0,t)$, and 
\begin{equation}\label{eq:stronga}
Z_{t}=Z_{0}+\int_{0}^{t} AZ_{s} d s+\int_{0}^{t} F(Z_{s}) d s+\int_{0}^{t} B(Z_{s}) d W_{s},a.s.
\end{equation}
\end{defn} 

Throughout this paper, when one of the above four notions is mentioned, it always refers to a probabilistically weak solution, and the terminology \textit{weak solution} is used specifically for both analytically and probabilistically weak solutions. Similarly, when a \textit{strong solution}is mentioned, it refers to a both analytically and probabilistically strong solution. As for the definitions of \textit{strong solution} and \textit{unique strong solution}, see e.g. \cite{ikeda2014stochastic}. Here, some important points are emphasized for readers' convenience: A strong solution is a Wiener functional of the form
$Z_{t}(\omega)=\Phi(Z_{0}, t, W_{\cdot}(\omega))$ for $t\geq 0$, a.s., where the mapping $\Phi$ defined on $V\times\mathbb{R}_{+}\times\mathcal{W}\footnote{$(\mathcal{W},\overline{\mathcal{B}}_{\infty},\mathcal{L}(W))$ denotes the classical Wiener space.}\mapsto V$ is $\mathcal{B}(V)\times\mathcal{B}(\mathbb{R}_{+})\times \overline{\mathcal{B}}_{\infty}/ \mathcal{B}(V)$ measurable. The equation has a unique strong solution if there exists a $\Phi$ satisfying the following properties:
\begin{defn}\label{defn:strongp}~

\begin{enumerate}
\item For any $\mathfrak{F}$-Brownian motion $W$ and any $\mathcal{F}_{0}$-measurable $V$-valued random variable $Z_{0}$, $Z_{t}(\omega)=\Phi(Z_{0}, t, W_{\cdot}(\omega))$ is a probabilistically weak solution to the equation;
\item For any probabilistically weak solution $(Z, W)$, $Z_{t}(\omega)=\Phi(Z_{0}, t, W_{\cdot}(\omega))$
for $t\geq 0$, a.s.
\end{enumerate}	
\end{defn}

\begin{rem}\label{rem:filtration}
By taking the product filtered probability space, it can always be assumed that $W$ is independent of $Z_{0}$. Let $\mathfrak{F}=\{\sigma(Z_{0})\vee \overline{\mathcal{F}}_{t}^{W}\}_{t\geq 0}$, then $Z$ is $\mathfrak{F}$-progressive and $W$ is an $\mathfrak{F}$-Brownian motion.
\end{rem}

In Section $4$, two martingale problem formulations will be derived. Here, suppose that $V$ is a Polish space, $\mathcal{L}$ is a multi-valued operator (see e.g. \cite{ethier2009markov}) on $B(V)$ or $C_{b}(V)$.
\begin{defn}\label{defn:martingale}
A continuous (or c\`{a}dl\`{a}g) $V$-valued process $\{Z_{t}\}_{t\geq 0}$ on some complete probability space $(\Omega, \mathcal{F}, P)$ is called a solution to the $C_{\mathbb{R}_{+}}(V)$- (or $D_{\mathbb{R}_{+}}(V)$-) martingale problem for $(\mathcal{L}, \nu)$ if 
$f(Z_{t})-f(Z_{0})-\int_{0}^{t}g(Z_{s})ds$
is an $\mathfrak{F}^Z$-martingale for any $(f, g)\in \mathcal{L}$ and $Z_{0}\sim \nu$. A $C_{\mathbb{R}_{+}}(V)$- (or $D_{\mathbb{R}_{+}}(V)$-) martingale problem is said to have a unique solution if any two solutions have the same finite-dimensional distribution.
\end{defn}

The following more general definition of Fr\'{e}chet derivatives (see e.g. \cite{lang2012differential}) will also be required:
\begin{defn}\label{defn:frechet}
Let $X$ and $Y$ be two topological vector spaces, a function $f: X\rightarrow Y$ is said to be Fr\'{e}chet differentiable at a point $x\in X$ if there exists a continuous linear operator
$\nabla f(x): X\rightarrow Y$ such that the function
\begin{equation*}
F_{x}(h)=f(x+h)-f(x)-\nabla f(x)h	\end{equation*}
is tangent to $0$ i.e. for every open $0$-neighborhood $W\subseteq Y$, there exists an open 0-neighborhood $V\subseteq X$ and a function $o:\mathbb{R}\rightarrow \mathbb{R}$ satisfying $\lim\limits_{t\rightarrow 0}\frac{o(t)}{t}=0$, such that $F(tV)\subseteq o(t)W$. Inductively, $\nabla^{2}f(x): X\rightarrow L(X, Y)\footnote{The space of all continuous linear operators from $X$ to $Y$}$ and higher order Fr\'{e}chet derivatives can be defined.
\end{defn} 

The following Theorem (see Proposition 8.8.4 in \cite{da2014stochastic}) will be applied in the proof of Proposition \ref{prop:solution}.
\begin{thm}\label{lemma1}
Let $\{S(t)\}_{t\geq 0}$ be a compact semigroup on Banach space $V$, then
for any $0<\frac{1}{p}<\alpha\leq1$, the operator $G_{\alpha}$ defined as
\begin{equation}
G_{\alpha}f(t)=\int_{0}^{t}(t-s)^{\alpha-1}S(t-s)f(s)ds, t\in[0,T],	
\end{equation}
is compact from $L^{p}([0,T];V)$ to $C([0,T];V)$.
\end{thm}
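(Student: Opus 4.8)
The plan is to prove compactness of $G_{\alpha}$ by verifying the two hypotheses of the Arzel\`{a}--Ascoli theorem for $C([0,T];V)$ applied to the image $G_{\alpha}(\mathcal{B})$ of the closed unit ball $\mathcal{B}=\{f\in L^{p}([0,T];V):\|f\|_{L^{p}}\le 1\}$: namely, (i) for each fixed $t\in[0,T]$ the slice $\{G_{\alpha}f(t):f\in\mathcal{B}\}$ is relatively compact in $V$, and (ii) the family $\{G_{\alpha}f:f\in\mathcal{B}\}$ is equicontinuous on $[0,T]$. First I would record the ingredients used throughout. Writing $p'$ for the conjugate exponent and $M=\sup_{r\in[0,T]}\|S(r)\|$ (finite since $S$ is a $C_{0}$-semigroup), the hypothesis $\alpha>1/p$ guarantees that $r\mapsto r^{\alpha-1}$ lies in $L^{p'}$ near $0$, so H\"{o}lder's inequality gives $\|G_{\alpha}f(t)\|\le M\,\|(t-\cdot)^{\alpha-1}\|_{L^{p'}(0,t)}\,\|f\|_{L^{p}}\le Mc_{\alpha,p}\,t^{\alpha-1/p}\|f\|_{L^{p}}$; in particular $G_{\alpha}f$ is well defined, $G_{\alpha}f(0)=0$, and $G_{\alpha}f(t)$ is small uniformly over $f\in\mathcal{B}$ when $t$ is small (the continuity of $t\mapsto G_{\alpha}f(t)$ itself follows from the same kind of estimate plus dominated convergence). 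I would also use the classical fact (see e.g. \cite{pazy2012semigroups}) that a compact $C_{0}$-semigroup is continuous in the operator norm on $(0,\infty)$, so on each interval $[\varepsilon,T+1]$ it admits a modulus of norm-continuity $\omega_{\varepsilon}(h)\to 0$ as $h\to 0$; this is obtained from strong continuity together with the observation that post-composition of a strongly convergent net of operators with a fixed compact operator converges in operator norm, applied to $S(r)=S(r-\varepsilon)S(\varepsilon)$.

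For (i), fix $t\in(0,T]$ and $\varepsilon\in(0,t)$ and split $G_{\alpha}f(t)=\int_{t-\varepsilon}^{t}(t-s)^{\alpha-1}S(t-s)f(s)\,ds+\int_{0}^{t-\varepsilon}(t-s)^{\alpha-1}S(t-s)f(s)\,ds$. The first integral has norm at most $Mc_{\alpha,p}\varepsilon^{\alpha-1/p}=:\rho(\varepsilon)$ uniformly over $f\in\mathcal{B}$. In the second integral every argument satisfies $t-s\ge\varepsilon$, so the semigroup property lets one factor $S(\varepsilon)$ out: the integral equals $S(\varepsilon)\bigl[\int_{0}^{t-\varepsilon}(t-s)^{\alpha-1}S(t-s-\varepsilon)f(s)\,ds\bigr]$, and the bracketed Bochner integral ranges over a norm-bounded subset of $V$ as $f$ ranges over $\mathcal{B}$ (bound $Mc_{\alpha,p}T^{\alpha-1/p}$). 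Since $S(\varepsilon)$ is a compact operator, the set of such second integrals is relatively compact, hence totally bounded. Therefore $\{G_{\alpha}f(t):f\in\mathcal{B}\}$ can be covered by finitely many balls of radius $2\rho(\varepsilon)$ for every $\varepsilon>0$, and $\rho(\varepsilon)\to 0$; this is exactly total boundedness, hence relative compactness in the Banach space $V$.

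For (ii), take $0\le t<t'\le T$ with $t'-t\le 1$ and decompose $G_{\alpha}f(t')-G_{\alpha}f(t)$ into the ``new mass'' term $\int_{t}^{t'}(t'-s)^{\alpha-1}S(t'-s)f(s)\,ds$, whose norm is $O\bigl((t'-t)^{\alpha-1/p}\bigr)$ uniformly over $\mathcal{B}$, plus $\int_{0}^{t}\bigl[(t'-s)^{\alpha-1}S(t'-s)-(t-s)^{\alpha-1}S(t-s)\bigr]f(s)\,ds$. In the latter I would split $\int_{0}^{t}=\int_{(t-\varepsilon)^{+}}^{t}+\int_{0}^{(t-\varepsilon)^{+}}$: the near-singularity part is bounded by $C\varepsilon^{\alpha-1/p}$ uniformly in $t,t',f$ (again by H\"{o}lder, using monotonicity of $r\mapsto r^{\alpha-1}$), while on $[0,(t-\varepsilon)^{+}]$ one writes the difference of integrands as $[(t'-s)^{\alpha-1}-(t-s)^{\alpha-1}]S(t'-s)f(s)+(t-s)^{\alpha-1}[S(t'-s)-S(t-s)]f(s)$, bounds the first by $M\,\|(t'-\cdot)^{\alpha-1}-(t-\cdot)^{\alpha-1}\|_{L^{p'}(0,T)}\|f\|_{L^{p}}$ (which tends to $0$ as $t'-t\to 0$ uniformly in $t$, by $L^{p'}$-continuity of translation of $r\mapsto r^{\alpha-1}$), and the second by $\omega_{\varepsilon}(t'-t)\,c_{\alpha,p}T^{\alpha-1/p}\|f\|_{L^{p}}$ using the operator-norm modulus of continuity of $S$ on $[\varepsilon,T+1]$. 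Choosing first $\varepsilon$ small to kill the singularity terms, then $t'-t$ small, yields uniform equicontinuity (the degenerate ranges $t=0$ or $t,t'<\varepsilon$ are dealt with directly by the H\"{o}lder bound of the first paragraph). With (i) and (ii) established, Arzel\`{a}--Ascoli gives that $G_{\alpha}(\mathcal{B})$ is relatively compact in $C([0,T];V)$, i.e.\ $G_{\alpha}$ is compact.

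I expect the main obstacle to be the equicontinuity estimate, and inside it the term $(t-s)^{\alpha-1}[S(t'-s)-S(t-s)]f(s)$: strong continuity of the semigroup is not enough there, because for fixed $s$ the values $\{f(s):f\in\mathcal{B}\}$ are not confined to a compact set, so one genuinely needs the operator-norm continuity of $S$ away from $0$ — and since that continuity degenerates as the argument approaches $0$, one is forced to excise a neighbourhood $[t-\varepsilon,t]$ of the kernel singularity $s=t$ beforehand. The delicate point is organizing the two small parameters $\varepsilon$ and $t'-t$ in the correct order so that all the resulting bounds are uniform in $t,t'$ and in $f\in\mathcal{B}$.
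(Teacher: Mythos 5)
Your proof is correct, and it follows essentially the same route as the source the paper cites for this statement (Proposition 8.8.4 in \cite{da2014stochastic}; the paper itself gives no proof): the vector-valued Arzel\`{a}--Ascoli criterion, with pointwise relative compactness obtained by excising the kernel singularity and factoring out the compact operator $S(\varepsilon)$, and equicontinuity obtained from the H\"{o}lder bound near the singularity together with the operator-norm continuity of a compact semigroup on $(0,\infty)$. Nothing further is needed.
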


Finally, the following definition and the related Bernstein theorem motivate the equivalence between equation (\ref{eq:inf}) and equation (\ref{eq:v}). 
See \cite{widder2015laplace} for a proof.

\begin{defn}\label{defn:completemonotone}
A function $f:(0,\infty)\mapsto [0,\infty)$ is completely monotone if it is infinitely differentiable and $(-1)^{k}f^{(k)}(t)\geq 0$ for all $t>0$ and $k\in\mathbb{N}$.
\end{defn}

\begin{thm}[\cite{bernstein1929fonctions}]\label{Bernstein}
A function $K:(0,\infty)\mapsto [0,\infty)$ is completely monotone if and only if there exists a locally finite measure $\mu$ such that
\begin{equation*}
K(t)=\int_{[0,\infty)}e^{-\lambda t}\mu(d\lambda).	
\end{equation*}
$\mu$ is finite if and only if $K(0+)<\infty$.
\end{thm}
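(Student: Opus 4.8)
The implication ``integral representation $\Rightarrow$ completely monotone'' is routine, while the converse is the substance; I treat them in turn.

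\emph{Sufficiency.} Suppose $K(t)=\int_{[0,\infty)}e^{-\lambda t}\,\mu(d\lambda)$ for a locally finite $\mu$, the integral being finite for each $t>0$. Fix $a>0$: since $\lambda\mapsto\lambda^{k}e^{-\lambda a/2}$ is bounded on $[0,\infty)$, one has $\lambda^{k}e^{-\lambda t}\le C_{k,a}e^{-\lambda a/2}$ for all $t\ge a$, and $\int e^{-\lambda a/2}\,\mu(d\lambda)=K(a/2)<\infty$ supplies an integrable dominating function. Hence one may differentiate under the integral sign arbitrarily often on $(a,\infty)$; as $a>0$ is arbitrary, $K\in C^{\infty}(0,\infty)$ and $(-1)^{k}K^{(k)}(t)=\int_{[0,\infty)}\lambda^{k}e^{-\lambda t}\,\mu(d\lambda)\ge0$, i.e. $K$ is completely monotone. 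Moreover $K(0+)=\lim_{t\downarrow0}\int e^{-\lambda t}\,\mu(d\lambda)=\mu([0,\infty))$ by monotone convergence ($e^{-\lambda t}\uparrow1$ as $t\downarrow0$), so $\mu$ finite forces $K(0+)<\infty$; the reverse implication drops out of the construction below.

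\emph{Necessity: reduction to the Hausdorff moment problem.} Let $K$ be completely monotone on $(0,\infty)$, hence $C^{\infty}$ and finite and continuous there. The key elementary identity is, for $h>0$ and $k\ge1$,
\begin{equation*}
\Delta_{h}^{k}K(t)=\int_{[0,h]^{k}}K^{(k)}\!\Big(t+\textstyle\sum_{i=1}^{k}u_{i}\Big)\,du_{1}\cdots du_{k},
\end{equation*}
proved by iterating $\Delta_{h}K(t)=\int_{0}^{h}K'(t+u)\,du$; it gives $(-1)^{k}\Delta_{h}^{k}K(t)\ge0$. Fixing $a>0$ and applying this with $h=a$ to the sequence $c^{a}_{n}:=K((n+1)a)$, one obtains $(-1)^{k}(\Delta^{k}c^{a})_{n}\ge0$ for all $n,k$, i.e. $(c^{a}_{n})_{n\ge0}$ is a completely monotone sequence with $c^{a}_{0}=K(a)<\infty$. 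By Hausdorff's moment theorem there is a finite Borel measure $\nu_{a}$ on $[0,1]$ with $K((n+1)a)=\int_{[0,1]}x^{n}\,\nu_{a}(dx)$ for all $n\ge0$. Discarding the atom of $\nu_{a}$ at $x=0$ (which affects only $n=0$), pushing the remainder forward under the homeomorphism $x\mapsto-\tfrac1a\ln x$ of $(0,1]$ onto $[0,\infty)$ to a finite measure $\mu'_{a}$, and rescaling the density by $\rho_{a}(d\lambda):=e^{\lambda a}\mu'_{a}(d\lambda)$, one gets a locally finite measure $\rho_{a}$ on $[0,\infty)$ with $K(s)=\int_{[0,\infty)}e^{-\lambda s}\,\rho_{a}(d\lambda)$ for every $s\in\{2a,3a,4a,\dots\}$.

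\emph{Necessity: assembling the measure.} I would first record the uniqueness lemma: if $\sigma$ is a finite signed Borel measure on $[0,\infty)$ with $\int e^{-\lambda r}\,\sigma(d\lambda)=0$ for every $r\in\{0,\delta,2\delta,\dots\}$ (some $\delta>0$), then $\sigma=0$; indeed, pushing $\sigma$ forward by $\lambda\mapsto e^{-\lambda\delta}$ yields a finite signed measure on $(0,1]$ all of whose moments vanish, which is $0$ by the Weierstrass theorem. Applying the previous step with $a=1/q$, $q\in\mathbb{N}$, and the lemma to $\rho_{1/q}-\rho_{1/q'}$ (both have Laplace transform $K$ on $\tfrac1q\mathbb{N}\cap[\tfrac2q,\infty)$ when $q\mid q'$, and multiplying by $e^{-\lambda\cdot 2/q}$ makes the difference a finite signed measure) forces $\rho_{1/q}=\rho_{1/q'}$ for all $q,q'$; call the common value $\mu$. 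Then $K(s)=\int e^{-\lambda s}\,\mu(d\lambda)$ holds on the dense set $\bigcup_{q}\{2/q,3/q,\dots\}\supseteq\mathbb{Q}_{>0}$, and since for each $s_{0}>0$ in this set the map $s\mapsto\int e^{-\lambda s}\,\mu(d\lambda)$ is finite and continuous on $[s_{0},\infty)$ by dominated convergence while $K$ is continuous, the identity extends to all $s>0$; local finiteness follows from $\mu([0,N])\le e^{Ns_{0}}K(s_{0})<\infty$, and $K(0+)=\lim_{s\downarrow0}\int e^{-\lambda s}\,\mu(d\lambda)=\mu([0,\infty))$ by monotone convergence, giving the remaining equivalence.

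\emph{Where the work lies.} Everything outside Hausdorff's theorem is soft: the finite-difference identity, the pushforwards, the uniqueness lemma, and the density/continuity extension are all routine bookkeeping. The genuine content---the one place where complete monotonicity is used in full strength---is the moment theorem itself. If one prefers not to cite it, the Bernstein statement can instead be proved directly by Post--Widder inversion: the discrete measures $\mu_{k}$ placing mass $\tfrac{(-k)^{n}}{n!}K^{(n)}(k)\ge0$ at $\lambda=n/k$ have total mass $\le K(0+)$ (Taylor remainder at the base point $k$ has the right sign) and Laplace transforms $\int e^{-\lambda t}\,\mu_{k}(d\lambda)\to K(t)$, after which Helly's selection theorem and $e^{-\lambda t}\in C_{0}([0,\infty))$ deliver the limiting $\mu$. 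In that route I expect the convergence $\int e^{-\lambda t}\,\mu_{k}(d\lambda)\to K(t)$---equivalently, convergence of the Taylor expansion of $K$ about $k$ evaluated at $k(1-e^{-t/k})\in(0,k)$---to be the main obstacle.
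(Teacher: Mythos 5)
The paper does not prove this theorem: it is quoted as a classical result of Bernstein, with the reader referred to \cite{widder2015laplace} for a proof, so there is no in-paper argument to compare yours against. On its own terms your proposal is correct and essentially complete, modulo the two classical black boxes you invoke (Hausdorff's moment theorem, and Weierstrass/Riesz for the uniqueness lemma). The reduction is sound: the finite-difference identity $\Delta_h^kK(t)=\int_{[0,h]^k}K^{(k)}(t+\sum_i u_i)\,du$ does show that $(K((n+1)a))_{n\ge 0}$ is a completely monotone sequence; discarding the atom at $x=0$ correctly costs you only the $n=0$ moment, which is why your representation of $K$ by $\rho_a$ starts at $s=2a$ rather than $s=a$; the damping by $e^{-2\lambda/q}$ before applying the uniqueness lemma is exactly what is needed since the $\rho_{1/q}$ need not be finite; and the consistency argument over $q\mid q'$ followed by continuity on $[s_0,\infty)$ (with $e^{-\lambda s_0}$ as dominating function) legitimately extends the identity from $\mathbb{Q}_{>0}$ to all of $(0,\infty)$. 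The sufficiency direction and the equivalence $\mu$ finite $\Leftrightarrow K(0+)<\infty$ via monotone convergence are also fine. The one fair caveat, which you acknowledge yourself, is that Hausdorff's theorem is the discrete twin of Bernstein's and carries essentially all the depth, so this is a reduction to a neighbouring classical result rather than a self-contained proof; your Post--Widder sketch is the standard way to avoid that dependence, and you correctly identify the convergence of the inversion formula as the step that would need real work there.
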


\section{Continuum seed-bank diffusion: Well-posedness}\label{sec:wellposedness}

Under the condition (\ref{condition}), equation (\ref{eq:SEE}) can be formulated as an SEE in $E=\mathbb{R}\times L^{1}(\mu)$ equipped with the norm $\|\cdot\|=|\cdot|+\|\cdot\|_{L^{1}}$, which is a separable Banach space. 

First, the linear operator $A$ is defined as $D(A)=\mathbb{R}\times\{y\in L^{1}(\mu):\lambda y(\lambda)\in L^{1}(\mu)\}$, and $A(x,y(\lambda))=(-cx,-\lambda y(\lambda))$ for $(x,y(\lambda))\in D(A)$. Since initially it is not clear whether equation (\ref{eq:SEE}) has a solution taking values in 
\begin{equation}
D=[0,1]\times \left\{y:(0, \infty) \rightarrow \mathbb{R} \text { is Borel measurable, and } 0 \leq y \leq 1, \mu \text {-a.e.}\right\},	
\end{equation}
in order to make the equation well-defined, it is necessary to impose a restriction on the coefficients. In other words, the following ``restricted" equation should be examined first:
\begin{equation}\label{eq:restricted}
\left\{\begin{array}{l}
d Z_{t}=A Z_{t} d t+\tilde{F}\left(Z_{t}\right) d t+\tilde{B}\left(Z_{t}\right) d W_{t}, t>0, \\
Z_{0}=\zeta,
\end{array}\right.
\end{equation}
where $\tilde{F}(x, y(\lambda))=(\int_{(0,\infty)}p(y(\lambda))\mu(d\lambda), \lambda p(x))$, $\tilde{B}(x, y(\lambda))=(\sqrt{p(x)(1-p(x))}, 0)$, $p(x)=x^{+}\wedge 1$, and $\zeta$ is an $E$-valued $\mathcal{F}_{0}$-measurable random variable.

Given that $\int_{(0,\infty)}\lambda \mu(d\lambda)<\infty$, there are some simple observations about $D$, $D(A)$ and the coefficients.

\begin{prop}\label{prop:domain}~

\begin{enumerate}
	\item $D\subseteq D(A)$, $D$ is a closed subset of $E$, and $D(A)$ is a dense Borel subset of $E$;
	\item $\tilde{F}$ and $\tilde{B}$ are Lipschitz continuous and $\frac{1}{2}$-H\"{o}lder continuous mappings from $E$ to itself, respectively.
\end{enumerate}
\end{prop}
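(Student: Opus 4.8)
The plan is to verify both parts essentially by hand, reducing every estimate to the facts that $p(x)=x^{+}\wedge 1$ is $1$-Lipschitz with values in $[0,1]$ and that, by the condition (\ref{condition}), $c'=\int_{(0,\infty)}\lambda\,\mu(d\lambda)<\infty$. For part 1 I would treat the three assertions in turn. To get $D\subseteq D(A)$: if $(x,y)\in D$ then $|y(\lambda)|\le 1$ for $\mu$-a.e.\ $\lambda$, so $\int_{(0,\infty)}\lambda|y(\lambda)|\,\mu(d\lambda)\le c'<\infty$, i.e.\ $\lambda y(\lambda)\in L^{1}(\mu)$ and $(x,y)\in D(A)$. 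To see $D$ is closed in $E$: if $(x_{n},y_{n})\to(x,y)$ in $E$, then $x_{n}\to x$ with $x_{n}\in[0,1]$, hence $x\in[0,1]$, while $y_{n}\to y$ in $L^{1}(\mu)$ forces $y_{n_{k}}\to y$ $\mu$-a.e.\ along a subsequence, and the bound $0\le y_{n_{k}}\le 1$ passes to the $\mu$-a.e.\ limit, so $(x,y)\in D$.

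For the density of $D(A)$ I would observe that every simple function $s=\sum_{i}a_{i}I_{B_{i}}$ with $B_{i}\in\mathcal{B}(0,\infty)$ lies in the second factor of $D(A)$, since $\int_{(0,\infty)}\lambda|s(\lambda)|\,\mu(d\lambda)\le\big(\textstyle\sum_{i}|a_{i}|\big)c'<\infty$, and then invoke density of simple functions in $L^{1}(\mu)$. For the Borel property I would write the $[0,\infty]$-valued functional $y\mapsto\int_{(0,\infty)}\lambda|y(\lambda)|\,\mu(d\lambda)$ as the pointwise increasing limit of the $n$-Lipschitz (hence continuous) functionals $g_{n}(y)=\int_{(0,\infty)}(\lambda\wedge n)|y(\lambda)|\,\mu(d\lambda)$; as a countable supremum of continuous maps it is Borel measurable, so its finiteness set --- which is exactly the second factor of $D(A)$ --- is a Borel subset of $L^{1}(\mu)$, and hence $D(A)$ is Borel in $E$.

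For part 2 I would first check that $\tilde F$ and $\tilde B$ map $E$ into $E$: the scalar components $\int_{(0,\infty)}p(y(\lambda))\,\mu(d\lambda)$ and $\sqrt{p(x)(1-p(x))}$ are bounded by $c$ and $1/2$ respectively, and the $L^{1}(\mu)$-component $\lambda p(x)$ of $\tilde F$ has $L^{1}(\mu)$-norm $p(x)\,c'<\infty$ (that of $\tilde B$ being $0$). Then, using that $p$ is $1$-Lipschitz and $[0,1]$-valued,
\[
\|\tilde F(x_{1},y_{1})-\tilde F(x_{2},y_{2})\|\le\int_{(0,\infty)}|y_{1}-y_{2}|\,d\mu+|p(x_{1})-p(x_{2})|\,c'\le(1\vee c')\,\|(x_{1},y_{1})-(x_{2},y_{2})\|,
\]
so $\tilde F$ is Lipschitz. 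For $\tilde B$ I would combine the elementary inequality $|\sqrt a-\sqrt b|\le\sqrt{|a-b|}$ for $a,b\ge 0$ with the $1$-Lipschitz continuity of $u\mapsto u(1-u)$ on $[0,1]$ and of $p$, to get $\big|\sqrt{p(x_{1})(1-p(x_{1}))}-\sqrt{p(x_{2})(1-p(x_{2}))}\big|\le|x_{1}-x_{2}|^{1/2}$, whence $\|\tilde B(x_{1},y_{1})-\tilde B(x_{2},y_{2})\|\le\|(x_{1},y_{1})-(x_{2},y_{2})\|^{1/2}$, i.e.\ $\tilde B$ is $\tfrac12$-H\"{o}lder.

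None of these steps is genuinely difficult; the only points worth being careful about are that the condition (\ref{condition}) must be invoked exactly where it is used --- in $D\subseteq D(A)$, in the density argument for $D(A)$, and in the well-definedness and Lipschitz estimate for the second component of $\tilde F$ --- and the Borel-measurability claim, for which one should exhibit $D(A)$ explicitly as a countable combination of sublevel sets of continuous functionals rather than appeal to $A$ generating a semigroup. The standing convention that elements of $L^{1}(\mu)$ are $\mu$-a.e.\ equivalence classes is harmless here, since all the conditions in play ($0\le y\le 1$ and $\lambda y\in L^{1}(\mu)$) are invariant under modification on a $\mu$-null set.
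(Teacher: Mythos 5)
Your proposal is correct and follows essentially the same route as the paper's (very terse) proof: inclusion and closedness of $D$ by direct verification, density of $D(A)$ via simple functions, and the Lipschitz/H\"older estimates reduced to $|p(x)-p(y)|\leq|x-y|$ together with the condition $c'<\infty$. The only cosmetic difference is in the Borel-measurability step, where the paper writes $D(A)=\bigcup_{n}\Gamma_{n}$ with $\Gamma_{n}=\mathbb{R}\times\{y:\|\lambda y(\lambda)\|_{L^{1}}\leq n\}$ closed by Fatou's lemma, while you exhibit the functional $y\mapsto\int\lambda|y|\,d\mu$ as an increasing limit of continuous functionals; both yield the same $F_{\sigma}$ representation.
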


Moreover, it can be proved that $A$ generates a strongly continuous semigroup
$\{S(t)\}_{t\geq 0}$ on $E$, and in general it is not a compact semigroup.

\begin{prop}\label{prop:semigroup}
$A$ is the generator of the strongly continuous contraction semigroup
\begin{equation}
S(t): (x, y(\lambda))\mapsto (e^{-ct}x, e^{-\lambda t}y(\lambda)), t\geq 0,
\end{equation}
on $E$.
\end{prop}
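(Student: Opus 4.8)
The plan is to verify directly that $\{S(t)\}_{t\ge 0}$ is a strongly continuous contraction semigroup on $E$, and then to identify its generator with $A$ by computing the resolvent and matching it with $(\alpha-A)^{-1}$. First I would check that each $S(t)$ is a well-defined linear operator on $E$: for $(x,y(\lambda))\in E$ one has $|e^{-\lambda t}y(\lambda)|\le|y(\lambda)|$ $\mu$-a.e., hence $e^{-\lambda\,\cdot\, t}y\in L^{1}(\mu)$. The identity $S(0)=\mathrm{Id}$ and the semigroup law $S(t+s)=S(t)S(s)$ are immediate from $e^{-c(t+s)}=e^{-ct}e^{-cs}$ and $e^{-\lambda(t+s)}=e^{-\lambda t}e^{-\lambda s}$, and contractivity follows from $e^{-ct}\le 1$, $e^{-\lambda t}\le 1$ for $t\ge 0$:
\begin{equation*}
\|S(t)(x,y)\|=|e^{-ct}x|+\int_{(0,\infty)}|e^{-\lambda t}y(\lambda)|\,\mu(d\lambda)\le |x|+\int_{(0,\infty)}|y(\lambda)|\,\mu(d\lambda)=\|(x,y)\|.
\end{equation*}

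For strong continuity I would fix $(x,y)\in E$ and let $t\downarrow 0$. The real component contributes $|e^{-ct}x-x|\to 0$, while for the $L^{1}$ component
\begin{equation*}
\int_{(0,\infty)}\bigl|e^{-\lambda t}y(\lambda)-y(\lambda)\bigr|\,\mu(d\lambda)=\int_{(0,\infty)}|y(\lambda)|\,(1-e^{-\lambda t})\,\mu(d\lambda)\longrightarrow 0
\end{equation*}
by dominated convergence, since the integrand is bounded by $|y|\in L^{1}(\mu)$ and tends to $0$ pointwise as $t\downarrow 0$.

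To identify the generator, denote it temporarily by $B$. For $\alpha>0$ the resolvent is the Laplace transform $R(\alpha,B)=\int_{0}^{\infty}e^{-\alpha t}S(t)\,dt$, and computing componentwise (Tonelli on the $L^{1}$ part),
\begin{equation*}
R(\alpha,B)(u,v)=\Bigl(\tfrac{u}{\alpha+c},\ \tfrac{v(\lambda)}{\alpha+\lambda}\Bigr),\qquad (u,v)\in E.
\end{equation*}
On the other hand $(\alpha-A)(x,y)=\bigl((\alpha+c)x,(\alpha+\lambda)y(\lambda)\bigr)$ is injective on $D(A)$, and for every $(u,v)\in E$ the candidate preimage $(x,y)=\bigl(u/(\alpha+c),\,v(\lambda)/(\alpha+\lambda)\bigr)$ does lie in $D(A)$, because $|\lambda y(\lambda)|=\tfrac{\lambda}{\alpha+\lambda}|v(\lambda)|\le|v(\lambda)|\in L^{1}(\mu)$; hence $\alpha-A:D(A)\to E$ is a bijection whose inverse is exactly the multiplication operator above. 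Comparing, $R(\alpha,B)=(\alpha-A)^{-1}$ as bounded operators on $E$, so $\alpha\in\rho(B)\cap\rho(A)$ and, since the resolvent determines a generator, $B=A$ with $D(B)=D(A)$; in particular this re-derives the density of $D(A)$, consistent with Proposition~\ref{prop:domain}. A clean alternative is to invoke the Hille--Yosida theorem directly, using the explicit bound $\|R(\alpha,A)\|\le 1/\alpha$ together with the density of $D(A)$.

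I do not expect a genuine obstacle here; all the computations are routine. The only point requiring a moment's care is the precise description of $D(A)$ — namely that the range of $R(\alpha,A)$ is exactly $\{(x,y):\lambda y(\lambda)\in L^{1}(\mu)\}$, neither larger nor smaller — which is what the estimate $\lambda/(\alpha+\lambda)\le 1$ secures, and which forces the domain of the generator to coincide with the maximal multiplication domain $D(A)$ rather than with some proper subspace such as $\mathrm{span}$ of the eigenfunctions.
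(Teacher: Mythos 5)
Your proof is correct, but it runs in the opposite direction from the paper's. The paper starts from the operator $A$: it checks that $A$ is closed and densely defined, computes $R(\alpha,A)(x,y)=\bigl(\tfrac{x}{\alpha+c},\tfrac{y(\lambda)}{\alpha+\lambda}\bigr)$, invokes the Hille--Yosida theorem to conclude that $A$ generates \emph{some} strongly continuous contraction semigroup, and only then identifies that semigroup with the explicit multiplication formula --- either by extending to an analytic semigroup and evaluating the contour-integral inversion formula via Cauchy's theorem, or by the alternative observation that $D(A)\subseteq D(B)$ for $B$ the generator of the explicit semigroup, forcing $A=B$. You instead start from the explicit formula, verify directly that it defines a strongly continuous contraction semigroup (semigroup law, contractivity, dominated convergence for strong continuity), compute its resolvent as the Laplace transform of the semigroup, and match it with $(\alpha-A)^{-1}$; since the resolvent determines the generator, $B=A$. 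Both arguments hinge on the same resolvent computation and the same observation that $\lambda/(\alpha+\lambda)\le 1$ places the candidate preimage in $D(A)$, but your route is more elementary: it needs neither Hille--Yosida nor any analytic-semigroup machinery, and the identification step is cleaner. What the paper's approach buys in exchange is the explicit verification that $A$ is closed and satisfies the Hille--Yosida resolvent bounds (and, in passing, the sectorial estimate showing the semigroup is analytic), facts it reuses implicitly elsewhere; your argument obtains closedness of $A$ only as an after-the-fact corollary of $A$ being a generator.
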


\begin{prop}\label{prop:compactsemigroup}
The semigroup $\{S(t)\}_{t\geq 0}$ is compact if and only if $\mu=\sum\limits_{i=1}^{n}c_{i}\delta_{\lambda_{i}}$ for $n\in \mathbb{N}_{0}$, or $\mu=\sum\limits_{i=1}^{\infty}c_{i}\delta_{\lambda_{i}}$,
where $c_{i}>0$, $\lambda_{i}\neq\lambda_{j}$ when $i\neq j$, and $\{\lambda_{i}\}_{i\in\mathbb{N}}$ is unbounded.
\end{prop}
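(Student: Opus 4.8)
The plan is to exploit the explicit form of the semigroup from Proposition~\ref{prop:semigroup}: $S(t)(x,y)=\bigl(e^{-ct}x,\,M_t y\bigr)$ on $E=\mathbb{R}\times L^1(\mu)$, where $M_t\colon L^1(\mu)\to L^1(\mu)$ is the multiplication operator $(M_t y)(\lambda)=e^{-\lambda t}y(\lambda)$. Since a subset of $E$ is precompact iff both of its coordinate projections are, and the $\mathbb{R}$-block is one-dimensional, $S(t)$ is compact iff $M_t$ is compact. Thus the proposition reduces to the claim that $M_t$ is compact for every $t>0$ iff $\mu$ is purely atomic with only finitely many atoms in each bounded interval --- which is precisely the stated dichotomy (here the requirement that $\{\lambda_i\}$ be unbounded is understood as the atoms having no finite accumulation point, equivalently $\lambda_i\to\infty$).

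For the ``if'' direction: if $\mu=\sum_{i=1}^{n}c_i\delta_{\lambda_i}$ is a finite sum then $E$ is finite-dimensional and there is nothing to prove. If $\mu=\sum_{i=1}^{\infty}c_i\delta_{\lambda_i}$ with $\lambda_i\to\infty$, the map $y\mapsto(c_i y(\lambda_i))_{i\in\mathbb{N}}$ is an isometric isomorphism $L^1(\mu)\cong\ell^1(\mathbb{N})$ under which $M_t$ becomes the diagonal operator with entries $e^{-\lambda_i t}$. Because $e^{-\lambda_i t}\to0$, $M_t$ is the operator-norm limit of its finite-rank truncations and hence compact; consequently $S(t)$ is compact for every $t>0$.

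For the ``only if'' direction I would argue contrapositively, exhibiting --- whenever $\mu$ fails to be of the stated form --- a bounded sequence $(u_n)$ in $L^1(\mu)$ whose images $M_t u_n$ have mutually disjoint supports, each contained in a fixed interval $(0,R]$; then $\|M_t u_n-M_t u_m\|=\|M_t u_n\|+\|M_t u_m\|\ge 2e^{-Rt}$ for $n\neq m$, so $(M_t u_n)$ has no convergent subsequence and $M_t$ (hence $S(t)$) is not compact. If $\mu$ has a non-trivial non-atomic part, choose $R$ so that this part charges $(0,R]$, find countably many disjoint Borel sets $A_n\subseteq(0,R]$ of positive $\mu$-measure (possible by non-atomicity), and put $u_n=\mu(A_n)^{-1}I_{A_n}$. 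If $\mu$ is purely atomic but infinitely many of its atoms $\lambda_{i_n}$ lie in some bounded interval $(0,R]$, take $u_n=c_{i_n}^{-1}I_{\{\lambda_{i_n}\}}$. The only $\mu$ escaping both cases are the finite sums $\sum_{i=1}^{n}c_i\delta_{\lambda_i}$ and the infinite sums $\sum_{i=1}^{\infty}c_i\delta_{\lambda_i}$ with only finitely many atoms in each bounded interval, i.e.\ $\lambda_i\to\infty$ --- exactly the asserted form.

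The ingredients of the ``if'' direction (the isometry $L^1(\sum c_i\delta_{\lambda_i})\cong\ell^1$, compactness of diagonal operators on $\ell^1$ with vanishing entries, and stability of compactness under direct sums with finite-rank operators) are routine. The part requiring care is the ``only if'' direction, where one must handle an arbitrary finite Borel measure on $(0,\infty)$: separating its atomic and non-atomic components and, in the non-atomic case, producing a genuinely countable family of disjoint positive-measure subsets inside a fixed bounded interval so that the supports of the $M_t u_n$ are disjoint. Organizing that decomposition cleanly is the main obstacle.
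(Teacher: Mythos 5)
Your proof is correct and follows the same broad strategy as the paper's --- reduce to the multiplication operator $M_t$ on $L^1(\mu)$, split $\mu$ into its atomic and non-atomic parts, and exhibit uniformly separated bounded sequences to refute compactness --- but the individual ingredients differ in instructive ways. For the positive direction with infinitely many atoms, you identify $L^1(\mu)\cong\ell^1$ and invoke the standard fact that a diagonal operator with entries $e^{-\lambda_i t}\to 0$ is the norm limit of its finite-rank truncations; the paper instead verifies total boundedness of the image of the unit ball via a weighted Kolmogorov-type criterion (Theorem 4 of \cite{hanche2010kolmogorov}). For the negative direction you use pairwise disjoint supports throughout (disjoint positive-$\mu$-measure subsets of a bounded interval in the non-atomic case, distinct atoms in a bounded interval in the atomic case), which yields the clean lower bound $\|M_tu_n-M_tu_m\|_{L^1}=\|M_tu_n\|_{L^1}+\|M_tu_m\|_{L^1}\ge 2e^{-Rt}$; the paper instead takes a nested decreasing family $M_1\supset M_2\supset\cdots$ with $\mu(M_{n+1})<\tfrac12\mu(M_n)$ (obtained via the Sierpi\'nski theorem) and a slightly more delicate two-term estimate, and handles the mixed case $\mu=\mu_c+\mu_d$ separately by excising the atoms from the test sets. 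Both routes work; yours is somewhat more economical and unifies the failure cases.

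One point deserves emphasis: your reading of the hypothesis --- that ``$\{\lambda_i\}_{i\in\mathbb{N}}$ unbounded'' must be taken to mean $\lambda_i\to\infty$, equivalently that every bounded interval contains only finitely many atoms --- is not merely a convenient gloss but the correct characterization, and your contrapositive in fact shows that the literal statement is too weak: if the atom set is unbounded but accumulates at a finite point (say atoms at $\{2-\tfrac1n\}\cup\{n\}$), your disjoint-atom construction still produces a non-compact $M_t$. The paper's Case 2 tail estimate $\sum_{i>n_k}c_i|e^{-\lambda_it}y_i|\le e^{-\lambda_{n_k}t}$ tacitly assumes $\lambda_i\ge\lambda_{n_k}$ for all $i>n_k$, i.e.\ an increasing enumeration of the atoms, which exists precisely when every bounded interval contains finitely many of them; so the paper's argument, like yours, really establishes the proposition only under the stronger (and correct) reading.
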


See the Appendix for the proofs of Proposition \ref{prop:domain}, \ref{prop:semigroup} and \ref{prop:compactsemigroup}.

\begin{rem}
The purpose of Proposotion \ref{prop:compactsemigroup} is to show that: Even assuming that $\int_{(0,\infty)}\lambda^{2}\mu(d\lambda)<\infty$ and considering the above set-ups in the Hilbert space $\mathbb{R}\times L^{2}(\mu)$, the semigroup $\{S(t)\}_{t\geq 0}$ is generally not compact by a similar proof. It is well-known that equation (\ref{eq:restricted}) has a weak solution taking values in $\mathbb{R}\times L^{2}(\mu)$ if $A$ generates a compact semigroup (see \cite{gkatarek1994weak}), but unfortunately that is not the case here. Therefore, it is not necessary to adopt a Hilbertian setting. Another reason for using the Banach space $\mathbb{R}\times L^{1}(\mu)$ is that $L^{1}(\mu)$ can be isometrically embedded into $\mathcal{M}(0,\infty)$, as will be employed in Section 4.
\end{rem}

The adjoint operator $A^{\star}$ of $A$ needs to be considered for defining 
the analytically weak solution (see Definition \ref{defn:weaka}) to the SEE (\ref{eq:restricted}). Due to the non-reflexivity of $E$, the adjoint semigroup $\{S(t)^{\star}\}_{t\geq 0}$ may not be strongly continuous on $E^{\star}\cong \mathbb{R}\times L^{\infty}(\mu)$, but its restriction $\{S(t)^{+}\}_{t\geq 0}$ to $\overline{D(A^{\star})}$ is. The generator $A^{+}$ of $\{S(t)^{+}\}_{t\geq 0}$ is the part of $A^{\star}$ in $\overline{D(A^{\star})}$ (see Definition \ref{defn:part}), and there are some simple observations about $A^{\star}$ and $A^{+}$, which are proved in the Appendix.

\begin{prop}\label{prop:adjoint}~

\begin{enumerate}
  \item $D(A^{\star})=\mathbb{R}\times\{h \in L^{\infty}(\mu):\lambda h(\lambda)\in L^{\infty}(\mu)\}$, and $A^{\star}(f^{(1)}, f^{(2)}(\lambda))=(-cf^{(1)},-\lambda f^{2}(\lambda))$ for $f=(f^{(1)},f^{(2)})\in D(A^{\star})$;
  \item $D(A^{\star})$ is neither dense and nor closed;
  \item $D(A^{+})=\{(\frac{f^{(1)}}{\alpha+c}, \frac{f^{(2)}(\lambda)}{\alpha+\lambda}): (f^{1}, f^{(2)}(\lambda))\in \overline{D(A^{\star})}, \alpha\in\rho(A)\}$, and $D(A^{\star 2})\subsetneq D(A^{+})\subsetneq D(A^{\star})$.
\end{enumerate}
\end{prop}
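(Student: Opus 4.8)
## Proof plan for Proposition \ref{prop:adjoint}

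The plan is to compute the adjoint operator directly from the duality pairing, then extract the structural facts about $D(A^\star)$ and $D(A^+)$ from the explicit description. Recall that $E = \mathbb{R}\times L^1(\mu)$, so $E^\star \cong \mathbb{R}\times L^\infty(\mu)$ with pairing $\langle (x,y),(f^{(1)},f^{(2)})\rangle = xf^{(1)} + \int_{(0,\infty)} y(\lambda)f^{(2)}(\lambda)\mu(d\lambda)$.

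\textbf{Step 1 (identify $A^\star$).} By definition, $(f^{(1)},f^{(2)})\in D(A^\star)$ iff there is $g\in E^\star$ with $\langle A(x,y),f\rangle = \langle (x,y),g\rangle$ for all $(x,y)\in D(A)$, i.e. $-cxf^{(1)} - \int \lambda y(\lambda)f^{(2)}(\lambda)\mu(d\lambda) = xg^{(1)} + \int y(\lambda)g^{(2)}(\lambda)\mu(d\lambda)$. Taking $x$ arbitrary and $y=0$ forces $g^{(1)} = -cf^{(1)}$; then taking $x=0$ and $y$ ranging over $L^1(\mu)$ with $\lambda y\in L^1(\mu)$ forces $g^{(2)}(\lambda) = -\lambda f^{(2)}(\lambda)$ $\mu$-a.e., and for this $g$ to lie in $L^\infty(\mu)$ we need $\lambda f^{(2)}(\lambda)\in L^\infty(\mu)$. (One should check that the functions $y$ with $\lambda y\in L^1(\mu)$ are separating for $L^\infty(\mu)$, which follows since $D(A)$ is dense in $E$ by Proposition \ref{prop:domain}.) This gives part (1).

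\textbf{Step 2 (density and closedness, part (2)).} For non-density: the closure of $D(A^\star)$ in $L^\infty$-norm cannot contain, e.g., the constant function $1$ perturbed on a set where $\lambda$ is unbounded — more precisely, if $\mu$ has unbounded support (or an atom structure accumulating at $\infty$), then any $h$ with $\lambda h(\lambda)\in L^\infty$ must satisfy $h(\lambda)\to 0$ along the unbounded part of the support, so the constant $1$ is not in $\overline{D(A^\star)}$; and if $\mu$ has bounded support the claim should be examined separately (here I would point out that $\overline{D(A^\star)}$ still omits functions that are genuinely in $L^\infty$ but not approximable — actually when $\mathrm{supp}\,\mu$ is bounded one has $D(A^\star) = E^\star$, so the intended hypothesis is presumably that $\mu$ is such that this fails; I would state the density claim under the standing assumptions of the paper, namely $c' = \int\lambda\mu(d\lambda)<\infty$ with $\mu$ not finitely supported, matching the regime of Proposition \ref{prop:compactsemigroup}). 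For non-closedness: exhibit a sequence $h_n\in D(A^\star)$ (e.g. truncations $h_n(\lambda) = h(\lambda)I_{\{\lambda\le n\}}$ of a bounded $h$ with $\lambda h(\lambda)\notin L^\infty$) converging in $L^\infty$ to something — this needs care since truncation does not converge in sup-norm unless $h$ decays; instead I would use $h_n(\lambda) = \frac{n}{n+\lambda}$, which lies in $D(A^\star)$ since $\lambda h_n(\lambda) = \frac{n\lambda}{n+\lambda}$ is bounded by $n$, wait that is bounded only by $n$ not uniformly — the cleaner choice is to note $\frac{1}{1+\lambda}\in D(A^\star)$ (as $\frac{\lambda}{1+\lambda}\le 1$) and build a convergent sequence landing outside. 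I would settle the exact witnessing sequence when writing the proof; the mechanism is that $L^\infty$-limits of "$\lambda h\in L^\infty$" functions need not retain that property.

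\textbf{Step 3 (the part $A^+$, part (3)).} Use the resolvent characterization: $\overline{D(A^\star)} = \overline{D(A)^\star}$ and, for $\alpha\in\rho(A)$ (note $\rho(A)\supseteq (0,\infty)$ by Proposition \ref{prop:semigroup} since $\{S(t)\}$ is a contraction semigroup), $R(\alpha,A)^\star = R(\alpha,A^\star)$ maps $E^\star$ into $D(A^\star)$, and $D(A^+) = R(\alpha, A^\star)(\overline{D(A^\star)})$ by the standard description of the part of an operator (Theorem 1.10.4 in \cite{pazy2012semigroups}). Since $R(\alpha,A)(x,y) = (\frac{x}{\alpha+c}, \frac{y(\lambda)}{\alpha+\lambda})$, its adjoint acts the same way on $E^\star$, yielding $D(A^+) = \{(\frac{f^{(1)}}{\alpha+c},\frac{f^{(2)}(\lambda)}{\alpha+\lambda}) : (f^{(1)},f^{(2)})\in\overline{D(A^\star)}\}$. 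The strict inclusions $D(A^{\star 2})\subsetneq D(A^+)\subsetneq D(A^\star)$ then follow: for $D(A^+)\subseteq D(A^\star)$ note $A^+$ is a restriction of $A^\star$ by definition, and strictness comes from picking $f\in D(A^\star)$ with $A^\star f\notin\overline{D(A^\star)}$ (possible by part (2), non-density); for $D(A^{\star 2})\subseteq D(A^+)$ note $f\in D(A^{\star 2})$ means $A^\star f\in D(A^\star)\subseteq\overline{D(A^\star)}$ so $f\in D(A^+)$, and strictness comes from an $f\in D(A^+)$ with $A^\star f\in\overline{D(A^\star)}\setminus D(A^\star)$, which exists precisely because $D(A^\star)$ is not closed (part (2)).

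\textbf{Main obstacle.} The routine adjoint computation in Step 1 is mechanical. The delicate points are the precise statements of non-density and non-closedness in Step 2: these are genuinely sensitive to the structure of $\mu$ (bounded vs.\ unbounded support, atomic vs.\ diffuse), and producing clean explicit witnessing sequences/functions in $L^\infty(\mu)$ — rather than hand-waving — is where the real work lies. Relatedly, verifying strictness of both inclusions in Step 3 hinges entirely on having those witnesses from Step 2 in hand, so I would prove Step 2 carefully first and then Step 3 follows formally.
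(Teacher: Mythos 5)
Your overall route matches the paper's: compute $A^{\star}$ directly from the pairing (the paper uses indicator functions $(x,I_{B})\in D(A)$, which is the clean way to make your ``separating'' remark rigorous, since $g^{(2)}+\lambda f^{(2)}\in L^{1}(\mu)$ and integrating it against all $I_{B}$ forces it to vanish a.e.), then use $R(\alpha,A^{+})=R(\alpha,A^{\star})|_{\overline{D(A^{\star})}}$ from Pazy for part (3). Your observation that part (2) degenerates when $\mu$ has bounded essential support (so that $D(A^{\star})=E^{\star}$) is a legitimate caveat the paper does not state.

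There are, however, two concrete gaps. First, you never produce the witnessing sequence for non-closedness; the truncation idea you half-dismissed is exactly what works, provided the truncated function decays to $0$ but not fast enough for $\lambda h(\lambda)$ to stay bounded: the paper takes $h_{n}(\lambda)=\frac{1}{1+\sqrt{\lambda}}I_{\{\lambda\le n\}}\to\frac{1}{1+\sqrt{\lambda}}$ in $L^{\infty}(\mu)$ (the tail supremum is $\le\frac{1}{1+\sqrt{n}}$), while $\frac{\lambda}{1+\sqrt{\lambda}}\sim\sqrt{\lambda}$ is unbounded. Your candidate $\frac{n}{n+\lambda}$ fails because it does not converge uniformly to $1$ on unbounded support. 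Second, your claim that strictness of the inclusions in part (3) ``follows from part (2)'' is too quick. To get $D(A^{+})\subsetneq D(A^{\star})$ you need an $f\in D(A^{\star})$ with $A^{\star}f\notin\overline{D(A^{\star})}$; non-density only tells you such targets exist in $E^{\star}$, not that any of them has the form $(-cf^{(1)},-\lambda f^{(2)})$ with $(f^{(1)},f^{(2)})\in D(A^{\star})$ --- dividing a candidate by $\lambda$ can destroy boundedness near $\lambda=0$. The paper resolves this with explicit functions: $f^{(2)}(\lambda)=1\wedge\lambda^{-1}$ (so $\lambda f^{(2)}\to 1$ at infinity, hence outside $\overline{D(A^{\star})}$) for the first strict inclusion, and $f^{(2)}(\lambda)=1\wedge\lambda^{-3/2}$ (so $\lambda f^{(2)}\in\overline{D(A^{\star})}$ but $\lambda^{2}f^{(2)}\notin L^{\infty}(\mu)$) for the second. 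Without such explicit witnesses the strictness claims are not established.
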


The relationship between the different notions of solutions to the SEE (\ref{eq:restricted}) (see Definition \ref{defn:mild}, \ref{defn:weakened} and \ref{defn:weaka}) is as follows, by which the existence of a weak solution can be further proved. It is actually a special case of the main result in \cite{gorajski2014equivalence}. See the Appendix for a direct proof.

\begin{prop}\label{prop:equi}
The following statements are equivalent:
\begin{enumerate}
  \item $Z$ is a mild solution to the SEE (\ref{eq:restricted});
  \item $Z$ is a weakened solution to the SEE (\ref{eq:restricted});
  \item $Z$ is an analytically weak solution to the SEE (\ref{eq:restricted}).
\end{enumerate}
\end{prop}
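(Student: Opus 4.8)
The plan is to prove the three statements equivalent by establishing the cycle $(1)\Rightarrow(2)\Rightarrow(3)\Rightarrow(1)$; as noted this is a special case of \cite{gorajski2014equivalence}, but the explicit diagonal structure of $A$ (Proposition~\ref{prop:semigroup}) and of $A^{\star},A^{+}$ (Proposition~\ref{prop:adjoint}), together with the boundedness and Lipschitz/H\"older continuity of $\tilde F,\tilde B$ (Proposition~\ref{prop:domain}; in fact $\|\tilde F(z)\|\le c+c^{\prime}$ and $\|\tilde B(z)\|\le 1/2$ for all $z\in E$, so every solution is bounded in $L^{p}$ and hence Bochner integrable on bounded intervals, and all the stochastic integrals below are well defined), will let me carry out each step by hand. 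The recurring tools are: (i) the semigroup identity $\int_{0}^{r}S(u)g\,du\in D(A)$ with $A\int_{0}^{r}S(u)g\,du=S(r)g-g$, valid for all $g\in E$; (ii) a deterministic Fubini theorem and a stochastic Fubini theorem; and (iii) the fundamental theorem of calculus for the restricted adjoint semigroup, $S(r)^{+}h-h=\int_{0}^{r}S(u)^{+}A^{+}h\,du$ for $h\in D(A^{+})$, the invariance $S(r)^{+}D(A^{+})\subseteq D(A^{+})$, and the strong continuity of $\{S(t)^{+}\}_{t\ge 0}$ on $\overline{D(A^{\star})}$ (see Definition~\ref{defn:part}).

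For $(1)\Rightarrow(2)$ I would integrate the mild identity \eqref{eq:mild} over $[0,t]$: the term $\int_{0}^{t}S(s)Z_{0}\,ds$ lies in $D(A)$ by (i); applying Fubini (deterministic and stochastic) rewrites the two convolution integrals as $\int_{0}^{t}(\int_{0}^{t-r}S(u)\,du)\tilde F(Z_{r})\,dr$ and $\int_{0}^{t}(\int_{0}^{t-r}S(u)\,du)\tilde B(Z_{r})\,dW_{r}$, each of which lies in $D(A)$ by (i) and the closedness of $A$ (for the stochastic integral I use that $A$ commutes with $\int_{0}^{\cdot}\,dW$ when the integrand is $D(A)$-valued with $A$ of it square-integrable, which holds here). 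Applying $A$ and using (i) once more, the three $A$-images sum — via \eqref{eq:mild} itself — to exactly $Z_{t}-Z_{0}-\int_{0}^{t}\tilde F(Z_{s})\,ds-\int_{0}^{t}\tilde B(Z_{s})\,dW_{s}$, which is \eqref{eq:weakened}. For $(2)\Rightarrow(3)$ I would pair \eqref{eq:weakened} with an arbitrary $f\in D(A^{+})\subseteq D(A^{\star})$: since $\int_{0}^{t}Z_{s}\,ds\in D(A)$, one gets $\langle A\int_{0}^{t}Z_{s}\,ds,f\rangle=\langle\int_{0}^{t}Z_{s}\,ds,A^{\star}f\rangle=\int_{0}^{t}\langle Z_{s},A^{+}f\rangle\,ds$ (using $A^{+}f=A^{\star}f$ and that the bounded functional $\langle\cdot,A^{\star}f\rangle$ commutes with the Bochner integral), while the other two terms are handled by pulling $\langle\cdot,f\rangle$ through the Bochner and It\^o integrals; this is exactly \eqref{eq:weaka}.

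The substantive step is $(3)\Rightarrow(1)$. Fix $t>0$ and $f\in D(A^{+})$ and put $\psi(s):=S(t-s)^{+}f$ for $s\in[0,t]$; by (iii), $\psi\in C^{1}([0,t];\overline{D(A^{\star})})$ with $\psi(s)\in D(A^{+})$ and $\psi'(s)=-A^{+}\psi(s)$. I would then write $\langle Z_{t},\psi(t)\rangle-\langle Z_{0},\psi(0)\rangle$ as a limit of telescoping Riemann sums over partitions of $[0,t]$, splitting each increment as $\langle Z_{s_{i+1}}-Z_{s_{i}},\psi(s_{i})\rangle+\langle Z_{s_{i+1}},\psi(s_{i+1})-\psi(s_{i})\rangle$; the first part is governed by \eqref{eq:weaka} applied with the frozen test vector $\psi(s_{i})\in D(A^{+})$, the second converges to $\int_{0}^{t}\langle Z_{s},\psi'(s)\rangle\,ds=-\int_{0}^{t}\langle Z_{s},A^{+}\psi(s)\rangle\,ds$, and the two occurrences of $\int_{0}^{t}\langle Z_{s},A^{+}\psi(s)\rangle\,ds$ cancel. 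Using $S(r)^{+}f=S(r)^{\star}f$ and $\langle g,S(r)^{\star}f\rangle=\langle S(r)g,f\rangle$, this yields
\[
\langle Z_{t},f\rangle=\langle S(t)Z_{0}+\int_{0}^{t}S(t-s)\tilde F(Z_{s})\,ds+\int_{0}^{t}S(t-s)\tilde B(Z_{s})\,dW_{s},\, f\rangle\quad\text{a.s.}
\]
for every $f\in D(A^{+})$. Finally $D(A^{+})$ is separating on $E=\mathbb{R}\times L^{1}(\mu)$: by Proposition~\ref{prop:adjoint} it contains $D(A^{\star 2})$, hence $(1,0)$ and all $(0,I_{[a,b]})$ with $0<a<b<\infty$, and countably many of these (taking $a,b\in\mathbb{Q}$) already separate $E$. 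Thus $Z_{t}$ coincides, a.s. for each $t\ge 0$, with the right-hand side of \eqref{eq:mild}, i.e. $Z$ is a mild solution.

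The main obstacle is precisely this last direction, where the non-reflexivity of $E$ bites: the naive choice of a time-dependent test function $s\mapsto S(t-s)^{\star}f$ need not even be continuous in $E^{\star}$, so the argument must stay inside $D(A^{+})\subseteq\overline{D(A^{\star})}$ and lean on the strong continuity of the restricted semigroup there, and one must separately verify that $D(A^{+})$ — a priori small, since $D(A^{\star})$ is itself neither dense nor closed (Proposition~\ref{prop:adjoint}) — is nonetheless rich enough to be separating on $E$; here the explicit description of $D(A^{\star 2})\subseteq D(A^{+})$ is what makes this point routine. The remaining bookkeeping (selecting a jointly good version of the process so that \eqref{eq:weaka} holds simultaneously at all partition points and for countably many test vectors, and justifying the Fubini interchanges) is standard given the boundedness of the coefficients.
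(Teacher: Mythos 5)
Your proposal is correct, and the first two implications $(1)\Rightarrow(2)$ and $(2)\Rightarrow(3)$ run essentially as in the paper (Fubini/stochastic Fubini plus the identity $A\int_{0}^{r}S(u)g\,du=S(r)g-g$, then pairing with $f\in D(A^{+})$ and using $A^{+}f=A^{\star}f$). Where you genuinely diverge is in how the equivalence is closed. The paper proves $(2)\Rightarrow(1)$ directly, by differentiating $g(s)=S(t-s)\int_{0}^{s}Z_{r}\,dr$ and integrating back, and then handles $(3)\Rightarrow(1)$ by an algebraic substitution $f=S^{\star}(t-s)f-\int_{0}^{t-s}S^{\star}(u)A^{+}f\,du$ inside the stochastic integral for $f\in D(A^{+2})$, re-applying the weak formulation several times, and finally invoking the weak$^{\star}$ density of $D(A^{+2})$ in $E^{\star}$ (via the closedness of $A$ and a theorem of Schaefer). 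You instead establish only the cycle, proving $(3)\Rightarrow(1)$ with the classical time-dependent test function $\psi(s)=S(t-s)^{+}f$ and a telescoping Riemann-sum (It\^{o} product rule) argument, which correctly stays inside $\overline{D(A^{\star})}$ where the adjoint semigroup is strongly continuous; and you conclude not by density but by exhibiting a countable \emph{separating} family in $D(A^{\star 2})\subseteq D(A^{+})$, namely $(1,0)$ and $(0,I_{[a,b]})$ with rational $0<a<b$, which indeed lie in $D(A^{\star 2})$ since $\lambda^{2}I_{[a,b]}(\lambda)$ is bounded. Your closing step is arguably more elementary and robust — identifying two elements of $E$ only requires a separating subset of $E^{\star}$, not a weak$^{\star}$-dense subspace — at the cost of some extra bookkeeping in the limit of the telescoping sum (uniform convergence of $\psi_{n}$ to $\psi$ and $L^{2}$-convergence of the corresponding stochastic integrals), which you correctly flag and which is routine given the boundedness of $\tilde{B}$. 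The paper's route, by contrast, delivers the mild identity paired against every $f\in E^{\star}$ in one stroke. Both arguments are sound.
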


\begin{rem}\label{rem:1}~

\begin{enumerate}
	\item Any mild solution to the SEE (\ref{eq:restricted}) is of the form 	
\begin{equation*}
\begin{aligned}
Z_{t} &=\left[\begin{array}{c}
e^{-c t}\zeta^{(1)}+\int_{0}^{t} e^{-c(t-s)} \int_{(0, \infty)} p\left(Y_{s}(\lambda)\right) \mu(d \lambda) d s+\int_{0}^{t} e^{-c(t-s)} \sqrt{p\left(X_{s}\right)\left(1-p\left(X_{s}\right)\right)} d W_{s} \\
e^{-\lambda t} \zeta^{(2)}(\lambda)+\int_{0}^{t} e^{-\lambda(t-s)} \lambda p\left(X_{s}\right) ds
\end{array}\right], a.s.,
\end{aligned}
\end{equation*}
and it can be verified directly that $Z$ has a continuous version;
\item When $Z$ takes values in $D$, Proposition \ref{prop:equi} still holds with the restriction ``$p$" in the coefficients removed, and the solution is analytically strong.
\end{enumerate}
\end{rem}

By the equivalence of 1 and 2 in Proposition \ref{prop:equi} and the similar approach in \cite{da2014stochastic}, the following result can be proved:

\begin{prop}\label{prop:solution}
For any $T>0$, the SEE (\ref{eq:restricted}) has a continuous weak solution on $[0, T]$.
\end{prop}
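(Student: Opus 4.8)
The plan is to construct the weak solution to the restricted equation \eqref{eq:restricted} by the classical Faedo--Galerkin / truncation-and-tightness scheme adapted to the mild formulation, exploiting the fact that the coefficients $\tilde F$ and $\tilde B$ are globally Lipschitz (Proposition~\ref{prop:domain}) and uniformly bounded on $E$, since $p(x)=x^{+}\wedge 1\in[0,1]$. Because the semigroup $\{S(t)\}_{t\ge 0}$ need not be compact (Proposition~\ref{prop:compactsemigroup}), one cannot invoke the Gatarek--G\k{a}tarek compactness criterion directly; instead I would first build finite-dimensional approximations and then pass to the limit. Concretely, fix an increasing sequence of Borel sets $\Lambda_{n}\uparrow(0,\infty)$ with $\mu(\Lambda_{n})<\infty$ and $\sup_{\Lambda_n}\lambda<\infty$, and let $\pi_{n}$ be the projection of $L^{1}(\mu)$ onto the subspace of functions supported on $\Lambda_{n}$; the truncated operator $A_{n}=A\pi_{n}$ is bounded, so the equation $dZ^{n}_{t}=A_{n}Z^{n}_{t}\,dt+\pi_n\tilde F(Z^{n}_{t})\,dt+\pi_n\tilde B(Z^{n}_{t})\,dW_{t}$ with $Z^n_0=\zeta$ has a unique strong solution by standard SDE theory in the Banach space $E$ (or one may truncate $\lambda$ in $A$ alone and keep $\tilde F,\tilde B$, which is cleaner since $\tilde F,\tilde B$ do not depend on the unbounded multiplication operator).

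The second step is a priori bounds. Using the mild representation of $Z^{n}$ from Remark~\ref{rem:1}(1), the contraction property $\|S(t)\|\le 1$ from Proposition~\ref{prop:semigroup}, boundedness of $p$, and the Burkholder--Davis--Gundy inequality for the stochastic convolution (together with the factorization method, which is where Theorem~\ref{lemma1}'s circle of ideas enters), one obtains $\sup_{n}E[\sup_{t\in[0,T]}\|Z^{n}_{t}\|^{m}]<\infty$ for suitable $m>2$, and moreover a modulus-of-continuity estimate of the form $E[\|Z^{n}_{t}-Z^{n}_{s}\|^{m}]\le C|t-s|^{\beta}$ for some $\beta>0$; note the $\mathbb{R}$-component inherits Hölder-$\tfrac12^{-}$ continuity from the Brownian integral, while the $L^{1}(\mu)$-component, being an absolutely continuous integral of a bounded integrand, is even Lipschitz in $t$. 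These bounds are uniform in $n$ because all constants depend only on $c$, $c'=\int\lambda\,\mu(d\lambda)<\infty$ (the condition \eqref{condition}), $T$, and the uniform bound on $p$.

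The third step is tightness and identification of the limit. Since $E=\mathbb{R}\times L^{1}(\mu)$ is a separable Banach space but not reflexive, tightness on $C([0,T];E)$ cannot be read off from moment bounds alone; I would instead obtain tightness of the laws of $(Z^{n},W)$ on $C([0,T];E)\times C([0,T];\mathbb{R})$ by a direct argument — the $\mathbb{R}$-marginal is tight by Kolmogorov, and for the $L^{1}(\mu)$-marginal one uses that $Z^{n}$ takes values in the $\mu$-essentially-bounded ball, the compact-containment and equicontinuity estimates above, plus an explicit control of the tails $\int_{(0,\infty)\setminus\Lambda_{k}}|Y^{n}_{t}(\lambda)|\,\mu(d\lambda)\to 0$ uniformly in $n,t$ — exactly the kind of uniform-integrability-over-$\mu$ estimate that makes a bounded subset of $L^{1}(\mu)$ relatively compact once the "mass at infinity in $\lambda$" is controlled, which here follows from $\mu$ being finite. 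By Prokhorov and Skorokhod representation, extract a subsequence converging a.s. on a new probability space; pass to the limit in the mild equation \eqref{eq:mild} using the Lipschitz continuity of $\tilde F,\tilde B$ and dominated convergence (again, boundedness of $p$ is what licenses this) to identify the limit $Z$ as a mild solution with respect to the limiting Brownian motion, then invoke the equivalence in Proposition~\ref{prop:equi} to record it as an analytically weak solution. Finally observe that the constructed solution is progressively measurable with respect to the limiting filtration and continuous (Remark~\ref{rem:1}(1)), giving a continuous weak solution on $[0,T]$.

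The main obstacle is the tightness step: the absence of a compact semigroup means the stochastic-convolution term is not automatically compactifying, so one must manufacture relative compactness in $C([0,T];L^{1}(\mu))$ by hand. The key is to combine (i) the uniform $\mu$-a.e. bound $0\le p(\cdot)\le 1$ (so the $L^{1}(\mu)$-norms live in a fixed ball), (ii) the time-equicontinuity from the a priori estimates, and (iii) a uniform tail bound in the $\lambda$-variable coming from finiteness of $\mu$; together these yield the Arzelà--Ascoli-type compact sets in $C([0,T];L^{1}(\mu))$ needed for Prokhorov. Everything else — the Galerkin solvability, the BDG/factorization estimates, and the passage to the limit — is routine given the Lipschitz-and-bounded structure of $\tilde F,\tilde B$ established in Proposition~\ref{prop:domain}.
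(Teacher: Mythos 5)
Your overall architecture (approximate, get uniform bounds, prove tightness, Skorokhod, identify the limit) matches the paper's, but the step you yourself flag as "the main obstacle" contains a genuine error. You claim that a bounded subset of $L^{1}(\mu)$ becomes \emph{norm} relatively compact once one controls the mass on $(0,\infty)\setminus\Lambda_{k}$ uniformly. That is false: uniform integrability together with tail control gives only \emph{weak} relative compactness (Dunford--Pettis), not relative compactness in the $L^{1}$-norm, and the paper's own Proposition \ref{prop:original} exhibits the counterexample --- for non-atomic $\mu$ the oscillating indicator functions $f^{\epsilon}_{n}$ are uniformly bounded by $c\epsilon$ in $L^{1}(\mu)$ (so all tails are trivially controlled, $\mu$ being finite) yet satisfy $\|f^{\epsilon}_{i}-f^{\epsilon}_{j}\|_{L^{1}}\geq \tfrac{\epsilon}{2}\mu_{c}(0,\infty)$ for $i\neq j$ and hence admit no convergent subsequence. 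So your Arzel\`{a}--Ascoli-type compact sets in $C([0,T];L^{1}(\mu))$ do not exist by the argument you give, and since the semigroup is not compact (Proposition \ref{prop:compactsemigroup}) there is no factorization rescue for the $Y$-component either. The way out --- and this is exactly what the paper does --- is to observe that no tightness for the $Y$-\emph{paths} is needed at all: one proves tightness only of the laws of $(X_{n},Y_{0})$ on $C([0,T];\mathbb{R})\times L^{1}(\mu)$, where the scalar semigroup $S^{(1)}(t)x=e^{-ct}x$ is trivially compact so Theorem \ref{lemma1} and the factorization formula apply to the first component, and then recovers $Y_{t}(\lambda)=e^{-\lambda t}Y_{0}+\int_{0}^{t}e^{-\lambda(t-s)}p(X_{s})\,ds$ as a continuous deterministic functional of $(X,Y_{0})$, so convergence of $Y^{n}$ comes for free.

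Two further, smaller points. First, the paper approximates by an Euler (time-discretization) scheme rather than your Galerkin truncation in $\lambda$; your route is viable in principle but forces you to also control the convergence of the truncated semigroups/generators $A_{n}\to A$ when identifying the limit, which the Euler scheme avoids. Second, "pass to the limit in the mild equation using Lipschitz continuity and dominated convergence" does not handle the stochastic convolution: after Skorokhod representation the driving noises change with $n$, so one must either identify the limit through a martingale problem and then invoke the martingale representation theorem to manufacture a Brownian motion on an enlarged space (the paper's route, which additionally uses the resolvent $(A-\alpha)^{-1}$ to tame the unbounded operator in the weakened formulation), or carry $(Z^{n},W)$ jointly and use a genuine theorem on convergence of stochastic integrals; neither is a dominated-convergence argument.
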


\begin{proof}

We employ the Euler approximation. For fixed $T>0$, define
$$F_{n}:[0,T]\times C([0,T];E)\rightarrow E~\mbox{and}~B_{n}:[0,T]\times C([0,T];E)\rightarrow E$$
as $F_{n}(t, Z)=\tilde{F}(Z(\xi_{n}(t))$ and $B_{n}(t, Z)=\tilde{B}(Z(\xi_{n}(t))$, where $\xi_{n}(t)=\frac{kT}{2^{n}}$ if $\frac{kT}{2^{n}}\leq t<\frac{(k
+1)T}{2^{n}}$ for $k=0,1,\cdots, 2^{n}-1$. Then, there exist unique strong solutions $\{Z_{n}\}_{n\in\mathbb{N}}$ to the following equations, which can be viewed as $C([0,T];E)$-valued random variables since $E$ is separable.
$$Z_{n}(t)=S(t)Z_{0}+\int_{0}^{t}S(t-s)F_{n}(s,Z_{n})ds+\int_{0}^{t}S(t-s)B_{n}(s,Z_{n})dW_{s}.$$
Indeed, $Z_{n}$ is defined inductively as follows:
\begin{eqnarray*}
&&Z_{n}(0)=Z_{0};\text{ For }\frac{kT}{2^{n}}<t\leq \frac{(k+1)T}{2^{n}}, k=0,1,\cdots, 2^{n}-1, \\
&&Z_{n}(t)=S(t-\frac{kT}{2^{n}})Z_{n}(\frac{kT}{2^{n}})+\int_{\frac{kT}{2^{n}}}^{t}S(t-s)\tilde{F}(Z_{n}(\frac{kT}{2^{n}}))ds+\int_{\frac{kT}{2^{n}}}^{t}S(t-s)\tilde{B}(Z_{n}(\frac{kT}{2^{n}}))dW_{s}.
\end{eqnarray*}

For $Z_{n}=(X_{n}, Y_{n})$, we apply Theorem \ref{lemma1} to show that the laws $\{\mathcal{L}((X_{n}, Y_{0}))\}_{n\in\mathbb{N}}$ form a tight family.

It is obvious that $S^{(1)}(t): x\mapsto e^{-ct}x, t\geq 0$ is a compact semigroup on $\mathbb{R}$. If $0<\frac{1}{p}<\alpha<\frac{1}{2}$, then by the stochastic Fubini theorem (see e.g. \cite{da2014stochastic}), we have the factorization formula
$$\int_{0}^{t}S^{(1)}(t-s)B_{n}^{(1)}(s,Z_{n})dW_{s}=\int_{0}^{t}(t-s)^{\alpha-1}S^{(1)}(t-s)K^{(n)}_{\alpha}(s)ds, t\in[0,T],$$
where $B_{n}^{(1)}$ denotes the first component of $B_{n}$, and
$$K^{(n)}_{\alpha}(t)=\frac{sin(\alpha\pi)}{\pi}\int_{0}^{t}(t-s)^{-\alpha}S^{(1)}(t-s)B_{n}^{(1)}(s,Z_{n})dW_{s}.$$
Therefore,
\begin{equation}\label{eq:eq1}
X_{n}(t)=S^{(1)}(t)X_{0}+G_{1}(F_{n}^{(1)}(\cdot, Z_{n}))(t)+G_{\alpha}(K^{(n)}_{\alpha}(\cdot))(t),	
\end{equation}
where $F_{n}^{(1)}$ denotes the first component of $F_{n}$.

For $K^{(n)}_{\alpha}$, by the Burkholder-Davis-Gundy inequality and $\sup\limits_{n\in\mathbb{N}}\sup\limits_{t\in[0,T]}|B_{n}^{(1)}(t,Z_{n})|\leq1$, we have
$$\sup_{n\in\mathbb{N}}E[\int_{0}^{T}|K^{(n)}_{\alpha}(t)|^{p}dt]\leq C,$$
for any $p>2$, where $C$ is a constant depending only on $\alpha,p$ and $T$. In addition, we have
$$\sup_{n\in\mathbb{N}}E[\int_{0}^{T}|F_{n}^{(1)}(t, Z_{n})|^{p}dt]\leq c^{p}T,$$
by $\sup\limits_{n\in\mathbb{N}}\sup\limits_{t\in[0,T]}|F_{n}^{(1)}(t,Z_{n})|\leq c$. Then, by the Chebyshev inequality, for any $\epsilon>0$,
there exists $r>0$ such that
$$\inf_{n\in\mathbb{N}}P(\{||K^{(n)}_{\alpha}||_{L^{p}}\leq r\}\bigcap\{||F_{n}^{(1)}(\cdot,Z_{n})||_{L^{p}}\leq r\})>1-\frac{\epsilon}{2}.$$

Moreover, $\mathcal{L}(X_{0})$ is tight i.e. 
$P(X_{0}\in K_{\epsilon})> 1-\frac{\epsilon}{2}$ for some compact subset $K_{\epsilon}$ of $\mathbb{R}$.

Take $\Omega_{\epsilon}=\{\omega\in\Omega: X_{0}(\omega)\in K_{\epsilon}\}$, by the Arzel\`{a}-Ascoli theorem, we know that the set $\{S^{(1)}(\cdot)X_{0}(\omega): \omega\in\Omega_{\epsilon}\}$ is precompact in $C([0,T];\mathbb{R})$. 

Finally, we take the set
$$K=\{S^{(1)}(\cdot)x_{0}+G_{1}x_{1}(\cdot)+G_{\alpha}x_{2}(\cdot)\in C([0,T];\mathbb{R}): x_{0}\in K_{\epsilon}, ||x_{1}||_{L^{p}}\leq r, ||x_{2}||_{L^{p}}\leq r\}.$$
By (\ref{eq:eq1}) and Theorem \ref{lemma1}, $K$ is precompact, and we have 
\begin{equation}\label{eq:eq2}
\inf\limits_{n\in\mathbb{N}}P(X_{n}\in \overline{K})\geq \inf\limits_{n\in \mathbb{N}} P(\Omega_{\epsilon}\bigcap ||K^{(n)}_{\alpha}||_{L^{p}}\leq r\}\bigcap\{||F_{n}^{(1)}(\cdot,Z_{n})||_{L^{p}}\leq r)\geq1-\epsilon.	
\end{equation}
Since $\mathcal{L}(Y_{0})$ is tight i.e. $P(Y_{0}\in L_{\epsilon})\geq 1-\epsilon$ for some compact subset $L_{\epsilon}$ of $L^{1}(\mu)$, then by (\ref{eq:eq2}), it can be shown that $\{\mathcal{L}((X_{n}, Y_{0}))\}_{n\in\mathbb{N}}$ is also tight.

Now, by the Prokhorov theorem, $\{\mathcal{L}((X_{n},Y_{0}))\}_{n\in\mathbb{N}}$ has a subsequence which weakly converges to a measure $\nu$ on $C([0,T];\mathbb{R})\times L^{1}(\mu)$. Then, by the Skorohod representation theorem (see e.g. \cite{ethier2009markov}), there exists a probability space $(\tilde{\Omega},\tilde{\mathcal{F}},\tilde{P})$ on which there are $C([0,T];\mathbb{R})\times L^{1}(\mu)$-valued random variables $Z_{0}^{(n)}=(\tilde{X}_{n}, \tilde{Y}^{(n)}_{0})\overset{d}=(X_{n}, Y_{0}),n\in\mathbb{N}$ and $(\tilde{X}, \tilde{Y}_{0})\sim \nu$, such that $(\tilde{X}_{n}, \tilde{Y}_{0}^{(n)})\rightarrow (\tilde{X}, \tilde{Y}_{0}), a.s.,$ as $n\rightarrow\infty$.  

Let $\tilde{Y}_{n}(t)=e^{-\lambda t}\tilde{Y}^{(n)}_{0}+\int_{0}^{t}e^{-\lambda(t-s)}\lambda p(\tilde{X}_{n}(s))ds, t\geq 0$ for $n\in\mathbb{N}$, and let $\tilde{Y}(t)=e^{-\lambda t}\tilde{Y}_{0}+\int_{0}^{t}e^{-\lambda(t-s)}\lambda p(\tilde{X}(s))ds$, then $\tilde{Z}_{n}=(\tilde{X}_{n}, \tilde{Y}_{n})$ converges to $\tilde{Z}=(\tilde{X}, \tilde{Y})$, a.s., as $n\rightarrow \infty$. Then, by the definition of the Riemann integral, we have $\tilde{Z}_{n}\overset{d}=Z_{n}$.

For $n\in\mathbb{N}$, by $\sup\limits_{n\in\mathbb{N}}\sup\limits_{t\in[0,T]}|\tilde{B}_{n}^{(1)}(t,Z_{n})|\leq1$, we know that
$$N_{n}(t)=X_{n}(t)-X_{0}+c\int_{0}^{t}X_{n}(\xi_{n}(s))ds-\int_{0}^{t}F_{n}^{(1)}(s, Z_{n}(s))ds,~t\in[0,T],$$
are square integrable martingales with respect to $\{\mathcal{F}_{t}^{Z_{n}}\}_{t\geq 0}$.
Moreover,
$$N_{n}^{2}(t)-\int_{0}^{t}p(X_{n}(\xi_{n}(s))(1-p(X_{n}(\xi_{n}(s))ds,~t\in[0,T],$$
are also square integrable martingales.

Then, by the martingale property, for any $0\leq t_{1}<t_{2}<\cdots<t_{m}\leq s\leq t\leq T$, and any bounded continuous functions $\phi_{i}, 1\leq i\leq m$, we have
\begin{equation}\label{eq1}
E[(N_{n}(t)-N_{n}(s))\prod_{i=1}^{m}\phi_{i}(Z_{n}(t_{i}))]=0,	
\end{equation}
and
\begin{equation}\label{eq2}
E[(N_{n}^{2}(t)-N_{n}^{2}(s)-\int_{s}^{t}p(X_{n}(\xi_{n}(s))(1-p(X_{n}(\xi_{n}(s)))ds)\prod_{i=1}^{m}\phi_{i}(Z_{n}(t_{i}))]=0.
\end{equation}
By the fact that $\tilde{Z}_{n}\overset{d}=Z_{n}$, $n\in\mathbb{N}$ and the definition of the Riemann integral, both (\ref{eq1}) and (\ref{eq2}) also hold for $\tilde{Z}_{n}=(\tilde{X}_{n}, \tilde{Y}_{n})$. 
Therefore, for each $n$,
$$\tilde{N}_{n}(t)=\tilde{X}_{n}(t)-\tilde{X}_{0}+c\int_{0}^{t}\tilde{X}_{n}(\xi_{n}(s))ds-\int_{0}^{t}F_{n}^{(1)}(s, \tilde{Z}_{n}(s))ds,~t\in[0,T],$$ 
is a square integrable martingale with respect to $\{\mathcal{F}_{t}^{\tilde{Z}_{n}}\}_{t\geq 0}$ of which the quadratic variation process is
$$[\tilde{N}_{n}]_{t}=\int_{0}^{t}p(\tilde{X}_{n}(\xi_{n}(s))(1-p(\tilde{X}_{n}(\xi_{n}(s))ds,~t\in[0,T].$$

Let $\tilde{M}_{n}(t)=(\tilde{N}_{n}(t), 0)$, and in order to take the limit, for any given $\alpha>0$, we take $\tilde{M}_{n}^{(\alpha)}(t)=(A-\alpha)^{-1}\tilde{M}_{n}(t)$. By Proposition \ref{prop:equi} and $(A-\alpha)^{-1}A=A(A-\alpha)^{-1}$ on $D(A)$, 
\begin{equation*}
\tilde{M}_{n}^{(\alpha)}(t)=	(A-\alpha)^{-1}\tilde{Z}_{n}(t)-(A-\alpha)^{-1}\tilde{Z}_{0}^{(n)}-A(A-\alpha)^{-1}\int_{0}^{t}\tilde{Z}_{n}(s)ds-
(A-\alpha)^{-1}\int_{0}^{t}\tilde{F}(\tilde{Z}_{n}(\xi_{n}(s)))ds.
\end{equation*}
We denote that $\tilde{M}_{n}^{(\alpha)}(t)=(\tilde{N}_{n}^{(\alpha)}(t), 0)$ for $\tilde{N}_{n}^{(\alpha)}(t)=\frac{1}{\alpha+c}\tilde{N}_{n}(t)$. In the same manner, but from $N_{n}^{(\alpha)}(t)=\frac{1}{\alpha+c}N_{n}(t)$, we have
\begin{equation}\label{eq3}
E[(\tilde{N}^{(\alpha)}_{n}(t)-\tilde{N}^{(\alpha)}_{n}(s))\prod_{i=1}^{m}\phi_{i}(\tilde{Z}_{n}(t_{i}))]=0
\end{equation}
and
\begin{equation}\label{eq4}
E[(\tilde{N}_{n}^{(\alpha)2}(t)-\tilde{N}_{n}^{(\alpha)2}(s)-\frac{1}{(\lambda+c)^{2}}\int_{s}^{t}p(\tilde{X}_{n}(\xi_{n}(s))(1-p(\tilde{X}_{n}(\xi_{n}(s)))ds)\prod_{i=1}^{m}\phi_{i}(\tilde{Z}_{n}(t_{i}))]=0.
\end{equation}
Since $\tilde{Z}_{n}\rightarrow\tilde{Z}$, a.s, on $C([0,T];E)\times L^{1}(\mu)$, then\footnote{Omit ``a.s." from now on}$\int_{0}^{t}\tilde{Z}_{n}(s)ds\rightarrow\int_{0}^{t}\tilde{Z}(s)ds$. By 2 of Proposition \ref{prop:domain}, we know that $\tilde{F}(\tilde{Z}_{n}(\xi_{n}(t)))\rightarrow \tilde{F}(\tilde{Z}(t))$ for $t\in [0,T]$, and thus $\int_{0}^{t}\tilde{F}(\tilde{Z}_{n}(\xi_{n}(s))ds\rightarrow \int_{0}^{t}\tilde{F}(\tilde{Z}(s))ds$ by the dominated convergence theorem. Furthermore, $A(A-\alpha)^{-1}$ is a bounded operator on $E$, hence for any $t\in[0,T]$,
$$\tilde{M}_{n}^{(\alpha)}(t)\rightarrow \tilde{M}^{(\alpha)}(t):=(A-\alpha)^{-1}\tilde{Z}(t)-(A-\alpha)^{-1}\tilde{Z}_{0}-A(A-\alpha)^{-1}\int_{0}^{t}\tilde{Z}(s)ds-
(A-\alpha)^{-1}\int_{0}^{t}\tilde{F}(\tilde{Z}(s))ds,$$
and then $\tilde{N}_{n}^{(\alpha)}(t)\rightarrow \tilde{N}^{(\alpha)}(t)$ if we denote $\tilde{M}^{(\alpha)}(t)=(\tilde{N}^{(\alpha)}(t),0)$.

Now let $n\rightarrow\infty$ in (\ref{eq3}) and (\ref{eq4}), by the uniform integrability of the integrands, we have
\begin{equation*}
E[(\tilde{N}^{(\alpha)}(t)-\tilde{N}^{(\alpha)}(s))\prod_{i=1}^{m}\phi_{i}(\tilde{Z}(t_{i}))]=0
\end{equation*}
and
\begin{equation*}
E[(\tilde{N}^{(\alpha)2}(t)-\tilde{N}^{(\alpha)2}(s)-\frac{1}{(\alpha+c)^{2}}\int_{s}^{t}p(\tilde{X}(s))(1-p(\tilde{X}(s)))ds)\prod_{i=1}^{m}\phi_{i}(\tilde{Z}(t_{i}))]=0,
\end{equation*}
i.e. $\tilde{N}^{(\alpha)}$ is a square integrable martingale with respect to $\{\mathcal{F}_{t}^{\tilde{Z}}\}_{t\geq 0}$ of which the quadratic variation process is
$$[\tilde{N}^{(\alpha)}]_{t}=\frac{1}{(\alpha+c)^{2}}\int_{0}^{t}p(\tilde{X}(s))(1-p(\tilde{X}(s))ds,~t\in[0,T].$$

Finally, by the martingale representation theorem (see e.g. \cite{ikeda2014stochastic}), there exists a filtered probability space $(\hat{\Omega},\hat{\mathcal{F}}, \hat{P}, \{\hat{\mathcal{F}}_{t}\}_{t\geq 0})$ and a standard $\{\mathcal{F}_{t}^{\tilde{Z}}\times\hat{\mathcal{F}}_{t}\}_{t\geq 0}$-Brownian motion $\{\bar{W}_{t}\}$ on $(\tilde{\Omega}\times\hat{\Omega}, \tilde{\mathcal{F}}\times\hat{\mathcal{F}}, \tilde{P}\times
\hat{P})$, such that
\begin{eqnarray*}
  \bar{N}^{(\alpha)}(t)&=&\frac{1}{\alpha+c}\int_{0}^{t}p(\bar{X}(s))(1-p(\bar{X}(s))d\bar{W}_{s},\\
  \bar{M}^{(\alpha)}(t)
  &=&(A-\alpha)^{-1}\bar{Z}(t)-(A-\alpha)^{-1}\bar{Z}_{0}-A(A-\alpha)^{-1}\int_{0}^{t}\bar{Z}(s)ds-
(A-\alpha)^{-1}\int_{0}^{t}\tilde{F}(\bar{Z}(s))ds,
\end{eqnarray*}
where $\bar{Z}(t,\tilde{\omega},\hat{\omega})=\tilde{Z}(t,\tilde{\omega})$, $\bar{M}^{(\alpha)}(t,\tilde{\omega},\hat{\omega})=\tilde{M}^{(\alpha)}(t,\tilde{\omega})$
and $\bar{Z}=(\bar{X}, \bar{Y})$. Therefore,
\begin{eqnarray*}
(A-\alpha)^{-1}\bar{Z}(t)&=&(A-\alpha)^{-1}\bar{Z}_{0}+A(A-\alpha)^{-1}\int_{0}^{t}\bar{Z}(s)ds\\
&&+(A-\alpha)^{-1}\int_{0}^{t}\tilde{F}(\bar{Z}(s))ds+(A-\alpha)^{-1}\int_{0}^{t}\tilde{B}(\bar{Z}(s))d\bar{W}_{s},
\end{eqnarray*}
and then by $(A-\alpha)A=A(A-\alpha)$ on $D(A)$, we have
$$\bar{Z}(t)=\bar{Z}_{0}+A\int_{0}^{t}\bar{Z}(s)ds+\int_{0}^{t}F(\bar{Z}(s))ds+\int_{0}^{t}B(\bar{Z}(s))d\bar{W}_{s}.$$
By Proposition \ref{prop:equi}, $(\tilde{\Omega}\times\hat{\Omega}, \tilde{\mathcal{F}}\times\hat{\mathcal{F}}, \tilde{P}\times
\hat{P}, \mathfrak{F}, \bar{W}, \bar{Z})$ is a continuous weak solution to the SEE (\ref{eq:SEE}) on $[0,T]$, where $\mathfrak{F}$ denotes the augmentation of $\{\mathcal{F}_{t}^{\tilde{Z}}\times\hat{\mathcal{F}}_{t}\}_{t\geq 0}$.
\end{proof}

\begin{rem} In the proof of Proposition \ref{prop:solution}, the tightness is obtained by the factorization method rather than by the Kolmogrov-Chentsov criterion which requires certain moment conditions on $\zeta$, hence one can directly deal with general initial conditions. Moreover, since the restricted coefficients are bounded, a priori estimates are not required.
\end{rem}

By concatenating solutions on consecutive finite intervals, there exists a continuous weak solution to the SEE (\ref{eq:restricted}) on $\mathbb{R}_{+}$. 
Then, following \cite{shiga1980infinite}, when $\mathcal{L}(\zeta)(D)=1$, it can be shown that the solution $Z$ is a $D$-valued process. 

\begin{prop}\label{prop:D}
If $Z_{0}\in D$, a.s., then any weak solution to the SEE (\ref{eq:restricted}) satisfies $Z_{t}\in D$ for 
$t>0$, a.s. 
\end{prop}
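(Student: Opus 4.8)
The plan is to treat the active coordinate $X$ and the seed-bank coordinate $Y$ separately, exploiting the truncation $p(x)=x^{+}\wedge 1$ built into the restricted coefficients. By Proposition \ref{prop:equi} together with Remark \ref{rem:1}(1), any weak solution coincides with the explicit mild representation recorded there and in particular has continuous paths in $E$; extracting the first coordinate of the weakened form shows that $X$ is a continuous real-valued semimartingale solving
\[
dX_t=\Big(\int_{(0,\infty)}p(Y_t(\lambda))\,\mu(d\lambda)-cX_t\Big)\,dt+\sqrt{p(X_t)(1-p(X_t))}\,dW_t ,
\]
with bounded adapted drift $b(s):=\int_{(0,\infty)}p(Y_s(\lambda))\,\mu(d\lambda)-cX_s$. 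Three structural facts drive the argument: $p(x)(1-p(x))=0$ for every $x\notin(0,1)$; $b(s)\ge 0$ whenever $X_s\le 0$; and $b(s)\le c-cX_s<0$ whenever $X_s>1$ (using $p\le 1$ and $\mu(0,\infty)=c$). Thus the truncated drift points strictly inward at both endpoints of $[0,1]$ exactly where the truncated diffusion coefficient degenerates, which is the mechanism behind the invariance of $[0,1]$.

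To prove $X_t\ge 0$, following \cite{shiga1980infinite} and \cite{yamada1971uniqueness}, I would pick $C^{2}$ convex approximations $\phi_n$ of $x\mapsto x^{-}$ with $\phi_n\equiv 0$ on $[0,\infty)$, $\phi_n\uparrow x^{-}$, $-1\le\phi_n'\le 0$, and $\phi_n''\ge 0$ supported in $(-1/n,0)$ (any mollification of $x^{-}$ works; no bound on the size of $\phi_n''$ is needed here because $\phi_n''$ is supported where $p(x)(1-p(x))$ already vanishes). Itô's formula gives
\begin{align*}
\phi_n(X_t)&=\phi_n(X_0)+\int_0^t\phi_n'(X_s)\,b(s)\,ds+\int_0^t\phi_n'(X_s)\sqrt{p(X_s)(1-p(X_s))}\,dW_s\\
&\quad+\tfrac12\int_0^t\phi_n''(X_s)\,p(X_s)(1-p(X_s))\,ds .
\end{align*}
The last two integrals vanish identically: on $\{X_s\le 0\}$ the factor $p(X_s)(1-p(X_s))$ is zero, while on $\{X_s>0\}$ the factors $\phi_n'(X_s)$ and $\phi_n''(X_s)$ are zero, so the stochastic integrand is $ds\,dP$-a.e. null. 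The remaining drift integrand is $\le 0$, since $\phi_n'(X_s)\ne 0$ forces $X_s<0$, whence $b(s)\ge 0$ and $\phi_n'(X_s)b(s)\le 0$. As $\phi_n(X_0)=0$ (because $X_0\in[0,1]$) and $\phi_n\ge 0$, we get $\phi_n(X_t)=0$ a.s., and letting $n\to\infty$ yields $X_t^{-}=0$, i.e. $X_t\ge 0$. The bound $X_t\le 1$ follows by the mirror argument with $C^{2}$ convex approximations $\psi_n\uparrow(x-1)^{+}$, $\psi_n\equiv 0$ on $(-\infty,1]$, $\psi_n''$ supported in $(1,1+1/n)$: there $p(X_s)(1-p(X_s))=0$ and $b(s)<0$, so again $\psi_n(X_t)\le\psi_n(X_0)=0$. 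Since $X$ has continuous paths, $X_t\in[0,1]$ for all $t\ge 0$ simultaneously, a.s.

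For the seed-bank coordinate I would read off the mild representation $Y_t(\lambda)=e^{-\lambda t}Y_0(\lambda)+\lambda\int_0^t e^{-\lambda(t-s)}p(X_s)\,ds$. Since $0\le p(X_s)\le 1$ unconditionally and $0\le Y_0(\lambda)\le 1$ $\mu$-a.e. (as $Z_0\in D$), we obtain $0\le Y_t(\lambda)\le e^{-\lambda t}+\lambda\int_0^t e^{-\lambda(t-s)}\,ds=1$ for $\mu$-a.e. $\lambda$, for each fixed $t$. Hence $Z_t=(X_t,Y_t)\in D$ for each fixed $t>0$, a.s.; applying this along a countable dense set of times and then invoking the continuity of $Z$ in $E$ together with the closedness of $D$ in $E$ (Proposition \ref{prop:domain}(1)) upgrades this to $Z_t\in D$ for all $t>0$, a.s. Working in the $L^{1}(\mu)$ setting is what makes this last step clean, since it automatically absorbs the $\mu$-null exceptional sets, which a priori could depend on $t$.

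The main obstacle is the care needed in the Itô step for $X$: one must choose the one-sided approximating functions so that the supports of $\phi_n''$ and $\psi_n''$ lie precisely inside $\{x<0\}$ and $\{x>1\}$, where $p(x)(1-p(x))$ vanishes, and then verify rigorously that the stochastic integral is a genuinely null process (null integrand) rather than merely a mean-zero martingale. It is this pathwise vanishing that lets the argument proceed without any moment condition on $Z_0$; everything else — the inward-drift inequalities, the elementary bound on $Y$, and the passage from fixed times to all times — is routine given the results already established.
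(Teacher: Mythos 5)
Your proof is correct, but it follows a genuinely different route from the paper's. The paper controls each boundary by applying It\^{o}'s formula to the singular test function $F_{\epsilon}(z)=\frac{1}{x+\epsilon}$ stopped at $\tau_{\delta}=\inf\{t:X_{t}\le-\delta\}$, bounding the drift contribution by a constant $C_{\epsilon}$ on $\{X_{s}\ge 0\}$ and discarding the negative part on $\{X_{s}<0\}$, then taking expectations to get $P(\tau_{\delta}\le t)\le(\epsilon-\delta)(E[F_{\epsilon}(X_{0})]+C_{\epsilon}t)$ and letting $\delta\uparrow\epsilon$; the argument is repeated with $G_{\epsilon}(z)=\frac{1}{1+\epsilon-x}$ at the upper boundary. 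You instead use one-sided $C^{2}$ convex approximations of $x^{-}$ and $(x-1)^{+}$ whose second derivatives are supported exactly where the truncated diffusion coefficient $p(x)(1-p(x))$ vanishes, so that both the stochastic integral and the second-order term have identically zero integrands and the first-order term is pointwise nonpositive by the inward-drift inequalities $b(s)\ge 0$ on $\{X_{s}\le 0\}$ and $b(s)\le c(1-X_{s})<0$ on $\{X_{s}>1\}$. This makes your conclusion $\phi_{n}(X_{t})\le\phi_{n}(X_{0})=0$ pathwise rather than in expectation, dispenses entirely with stopping times, localization constants, and the $\delta\to\epsilon$ limit, and isolates the real mechanism (truncation kills the noise outside $[0,1]$ while the drift points inward) more transparently; the paper's approach is closer in spirit to the comparison arguments of \cite{shiga1980infinite} and would also survive situations where the diffusion coefficient does not vanish identically outside the target interval but only degenerates at the boundary. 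Your treatment of the $Y$-component via the explicit mild formula and the closedness of $D$ from Proposition \ref{prop:domain} coincides with the paper's one-line observation. The only places that deserve a word of care in a final write-up are the explicit construction of $\phi_{n},\psi_{n}$ with the stated support properties (a standard mollification of the kink works) and the justification, via Proposition \ref{prop:equi} and Remark \ref{rem:1}, that the first coordinate of any weak solution is the continuous real semimartingale you claim it is; both are routine.
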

\begin{proof}
It is obvious that $Y_{t}(\lambda)=e^{-\lambda t}Y_{0}+\int_{0}^{t}e^{-\lambda(t-s)}p(X_{s})ds$ takes values in $[0,1]$, a.e., a.s., if $X_{t}$ does. Therefore, we only need to show that $X_{t}\in[0,1]$ for any $t>0$, a.s.

For any given $\epsilon>0$, take $F_{\epsilon}(z)=\frac{1}{x+\epsilon}$. Define $\tau_{\delta}=\inf\{t\geq0: X_{t}\leq -\delta\}$ for any $\delta\in(0,\epsilon)$, then for any $s\geq0$, $X_{s}^{\tau_{\delta}}\geq-\delta>-\epsilon$. By It\^{o}'s formula, we have
\begin{eqnarray*}
\frac{1}{X^{\tau_{\delta}}_{t}+\epsilon}&=&\frac{1}{X_{0}+\epsilon}+\int_{0}^{t\wedge\tau_{\delta}}g_{\epsilon}(Z_{s})ds+M_{t}\\	
&=&\frac{1}{X_{0}+\epsilon}+\int_{0}^{t\wedge\tau_{\delta}}
[\frac{1}{(X_{s}+\epsilon)^{3}}p(X_{s})(1-p(X_{s}))-\frac{1}{(X_{s}+\epsilon)^{2}}(\int_{\mathbb{R}_{+}}p(Y_{s}(\lambda))\mu(d\lambda)-cX_{s})]ds+M_{t},
\end{eqnarray*}
where $\{M_{t}\}_{t\geq 0}$ is a continuous martingale such that $M_{0}=0$.

We split the second term as 
$$\int_{0}^{t\wedge\tau_{\delta}}g_{\epsilon}(Z_{s})ds=\int_{0}^{t\wedge\tau_{\delta}}g_{\epsilon}(Z_{s})I_{\{X_{s}\geq0\}}ds+\int_{0}^{t\wedge\tau_{\delta}}
g_{\epsilon}(Z_{s})I_{\{X_{s}<0\}}ds,$$
and note that 
\begin{eqnarray*}
g_{\epsilon}(Z_{s})I_{\{X_{s}\geq0\}}&\leq&\frac{2c\epsilon^{2}+c\epsilon+1}{\epsilon^{3}},\\
g_{\epsilon}(Z_{s})I_{\{X_{s}<0\}}&<&0.
\end{eqnarray*}
Therefore, by taking expectation on both sides, we have
$$E[\frac{1}{X^{\tau_{\delta}}_{t}+\epsilon}]\leq E[\frac{1}{X_{0}+\epsilon}]+C_{\epsilon}t,$$
where $C_{\epsilon}>0$ is a constant only depending on $\epsilon$. By the continuity of $X$, actually $X^{\tau_{\delta}}_{t}(\omega)=-\delta$, and thus
$$P(\tau_{\delta}\leq t)\leq (\epsilon-\delta)(E[F_{\epsilon}(X_{0})]+C_{\epsilon}t).$$
Letting $\delta$ tend to $\epsilon$, we know that $P(\tau_{\epsilon}\leq t)=0$ holds for any $\epsilon>0$ and $t\geq 0$, which then implies  $$P(X_{t}\geq0~\mbox{for any}~t\geq0)=1.$$

Similarly, take $G_{\epsilon}(z)=\frac{1}{1+\epsilon-x}$ and $\tau^{\prime}_{\delta}=\inf\{t\geq0: X_{t}\geq 1+\delta\}$ for any $\delta\in(0,\epsilon)$, we can prove that 
$$P(X_{t}\leq1~\mbox{for any}~t\geq0)=1.$$
The proof is completed.
\end{proof}

\begin{rem}\label{rem:mu}
In Proposition \ref{prop:D}, if $Y_{0}$ does not depend on $\mu$, then 
$0\leq Y_{t}(\lambda)\leq 1$ for all $t>0$ and $\lambda\in(0,\infty)$ i.e. the solution $Z$ takes values in $\mathbb{R}\times L^{\infty}$. This will be used in Section 6.
\end{rem}

Now, by Proposition \ref{prop:D} and Remark \ref{rem:1}, it is known that 
the unrestricted equation (\ref{eq:SEE}) has a $D$-valued continuous weak solution.
The next step is prove the pathwise uniqueness of the solution, which is equivalent to that of SVE (\ref{eq:v}), but here it is proved directly by the classical method in \cite{yamada1971uniqueness}. See the Appendix for the proof.

\begin{prop}\label{prop:pathwise}
The solution to equation (\ref{eq:SEE}) is pathwise unique.
\end{prop}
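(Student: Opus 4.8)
The plan is to reduce pathwise uniqueness of the full system \eqref{eq:SEE} to pathwise uniqueness of the one-dimensional equation for the $X$-component, and then apply the classical Yamada--Watanabe argument adapted to our slightly non-standard drift. Suppose $Z^1 = (X^1, Y^1)$ and $Z^2 = (X^2, Y^2)$ are two $D$-valued solutions driven by the same Brownian motion $W$ on the same filtered probability space with $Z_0^1 = Z_0^2$. Since the $Y$-component solves a linear ODE driven by $X$, namely $Y_t^j(\lambda) = e^{-\lambda t} Y_0^j(\lambda) + \int_0^t e^{-\lambda(t-s)} X_s^j\, ds$ (using $p(x)=x$ on $D$ by Remark \ref{rem:1}), we immediately get the pathwise bound
\begin{equation*}
\|Y_t^1 - Y_t^2\|_{L^1(\mu)} = \int_{(0,\infty)} \Bigl| \int_0^t e^{-\lambda(t-s)} (X_s^1 - X_s^2)\, ds \Bigr| \mu(d\lambda) \leq c \int_0^t |X_s^1 - X_s^2|\, ds,
\end{equation*}
so it suffices to prove $X^1 \equiv X^2$; the $Y$-component then coincides automatically. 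This is the first step.

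Next I would write the equation satisfied by the difference $U_t := X_t^1 - X_t^2$. Subtracting the two $X$-equations of \eqref{eq:SEE},
\begin{equation*}
U_t = \int_0^t \Bigl( \int_{(0,\infty)} (Y_s^1(\lambda) - Y_s^2(\lambda))\, \mu(d\lambda) - c\, U_s \Bigr) ds + \int_0^t \bigl( \sqrt{X_s^1(1-X_s^1)} - \sqrt{X_s^2(1-X_s^2)} \bigr) dW_s.
\end{equation*}
The diffusion coefficient $\sigma(x) = \sqrt{x(1-x)}$ is $\tfrac12$-H\"older continuous on $[0,1]$, so the Yamada--Watanabe method applies: one introduces the standard sequence of $C^2$ approximations $\phi_n$ to $|\cdot|$ supported away from $0$ with $\phi_n'' (u) \leq \tfrac{2}{n |u|}$ on the relevant annulus and $\phi_n \uparrow |\cdot|$, applies It\^o's formula to $\phi_n(U_t)$, and takes expectations. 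The martingale term vanishes, the It\^o correction term is controlled by $\tfrac12 \int_0^t \phi_n''(U_s) |\sigma(X_s^1) - \sigma(X_s^2)|^2\, ds \leq \tfrac12 \int_0^t \phi_n''(U_s) |U_s|\, ds \leq \tfrac{t}{n} \to 0$ using $|\sigma(a)-\sigma(b)|^2 \leq |a-b|$, and the drift term splits into $-c\,\mathrm{sgn}$-type contribution (which has a favorable sign, or at worst is bounded by $c\int_0^t |U_s|\,ds$) plus the seed-bank feedback term bounded using the first step by $\int_0^t \|Y_s^1 - Y_s^2\|_{L^1(\mu)}\, ds \leq c \int_0^t \int_0^s |U_r|\, dr\, ds$. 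Collecting terms and letting $n\to\infty$ yields
\begin{equation*}
E[|U_t|] \leq C \int_0^t E[|U_s|]\, ds
\end{equation*}
for a constant $C$ depending only on $c$, and Gronwall's lemma forces $E[|U_t|] = 0$ for all $t$, hence $X^1 = X^2$ up to indistinguishability by path continuity.

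The main obstacle — really the only subtle point — is handling the seed-bank feedback term $\int_{(0,\infty)} (Y_s^1 - Y_s^2)\,\mu(d\lambda)$ inside the drift, since it is non-local in time (it depends on the whole past of $X$ through the convolution) and a priori not Lipschitz in the current state $U_s$. The resolution is exactly the pathwise estimate from the first step, which converts it into a double time integral of $|U_r|$; this keeps everything within the scope of a (slightly iterated) Gronwall argument and is why one does not need the compactness or Hilbert-space machinery. One should also take care that the Yamada--Watanabe cutoff interacts correctly with the fact that $U$ may vanish on a set of positive measure — this is standard but worth stating — and that all expectations are finite, which is immediate here since $|U_t| \leq 1$ and $|\sigma| \leq \tfrac12$ so the stochastic integral is a genuine $L^2$-martingale and no localization by stopping times is needed beyond what $\phi_n$ already provides.
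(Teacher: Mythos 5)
Your proposal follows essentially the same route as the paper: reduce to the $X$-component via the explicit formula for $Y$, apply the Yamada--Watanabe approximation $\phi_n$ with It\^{o}'s formula, control the seed-bank feedback term through the $L^{1}(\mu)$-bound on $Y^{1}-Y^{2}$, and close with Gr\"{o}nwall. One small correction: the variation-of-constants formula is $Y^{j}_{t}(\lambda)=e^{-\lambda t}Y^{j}_{0}(\lambda)+\int_{0}^{t}e^{-\lambda(t-s)}\lambda X^{j}_{s}\,ds$ (you dropped the factor $\lambda$), so the resulting bound is $\|Y^{1}_{t}-Y^{2}_{t}\|_{L^{1}(\mu)}\leq c^{\prime}\int_{0}^{t}|X^{1}_{s}-X^{2}_{s}|\,ds$ with $c^{\prime}=\int_{(0,\infty)}\lambda\,\mu(d\lambda)$ rather than $c$; this is exactly where Condition (\ref{condition}) enters, and the rest of your argument is unaffected.
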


Finally, the first main result Theorem \ref{th:wellposed} can be proved as follows:

\begin{proof}[\textbf{Proof of Theorem \ref{th:wellposed}}]~

1. Note that the Yamada-Watanabe theorem still holds in this case since the stochastic integral is the classical one, and $D$ is a Polish space, its proof follows that of Theorem 4.1.1 in \cite{ikeda2014stochastic}. Alternatively, by the equivalence between equation (\ref{eq:SEE}) and the SVE (\ref{eq:v}) with a random initial condition, and the fact that equation (\ref{eq:v}) is a path-dependent SDE, we can also conclude that equation (\ref{eq:SEE}) has a $D$-valued continuous unique strong solution, based on Proposition \ref{prop:solution}, \ref{prop:D} and \ref{prop:pathwise}. 

2. As in the proof of Proposition \ref{prop:pathwise} except that we replace $Z$ and $\tilde{Z}$ with
$Z^{1}=(X^{1}, Y^{1})$ and $Z^{2}=(X^{2}, Y^{2})$, we have
\begin{eqnarray*}
&&E[\sup_{t\in[0, T]}\phi_{n}(X^{1}_{t}-X^{2}_{t})]\\
&\leq &E[\phi_{n}(X^{1}_{0}-X^{2}_{0})]+\int_{0}^{T}E[|\int_{(0,\infty)}(Y^{1}_{t}(\lambda)-Y^{2}_{t}(\lambda))\mu(d\lambda)|]dt+c\int_{0}^{T}E[|X^{1}_{t}-X^{2}_{t}|]dt\\
&&+E[\sup_{t\in[0,T]}|\int_{0}^{t}\phi^{\prime}_{n}(X^{1}_{s}-X^{2}_{s})(\sqrt{X^{1}_{s}(1-X^{1}_{s})}-\sqrt{X^{2}_{s}(1-X^{2}_{s})} )dW_{s}|]+\frac{T}{n},
\end{eqnarray*}

and then by the Burkholder-Davis-Gundy inequality, and $\sup\limits_{t\in \mathbb{R}_{+}}|X^{1}_{t}-X^{2}_{t}|\leq 1$, letting $n\rightarrow \infty$, we have
\begin{equation*}
E[\sup_{t\in[0,T]}|X^{1}_{t}-X^{2}_{t}|]\leq E[|X^{1}_{0}-X^{2}_{0}|]+(c+3)\int_{0}^{T}E[\sup_{s\in [0,t]}|X^{1}_{s}-X^{2}_{s}|]dt+\int_{0}^{T}E[||Y^{1}_{t}-Y^{2}_{t}||_{L^{1}}]dt.
\end{equation*}
Moreover, 
\begin{eqnarray*}
	E[||Y^{1}_{t}-Y^{2}_{t}||_{L^{1}}]&=&E[\int_{(0,\infty)}|Y^{1}_{0}-Y^{2}_{0}+\int_{0}^{t}e^{-\lambda(t-s)}\lambda(X^{1}_{s}-X^{2}_{s})ds|\mu(d\lambda)]\\
	&\leq &E[||Y^{1}_{0}-Y^{2}_{0}||_{L^{1}}]+c^{\prime}\int_{0}^{t}E[|X^{1}_{s}-X^{2}_{s}|]ds.
\end{eqnarray*}
Finally, by the Gr\"{o}nwall inequality,
\begin{equation}\label{t3}
E[\sup_{t\in[0, T]}|X^{1}_{t}-X^{2}_{t}|]\leq e^{c^{\prime}T+c+3}(E[|X^{1}_{0}-X^{2}_{0}|]+TE[||Y^{1}_{0}-Y^{2}_{0}||_{L^{1}}]),
\end{equation} 

and note that
\begin{equation}\label{t4}
E[\sup_{t\in[0,T]}||Y^{1}_{t}-Y^{2}_{t}||_{L^{1}}]\leq E[||Y^{1}_{0}-Y^{2}_{0}||_{L^{1}}]+c^{\prime}TE[\sup_{t\in[0,T]}|X^{1}_{t}-X^{2}_{t}|],
\end{equation}

Combining (\ref{t3}) and (\ref{t4}) together, we get the desired result. 	
\end{proof}

\section{Continuum seed-bank diffusion: Markov Property}\label{sec:markov}
The aim of this section is to show the Markov properties of the solution to equation (\ref{eq:SEE}) in both the cases when the state space is endowed with the original and the weak$^{\star}$ topology, and then the corresponding martingale problem formulations will be derived.

Let $Z^{z}$ be the strong solution on some $(\Omega, \mathcal{F}, P, \mathfrak{F}=\{\overline{\mathcal{F}}_{t}^{W}\}_{t\geq 0}, W)$ such that $Z_{0}=z$, a.s. For $f\in B(D)$, define 
\begin{equation}
T_{t}f(z)=E[f(Z^{z}_{t})]	
\end{equation}
for $t\geq 0$ and $z\in D$. $\{T_{t}\}_{t\geq 0}$ are linear contraction mappings from $B(D)$ to itself satisfying the following properties:
\begin{prop}\label{prop:semigroups}~

\begin{enumerate}
  \item Feller property: For fixed $t\geq 0$, if $f\in C_{b}(D)$, then $T_{t}f\in C_{b}(D)$;
  \item Measurability: For fixed $f\in B(D)$, $T_{t}f(z): \mathbb{R}_{+}\times D\rightarrow \mathbb{R}$ is $\mathcal{B}(\mathbb{R}_{+})\times \mathcal{B}(D)$-measurable;
  \item For fixed $f\in B(D)$, $T_{t}f(\cdot): \mathbb{R}_{+}\rightarrow B(D)$ is Borel measurable.
\end{enumerate}
\end{prop}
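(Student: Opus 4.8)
The plan is to exploit the continuous-dependence estimate from Theorem \ref{th:wellposed}(2) for the Feller property, the measurable-dependence-on-initial-condition structure of strong solutions for the measurability claims, and Fubini-type arguments to pass from joint measurability to the separated statements. Throughout, write $Z^{z}$ for the strong solution with $Z_{0}=z$, which by Definition \ref{defn:strongp} and Remark \ref{rem:filtration} may be realized as $Z^{z}_{t}=\Phi(z,t,W_{\cdot})$ on a single filtered probability space carrying one Brownian motion $W$, with $\Phi$ jointly $\mathcal{B}(D)\times\mathcal{B}(\mathbb{R}_{+})\times\overline{\mathcal{B}}_{\infty}/\mathcal{B}(D)$-measurable.

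For the Feller property (item 1), fix $t\geq 0$ and $f\in C_{b}(D)$. Given $z_{n}\to z$ in $D$, construct the solutions $Z^{z_{n}}$ and $Z^{z}$ on the same $(\Omega,\mathcal{F},P,\mathfrak{F},W)$ driven by the same $W$; by Theorem \ref{th:wellposed}(2),
\begin{equation*}
E\Big[\sup_{s\in[0,T]}\|Z^{z_{n}}_{s}-Z^{z}_{s}\|\Big]\leq C\,\|z_{n}-z\|\longrightarrow 0,
\end{equation*}
so $Z^{z_{n}}_{t}\to Z^{z}_{t}$ in $L^{1}$, hence in probability, hence along a subsequence almost surely. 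Since $D$ is closed in $E$ and $f$ is bounded continuous on $D$, $f(Z^{z_{n}}_{t})\to f(Z^{z}_{t})$ a.s. along that subsequence and is uniformly bounded; a standard subsequence-of-every-subsequence argument together with dominated convergence yields $T_{t}f(z_{n})=E[f(Z^{z_{n}}_{t})]\to E[f(Z^{z}_{t})]=T_{t}f(z)$. Boundedness of $T_{t}f$ is immediate from $\|f\|_{\infty}$, so $T_{t}f\in C_{b}(D)$.

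For item 2, since $(z,t,\omega)\mapsto \Phi(z,t,W_{\cdot}(\omega))$ is jointly measurable and $f\in B(D)$, the map $(z,t,\omega)\mapsto f(\Phi(z,t,W_{\cdot}(\omega)))$ is $\mathcal{B}(D)\times\mathcal{B}(\mathbb{R}_{+})\times\mathcal{F}$-measurable and bounded; by the Fubini--Tonelli theorem the partial integral $(z,t)\mapsto \int_{\Omega} f(\Phi(z,t,W_{\cdot}))\,dP = T_{t}f(z)$ is $\mathcal{B}(\mathbb{R}_{+})\times\mathcal{B}(D)$-measurable. Item 3 then follows from item 2: for fixed $f\in B(D)$, joint measurability of $(t,z)\mapsto T_{t}f(z)$ together with the uniform bound $\|T_{t}f\|_{\infty}\leq\|f\|_{\infty}$ and separability of the relevant structure gives, via the standard identification of jointly measurable bounded functions with Borel maps into $B(D)$ (equivalently, checking that $t\mapsto \int_{D} T_{t}f\,d\rho$ is Borel for enough measures $\rho$, or simply that $t\mapsto T_{t}f(z)$ is measurable for each $z$ plus the joint statement), that $t\mapsto T_{t}f(\cdot)\in B(D)$ is Borel measurable.

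The main obstacle is item 1: it hinges entirely on having the solutions for different initial data coupled through the \emph{same} Brownian motion so that the $L^{1}$-contraction estimate of Theorem \ref{th:wellposed}(2) applies; the remaining care is only in passing from $L^{1}$-convergence to almost-sure convergence along a subsequence and invoking dominated convergence, which is routine. Items 2 and 3 are essentially bookkeeping with the joint measurability of $\Phi$ and Fubini, the only subtlety being to phrase item 3 correctly as Borel measurability of a $B(D)$-valued map, which reduces to the joint measurability already established.
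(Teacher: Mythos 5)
Your item 1 is exactly the paper's argument: couple the two solutions through the same Brownian motion, apply the estimate of Theorem \ref{th:wellposed}(2) to get convergence in probability, and conclude by dominated convergence. Your item 2, however, takes a genuinely different route. The paper does not invoke the measurability of the Wiener functional $\Phi$; instead it first proves \emph{joint continuity} of $(z,t)\mapsto T_tf(z)$ for $f\in C_b(D)$ by estimating $E[\|Z^{z'}_{t+h}-Z^z_t\|]$ (splitting off the time increment and controlling it via the mild form and the Burkholder--Davis--Gundy inequality to get $O(|h|+\sqrt{|h|})$), then passes to indicators of closed sets by approximation with continuous functions, and finishes with a monotone class argument. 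Your Fubini argument based on the joint $\mathcal{B}(D)\times\mathcal{B}(\mathbb{R}_+)\times\overline{\mathcal{B}}_{\infty}$-measurability of $\Phi$ (which Theorem \ref{th:wellposed}(1) does supply, via Definition \ref{defn:strongp}) is shorter and equally valid; what it does not give you, and what the paper's route does, is the joint continuity statement for continuous $f$, which the paper reuses implicitly elsewhere.

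Item 3 contains a genuine gap. The ``standard identification of jointly measurable bounded functions with Borel maps into $B(D)$'' does not exist: $B(D)$ with the supremum norm is non-separable, and joint measurability of $(t,z)\mapsto g(t,z)$ does \emph{not} imply Borel measurability of $t\mapsto g(t,\cdot)$ into $(B(D),\|\cdot\|_\infty)$. For a counterexample take $D=[0,1]$ and $g(t,z)=I_{\{t=z\}}$: this is jointly Borel, but the range $\{I_{\{t\}}:t\in[0,1]\}$ is an uncountable uniformly separated closed set, so every subset of it is Borel in $B([0,1])$, and preimages under $t\mapsto I_{\{t\}}$ realize arbitrary subsets of $[0,1]$. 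Your fallback (``$t\mapsto T_tf(z)$ measurable for each $z$ plus the joint statement'') fails for the same reason. The paper instead argues through a countable dense subset $D_0\subseteq D$ together with the continuity in $z$ coming from Theorem \ref{th:wellposed}(2), writing events such as $\{\sup_{z\in D}|T_tf(z)|\leq r\}$ as countable intersections $\bigcap_{z\in D_0}\{|T_tf(z)|\leq r\}$; some version of this separability reduction (or a restriction to the separable subspace of $B(D)$ actually needed for the full generator) is required to make item 3 go through, and your proposal is missing it.
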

By the well-posedness of equation (\ref{eq:SEE}) given by Theorem \ref{th:wellposed}, it is classical to show that the solution $Z$ is a $D$-valued time-homogeneous strong Markov process. As the result, by 2 of Proposition \ref{prop:semigroups}, $\{T_{t}\}_{t\geq 0}$ is a measurable contraction semigroup on $B(D)$.

\begin{prop}\label{prop:markovs}
For any $f\in B(D)$, $z\in D$, $t\geq 0$ and $\mathfrak{F}$-stopping time $\tau<\infty$, a.s., 
\begin{equation}
E[f(Z^z_{t+\tau})|\overline{\mathcal{F}}_{\tau}^{W}]=T_{t}f(Z^z_{\tau}), a.s.	
\end{equation}
Consequently, $T_{t+s}f=T_{t}\circ T_{s}f$ for any $t\geq 0$	. 
\end{prop}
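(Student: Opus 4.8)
The plan is to deduce the (strong) Markov property from pathwise uniqueness via the standard Yamada--Watanabe ``flow'' argument, and then to upgrade deterministic times to stopping times by dyadic approximation, using the Feller regularity recorded in Proposition \ref{prop:semigroups}. Fix $s\geq 0$ and set $W^{(s)}_{u}:=W_{s+u}-W_{s}$, $u\geq 0$; by the (simple) Markov property of Brownian motion this is a standard Brownian motion independent of $\overline{\mathcal{F}}_{s}^{W}$, and $\{Z^{z}_{s+u}\}_{u\geq 0}$ is a solution of the \emph{same} SEE (\ref{eq:SEE}) driven by $W^{(s)}$ with the $\overline{\mathcal{F}}_{s}^{W}$-measurable initial value $Z^{z}_{s}$, on the filtered space with filtration $\{\overline{\mathcal{F}}_{s}^{W}\vee\overline{\mathcal{F}}_{u}^{W^{(s)}}\}_{u\geq 0}$. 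Since Theorem \ref{th:wellposed} provides a unique strong solution, there is a measurable Wiener functional $\Phi$ (the solution map from Definition \ref{defn:strongp}) with $Z^{z}_{s+u}=\Phi(Z^{z}_{s},u,W^{(s)}_{\cdot})$ for all $u\geq 0$, a.s.

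Next, because $W^{(s)}$ is independent of $\overline{\mathcal{F}}_{s}^{W}$ while $Z^{z}_{s}$ is $\overline{\mathcal{F}}_{s}^{W}$-measurable, and $(\zeta,w)\mapsto f(\Phi(\zeta,t,w))$ is jointly measurable, the substitution (``freezing'') lemma for conditional expectations gives, for every $f\in B(D)$,
\begin{equation*}
E\big[f(Z^{z}_{s+t})\,\big|\,\overline{\mathcal{F}}_{s}^{W}\big]=\psi(Z^{z}_{s}),\qquad \psi(\zeta):=E\big[f\big(\Phi(\zeta,t,W^{(s)}_{\cdot})\big)\big].
\end{equation*}
As $\Phi(\zeta,\cdot,W^{(s)})$ has the law of $Z^{\zeta}$, one gets $\psi(\zeta)=E[f(Z^{\zeta}_{t})]=T_{t}f(\zeta)$, which is measurable in $\zeta$ by part 2 of Proposition \ref{prop:semigroups}. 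This is the Markov property at deterministic times; taking expectations yields the Chapman--Kolmogorov/semigroup identity $T_{s+t}f=T_{s}\circ T_{t}f$.

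For a stopping time $\tau<\infty$ a.s., put $\tau_{n}:=2^{-n}\lceil 2^{n}\tau\rceil\downarrow\tau$; each $\tau_{n}$ is a stopping time with countable range and $\overline{\mathcal{F}}_{\tau}^{W}\subseteq\overline{\mathcal{F}}_{\tau_{n}}^{W}$. Fix $f\in C_{b}(D)$ and $A\in\overline{\mathcal{F}}_{\tau}^{W}$. Splitting over the events $\{\tau_{n}=k2^{-n}\}\in\overline{\mathcal{F}}_{k2^{-n}}^{W}$ and applying the deterministic-time Markov property,
\begin{equation*}
E\big[f(Z^{z}_{t+\tau_{n}})\,I_{A}\big]=E\big[T_{t}f(Z^{z}_{\tau_{n}})\,I_{A}\big].
\end{equation*}
Letting $n\to\infty$, the left side tends to $E[f(Z^{z}_{t+\tau})I_{A}]$ by continuity of the paths of $Z$ and bounded convergence, and the right side tends to $E[T_{t}f(Z^{z}_{\tau})I_{A}]$ because $T_{t}f\in C_{b}(D)$ by the Feller property (part 1 of Proposition \ref{prop:semigroups}), again using path-continuity and bounded convergence. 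Hence $E[f(Z^{z}_{t+\tau})\mid\overline{\mathcal{F}}_{\tau}^{W}]=T_{t}f(Z^{z}_{\tau})$ for all $f\in C_{b}(D)$, and a functional monotone-class argument extends this to all $f\in B(D)$.

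I expect the main obstacle to be the rigorous justification of the flow identity $Z^{z}_{s+\cdot}=\Phi(Z^{z}_{s},\cdot,W^{(s)})$ together with the attendant freezing step: one must know that the \emph{same} measurable solution functional $\Phi$ furnished by the Yamada--Watanabe machinery behind Theorem \ref{th:wellposed} applies to an arbitrary $\overline{\mathcal{F}}_{s}^{W}$-measurable initial condition and an independent driving Brownian motion — which is precisely what Definition \ref{defn:strongp} and Remark \ref{rem:filtration} guarantee — and that $\Phi$ is jointly measurable so that the substitution lemma is legitimate. Once this is in place, the passage in Step 3 is routine, its only inputs being path-continuity of $Z$ and the Feller regularity of Proposition \ref{prop:semigroups}.
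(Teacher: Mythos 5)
Your proposal is correct and follows essentially the same route as the paper: the flow identity $Z^{z}_{s+u}=\Phi(Z^{z}_{s},u,W^{(s)})$ from unique strong solvability, the freezing lemma for the deterministic-time Markov property, and approximation of a general stopping time by discrete ones combined with path continuity. Your treatment of the limit $\tau_{n}\downarrow\tau$ is in fact slightly more careful than the paper's (you first restrict to $f\in C_{b}(D)$ so that $T_{t}f$ is continuous by the Feller property, then extend to $B(D)$ by a monotone-class argument), but this is a refinement of the same argument rather than a different approach.
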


See the Appendix for the proofs of Proposition \ref{prop:semigroups} and \ref{prop:markovs}.

\begin{rem}\label{rem:randomi}
When the initial condition is random, replacing the filtration $\mathfrak{F}$ with that in Remark \ref{rem:filtration}, the strong Markov property still holds.
\end{rem}

For the measurable Markov semigroup $\{T_{t}\}_{t\geq 0}$, its full generator 
is a multi-valued operator defined as
\begin{equation}
\hat{\mathcal{L}}=\{(f, g)\in B(D)\times B(D): T_{t}f=f+\int_{0}^{t}T_{s}gds,~t\geq 0\},	
\end{equation}
where $(\int_{0}^{t}T_{s}gds)(z)=\int_{0}^{t}T_{s}g(z)ds$ for $z\in D$.
By 2 of Proposition \ref{prop:semigroups}, $\int_{0}^{t}T_{s}gds\in B(D)$.

The full generator $\hat{\mathcal{L}}$ has the following properties, see e.g. Theorem 1.7.1 in \cite{ethier2009markov}. The same result also holds for the single-valued operator 
\begin{equation}
\tilde{\mathcal{L}}=\{(f, g)\in C_{b}(D)\times C_{b}(D): T_{t}f=f+\int_{0}^{t}T_{s}gds,~t\geq 0\}.	
\end{equation}
\begin{prop}\label{prop:full}~

\begin{enumerate}
  \item $(f, g)\in \hat{\mathcal{L}}$ (or $\tilde{\mathcal{L}}$) if and only if $f(Z^{z}_{t})-f(z)-\int_{0}^{t}g(Z^{z}_{s})ds$ is a martingale for any $z\in D$;
  \item The largest subspace of $B(D)$ (or $C_{b}(D)$) on which $\{T_{t}\}_{t\geq 0}$ is strongly continuous is 
  $\overline{D(\hat{\mathcal{L}})}$ (or $\overline{D(\tilde{\mathcal{L}})}$).
\end{enumerate}
\end{prop}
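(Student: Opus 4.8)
The plan is to treat Proposition \ref{prop:full} as a specialization of the general theory of full generators of measurable contraction semigroups: Propositions \ref{prop:semigroups} and \ref{prop:markovs} already give that $\{T_t\}_{t\ge0}$ is such a semigroup on $B(D)$ and maps $C_b(D)$ into itself (the Feller property), so I would simply run the argument behind Theorem 1.7.1 in \cite{ethier2009markov}, pausing only where the joint-measurability statements of Proposition \ref{prop:semigroups} are needed. Throughout, $\|\cdot\|$ is the supremum norm. For part~1, suppose first $(f,g)\in\hat{\mathcal L}$, so $T_tf=f+\int_0^tT_sg\,ds$ on $D$. Fixing $z\in D$ and $0\le t\le t+s$, the Markov property (Proposition \ref{prop:markovs} with $\tau\equiv t$) gives
\begin{equation*}
E\big[f(Z^z_{t+s})\,\big|\,\overline{\mathcal F}^W_t\big]=T_sf(Z^z_t)=f(Z^z_t)+\int_0^sT_ug(Z^z_t)\,du,
\end{equation*}
while the conditional Fubini theorem — licit because $(u,w)\mapsto T_ug(w)$ is $\mathcal B(\mathbb R_+)\times\mathcal B(D)$-measurable and bounded — yields $E[\int_t^{t+s}g(Z^z_u)\,du\mid\overline{\mathcal F}^W_t]=\int_0^sT_ug(Z^z_t)\,du$. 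Subtracting, $f(Z^z_t)-f(z)-\int_0^tg(Z^z_u)\,du$ is an $\{\overline{\mathcal F}^W_t\}$-martingale (integrable, since $f,g$ are bounded). Conversely, if this process is a martingale for every $z\in D$, taking expectations and interchanging $E$ with $\int_0^t$ by Fubini gives $T_tf(z)-f(z)-\int_0^tT_ug(z)\,du=0$ for all $t\ge0$ and $z\in D$, i.e. $(f,g)\in\hat{\mathcal L}$; the statement for $\tilde{\mathcal L}$ is the identical argument with everything restricted to $C_b(D)$.

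For part~2, one inclusion is immediate: if $f\in D(\hat{\mathcal L})$ with $(f,g)\in\hat{\mathcal L}$ then $\|T_tf-f\|=\|\int_0^tT_sg\,ds\|\le t\|g\|\to0$, and since every $T_t$ is a contraction the set of $f$ with $\lim_{t\downarrow0}\|T_tf-f\|=0$ is a closed subspace, so $\{T_t\}_{t\ge0}$ is strongly continuous on $\overline{D(\hat{\mathcal L})}$. For the reverse, let $L\subseteq B(D)$ denote that closed subspace of strongly continuous vectors; it suffices to show $L\subseteq\overline{D(\hat{\mathcal L})}$. Introduce the resolvent $R_\alpha f:=\int_0^\infty e^{-\alpha s}T_sf\,ds$ for $\alpha>0$, which belongs to $B(D)$ because $(s,w)\mapsto T_sf(w)$ is jointly measurable and bounded. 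Using $T_tT_s=T_{t+s}$ together with Fubini one gets $T_tR_\alpha f=e^{\alpha t}R_\alpha f-e^{\alpha t}\int_0^te^{-\alpha s}T_sf\,ds$, and an elementary manipulation rewrites this as $T_tR_\alpha f-R_\alpha f=\int_0^tT_s(\alpha R_\alpha f-f)\,ds$; hence $\alpha R_\alpha f\in D(\hat{\mathcal L})$. Finally, for $f\in L$ one has $\alpha R_\alpha f-f=\int_0^\infty e^{-u}(T_{u/\alpha}f-f)\,du\to0$ in $\|\cdot\|$ as $\alpha\to\infty$ by dominated convergence, so $f\in\overline{D(\hat{\mathcal L})}$, which gives $L=\overline{D(\hat{\mathcal L})}$. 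For $\tilde{\mathcal L}$ the same scheme applies once we verify that $R_\alpha$ preserves $C_b(D)$: if $f$ lies in the strong-continuity subspace of $C_b(D)$, then $\|T_{s+h}f-T_sf\|\le\|T_hf-f\|\to0$, so $s\mapsto T_sf$ is continuous into $C_b(D)$ (each $T_sf\in C_b(D)$ by the Feller property, Proposition \ref{prop:semigroups}(1)), and $R_\alpha f$ is then a bona fide $C_b(D)$-valued Bochner integral.

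The abstract semigroup skeleton above is routine; the only steps where the concrete structure of the process enters — and hence where I expect the real work to lie — are (i) that $R_\alpha$ maps $B(D)$, respectively $C_b(D)$, into itself, which rests on the joint measurability in Proposition \ref{prop:semigroups}(2), respectively on the Feller property in Proposition \ref{prop:semigroups}(1), the latter being delicate precisely because $\{T_t\}_{t\ge0}$ need not be strongly continuous on all of $C_b(D)$; and (ii) the measure-theoretic bookkeeping justifying the Fubini and conditional-Fubini interchanges in part~1. Everything else is a direct transcription of the theory of measurable contraction semigroups.
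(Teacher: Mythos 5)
Your proposal is correct and coincides with the paper's treatment: the paper does not prove Proposition \ref{prop:full} itself but simply invokes Theorem 1.7.1 in \cite{ethier2009markov}, whose proof is exactly the resolvent/Markov-property argument you transcribe, with the joint measurability and Feller statements of Proposition \ref{prop:semigroups} supplying the hypotheses needed to apply it on $B(D)$ and $C_b(D)$ respectively. No gaps.
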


Now, take the space of functions
\begin{equation}
\tilde{H}=\{h(z)=h_{c}(\langle z, f_{1}\rangle, \cdots, \langle z, f_{n}\rangle), z\in E,~\text{for}~n\in\mathbb{N}, h_{c}\in C_{c}^{\infty}(\mathbb{R}^{n})~\text{and}~f_{i}\in D(A^{+}) \}.\end{equation} 
When it is restricted to $D$, $\tilde{H}|_{D}$ is a subalgebra of $C_{b}(D)$. By the equivalence of 1 and 3 in Proposition \ref{prop:equi}, $2$ of Remark \ref{rem:1} and It\^{o}'s formula, it can be shown that
\begin{equation*}
h(Z^{z}_{t})-h(z)-\int_{0}^{t} \tilde{\mathcal{L}} h(Z^{z}_{s})ds, t\geq 0	
\end{equation*}
is a continuous martingale for any $h\in \tilde{H}|_{D}$ and $z\in D$, where  \footnote{ ``$\cdots$" refers to ``$\langle z, f_{i}\rangle$ for $i=1,2,\cdots, n$", and $\langle \nabla^{2} h(z), B(z) \otimes B(z)\rangle$ denotes $\nabla^{2} h(z)(B(z), B(z))$.}
\begin{equation}\label{gene1}
\begin{aligned}
\tilde{\mathcal{L}} h(z) &=\left\langle z, \sum_{i=1}^{n} \partial_{x_{i}} h_{c}(\cdots) A^{+} f_{i}\right\rangle+\left\langle F(z), \sum_{i=1}^{n} \partial_{x_{i}} h_{c}(\cdots) f_{i}\right\rangle+\frac{1}{2} \sum_{i,j=1}^{n} \partial_{x_{i} x_{j}} h_{c}(\cdots) x(1-x) f_{i}^{(1)}f_{j}^{(1)}\\
&=\langle A z, \nabla h(z)\rangle+\langle F(z), \nabla h(z)\rangle+\frac{1}{2}\left\langle \nabla^{2} h(z), B(z) \otimes B(z)\right\rangle.
\end{aligned}
\end{equation}
Since $D$ is bounded, and $F, B$ are bounded continuous mappings on $D$, then $\tilde{\mathcal{L}} h\in C_{b}(D)$. Therefore, by 1 of Proposition \ref{prop:full}, $\tilde{H}|_{D}$ is contained in $D(\tilde{\mathcal{L}})$\footnote{The same notation has been used for $\tilde{\mathcal{L}}$ and its restriction to $\tilde{H}|_{D}$. The same applies to $\mathcal{L}$ below.}.
Conversely, by the martingale representation theorem, any solution to the $C_{\mathbb{R}_{+}}(D)$-martingale problem for $(\tilde{\mathcal{L}}, \nu)$ is a weak solution to equation (\ref{eq:SEE}).

\begin{rem}\label{rem:poly}
In the definition of $\tilde{H}$, since $\langle z, f_{i} \rangle$ for $i=1,2,\cdots,n$ are bounded, then the space of $h_{c}$ can be replaced by $C^{\infty}(\mathbb{R})$ or multivariate polynomials. Consider the subalgebra $\tilde{H}_{p}$ generated by $\{\langle z, f\rangle: z\in E, f_{i}\in D(A^{+})\}$, it can be verified directly that $\tilde{\mathcal{L}}$ maps $\tilde{H}_{p}$ to itself. Consequently, the solution $Z$ is an infinite-dimensional polynomial process as defined in \cite{cuchiero2021infinite}. 
\end{rem}

The following martingale problem formulation serves as a summary of the above discussion. See the Appendix for the proof.

\begin{prop}\label{prop:mps} 
For any initial distribution $\nu$ on $D$, the $C_{\mathbb{R}_{+}}(D)$-martingale problem for $(\tilde{\mathcal{L}}, \nu)$ has a unique solution, and the solution $Z^{\nu}\sim P_{\nu}$ is an $\mathfrak{F}^{Z^{\nu}}$-strong Markov process. Denote the solution for $(\tilde{\mathcal{L}}, \delta_{z})$ as $P_{z}$, then $P_{z}(B)$ is Borel measurable in $z$ for each $B\in \mathcal{B}(C_{\mathbb{R}_{+}}(D))$ and $P_{\nu}=\int_{D}P_{z}d\nu$.
\end{prop}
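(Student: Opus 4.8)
The plan is to read the proposition off the well-posedness of the SEE (Theorem \ref{th:wellposed}) together with soft properties of martingale problems, rather than rerunning an approximation. \emph{Existence.} Fix an initial law $\nu$ on $D$ and let $Z^{\nu}=(X^{\nu},Y^{\nu})$ be the $D$-valued continuous strong solution of (\ref{eq:SEE}) with $Z^{\nu}_{0}\sim\nu$ furnished by Theorem \ref{th:wellposed}, realised as in Remark \ref{rem:filtration} so that $W$ is independent of $Z^{\nu}_{0}$. By the equivalence of mild and analytically weak solutions (Proposition \ref{prop:equi}) together with $2$ of Remark \ref{rem:1}, an application of It\^o's formula to each $h\in\tilde{H}|_{D}$, carried out coordinatewise along the analytically weak form, shows that $h(Z^{\nu}_{t})-h(Z^{\nu}_{0})-\int_{0}^{t}\tilde{\mathcal{L}}h(Z^{\nu}_{s})\,ds$ is a continuous martingale for the filtration of Remark \ref{rem:filtration}, hence also for the smaller $\mathfrak{F}^{Z^{\nu}}$, with $\tilde{\mathcal{L}}h$ given by (\ref{gene1}). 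Thus $Z^{\nu}$ solves the $C_{\mathbb{R}_{+}}(D)$-martingale problem for $(\tilde{\mathcal{L}},\nu)$, and existence of a solution is established.

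\emph{Uniqueness.} The key is the converse already indicated before the statement: any solution $\hat Z=(\hat X,\hat Y)$ of the martingale problem for $(\tilde{\mathcal{L}},\nu)$ is a weak solution of (\ref{eq:SEE}). Indeed, since $(1,0)\in D(A^{+})$ (Proposition \ref{prop:adjoint}) and $D$ is bounded, the maps $z\mapsto x$ and $z\mapsto x^{2}$ coincide on $D$ with elements of $\tilde{H}|_{D}$, so the martingale property and (\ref{gene1}) show that $M_{t}:=\hat X_{t}-\hat X_{0}-\int_{0}^{t}\!\big(\int_{(0,\infty)}\hat Y_{s}(\lambda)\,\mu(d\lambda)-c\hat X_{s}\big)\,ds$ is a continuous square-integrable $\mathfrak{F}^{\hat Z}$-martingale with $\langle M\rangle_{t}=\int_{0}^{t}\hat X_{s}(1-\hat X_{s})\,ds$, while the test functions $z\mapsto\langle z,(0,g)\rangle$ with $(0,g)\in D(A^{+})$ have trivial martingale part and pin down $\hat Y_{t}(\lambda)=e^{-\lambda t}\hat Y_{0}(\lambda)+\int_{0}^{t}e^{-\lambda(t-s)}\lambda\hat X_{s}\,ds$ in $L^{1}(\mu)$. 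By the martingale representation theorem, after the enlargement of the probability space needed because $\sqrt{x(1-x)}$ degenerates on $\{0,1\}$ (exactly as in the last step of the proof of Proposition \ref{prop:solution}), there is a Brownian motion $\bar W$ with $M_{t}=\int_{0}^{t}\sqrt{\hat X_{s}(1-\hat X_{s})}\,d\bar W_{s}$, so $\hat Z$ is a weak solution of (\ref{eq:SEE}) with initial law $\nu$. Since a weak solution exists and is pathwise unique (Propositions \ref{prop:solution}, \ref{prop:D}, \ref{prop:pathwise}) and $D$ is Polish, the Yamada--Watanabe theorem forces the law of $\hat Z$ on $C_{\mathbb{R}_{+}}(D)$ to equal $\mathcal{L}(Z^{\nu})=:P_{\nu}$; in particular any two solutions have the same finite-dimensional distributions, so the martingale problem is well posed.

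\emph{Strong Markov property, measurability and mixing.} That $Z^{\nu}$ is an $\mathfrak{F}^{Z^{\nu}}$-strong Markov process follows from Proposition \ref{prop:markovs} and Remark \ref{rem:randomi}: for an $\mathfrak{F}^{Z^{\nu}}$-stopping time $\tau$, conditioning the identity $E[f(Z^{\nu}_{t+\tau})\mid\mathcal{F}^{W}_{\tau}]=T_{t}f(Z^{\nu}_{\tau})$ further onto $\overline{\mathcal{F}}^{Z^{\nu}}_{\tau+}$ (which sits inside the $W$-filtration, the initial $\sigma$-field being accounted for by Remark \ref{rem:randomi}) and using that $Z^{\nu}_{\tau}$ is $\overline{\mathcal{F}}^{Z^{\nu}}_{\tau+}$-measurable yields the transition kernel $T_{t}$. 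For the dependence on the starting point, write $P_{z}=\mathcal{L}\big(\Phi(z,\cdot,W)\big)$ with $\Phi$ the measurable solution map of Definition \ref{defn:strongp}; joint $\mathcal{B}(D)\otimes\mathcal{F}$-measurability of $(z,\omega)\mapsto\Phi(z,\cdot,W_{\cdot}(\omega))$ into $C_{\mathbb{R}_{+}}(D)$ together with Tonelli's theorem gives that $z\mapsto P_{z}(B)=\int I_{B}\big(\Phi(z,\cdot,W(\omega))\big)\,P(d\omega)$ is Borel for every $B\in\mathcal{B}(C_{\mathbb{R}_{+}}(D))$; and realising $Z^{\nu}$ with $W$ independent of $Z^{\nu}_{0}\sim\nu$ gives, for bounded measurable $G$, $E[G(Z^{\nu})]=\int_{D}E[G(\Phi(z,\cdot,W))]\,\nu(dz)=\int_{D}\big(\int G\,dP_{z}\big)\,\nu(dz)$, i.e. $P_{\nu}=\int_{D}P_{z}\,d\nu$.

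\emph{Main obstacle.} The only step that is more than bookkeeping is the converse implication in the uniqueness part, namely extracting a genuine weak solution of the SEE from an abstract martingale-problem solution: one must check that $\tilde{H}|_{D}$ contains enough test functions to recover both the quadratic variation of the scalar noise and the full measure-valued seed-bank coordinate, and carry out the space enlargement cleanly on the set where $\sqrt{x(1-x)}$ vanishes. Everything else reduces to Yamada--Watanabe, Proposition \ref{prop:markovs} and routine measurability.
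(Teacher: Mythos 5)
Your proposal is correct and follows essentially the same route as the paper: existence by applying It\^{o}'s formula to the strong solution via Proposition \ref{prop:equi}, and uniqueness by recovering a weak solution of the SEE from any martingale-problem solution (test functions isolating $x$, $x^{2}$ and the $Y$-coordinate, then martingale representation on an enlarged space) and invoking the pathwise uniqueness of Theorem \ref{th:wellposed}. The only cosmetic differences are that you obtain the measurability of $z\mapsto P_{z}(B)$ and the mixing identity directly from the solution map $\Phi$ and Tonelli, where the paper cites a monotone class argument and the disintegration theorem, and that you correctly observe the paper's localization by $\tau_{n}$ is unnecessary since $D$ is bounded.
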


By 2 of Proposition \ref{prop:full}, if $D(\mathbb{\tilde{\mathcal{L}}})$ is dense in $C_{b}(D)$, then the Markov semigroup $\{T_{t}\}_{t\geq 0}$ is strongly continuous on $C_{b}(D)$. However, this is not clear as the following Proposition indicates that the state space $D$ endowed with the subspace topology of $E$ is generally not locally compact. More importantly, the lack of local compactness brings about technical difficulties in the proofs of certain approximation problems. For example, when $\mu$ is not absolutely continuous, there is no readily applicable precompact criterion in $L^{1}(\mu)$.

\begin{prop}\label{prop:original}
$D$ is a locally compact subset of $E$ if and only if $\mu=\sum\limits_{i=1}^{\infty}c_{i}\delta_{\lambda_{i}}$, where $c_{i}>0$ and $\lambda_{i}\neq\lambda_{j}$ when $i\neq j$. 
\end{prop}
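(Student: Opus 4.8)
The plan is to reduce the question to the second coordinate and then handle the two implications separately. Write $D=[0,1]\times D_{2}$ with $D_{2}=\{y\in L^{1}(\mu):0\le y\le 1\ \mu\text{-a.e.}\}$; by Proposition \ref{prop:domain} this $D_{2}$ is a closed, bounded, convex subset of $L^{1}(\mu)$. Since $[0,1]$ is compact and nonempty and the topology on $D$ is the product topology, $D$ is locally compact if and only if $D_{2}$ is: a closed subspace $\{x_{0}\}\times D_{2}\cong D_{2}$ of a locally compact Hausdorff space is locally compact, and conversely a product of two locally compact Hausdorff spaces is locally compact. As $\mu$ is a finite Borel measure on $(0,\infty)$, it splits as $\mu=\mu_{a}+\mu_{c}$ with $\mu_{a}=\sum_{i}c_{i}\delta_{\lambda_{i}}$ purely atomic and $\mu_{c}$ atomless; so the assertion to be proved amounts to: $D_{2}$ is locally compact if and only if $\mu_{c}=0$.

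\textbf{Sufficiency.} Assume $\mu_{c}=0$, i.e.\ $\mu=\sum_{i}c_{i}\delta_{\lambda_{i}}$ with $c_{i}>0$ and $\sum_{i}c_{i}=c<\infty$. Then $L^{1}(\mu)$ is isometrically isomorphic to the weighted space $\ell^{1}(w)$ with $w_{i}=c_{i}$, and under this identification $D_{2}$ becomes $\{a:0\le a_{i}\le 1\ \forall i\}$. I would show this set is sequentially compact: given a sequence $(a^{(k)})_{k}$, a diagonal argument extracts a subsequence with $a^{(k)}_{i}\to a_{i}\in[0,1]$ for each $i$; then $\|a^{(k)}-a\|_{\ell^{1}(w)}=\sum_{i}c_{i}|a^{(k)}_{i}-a_{i}|\to 0$, since the tail $\sum_{i>N}c_{i}|a^{(k)}_{i}-a_{i}|\le 2\sum_{i>N}c_{i}$ is small uniformly in $k$ (this is exactly where summability of $(c_{i})$ enters) while the finite head tends to $0$. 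Hence $D_{2}$, and therefore $D$, is compact, a fortiori locally compact. (If there are only finitely many atoms, $L^{1}(\mu)\cong\mathbb{R}^{n}$ and $D_{2}\cong[0,1]^{n}$, which is again compact.)

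\textbf{Necessity.} I argue the contrapositive: if $\mu_{c}\neq 0$, then $D_{2}$ is not locally compact at $0$. Let $E_{0}\subseteq(0,\infty)$ be the complement of the set of atoms of $\mu$, so $\mu|_{E_{0}}$ is atomless and $\mu(E_{0})=\|\mu_{c}\|=:m>0$. Fix an arbitrary $r>0$ and set $\delta:=\min\{r/2,\,m/2\}>0$. By atomlessness (Sierpi\'{n}ski's theorem, via repeated halving) choose a Borel set $E_{0}'\subseteq E_{0}$ with $\mu(E_{0}')=2\delta$, and transport the classical Rademacher construction along a measure isomorphism of $(E_{0}',\mu|_{E_{0}'})$ with $([0,2\delta],\mathrm{Leb})$ to obtain Borel sets $R_{1},R_{2},\dots\subseteq E_{0}'$ with $\mu(R_{n})=\delta$ for all $n$ and $\mu(R_{n}\triangle R_{m})=\delta$ for $n\neq m$. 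Then $y_{n}:=I_{R_{n}}\in D_{2}$, $\|y_{n}\|_{L^{1}(\mu)}=\delta<r$, and $\|y_{n}-y_{m}\|_{L^{1}(\mu)}=\delta$ for $n\neq m$. Every neighbourhood of $0$ in $D_{2}$ contains $B(0,r)\cap D_{2}$ for some $r>0$, and this set contains the infinite $\delta$-separated family $\{y_{n}\}$; hence no neighbourhood of $0$ in $D_{2}$ is totally bounded, so none is relatively compact, and $D_{2}$ (hence $D$) fails to be locally compact.

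\emph{Main obstacle.} The delicate part is necessity. The pointwise bound $0\le y\le 1$ rules out the usual witnesses to non-compactness of balls in $L^{1}$ (bump functions of height $n$), so one must use the ``spreading'' mechanism instead; and the construction has to simultaneously keep the $L^{1}(\mu)$-norms below the prescribed radius $r$ and keep the family uniformly separated, which forces the Rademacher family to be rescaled onto a sub-region of mass $2\delta$ inside the atomless part of $\mu$. Checking that this Rademacher-type combinatorics can actually be realized inside $E_{0}$ — legitimate precisely because $\mu|_{E_{0}}$ is atomless — is the technical heart of the proof.
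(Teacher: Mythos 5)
Your proof is correct and follows essentially the same route as the paper's: sufficiency by identifying $D$ (up to the compact factor $[0,1]$) with $[0,1]^{\mathbb{N}}$ in the weighted $\ell^{1}$ norm and verifying sequential compactness via the summability of the weights $c_{i}$ (the paper invokes a weighted Kolmogorov criterion or Tychonoff for the same purpose), and necessity by exhibiting, inside every ball around $0$, an infinite uniformly separated Rademacher-type family supported in the atomless part of $\mu$. The only difference is cosmetic: the paper's witnesses are functions of small height $\epsilon$ on alternating dyadic $\mu_{c}$-quantile intervals covering half the atomless mass, whereas yours are height-one indicators of Rademacher sets squeezed into a region of mass $2\delta$; both mechanisms produce arbitrarily small norms with a fixed positive mutual separation, which is the heart of the argument.
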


\begin{proof}
The sufficient part follows immediately from a weighted version of Theorem 4 in \cite{hanche2010kolmogorov}, see also the proof of Proposition \ref{prop:compactsemigroup}. Alternatively, since $D$ endowed with the subspace topology of $\mathbb{R}\times l^{1}(w)$ is homeomorphic to $[0,1]^{\mathbb{N}_{0}}$ endowed with the product topology, then the Tychonoff theorem leads to the conclusion.

For the necessity, when $\mu$ is not discrete i.e. it has a non-atomic component $\mu_{c}\neq 0$. For any $\epsilon>0$ and $n\in\mathbb{N}$, define $$f^{\epsilon}_n(x)= \begin{cases}\epsilon, & x \in \bigcup\limits_{k=0}^{2^{n-1}-1}(a^{n}_{2k}, a^{n}_{2k+1}],\\ 0, & x \in\bigcup\limits_{k=1}^{2^{n-1}-1}(a^{n}_{2k-1}, a^{n}_{2k}]\bigcup\left(a^{n}_{2^{n}-1}, \infty\right),\end{cases}$$
where for each $n$, $a^{n}_{0}=0$ and $a^{n}_{k}$ satisfies $\mu_{c}((0, a^{n}_{k}])=\frac{k}{2^{n}}\mu_{c}(0,\infty)$. It is obvious that $$||f^{\epsilon}_{i}-f^{\epsilon}_{j}||_{L^{1}}\geq ||f^{\epsilon}_{i}-f^{\epsilon}_{j}||_{L^{1}(\mu_{c})}=\frac{\epsilon}{2}\mu_{c}(0,\infty)>0,~\text{for any}~i\neq j,$$
and $\sup\limits_{n\in\mathbb{N}}||f^{\epsilon}_{n}||_{L^{1}}\leq c\epsilon$. Since $\epsilon$ can be taken arbitrarily small, this implies that $0\in D$ has no compact neighborhood, thus in this case $D$ is not locally compact.   
\end{proof}

In order to circumvent the challenges posed by Proposition \ref{prop:original}, $D$ will be isometrically embedded into $\mathbb{R}\times\mathcal{M}(0,\infty)$ through the mapping $i_{\mu}: (x, f)\mapsto (x, f.\mu)$. Recall that $\mathcal{M}(0,\infty)$ is the space of all finite signed measures on $(0,\infty)$ equipped with the total variation norm, and $\mathcal{M}(0,\infty)\cong C_{0}(0,\infty)^{\star}$ by the Riesz-Markov-Kakutani representation theorem (see e.g. \cite{cohn2013measure}). In fact, any $y\in L^{1}(\mu)$ can be viewed as a continuous linear functional $T_{y}$ on $C_{0}(0, \infty)$ by
$\langle T_{y}, h\rangle=\int_{(0, \infty)}h(\lambda)y(\lambda)\mu(d\lambda)$
i.e. a finite signed measure $y.\mu(\cdot)=\int_{\cdot}y(\lambda)\mu(d\lambda)$.
$y$ is the Radon-Nikodym derivative of $y.\mu$ with respect to $\mu$, and 
$\|y.\mu\|_{TV}=\|y\|_{L^{1}}$.

Since the image $i_{\mu}(D)$ is contained in the closed ball 
\begin{equation*}
B_{1+c}=\{z=(x, y.\mu)\in \mathbb{R}\times\mathcal{M}(0,\infty):|x|+||y.\mu||_{TV}\leq 1+c\},	
\end{equation*}
then by the Alaoglu theorem (see e.g. \cite{schaefer1971locally}), it is weak$^{\star}$-relatively compact. Moreover, $i_{\mu}(D)$ is metrizable under the weak$^{\star}$ topology. Actually, by Theorem 3.29 in \cite{brezis2011functional}, the metric $d$ can be defined as:
\begin{equation}\label{eq:metric}
d(z_{1}, z_{2})=|x_{1}-x_{2}|+\sum_{n=1}^{\infty}\frac{1}{2^{n}}|\langle y_{1}-y_{2}, f_{n}\rangle|,
\end{equation}
for $z_{1}=(x_{1},y_{1}.\mu)$, $z_{2}=(x_{2},y_{2}.\mu)$, and a contable dense subset $\{f_{n}\}_{n\in\mathbb{N}}$ of the closed unit ball in $C_{0}(0, \infty)$. 

\begin{rem}\label{rem:5}~

\begin{enumerate}
	\item The weak$^{\star}$ convergence on $\mathcal{M}(0,\infty)$ is also referred to as the vague convergence;
	\item By the Dunford-Pettis theorem (see e.g. \cite{kallenberg1997foundations}) and the Eberlin-\^{S}mulian Theorem (see e.g. \cite{schaefer1971locally}), $D$ is also weakly precompact in $E$, but the closed balls are not metrizable under the weak topology since $E^{\star}\cong \mathbb{R}\times L^{\infty}(0,\infty)$ is not separable.
\end{enumerate}
\end{rem} 

Regarding $D$ and $i_{\mu}(D)$ as the same, in the subsequent text, the state space $D$ will be replaced by $(D, d)$ i.e. the original topology is replaced by the weak$^{\star}$ topology.

There are some observations about the new state space $(D, d)$.

\begin{prop}\label{prop:wstate}~

\begin{enumerate}
	\item $D$ is weak$^{\star}$ closed in $\mathbb{R}\times\mathcal{M}(0, \infty)$, hence $(D, d)$ is a compact metric space;
	\item The Borel $\sigma$-algebra on $(D, d)$ is the same as that on $D$;
	\item The vague convergence on $\left\{y:(0, \infty) \rightarrow \mathbb{R} \text { is measurable and } 0 \leqslant y \leqslant 1, \mu \text {-a.e.}\right\}$ is equivalent to the weak convergence of measures. 
\end{enumerate}
\end{prop}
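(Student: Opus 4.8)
The plan is to treat the three assertions in order, with the bulk of the work --- and the one genuine obstacle --- concentrated in part~1: there the point is to show that the vague limit of a sequence drawn from $D$ stays in $D$, i.e.\ that absolute continuity with respect to $\mu$ together with the density bound $0\le\cdot\le1$ survives vague limits; once this is in hand, parts~2 and~3 reduce to routine functional-analytic and tightness arguments. For part~1, since $i_{\mu}(D)$ is contained in the ball $B_{1+c}$, which is weak$^{\star}$ compact by the Alaoglu theorem and metrizable by separability of the predual $\mathbb{R}\times C_{0}(0,\infty)$, it suffices to prove that $i_{\mu}(D)$ is sequentially closed inside $B_{1+c}$; a closed subset of a compact set being compact, and a compact subset of a Hausdorff space being closed, this yields simultaneously that $D$ is weak$^{\star}$ closed in $\mathbb{R}\times\mathcal{M}(0,\infty)$ and that it is weak$^{\star}$ compact, so $(D,d)$ is a compact metric space. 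Thus let $(x_{k},y_{k}.\mu)\to(x,\nu)$ weak$^{\star}$ with $x_{k}\in[0,1]$ and $0\le y_{k}\le1$ $\mu$-a.e. Then $x\in[0,1]$, and for every $h\in C_{c}(0,\infty)$ with $0\le h\le1$ we have $0\le\int_{(0,\infty)}h\,y_{k}\,d\mu\le\int_{(0,\infty)}h\,d\mu$, whence, letting $k\to\infty$, $0\le\langle\nu,h\rangle\le\int_{(0,\infty)}h\,d\mu$. The first inequality, for all nonnegative $h\in C_{c}(0,\infty)$, forces $\nu\ge0$, and the second, applied to $\mu-\nu$, forces $\mu-\nu\ge0$, by the standard characterization of positivity of a Radon measure through its action on $C_{c}$. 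Hence $0\le\nu\le\mu$ as measures; in particular $\nu\ll\mu$ and $y:=d\nu/d\mu$ satisfies $0\le y\le1$ $\mu$-a.e., so $(x,\nu)=(x,y.\mu)\in i_{\mu}(D)$, as required.

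For part~2, the weak$^{\star}$ topology on $i_{\mu}(D)$ is coarser than the $\|\cdot\|_{TV}$-topology, which under the isometry $i_{\mu}$ is exactly the $L^{1}(\mu)$-topology on $D$, so $\mathcal{B}((D,d))\subseteq\mathcal{B}(D)$. For the reverse inclusion, write $\|y-y_{0}\|_{L^{1}}=\|(y-y_{0}).\mu\|_{TV}=\sup_{n}|\langle(y-y_{0}).\mu,f_{n}\rangle|$ with $\{f_{n}\}$ the countable dense subset of the unit ball of $C_{0}(0,\infty)$ used in \eqref{eq:metric}; each map $y\mapsto\langle(y-y_{0}).\mu,f_{n}\rangle$ is $d$-continuous, so $y\mapsto\|y-y_{0}\|_{L^{1}}$ is $d$-lower semicontinuous and every $L^{1}$-ball in $D$ is $d$-Borel. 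Since $L^{1}(\mu)$ is separable, $(D,\|\cdot\|_{L^{1}})$ is second countable with a base of such balls, so every norm-open subset of $D$ is $d$-Borel; thus $\mathcal{B}(D)\subseteq\mathcal{B}((D,d))$ and the two coincide.

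For part~3, weak convergence of measures trivially implies vague convergence since $C_{0}(0,\infty)\subseteq C_{b}(0,\infty)$. Conversely, suppose $y_{k},y\in\{0\le y\le1\ \mu\text{-a.e.}\}$ and $y_{k}.\mu\to y.\mu$ vaguely. Given $h\in C_{b}(0,\infty)$ and $\varepsilon>0$, choose a compact $K\subseteq(0,\infty)$ with $\mu((0,\infty)\setminus K)<\varepsilon$ and $\chi\in C_{c}(0,\infty)$ with $\mathbf{1}_{K}\le\chi\le1$. Then $h\chi\in C_{c}(0,\infty)$, so $\int h\chi\,d(y_{k}.\mu)\to\int h\chi\,d(y.\mu)$, whereas $|\int h(1-\chi)\,d(y_{k}.\mu)|\le\|h\|_{\infty}\,y_{k}.\mu((0,\infty)\setminus K)\le\|h\|_{\infty}\,\mu((0,\infty)\setminus K)<\|h\|_{\infty}\varepsilon$ uniformly in $k$, and likewise for the limit; hence $\limsup_{k}|\int h\,d(y_{k}.\mu)-\int h\,d(y.\mu)|\le2\|h\|_{\infty}\varepsilon$, and $\varepsilon\downarrow0$ concludes. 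Here the uniform domination $y_{k}.\mu\le\mu$ by the fixed finite measure $\mu$ is precisely what supplies the tightness that upgrades vague convergence to weak convergence.
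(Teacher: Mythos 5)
Your proof is correct, and parts 2 and 3 match the paper's argument in substance (part 2 is essentially identical; part 3 simply writes out in full the tightness/cut-off argument that the paper delegates to a modification of Lemma 4.20 in Kallenberg, using exactly the same uniform domination $y_k.\mu\le\mu$). The genuine difference is in part 1. The paper identifies the vague limit $\nu$ by first invoking the Dunford--Pettis theorem and the Eberlein--\^{S}mulian theorem to extract a subsequence of $\{g_n\}$ converging \emph{weakly in $L^{1}(\mu)$} to some $g$, concludes $\nu=g.\mu$ by uniqueness of vague limits, and then verifies $0\le g\le 1$ $\mu$-a.e.\ by testing the weak $L^1$ limit against indicator functions such as $I_{\{g\le -1/n\}}$. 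You bypass the weak-compactness machinery entirely: you deduce $0\le\nu\le\mu$ as measures directly from $0\le\int h\,y_k\,d\mu\le\int h\,d\mu$ for nonnegative $h\in C_c(0,\infty)$ and the characterization of positivity of Radon measures on $C_c$, and then obtain the density $y=d\nu/d\mu\in[0,1]$ from the Radon--Nikodym theorem. Your route is more elementary and self-contained (it uses only Riesz--Markov--Kakutani and Radon--Nikodym, both already in the paper's toolkit), at the cost of not exhibiting the weak $L^1$ convergence of the densities, which the paper's approach gives as a by-product but does not actually need elsewhere. Both arguments are complete; your framing via ``closed subset of the compact metrizable ball $B_{1+c}$, hence compact, hence weak$^{\star}$ closed in the Hausdorff dual'' also cleanly justifies why sequential closedness suffices.
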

\begin{proof}~

1. Suppose that $\{(f_{n},g_{n}.\mu)\}_{n\in\mathbb{N}}\subseteq (D,d)$ weak$^{*}$ converges to $(f,\nu)\in \mathbb{R}\times\mathcal{M}(0,\infty)$, then $0\leq f\leq 1$. For $\nu$, note that
$\{g_{n}\}_{n\in \mathbb{N}}$ is uniformly integrable, thus by 2 of Remark \ref{rem:5}, there exists a subsequence $\{g_{n_{k}}\}_{k\in\mathbb{N}}$ weakly converges in $L^{1}(\mu)$ to some $g$ i.e. $\int g_{n_{k}}h d\mu\rightarrow \int gh d\mu$ as $k\rightarrow \infty$ for any $h\in L^{\infty}(\mu)$. Since $\{g_{n_{k}}.\mu\}_{k\in\mathbb{N}}$ also vaguely converges to $g.\mu$, by the uniqueness, we know that $\nu=g.\mu$. Then, we only need to show that $0\leq g\leq 1,\mu-a.e.$ Take $h_{n}=I_{\{g \leq -\frac{1}{n}\}}$ for $n\in\mathbb{N}$, as the limit, we have $\int h_{n}g d\mu\geq 0$, hence $-\frac{1}{n}\mu\{g\leq-\frac{1}{n}\}\geq 0$. As the result, $\mu\{g\leq-\frac{1}{n}\}=0$, which implies that $g\geq 0,\mu-a.e.$ Similarly, by taking $h_{n}^{\prime}=I_{\{1-g \leq -\frac{1}{n}\}}$, we know that $g\leq 1, \mu-a.e.$.

2. It is obvious that $\mathcal{B}(D,d)\subseteq \mathcal{B}(D)$. For ``$\supseteq$", note that $D$ is separable, then by the isometry, the image $i_{\mu}(D)$ is also separable which implies that it has a countable topological basis composed of open balls. Due to the weak$^{\star}$-lower semicontinuity of the norm, each open ball $B_{r}=\{z=(x, y.\mu):|x|+||y.\mu||_{TV}<r\}$ is $\mathcal{B}(D,d)$-measurable, then the result follows.

3. Since $\mu$ is tight, by definition and the boundedness, $\{y_{n}.\mu\}_{n\in\mathbb{N}}$ is also tight, then the conclusion follows from a slight modification of Lemma 4.20 in \cite{kallenberg1997foundations}.
\end{proof}

Convergence in norm on $D$ implies weak$^{\star}$-convergence on $(D,d)$, hence the strong solution $Z$ given by Theorem \ref{th:wellposed} is also a continuous $(D, d)$-valued process. Moreover, the continuity with respect to the initial condition still holds as follows:

\begin{prop}\label{prop:weakc}
On some $(\Omega, \mathcal{F}, P, W)$, let $\{Z^{n}=(X^{n}, Y^{n})\}_{n\in\mathbb{N}}$ be a sequence of strong solutions to equation (\ref{eq:SEE}) with initial conditions $Z^{n}_{0}\in (D, d)$, and let $Z=(X, Y)$ be the strong solution with initial condition $Z_{0}\in (D, d)$. If $\lim\limits_{n\rightarrow \infty}v(Z^{n}_{0}, Z_{0})=0$, a.s., then for any $t\geq 0$, \begin{equation}
	\lim_{n\rightarrow \infty}E[d(Z^{n}_{t}, Z_{t})]=0.
\end{equation}
\end{prop}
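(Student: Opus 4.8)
The plan is to reduce the statement to a uniform estimate on the active component $X^{n}-X$, and then to recover convergence of the seed-bank components from the explicit ODE representation of $Y$ together with part 3 of Proposition \ref{prop:wstate}. First I would record, from the second line of (\ref{eq:inf}), that $Y_{t}(\lambda)=e^{-\lambda t}Y_{0}(\lambda)+\lambda\int_{0}^{t}e^{-\lambda(t-s)}X_{s}\,ds$; integrating against $\mu$,
\begin{equation*}
\int_{(0,\infty)}\!\big(Y^{n}_{s}-Y_{s}\big)(\lambda)\,\mu(d\lambda)=\big\langle Y^{n}_{0}.\mu-Y_{0}.\mu,\,e^{-\lambda s}\big\rangle+\int_{0}^{s}\!\Big(\int_{(0,\infty)}\lambda e^{-\lambda(s-r)}\mu(d\lambda)\Big)\big(X^{n}_{r}-X_{r}\big)\,dr .
\end{equation*}
The crucial point is that $\lambda\mapsto e^{-\lambda s}$ is bounded and continuous on $(0,\infty)$ for every $s\ge 0$ but does \emph{not} vanish at $0$, so vague convergence of the initial data alone would not suffice. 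However, since $\mu$ is finite and $0\le Y^{n}_{0},Y_{0}\le 1$, part 3 of Proposition \ref{prop:wstate} upgrades $d(Z^{n}_{0},Z_{0})\to 0$ (a.s.) to weak convergence of the finite measures $Y^{n}_{0}.\mu\to Y_{0}.\mu$; hence $a_{n}(s):=\langle Y^{n}_{0}.\mu-Y_{0}.\mu,\,e^{-\lambda s}\rangle\to 0$ a.s. for each fixed $s$. Moreover $|a_{n}(s)|\le 2c$ and $\int_{(0,\infty)}\lambda e^{-\lambda(s-r)}\mu(d\lambda)\le c'$ by (\ref{condition}), so $\int_{0}^{t}|a_{n}(s)|\,ds\to 0$ a.s., and hence $E[\int_{0}^{t}|a_{n}(s)|\,ds]\to 0$, by dominated convergence.

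Next I would bound $E[\sup_{s\le t}|X^{n}_{s}-X_{s}|]$ by rerunning the Yamada--Watanabe argument from the proof of part 2 of Theorem \ref{th:wellposed} (both $X^{n}$ and $X$ are driven by the same $W$), but keeping the seed-bank contribution in the form $\int_{0}^{t}E[\,|\int_{(0,\infty)}(Y^{n}_{s}-Y_{s})\mu(d\lambda)|\,]\,ds$ instead of bounding it by $\|Y^{n}_{0}-Y_{0}\|_{L^{1}(\mu)}$, which we do not control. Substituting the display above and using that the integrands are nonnegative, one obtains, for some constant $C=C(c,c',t)$,
\begin{equation*}
E\big[\sup_{s\le t}|X^{n}_{s}-X_{s}|\big]\le E\big[|X^{n}_{0}-X_{0}|\big]+E\Big[\int_{0}^{t}|a_{n}(s)|\,ds\Big]+C\int_{0}^{t}E\big[\sup_{u\le s}|X^{n}_{u}-X_{u}|\big]\,ds ,
\end{equation*}
and Grönwall's inequality then gives $E[\sup_{s\le t}|X^{n}_{s}-X_{s}|]\le e^{Ct}\big(E[|X^{n}_{0}-X_{0}|]+E[\int_{0}^{t}|a_{n}(s)|\,ds]\big)$. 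Since $d(z_{1},z_{2})\ge|x_{1}-x_{2}|$ by (\ref{eq:metric}), we have $X^{n}_{0}\to X_{0}$ a.s. and $|X^{n}_{0}-X_{0}|\le 1$, so the right-hand side tends to $0$; thus $E[\sup_{s\le t}|X^{n}_{s}-X_{s}|]\to 0$.

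To conclude, I would use (\ref{eq:metric}) (with the summation index renamed $k$ and $\{f_{k}\}$ the chosen dense sequence in the unit ball of $C_{0}(0,\infty)$): $E[d(Z^{n}_{t},Z_{t})]=E[|X^{n}_{t}-X_{t}|]+\sum_{k\ge1}2^{-k}E[|\langle Y^{n}_{t}.\mu-Y_{t}.\mu,f_{k}\rangle|]$. The first term is at most $E[\sup_{s\le t}|X^{n}_{s}-X_{s}|]\to 0$. For fixed $k$, inserting $f_{k}$ into the display gives $\langle Y^{n}_{t}.\mu-Y_{t}.\mu,f_{k}\rangle=\langle Y^{n}_{0}.\mu-Y_{0}.\mu,e^{-\lambda t}f_{k}(\lambda)\rangle+\int_{0}^{t}\big(\int_{(0,\infty)}\lambda e^{-\lambda(t-r)}f_{k}(\lambda)\mu(d\lambda)\big)(X^{n}_{r}-X_{r})\,dr$; the first summand tends to $0$ a.s. (now $e^{-\lambda t}f_{k}\in C_{0}(0,\infty)\subseteq C_{b}(0,\infty)$, so even vague convergence applies) and is bounded by $2c$, while the second is bounded by $c't\,\|f_{k}\|_{\infty}\sup_{r\le t}|X^{n}_{r}-X_{r}|$; hence $E[|\langle Y^{n}_{t}.\mu-Y_{t}.\mu,f_{k}\rangle|]\to 0$ for each $k$. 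Since these expectations are bounded by $2c$ uniformly in $n$ and $k$, dominated convergence over $k$ gives $\sum_{k}2^{-k}E[|\langle\cdots\rangle|]\to 0$, and therefore $E[d(Z^{n}_{t},Z_{t})]\to 0$.

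The main obstacle is precisely that the hypothesis furnishes only weak$^{\star}$ (vague) convergence of the initial conditions, so the $L^{1}(\mu)$-Lipschitz estimate of part 2 of Theorem \ref{th:wellposed} cannot be applied to the seed-bank component. The resolution --- exploiting that $Y_{0}$ enters the $X$-equation only through the $C_{b}$-functional $\langle Y_{0}.\mu,e^{-\lambda s}\rangle$, and invoking the tightness-based equivalence of vague and weak convergence in part 3 of Proposition \ref{prop:wstate} --- is the one genuinely new ingredient; the Yamada--Watanabe step and the Grönwall bookkeeping are routine given what is already established.
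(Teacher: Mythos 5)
Your argument is correct and reaches the same conclusion, but it is organized differently from the paper's proof, in a way worth noting. Both proofs start from the same Duhamel representation $\langle Y^{n}_{t}-Y_{t},g\rangle=\langle Y^{n}_{0}-Y_{0},e^{-\lambda t}g(\lambda)\rangle+\int_{0}^{t}\langle \lambda e^{-\lambda(t-s)},g\rangle(X^{n}_{s}-X_{s})\,ds$ and both must confront the same obstruction you identify: the drift of $X$ tests $Y^{n}_{0}.\mu-Y_{0}.\mu$ against $e^{-\lambda s}$, which is bounded and continuous but not in $C_{0}(0,\infty)$, so vague convergence of the initial data is not directly applicable. The paper resolves this by re-deriving the tightness argument inline: it chooses a partition of unity $\{f_{\epsilon},f_{\epsilon}^{c}\}$ subordinate to a compact set carrying all but $\epsilon$ of the mass of $\mu$, splits $\langle Y^{n}_{s}-Y_{s},1\rangle$ accordingly, bounds the tail by $\epsilon$, and then runs \emph{two} nested Gr\"{o}nwall estimates (one for $E[|X^{n}_{t}-X_{t}|]$, one for $E[|\langle Y^{n}_{t}-Y_{t},f_{\epsilon}\rangle|]$) before letting $\epsilon\to 0$. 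You instead invoke part 3 of Proposition \ref{prop:wstate} once, at the level of the initial data, to upgrade vague to weak convergence and conclude $a_{n}(s)=\langle Y^{n}_{0}.\mu-Y_{0}.\mu,e^{-\lambda s}\rangle\to 0$ directly; a single Gr\"{o}nwall application then closes the estimate. Since that proposition is already proved in the paper (by exactly the tightness argument the paper repeats here), your route is more modular and avoids the $\epsilon$-bookkeeping; the paper's route is self-contained at this point and does not rely on the earlier proposition. The final passage to $E[d(Z^{n}_{t},Z_{t})]\to 0$ via the metric (\ref{eq:metric}) and discrete dominated convergence is identical in both.

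One small caution: you state the Gr\"{o}nwall bound for $E[\sup_{s\le t}|X^{n}_{s}-X_{s}|]$, which forces you through the Burkholder--Davis--Gundy inequality and produces a term of the form $E[(\int_{0}^{t}|X^{n}_{s}-X_{s}|\,ds)^{1/2}]$ that does not fit a linear Gr\"{o}nwall iteration without an extra step. This is not needed: every use you make of the $X$-estimate downstream (the first term of $d$, and the factor $\int_{0}^{t}E[|X^{n}_{r}-X_{r}|]\,dr$ multiplying $f_{k}$) only requires the fixed-time quantity $E[|X^{n}_{t}-X_{t}|]$, for which the stochastic integral has vanishing expectation and no BDG is required --- this is exactly how the paper's proof proceeds. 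Dropping the supremum removes the only soft spot in your write-up.
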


\begin{proof}
Fix $t\in [0, T]$, for any $g\in C_{0}(0, \infty)$, we have
$$\left\langle Y^{n}_t-Y_t, g\right\rangle=\langle Y_0^n-Y_0, e^{-\lambda t}g(\lambda)\rangle+\int_0^t\langle\lambda e^{-\lambda(t-s)}, g\rangle\left(X_s^n-X_s\right) d s,
$$ 
thus
\begin{equation}\label{eq62}
E[|\langle Y_t^n-Y_t, g\rangle|] \leq E[|\langle Y_{0}^{n}-Y_{0}, e^{-\lambda t} g(\lambda)\rangle|]+c^{\prime}\|g\| \int_0^t E\left[\left|X_s^n-X_s\right|\right] ds.
\end{equation}

Since $\mu$ is tight, for any $\epsilon>0$, there exists a compact subset $K_{\epsilon}$ of $(0,\infty)$ such that
$\mu(K^{c}_{\epsilon})<\epsilon$. Take an open subset $O_{\epsilon}$ satisfying $K_{\epsilon}\subsetneq O_{\epsilon}\subsetneq (0,\infty)$, then there exists a partition of unity $\{f_{\epsilon}, f^{c}_{\epsilon}\}\subseteq C^{\infty}_{c}(0,\infty)$ such that $f_{\epsilon}\wedge f^{c}_{\epsilon}\geq 0$, supp $f_{\epsilon}\subseteq O_{\epsilon}$, supp $f^{c}_{\epsilon}\subseteq K^{c}_{\epsilon}$, and $f_{\epsilon}+f^{c}_{\epsilon}=1$. 

As the proof of 2 in Theorem \ref{th:wellposed}, and taking the notations therein, we have
\begin{eqnarray*}
\phi_{N}(X^{n}_{t}-X_{t})&=&\int_{0}^{t}\phi^{\prime}_{N}(X^{n}_{s}-X_{s})\langle Y^{n}_{s}-Y_{s}, f_{\epsilon} \rangle ds+\int_{0}^{t}\phi^{\prime}_{N}(X^{n}_{s}-X_{s})\langle Y^{n}_{s}-Y_{s}, f^{c}_{\epsilon} \rangle ds\\
&&-c\int_{0}^{t}\phi^{\prime}_{N}(X^n_{s}-X_{s})(X^n_{s}-X_{s})ds+\int_{0}^{t}\phi^{\prime}_{N}(X^{n}_{s}-X_{s})(\sqrt{X^{n}_{s}(1-X^{n}_{s})}-\sqrt{X_{s}(1-X_{s})} )dW_{s}\\
&&+\frac{1}{2}\int_{0}^{t}\phi_{N}^{\prime\prime}(X^{n}_{s}-X_{s})|\sqrt{X^{n}_{s}(1-X^{n}_{s})}-\sqrt{X_{s}(1-X_{s})}|^{2}ds,
\end{eqnarray*}
Then, by $\sup\limits_{n\in\mathbb{N}}\sup\limits_{t\geq 0}|Y^{n}_{t}-Y_{t}|\leq 1$, 
\begin{equation*}
E[\phi_{N}(X^{n}_{t}-X_{t})]\leq \int_{0}^{t}E[|\langle Y^{n}_{s}-Y_{s}, f_{\epsilon}\rangle|]ds+\epsilon t+ c\int_{0}^{t}E[|X^{n}_{s}-X_{s}|]ds+\frac{t}{N}.
\end{equation*}
 
Letting $N\rightarrow \infty$, by the Gr\"{o}nwall inequality, we have
\begin{equation}\label{eq63}
E[|X^{n}_{t}-X_{t}|]\leq M_{\epsilon}(t)+c\int_{0}^{t}e^{c(t-s)}M_{\epsilon}(s)ds,
\end{equation}  
where $M_{\epsilon}(t)=\int_{0}^{t}E[|\langle Y^{n}_{s}-Y_{s}, f_{\epsilon}\rangle|]ds+\epsilon t$.

By (\ref{eq63}), taking $g=f_{\epsilon}$ in (\ref{eq62}) yields that
\begin{eqnarray*}
E[|\langle Y_t^n-Y_t, f_{\epsilon}\rangle|] \leq N^{(n)}_{\epsilon}(t)+c^{\prime}\int_0^t (M_{\epsilon}(s)+c\int_{0}^{s}e^{c(s-u)}M_{\epsilon}(u)du)ds,
\end{eqnarray*}
where $N^{(n)}_{\epsilon}(t)=E[|\langle Y_{0}^{n}-Y_{0}, e^{-\lambda t} f_{\epsilon}(\lambda)\rangle|]$.

After direct calculations, we have
$$E[|\langle Y_t^n-Y_t, f_{\epsilon}\rangle|]\leq (N^{(n)}_{\epsilon}(t)+C\epsilon)+C\int_{0}^{t}E[|\langle Y_s^n-Y_s, f_{\epsilon}\rangle|]ds,$$
where $C$ is a constant depending only on $c, c^{\prime}$ and $T$, and it may be different from line to line.

By the Gr\"{o}nwall inequality again, we know that
\begin{equation*}
E[|\langle Y_t^n-Y_t, f_{\epsilon}\rangle|]\leq (N^{(n)}_{\epsilon}(t)+C\epsilon)+C\int_{0}^{t}e^{C(t-s)}(N^{(n)}_{\epsilon}(s)+C\epsilon)ds.
\end{equation*}
Therefore, by the definition, 
\begin{equation}\label{eq64}
M_{\epsilon}(t)\leq C\int_{0}^{t}N^{(n)}_{\epsilon}(s)ds+C\epsilon.
\end{equation}
Now, plugging (\ref{eq64}) into (\ref{eq63}), we have
$$E[|X^{n}_{t}-X_{t}|]\leq C\int_{0}^{t}N^{(n)}_{\epsilon}(s)ds+C\epsilon.$$
By the condition and the dominated convergence theorem, for fixed $\epsilon>0$, we know that $N^{(n)}_{\epsilon}(t)\rightarrow 0$ for any $t\geq 0$ as $n\rightarrow \infty$. Note that $\sup\limits_{n\in\mathbb{N}}\sup\limits_{t\in [0,T]}|N^{n}_{\epsilon}(t)|\leq c$, by the dominated convergence theorem again, we have $\lim\limits_{n\rightarrow\infty}E[|X^{n}_{t}-X_{t}|]\leq C\epsilon$. Since $\epsilon$ is arbitrary, the convergence of the first component is proved.

Finally, by (\ref{eq:metric}), we have
$$E[d(Z^{n}_{t}, Z_{t})]\leq E[|X^{n}_{t}-X_{t}|]+\sum_{m=1}^{\infty}\frac{1}{2^{m}}E[|\langle Y^{n}_{t}-Y_{t}, f_{m}\rangle|],$$
where $\{f_{m}\}_{m\in\mathbb{N}}$ is a countable dense subset of the closed unit ball in $C_{0}(0,\infty)$. For each $m$, by (\ref{eq62}) and the fact that $\lim\limits_{n\rightarrow\infty}E[|X^{n}_{t}-X_{t}|]=0$, we know that $\lim\limits_{n\rightarrow\infty}E[|\langle Y^{n}_{t}-Y_{t}, f_{m}\rangle|]=0$. By the discrete version of the dominated convergence theorem, the desired result follows.
\end{proof}

By Proposition \ref{prop:weakc} and the dominated convergence theorem, the following operator still satisfies the Feller property i.e. it maps $C(D,d)$ to $C(D,d)$:
\begin{equation}
P_{t}f(z)=E(f(Z^{z}_{t})),	
\end{equation}
for $t\geq 0$, $z\in(D, d)$, and $f\in B(D, d)$, where $\{Z^{z}_{t}\}_{t\geq 0}$ is the $(D, d)$-valued strong solution such that $Z_{0}=z$, a.s.

Similar to Proposition \ref{prop:markovs}, it is classical to show that the $(D, d)$-valued solution is strong Markov with respect to $\{P_{t}\}_{t\geq 0}$. Actually, by 2 of Proposition \ref{prop:wstate}, all concepts related to the measurability remain unchanged when the state space $D$ is replaced by $(D,d)$. For example, the functional $\Phi$ in Definition \ref{defn:strongp} is $\mathcal{B}(D, d)\times \mathcal{B}(\mathbb{R}_{+}) \times \overline{\mathcal{B}}_{\infty}/\mathcal{B}(D,d)$-measurable, and any $(D, d)$-valued progressively measurable process is a $D$-valued progressively measurable process. 

\begin{prop}\label{prop:markovw}
For any $f\in B(D, d)$, $z\in (D, d)$, $s\geq 0$ and $\mathfrak{F}=\{\overline{\mathcal{F}}_{t}^{W}\}_{t\geq 0}$-stopping time $\tau<\infty$, a.s., we have
\begin{equation}
E[f(Z_{s+\tau}^{z})|\overline{\mathcal{F}}_{\tau}^{W}]=P_{s}f(Z^{z}_{\tau}),~a.s.
\end{equation}
Consequently, $P_{t+s}f=P_{t}\circ P_{s}f$ for any $t\geq 0$	.
\end{prop}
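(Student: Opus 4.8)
The plan is to transfer the strong Markov property of Proposition \ref{prop:markovs} to the coarser state space $(D,d)$, the point being that almost nothing genuinely new is required. By part 2 of Proposition \ref{prop:wstate} one has $\mathcal{B}(D,d)=\mathcal{B}(D)$, so $B(D,d)=B(D)$ as a class of functions; moreover the $(D,d)$-valued strong solution $Z^{z}$ is the very same process as the $D$-valued one, since the Wiener functional $\Phi$ of Definition \ref{defn:strongp} and the filtration $\mathfrak{F}=\{\overline{\mathcal F}^{W}_{t}\}_{t\ge 0}$ are unchanged. Hence $P_{t}f(z)=E[f(Z^{z}_{t})]=T_{t}f(z)$ for every $f\in B(D,d)$ and $z\in D$, and with these identifications the assertion of Proposition \ref{prop:markovw} is literally that of Proposition \ref{prop:markovs}. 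So at one level the proof is a single line; the rest of the plan spells out the ``classical'' self-contained route that mirrors Proposition \ref{prop:markovs}.

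\emph{Step 1 (simple Markov property).} Using uniqueness of the strong solution from Theorem \ref{th:wellposed}, I would invoke the cocycle identity $Z^{z}_{t+s}=\Phi\bigl(Z^{z}_{t},\,s,\,(W_{t+\cdot}-W_{t})\bigr)$ a.s.; since $(W_{t+\cdot}-W_{t})$ is a Brownian motion independent of $\overline{\mathcal F}^{W}_{t}$ while $Z^{z}_{t}$ is $\overline{\mathcal F}^{W}_{t}$-measurable, a Fubini/conditioning argument yields $E[f(Z^{z}_{t+s})\mid\overline{\mathcal F}^{W}_{t}]=P_{s}f(Z^{z}_{t})$ for all $f\in B(D,d)$ and deterministic $t,s\ge 0$. \emph{Step 2 (extension to stopping times).} For an a.s.\ finite $\mathfrak{F}$-stopping time $\tau$, take the dyadic approximations $\tau_{n}=2^{-n}\lceil 2^{n}\tau\rceil$, which are stopping times decreasing to $\tau$ with countably many values; decomposing over the values of $\tau_{n}$ and applying Step 1 level by level gives $E[f(Z^{z}_{s+\tau_{n}})\mid\overline{\mathcal F}^{W}_{\tau_{n}}]=P_{s}f(Z^{z}_{\tau_{n}})$, and conditioning further onto $\overline{\mathcal F}^{W}_{\tau}\subseteq\overline{\mathcal F}^{W}_{\tau_{n}}$ gives $E[f(Z^{z}_{s+\tau_{n}})\mid\overline{\mathcal F}^{W}_{\tau}]=E[P_{s}f(Z^{z}_{\tau_{n}})\mid\overline{\mathcal F}^{W}_{\tau}]$. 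Letting $n\to\infty$ for $f\in C(D,d)$: the left side converges because $Z^{z}$ has continuous $(D,d)$-valued paths and $f$ is bounded continuous, and the right side converges because $P_{s}f\in C(D,d)$ by the Feller property of Proposition \ref{prop:weakc}, again using path continuity and boundedness. This proves the identity for $f\in C(D,d)$; a functional monotone class argument (recall $C(D,d)$ generates $\mathcal{B}(D,d)$ since $(D,d)$ is a compact metric space) extends it to all $f\in B(D,d)$. The semigroup relation $P_{t+s}f=P_{t}\circ P_{s}f$ then follows by specialising to $\tau\equiv s$ and taking expectations.

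The only place where care is genuinely required is making the limit in Step 2 legitimate, that is, knowing that $P_{s}$ maps $C(D,d)$ into itself and that the solution has continuous $(D,d)$-valued paths; both are already available, the former being Proposition \ref{prop:weakc} and the latter having been noted earlier since norm convergence on $D$ implies weak$^{\star}$ convergence. Given those two inputs, everything else is routine bookkeeping, and, as observed in the first paragraph, the equality $\mathcal{B}(D,d)=\mathcal{B}(D)$ in fact makes Proposition \ref{prop:markovw} an immediate consequence of Proposition \ref{prop:markovs}.
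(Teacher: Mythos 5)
Your proposal is correct and matches the paper's approach: the paper likewise reduces Proposition \ref{prop:markovw} to Proposition \ref{prop:markovs} by noting (via part 2 of Proposition \ref{prop:wstate}) that $\mathcal{B}(D,d)=\mathcal{B}(D)$, so the functional $\Phi$, the filtration, and hence the semigroup are unchanged, and the classical route you outline (cocycle identity, discrete stopping times, passage to the limit by path continuity) is exactly the argument used for Proposition \ref{prop:markovs} in the Appendix. If anything, your Step 2 is slightly more careful than the paper's, since you first treat $f\in C(D,d)$ using the Feller property and then extend to $B(D,d)$ by a monotone class argument, which is the right way to justify the limit along $\tau_{n}\downarrow\tau$.
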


\begin{rem}\label{rem:88}
Parallel to Remark \ref{rem:randomi}, if the initial condition is random, replacing the filtration $\mathfrak{F}$ with that in Remark \ref{rem:filtration}, the strong Markov property still holds.
\end{rem}

Now, since the strong Markov process $\{Z^{z}_{t}\}_{t\geq 0}$ takes values in a compact metric space $(D,d)$, and the Markov semigroup $\{P_{t}\}_{t\geq 0}$ has the Feller property, then it is well-known that $\{P_{t}\}_{t\geq 0}$ is strongly continuous on $C(D, v)$(see also Proposition \ref{prop:stone} below). As the result, the following infinitesimal generator $\mathcal{L}$ can be defined. Moreover, Proposition \ref{prop:full} still holds. 
\begin{eqnarray}
D(\mathcal{L}) &=&\left\{f \in C(D,d): \lim_{t\rightarrow 0} \frac{P_{t} f-f}{t} \text { exists in } C(D,d)\right\}, \\
\mathcal{L}f &=&\lim_{t \rightarrow 0} \frac{P_{t} f-f}{t} \text { for } f \in D(\mathcal{L}).\nonumber
\end{eqnarray}
Take the space of functions 
\begin{equation}
H=\left\{h(z)=h_{c}\left(\left\langle z, f_{1}\right\rangle, \cdots,\left\langle z, f_{n}\right\rangle\right), z \in \mathbb{R}\times \mathcal{M}(0, \infty) \text {, for } n \in \mathbb{N}, h_{c} \in C_{c}^{\infty}\left(\mathbb{R}^{n}\right) \text { and } f_{i} \in \mathbb{R}\times C_{c}(0, \infty)\right\},	
\end{equation}
and then define the notion of weak$^{\star}$ solution as follows:

\begin{defn}\label{defn:weakstar}
A $(D, d)$-valued progressively measurable process $Z$ on $(\Omega,\mathcal{F}, P, \mathfrak{F}, W)$ is  called a weak$^{\star}$ solution if for $t\geq 0$ and $f\in \mathbb{R}\times C_{0}(0, \infty)$,
\begin{equation}\label{eq:weakstardefn}
\langle Z_{t}, f \rangle=\langle Z_{0}, f \rangle +\int_{0}^{t} \langle A Z_{s}, f\rangle ds+\int_{0}^{t} \langle F\left(Z_{s}\right), f\rangle d s+\int_{0}^{t} \langle B\left(Z_{s}\right) d W_{s}, f\rangle,a.s.
\end{equation}
\end{defn}

\begin{rem}
In equation (\ref{eq:weakstardefn}), $\langle z, f \rangle=xg+\int_{(0, \infty)}y(\lambda)h(\lambda)\mu(d\lambda)$ for $z=(x, y)\in E$, and $f=(g, h)\in \mathbb{R}\times C_{0}(0, \infty)$.
\end{rem}

Parallel to Proposition \ref{prop:equi}, the following statement is trivial:

\begin{prop}\label{prop:weakk1}
$Z$ is a $(D, d)$-valued analytically strong solution if and only if $Z$ is a weak$^{\star}$ solution.
\end{prop}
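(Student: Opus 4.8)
The plan is to read both notions as a single $E$-valued identity, once tested against every $f$ in the subspace $\mathbb{R}\times C_{0}(0,\infty)\subseteq E^{\star}$ (here $h\in C_{0}(0,\infty)$ is identified with the functional $y\mapsto\int_{(0,\infty)}y(\lambda)h(\lambda)\mu(d\lambda)$, so the pairing in Definition \ref{defn:weakstar} is exactly the restriction of the $E$--$E^{\star}$ pairing). Two preliminary remarks make everything go through. First, since $D\subseteq D(A)$ by Proposition \ref{prop:domain}, the quantity $AZ_{s}=(-cX_{s},-\lambda Y_{s}(\lambda))$ is a genuine element of $E$ whenever $Z_{s}\in D$, and on $D$ one has $\|AZ_{s}\|_{E}\le c+c'$, $\|F(Z_{s})\|_{E}\le c+c'$, $\|B(Z_{s})\|_{E}\le 1$; hence, together with progressive measurability of $Z$, all of $\int_{0}^{t}AZ_{s}\,ds$, $\int_{0}^{t}F(Z_{s})\,ds$ (Bochner) and $\int_{0}^{t}B(Z_{s})\,dW_{s}$ (It\^{o}) are well defined as $E$-valued random variables. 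Second, and this is the only structural input, $\mathbb{R}\times C_{0}(0,\infty)$ is \emph{separating} for $E=\mathbb{R}\times L^{1}(\mu)$: if $\xi=(a,\varphi)\in E$ satisfies $\langle\xi,f\rangle=0$ for all $f\in\mathbb{R}\times C_{0}(0,\infty)$, then $a=0$ (take $f=(1,0)$) and $\int_{(0,\infty)}\varphi h\,d\mu=0$ for every $h\in C_{0}(0,\infty)$, i.e. the finite signed measure $\varphi.\mu$ annihilates $C_{0}(0,\infty)$, so $\varphi.\mu=0$ by Riesz--Markov--Kakutani (equivalently, by injectivity of $i_{\mu}\colon L^{1}(\mu)\hookrightarrow\mathcal{M}(0,\infty)$), whence $\varphi=0$ in $L^{1}(\mu)$.

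($\Rightarrow$) Suppose $Z$ is a $(D,d)$-valued analytically strong solution, so the identity of Definition \ref{label:stronga} holds in $E$, a.s. Fix $f\in\mathbb{R}\times C_{0}(0,\infty)\subseteq E^{\star}$ and apply $\langle\,\cdot\,,f\rangle$. A bounded linear functional commutes with Bochner integrals, so $\langle\int_{0}^{t}AZ_{s}\,ds,f\rangle=\int_{0}^{t}\langle AZ_{s},f\rangle\,ds$ and likewise for the $F$-term; and a bounded linear operator commutes with the stochastic integral, so $\langle\int_{0}^{t}B(Z_{s})\,dW_{s},f\rangle=\int_{0}^{t}\langle B(Z_{s})\,dW_{s},f\rangle$ (the integrand $B(Z_{s})$ has a single nonzero coordinate, so this is really a one-dimensional statement). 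This is precisely the weak$^{\star}$ identity of Definition \ref{defn:weakstar}.

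($\Leftarrow$) Suppose $Z$ is a weak$^{\star}$ solution, fix $t\ge 0$, and set $R_{t}:=Z_{0}+\int_{0}^{t}AZ_{s}\,ds+\int_{0}^{t}F(Z_{s})\,ds+\int_{0}^{t}B(Z_{s})\,dW_{s}\in E$. Reading the commutation identities of the previous paragraph backwards, for each fixed $f\in\mathbb{R}\times C_{0}(0,\infty)$ the weak$^{\star}$ identity says $\langle Z_{t},f\rangle=\langle R_{t},f\rangle$ a.s. Since $C_{c}(0,\infty)$ is dense in $C_{0}(0,\infty)$, the space $\mathbb{R}\times C_{0}(0,\infty)$ is separable; fix a countable dense subset $\{f_{n}\}_{n\in\mathbb{N}}$. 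On the full-measure intersection of the events $\{\langle Z_{t},f_{n}\rangle=\langle R_{t},f_{n}\rangle\}$ we get $\langle Z_{t}-R_{t},f_{n}\rangle=0$ for all $n$, hence $\langle Z_{t}-R_{t},f\rangle=0$ for all $f\in\mathbb{R}\times C_{0}(0,\infty)$ by continuity. By the separating property above, $Z_{t}=R_{t}$ in $E$, a.s. Since $Z_{t}\in D\subseteq D(A)$ and $s\mapsto AZ_{s}$ is Bochner integrable on $(0,t)$ by the first remark, this is exactly the defining identity of an analytically strong solution. (If both notions are read as holding for all $t$ simultaneously, a.s., one glues over a countable dense set of times using path-continuity of $t\mapsto Z_{t}$ and $t\mapsto R_{t}$.)

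\emph{The main obstacle.} There is essentially one non-bookkeeping point, used in ($\Leftarrow$): that the \emph{proper} subspace $\mathbb{R}\times C_{0}(0,\infty)$ of $E^{\star}=\mathbb{R}\times L^{\infty}(\mu)$ still pins down $E$-valued identities. This rests on (i) the isometric embedding $i_{\mu}\colon L^{1}(\mu)\hookrightarrow(\mathcal{M}(0,\infty),\|\cdot\|_{TV})$ together with the fact that $C_{0}(0,\infty)$ separates points of $\mathcal{M}(0,\infty)$, and (ii) the interchange of ``for each $f$, a.s.'' with ``a.s., for all $f$'', which is legitimate because $C_{0}(0,\infty)$ is separable. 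Everything else---commuting continuous functionals and operators with Bochner and It\^{o} integrals, and the boundedness estimates on $D$ that guarantee the integrals exist---is routine.
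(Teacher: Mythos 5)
Your proof is correct. The paper gives no standalone argument for this proposition; it only remarks that the statement holds ``similar to Proposition \ref{prop:equi}'', whose proof runs through the chain mild $\Leftrightarrow$ weakened $\Leftrightarrow$ analytically weak via stochastic Fubini and semigroup identities, with the hard direction (analytically weak $\Rightarrow$ mild) requiring the adjoint semigroup and weak$^{\star}$ density of $D(A^{+2})$ in $E^{\star}$. Your route is genuinely different and strictly more elementary: you exploit the fact that Definition \ref{defn:weakstar} pairs the test functional $f$ directly with $AZ_{s}$ (which lives in $E$ because $D\subseteq D(A)$ and is bounded there by $c+c'$), rather than pairing $Z_{s}$ with $A^{+}f$ as in Definition \ref{defn:weaka}. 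This reduces the whole equivalence to the single structural fact that $\mathbb{R}\times C_{0}(0,\infty)$ separates points of $\mathbb{R}\times L^{1}(\mu)$ through the embedding $i_{\mu}$, plus the standard separability argument to upgrade ``for each $f$, a.s.''\ to ``a.s., for all $f$''. What the paper's implied route buys is uniformity with Section 3 and the mild formulation as a by-product (already available from Remark \ref{rem:1}); what yours buys is a short, self-contained proof that makes transparent why no adjoint machinery is needed once the solution is known to take values in $D\subseteq D(A)$. The only places you gloss are routine: strong measurability of $s\mapsto AZ_{s}$ (which follows since $A|_{D}$ is norm-continuous by dominated convergence and $\mathcal{B}(D)=\mathcal{B}(D,d)$ by Proposition \ref{prop:wstate}), and the gluing over $t$, which you correctly flag.
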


Consequently, as before, it can be shown that 
\begin{equation*}
h(Z_{t}^{z})-h(z)-\int_{0}^{t}\mathcal{L}h(Z^{z}_{s})ds	
\end{equation*}
is a continuous martingale for any $h\in H|_{(D, d)}$ and $z\in (D, d)$, where 
\begin{equation}\label{eq:generatorw}
\begin{aligned}
\mathcal{L}h(z) &=\left\langle Az, \sum_{i=1}^{n} \partial_{x_{i}} h_{c}(\cdots) f_{i}\right\rangle+\left\langle F(z), \sum_{i=1}^{n} \partial_{x_{i}} h_{c}(\cdots) f_{i}\right\rangle+\frac{1}{2} \sum_{i, j=1}^{n} \partial_{x_{i} x_{j}} h_{c}(\cdots) x(1-x) f_{i}^{1} f_{j}^{1}.
\end{aligned}
\end{equation}

By Definition \ref{defn:frechet}, the following lemma can be verified directly, hence
\begin{equation}
\mathcal{L}h(z) =\langle A z, \nabla h(z)\rangle+\langle F(z), \nabla h(z)\rangle+\frac{1}{2}\left\langle\nabla^{2} h(z), B(z) \otimes B(z)\right\rangle.	
\end{equation}
See the Appendix for the proof.

\begin{lemma}\label{lem:derivative}
For any $h\in H$, $\nabla h(z)=\sum\limits_{i=1}^{n} \partial_{x_{i}} h_{c}(\langle z, f_{1}\rangle \cdots \langle z, f_{n}\rangle) f_{i}$, and 
$$\nabla^{2}h(z)(z_{1}, z_{2})=\sum_{i, j=1}^{n} \partial_{x_{i} x_{j}} h_{c}(\langle z, f_{1}\rangle \cdots \langle z, f_{n}\rangle)\langle z_{1}, f_{i}\rangle \langle z_{2}, f_{j}\rangle,$$ for $z_{1}, z_{2}\in \mathbb{R}\times \mathcal{M}(0, \infty)$.
\end{lemma}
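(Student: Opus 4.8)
The plan is to recognise every $h\in H$ as the composition $h=h_c\circ L$, where $h_c\in C_c^\infty(\mathbb{R}^n)$ is smooth and $L\colon\mathbb{R}\times\mathcal{M}(0,\infty)\to\mathbb{R}^n$ is the linear map $Lz=(\langle z,f_1\rangle,\dots,\langle z,f_n\rangle)$, and then to apply the chain rule for Fréchet derivatives together with the ordinary finite-dimensional Taylor expansion of $h_c$. First I would check that $L$ is well defined and bounded: each $f_i\in\mathbb{R}\times C_c(0,\infty)\subseteq\mathbb{R}\times C_0(0,\infty)$ acts as a linear functional on $\mathbb{R}\times\mathcal{M}(0,\infty)$ with $|\langle z,f_i\rangle|\le\|f_i\|\,\|z\|$, so $|Lz|\le C_L\|z\|$ for a constant $C_L$ depending only on the $f_i$; moreover each $\langle\cdot,f_i\rangle$ is weak$^\star$-continuous, so $L$ is continuous for the weak$^\star$ topology as well. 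Being linear and continuous, $L$ is its own Fréchet derivative at every point, i.e.\ $L(z+z')-Lz-Lz'=0$.

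Next I would invoke Taylor's theorem for $h_c\in C^\infty(\mathbb{R}^n)$: for $u,v\in\mathbb{R}^n$,
\[
h_c(u+v)-h_c(u)-\sum_{i=1}^n\partial_{x_i}h_c(u)\,v_i=o(|v|)\qquad\text{as }|v|\to0,
\]
uniformly in $u$ (which suffices since $h_c$ has compact support). Substituting $u=Lz$, $v=Lz'$ and using $|Lz'|\le C_L\|z'\|$, the remainder becomes $o(\|z'\|)$, so the linear functional $z'\mapsto\sum_{i=1}^n\partial_{x_i}h_c(Lz)\,\langle z',f_i\rangle$ — a finite combination of the continuous functionals $\langle\cdot,f_i\rangle$, hence itself continuous — satisfies the tangency condition of Definition \ref{defn:frechet}. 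This identifies $\nabla h(z)=\sum_{i=1}^n\partial_{x_i}h_c(\langle z,f_1\rangle\cdots\langle z,f_n\rangle)\,f_i$.

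For the second derivative I would repeat the argument one level up. The map $z\mapsto\nabla h(z)$ equals $G\circ L$, where $G\colon\mathbb{R}^n\to\big(\mathbb{R}\times\mathcal{M}(0,\infty)\big)^\star$ is $G(u)=\sum_i\partial_{x_i}h_c(u)\,f_i$, which is smooth with $DG(u)w=\sum_{i,j}\partial_{x_ix_j}h_c(u)\,w_j\,f_i$; the chain rule again gives $\nabla^2h(z)=DG(Lz)\circ L$, i.e.\ $\nabla^2h(z)(z_1,z_2)=\sum_{i,j=1}^n\partial_{x_ix_j}h_c(\langle z,f_1\rangle\cdots\langle z,f_n\rangle)\,\langle z_1,f_i\rangle\langle z_2,f_j\rangle$ (the symmetry of the resulting bilinear form being inherited from $\partial_{x_ix_j}h_c=\partial_{x_jx_i}h_c$), which is exactly the claimed formula; this bilinear form is continuous in each argument, as required.

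Since each step is a routine chain-rule computation, I do not expect a genuine obstacle; the only point that deserves care is purely topological, namely that Definition \ref{defn:frechet} is phrased for general topological vector spaces, so one must make sure the tangency condition is verified with respect to the topology actually in force on $\mathbb{R}\times\mathcal{M}(0,\infty)$. In the norm topology this is immediate from $|Lz'|\le C_L\|z'\|$, while for the weak$^\star$ topology one uses that the $L$-preimage of a small ball in $\mathbb{R}^n$ is a weak$^\star$ $0$-neighbourhood, so the Taylor remainder bound takes over on that neighbourhood; either way the estimate of the previous paragraphs applies verbatim.
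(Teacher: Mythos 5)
Your proposal is correct and follows essentially the same route as the paper: the paper likewise reduces everything to a second-order Taylor/mean-value estimate on $h_{c}$, bounds the remainder by a constant times $\max_{i}|\langle te, f_{i}\rangle|^{2}$, and uses that $\bigcap_{i}\{e:|\langle e, f_{i}\rangle|\leq \frac{1}{n}\}$ is a weak$^{\star}$ open $0$-neighbourhood to verify the tangency condition of Definition \ref{defn:frechet}. The only difference is presentational --- you package the computation as a chain rule for $h_{c}\circ L$ while the paper applies the mean value theorem twice directly --- and the topological point you flag at the end (that the $L$-preimage of a small ball is a weak$^{\star}$ $0$-neighbourhood) is exactly the step the paper's proof hinges on.
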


Note that in the definition of $H$, $f_{i}\in\mathbb{R}\times C_{c}(0,\infty)$, and thus $\mathcal{L}h\in C(D,d)$ for any $h\in H|_{(D,d)}$. Then, based on the above statements, it is known that $H|_{(D,d)}$ is contained in $D(\mathcal{L})$. Conversely, by the isometry and the martingale representation theorem, any solution to the $C_{\mathbb{R}_{+}}(D,d)$-martingale problem for $(\tilde{\mathcal{L}}, \nu)$ is a weak solution to equation (\ref{eq:SEE}).

The following martingale problem formulation serves as a summary of the above discussion. The proof is similar to that of Proposition \ref{prop:mps}.

\begin{prop}\label{prop:mpw}
For any initial distribution $\nu$ on $(D, d)$, the $C_{\mathbb{R}_{+}}(D,d)$-martingale problem for $(\mathcal{\bar{L}}, \nu)$ has a unique solution, and the solution $Z^{\nu}\sim P_{\nu}$ is an $\{\overline{\mathcal{F}}_{t+}^{Z^{\nu}}\}$-strong Markov process. Denote $P_{z}$ the solution for $(\mathcal{\bar{L}}, \delta_{z})$, then $P_{z}(B)$ is Borel measurable in $z$ for each $B\in \mathcal{B}(C_{\mathbb{R}_{+}}(D,d))$ and $P_{\nu}=\int_{(D, d)}P_{z}d\nu$.
\end{prop}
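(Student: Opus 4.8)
The plan is to mirror the proof of Proposition~\ref{prop:mps}, replacing its ingredients by their weak$^{\star}$ counterparts: existence of a solution comes from the strong solution of equation~(\ref{eq:SEE}) together with the It\^{o}-formula computation carried out just before the statement; uniqueness comes from the martingale representation theorem combined with the pathwise uniqueness of Proposition~\ref{prop:pathwise}; the strong Markov property is Proposition~\ref{prop:markovw}; and the measurability assertions follow from the Wiener-functional form of the strong solution. For existence, let $Z^{\nu}$ be the $(D,d)$-valued continuous strong solution of equation~(\ref{eq:SEE}) with $Z^{\nu}_{0}\sim\nu$ furnished by Theorem~\ref{th:wellposed}, and let $P_{\nu}$ be its law on $C_{\mathbb{R}_{+}}(D,d)$. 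As established just before the statement, $h(Z^{\nu}_{t})-h(Z^{\nu}_{0})-\int_{0}^{t}\bar{\mathcal{L}}h(Z^{\nu}_{s})\,ds$ is a continuous $\mathfrak{F}^{Z^{\nu}}$-martingale for every $h\in H|_{(D,d)}$, so $P_{\nu}$ solves the $C_{\mathbb{R}_{+}}(D,d)$-martingale problem for $(\bar{\mathcal{L}},\nu)$ (with $\bar{\mathcal{L}}$ acting on the core $H|_{(D,d)}$).

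For uniqueness, let $\tilde Z=(\tilde X,\tilde Y.\mu)$ be any solution of this martingale problem. Fix $f=(g,h)\in\mathbb{R}\times C_{c}(0,\infty)$. Since $(D,d)$ is compact (Proposition~\ref{prop:wstate}), $z\mapsto\langle z,f\rangle$ is bounded on $D$, so one may choose $\psi\in C_{c}^{\infty}(\mathbb{R})$ agreeing with the identity on a neighbourhood of the range of $\langle\cdot,f\rangle$ over $D$; feeding $h_{1}(z)=\psi(\langle z,f\rangle)$ and $h_{2}(z)=\psi(\langle z,f\rangle)^{2}$ into the martingale problem and using Lemma~\ref{lem:derivative} and~(\ref{eq:generatorw}) shows that
$$M^{f}_{t}:=\langle \tilde Z_{t},f\rangle-\langle \tilde Z_{0},f\rangle-\int_{0}^{t}\bigl(\langle A\tilde Z_{s},f\rangle+\langle F(\tilde Z_{s}),f\rangle\bigr)\,ds$$
is a continuous martingale with $\langle M^{f}\rangle_{t}=\int_{0}^{t}\tilde X_{s}(1-\tilde X_{s})\,g^{2}\,ds$. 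Taking $f=(1,0)$ and applying the martingale representation theorem produces, on a possibly enlarged probability space, a standard Brownian motion $\bar W$ with $M^{(1,0)}_{t}=\int_{0}^{t}\sqrt{\tilde X_{s}(1-\tilde X_{s})}\,d\bar W_{s}$; taking $f=(0,h)$ gives $\langle M^{f}\rangle\equiv0$, so $\langle\tilde Z_{\cdot},(0,h)\rangle$ follows its deterministic evolution. Combining these and passing from $f\in\mathbb{R}\times C_{c}(0,\infty)$ to $f\in\mathbb{R}\times C_{0}(0,\infty)$ by the density of $C_{c}(0,\infty)$ in $C_{0}(0,\infty)$ and dominated convergence, $(\tilde Z,\bar W)$ is a weak$^{\star}$ solution, hence by Proposition~\ref{prop:weakk1} a $(D,d)$-valued analytically strong---in particular weak---solution of equation~(\ref{eq:SEE}). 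By Proposition~\ref{prop:pathwise} and the Yamada--Watanabe theorem, equation~(\ref{eq:SEE}) has a unique weak solution, so the law of $\tilde Z$ equals $P_{\nu}$ and the finite-dimensional distributions are determined.

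It remains to record the strong Markov property and the measurable dependence. That $Z^{\nu}\sim P_{\nu}$ is $\{\overline{\mathcal{F}}^{Z^{\nu}}_{t+}\}$-strong Markov is Proposition~\ref{prop:markovw} together with Remark~\ref{rem:88}. For the measurability, the strong solution is a Wiener functional $Z^{z}_{t}=\Phi(z,t,W_{\cdot})$ with $\Phi$ jointly measurable (Definition~\ref{defn:strongp}), and by Proposition~\ref{prop:wstate}(2) the Borel structure of $D$ is unchanged under the $d$-topology; hence $\omega\mapsto\Phi(z,\cdot,W_{\cdot}(\omega))$ is a $C_{\mathbb{R}_{+}}(D,d)$-valued random variable depending jointly measurably on $(z,\omega)$, so $z\mapsto P_{z}(B)=\int I_{B}\bigl(\Phi(z,\cdot,W_{\cdot}(\omega))\bigr)\,P(d\omega)$ is Borel for every $B\in\mathcal{B}(C_{\mathbb{R}_{+}}(D,d))$ by Fubini. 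Taking $Z_{0}\sim\nu$ independent of $W$ as in Remark~\ref{rem:filtration} and conditioning on $Z_{0}$ gives $P_{\nu}(B)=\int_{(D,d)}P_{z}(B)\,\nu(dz)$, which also upgrades uniqueness from $\nu=\delta_{z}$ to arbitrary $\nu$.

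The main obstacle is the uniqueness step: one must recover both the drift and the quadratic variation of $\langle\tilde Z_{\cdot},f\rangle$ for all $f\in\mathbb{R}\times C_{0}(0,\infty)$ from a martingale problem stated only over the test class $H|_{(D,d)}$ with $C_{c}^{\infty}$ outer and $C_{c}$ inner functions, and then invoke the martingale representation theorem to produce the driving Brownian motion in this measure-valued, non-reflexive setting. The truncation of linear and quadratic functionals to $C_{c}^{\infty}$ ones is legitimate only because $(D,d)$ is compact, and the passage from $C_{c}(0,\infty)$ to $C_{0}(0,\infty)$ uses the density noted in Section~2; once $\tilde Z$ is identified as a weak solution of~(\ref{eq:SEE}), everything else is a routine consequence of Theorem~\ref{th:wellposed}, Proposition~\ref{prop:markovw} and the Wiener-functional representation.
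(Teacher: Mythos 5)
Your proposal is correct and follows essentially the same route the paper intends: the paper proves Proposition \ref{prop:mpw} by adapting the proof of Proposition \ref{prop:mps} (existence from the strong solution plus the It\^{o} computation, uniqueness by recovering the drift and quadratic variation from bump-function test functions, invoking the martingale representation theorem to identify any martingale-problem solution as a weak solution of (\ref{eq:SEE}), and then appealing to pathwise uniqueness), which is exactly what you do, with the pleasant simplification that compactness of $(D,d)$ lets you dispense with the localizing stopping times used in the original argument. Your measurability step via the Wiener functional $\Phi$ and Fubini is a minor variant of the paper's monotone-class/disintegration argument and is equally valid.
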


With 2 of Proposition \ref{prop:full} applied to $\mathcal{L}$, the fact that $\{P_{t}\}_{t\geq 0}$ is strongly continuous on $C(D, d)$ can also be proved by showing that $H|_{(D,d)}$ is dense in $C(D, d)$.

\begin{prop}\label{prop:stone}
$\{P_{t}\}_{t\geq 0}$ is strongly continuous on $C(D, d)$.
\end{prop}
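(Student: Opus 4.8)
The plan is to derive strong continuity from the abstract criterion in part~2 of Proposition~\ref{prop:full}. Applied to the Feller semigroup $\{P_{t}\}_{t\ge 0}$ on $C(D,d)$ (the Feller property being supplied by Proposition~\ref{prop:weakc}), that criterion identifies the largest subspace of $C(D,d)$ on which $\{P_{t}\}_{t\ge 0}$ acts strongly continuously as $\overline{D(\mathcal{L})}$. Hence it suffices to prove that $D(\mathcal{L})$ is dense in $C(D,d)$, and since $H|_{(D,d)}\subseteq D(\mathcal{L})$ as established above, it is enough to show that $H|_{(D,d)}$ is dense in $C(D,d)$. Because $(D,d)$ is a compact metric space, this will follow from the Stone--Weierstrass theorem once we check that $H|_{(D,d)}$ is a point-separating subalgebra of $C(D,d)$ containing the constants.

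The subalgebra property is where a little care is needed, since $H$ itself is not an algebra of functions on $\mathbb{R}\times\mathcal{M}(0,\infty)$: the compact-support requirement on the outer function $h_{c}$ is not preserved under addition. However, on the compact set $D$ every linear functional $z\mapsto\langle z,f_{i}\rangle$ is bounded, so for any finite family $f_{1},\dots,f_{n}\in\mathbb{R}\times C_{c}(0,\infty)$ the map $z\mapsto(\langle z,f_{1}\rangle,\dots,\langle z,f_{n}\rangle)$ sends $D$ into a compact set $K\subseteq\mathbb{R}^{n}$; multiplying any $\phi\in C^{\infty}(\mathbb{R}^{n})$ by a cutoff $\chi\in C_{c}^{\infty}(\mathbb{R}^{n})$ with $\chi\equiv 1$ on a neighbourhood of $K$ alters neither $\phi(\langle z,f_{1}\rangle,\dots)$ on $D$ nor membership of the resulting function in $H$. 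Thus $H|_{(D,d)}$ coincides with the class of restrictions obtained by allowing $h_{c}\in C^{\infty}(\mathbb{R}^{n})$ (or even polynomials, cf.\ Remark~\ref{rem:poly}), which is manifestly closed under sums and products and contains the constants (take $h_{c}\equiv 1$).

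It remains to verify that $H|_{(D,d)}$ separates points of $D$. Given $z_{1}=(x_{1},y_{1}.\mu)\ne z_{2}=(x_{2},y_{2}.\mu)$: if $x_{1}\ne x_{2}$, the function $z\mapsto\langle z,(1,0)\rangle=x$ separates them; if $x_{1}=x_{2}$, then $y_{1}.\mu\ne y_{2}.\mu$ as finite signed measures, and since $C_{c}(0,\infty)$ is dense in $C_{0}(0,\infty)$ while $\mathcal{M}(0,\infty)\cong C_{0}(0,\infty)^{\star}$ by the Riesz--Markov--Kakutani theorem, there is $f\in C_{c}(0,\infty)$ with $\langle y_{1}.\mu,f\rangle\ne\langle y_{2}.\mu,f\rangle$, so $z\mapsto\langle z,(0,f)\rangle$ separates them. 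Stone--Weierstrass then gives density of $H|_{(D,d)}$ in $C(D,d)$, whence $\overline{D(\mathcal{L})}=C(D,d)$, and the conclusion follows from part~2 of Proposition~\ref{prop:full}. The only genuine obstacle is the bookkeeping point that $H$ is not globally an algebra; once this is resolved by the cutoff argument, the remainder is routine.
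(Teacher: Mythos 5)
Your proof is correct and follows essentially the same route as the paper: reduce to density of $H|_{(D,d)}$ in $C(D,d)$ via part 2 of Proposition \ref{prop:full}, then apply Stone--Weierstrass after checking the subalgebra and point-separation properties (the latter using density of $C_{c}(0,\infty)$ in $C_{0}(0,\infty)$ and $\mathcal{M}(0,\infty)\cong C_{0}(0,\infty)^{\star}$). Your cutoff argument making the subalgebra property explicit is a slightly more careful treatment of a point the paper handles by asserting the subalgebra property directly (and by invoking the vanishes-nowhere form of Stone--Weierstrass rather than adjoining constants), but the substance is identical.
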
 
\begin{proof}Since $H|_{(D,d)}$ is a subalgebra of $C(D,d)$ and it vanishes nowhere, by the Stone-Weierstrass theorem, we only need to show that $H|_{(D,d)}$ separates points. For $x, y\in (D,d)$, if $h(x)=h(y)$ for any $h\in H|_{(D,d)}$, then for any $f\in \mathbb{R}\times C_{c}(0,\infty)$, we have $\langle x, f\rangle =\langle y, f\rangle$. In fact, given $f$, $\langle x, f\rangle$ and $\langle y, f\rangle$ are contained in some compact subset, thus we can take $h_{c}$ as a smooth bump function. Finally, note that $C_{c}(0,\infty)$ is dense in $C_{0}(0,\infty)$, hence $x=y$.
\end{proof}

Finally, it is obvious that $\{P_{t}\}_{t\geq 0}$ is a positive contraction semigroup and $\mathcal{L}1=0$, hence by definition, the $(D,d)$-valued solution $Z$ is a Feller process.

\section{Continuum seed-bank diffusion: Scaling Limit Interpretation}\label{sec:scaling}

In this section, a discrete-time Wright-Fisher type model is introduced, and then Theorem \ref{th:scaling} which states that the scaling limit of the allele frequency process is the $(D,d)$-valued solution $Z$ to equation (\ref{eq:SEE}) will be proved. 

Consider a haploid population of fixed size $N$ which evolves in discrete generations $0,1,\cdots$. Assume that each individual carries an allele of type from $\{A, a\}$, and
there are finitely many seed-banks labeled by their respective sizes $M^{N}_{i}, i=1,2,\cdots, n$, which consists of the dormant individuals. 

For given $c\in \mathbb{N}$, assume that $c\leq \min\{N, M^{N}_{i}, i=1,2,\cdots, n\}$, and $c=\sum\limits_{i=1}^{n}c^{n}_{i}$ for $c^{n}_{i}>0$. The dynamics of the model are described as follows:

\begin{enumerate}
\item $N-c$ active individuals in generation $1$ is produced by active individuals in generation $0$ by multinomial sampling with equal weights i.e. each of them selects a parent from generation $0$ uniformly, and inherits its genetic type;
\item The remaining $c$ individuals select their parents in the same way, but they go dormant afterwards. Each of them selects a seed-bank to enter according to the probability distribution $\{\frac{c^{n}_{i}}{c}, i=1,2,\cdots, n\}$. Let $C^{n}_{i}$ be the number of individuals entering the $i$-th seed-bank, then $\{C^{n}_{i}, i=1,2,\cdots, n\}$ is a multinomial distributed random vector; 
\item According to the realization of $2$, there are $C^{n}_{i}$ dormant individuals in the $i$-th seed-bank to revive in generation $1$, and they are selected by sampling without replacement;
\item The remaining $\sum\limits_{i=1}^{n}M^{N}_{i}-c$ dormant individuals stay in the seed-banks.
\end{enumerate}

Since the active population and the seed-banks are exchanging the same number of individuals, their sizes $\{N, M_{i}, i=1,2,\cdots, n\}$ remain unchanged.

Following the notations in \cite{blath2016}, the type-$A$ allele frequency process is defined as 
\begin{eqnarray}
X^{N}_{k}=\frac{1}{N}\sum_{p=1}^{N}I_{\{\xi_{k}(p)=A\}} \text{ and } Y^{M^{N}_{i}}_{k}=\frac{1}{M^{N}_{i}}\sum_{q=1}^{M^{N}_{i}}I_{\{\eta^{i}_{k}(q)=A\}},
\end{eqnarray}
for $i=1,2,\cdots,n$ and $k\in\mathbb{N}_{0}$, where $\xi_{k}(p)$ and $\eta_{k}^{i}(q)$ are random variables taking values in $\{A, a\}$. All of them are discrete-time time-homogeneous Markov chains taking values in
\begin{eqnarray}
J^{N}=\{0,\frac{1}{N}, \frac{2}{N},\cdots,1\} \text{ and }
J^{M^{N}_{i}}=\{0,\frac{1}{M^N_{i}}, \frac{2}{M^N_{i}},\cdots,1\},
\end{eqnarray}
respectively.

Denote $Y_{k}^{\vec{M}^{N}}=(Y_{k}^{M^N_{1}}, Y_{k}^{M^N_{2}}, \cdots, Y_{k}^{M^N_{n}})$, then for $X^{N}_{0}=x\in J^{N}$ and $Y^{\vec{M}^{N}}_{0}=\vec{y}\in \prod\limits_{i=1}^{n}J^{M^N_{i}}$, the transition probability of $\{(X^N_k, Y^{\vec{M}^N}_k)\}_{k\in\mathbb{N}_{0}}$ is 
\begin{eqnarray}
&&P_{x,\vec{y}}(X_{1}^{N}=x^{\prime}, Y_{1}^{\vec{M}^N}=\vec{y}^{\prime})=\sum_{i_{1}+\cdots+i_{n}=c}\{P(C^{n}_{1}=i_{1},\cdots, C^n_{n}=i_{n})\cdot\\
&&\quad P_{x,\vec{y}}(U=x^{\prime}N-\sum_{j=1}^{n}Z^n_{j}, V^n_{j}=(y_{j}^{\prime}-y_{j})M^{N}_{j}+Z^n_{j},\text{for}~j=1,2,\cdots,n|C^n_{1}=i_{1},\cdots, C^n_{n}=i_{n})\},\nonumber
\end{eqnarray}
where 
\begin{enumerate}
	\item $U$ is the number of active individuals in generation $1$ that are offsprings of type-A active individuals in generation $0$;
	\item $V^{n}_{j}$, for $j=1,\cdots, n$, is the number of dormant individuals in the $j$-th seed-bank of generation $1$ that are offsprings of type-A active individuals in generation $0$;
	\item $Z^{n}_{j}$, for $j=1,\cdots, n$, is the number of type-A individuals to revive in generation 1 from the $j$-th seed-bank in generation 0.
\end{enumerate}

According to the mechanism of the model, it is known that 
\begin{enumerate}
	\item $U$,$V^n_j$,$Z^n_j$ are conditionally independent with respect to $C^n_j$, for $j=1,\cdots, n$;
	\item $U\sim Bin(N-c, x)$;
	\item $V^n_{j}\sim Bin(C^n_{j}, x)$\footnote{It refers to that $V^n_{j}|_{C^{n}_{j}=i}\sim Bin(i,x)$. The same applies to 4 below.} for $j=1,\cdots, n$;
	\item $Z^n_{j}\sim Hyp(M^{N}_{j}, y_{j}M^N_{j}, C^n_j)$ for $j=1,\cdots, n$;
	\item $X^{N}_{1}=\frac{U+\sum\limits_{j=1}^{n}Z^{n}_{j}}{N}$, and 
	$Y^{M^N_{j}}_{1}=\frac{y_{j}M^{N}_{j}-Z^n_{j}+V^n_{j}}{M^{N}_{j}}$ for $j=1,\cdots, n$.
\end{enumerate}

Now, in order to prove the desired result, the vector valued process $Z^{N}=(X^{N}, Y^{M^{N}})$ should be mapped to a 
$(D,d)$-valued process. Therefore, the following measurable mapping is introduced:

\begin{defn}\label{defn:mapping}
\begin{eqnarray*}
\eta_{N,n}: E_{N,n}&\rightarrow &(D,d)\\
(x,\vec{y})&\mapsto &(x, \tilde{y}.\mu),
\end{eqnarray*}
where $E_{N, n}=J^{N}\times \prod\limits_{j=1}^{n}J^{M^{N}_{j}}$, $\tilde{y}=\sum\limits_{i=1}^{n}y_{i}I_{(\lambda^{n}_{i-1}, \lambda^{n}_{i}]}\in L^{1}(\mu)$, $\lambda^{n}_{i}=\frac{c^{n}_{i}N}{M^{N}_{i}}$ for $i=1,2,\cdots,n$, and $\lambda^{n}_{0}=0$.
\end{defn}

Let $\mu_{n}=\sum\limits_{i=1}^{n}c^{n}_{i}\delta_{\lambda^{n}_{i}}$, if $c^{n}_{i}=\mu(\lambda^{n}_{i-1}, \lambda^{n}_{i}]$, then 
\begin{equation}\label{eq:equal}
\int_{(0,\infty)}\tilde{y}(\lambda)\mu_{n}(d\lambda)=\int_{(0,\infty)}\tilde{y}(\lambda)\mu(d\lambda).	
\end{equation}
At the end of the following proof, the support set $\{\lambda^{n}_{i}\}_{i=1,2,\cdots, n_{r}}$ will be determined at first, then take $c^{n}_{i}=\mu(\lambda^{n}_{i-1}, \lambda^{n}_{i}]$, and for given $N_{r}$, let $M^{N_{r}}_{i}=\frac{c^{n}_{i}N_{r}}{\lambda^{n}_{i}}$. In this way, the corresponding mapping $\eta_{N_{r},n_{r}}$ can be defined, and equation (\ref{eq:equal}) holds.

\tikzset{global scale/.style={
    scale=#1,
    every node/.append style={scale=#1}
  }
}
\begin{center}
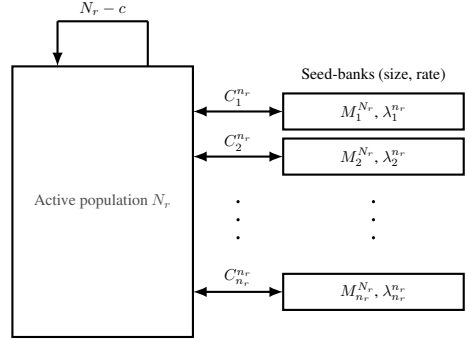

\begin{tikzpicture}[global scale=0.6]
\node (rect) at (0,0) [draw, thick, fill=white, fill opacity=0.7,
minimum width=4cm,minimum height=6cm] {Active population $N_{r}$}; 
\draw[-,thick] (1,3) -- (1,4) node[right] {};
\draw[-,thick] (1,4) -- (-1,4) node[above] at (0,4){$N_{r}-c$};
\draw[-latex,thick] (-1,4) -- (-1,3) node[right] {};
\draw[latex-latex, thick] (2,2) -- (4,2) node[above] at (3,2){$C^{n_{r}}_{1}$};
\draw[latex-latex, thick] (2,1) -- (4,1) node[above] at (3,1){$C^{n_{r}}_{2}$};
\fill (3,0) circle (1pt);
\fill (3,-0.4) circle (1pt);
\fill (3,-0.8) circle (1pt);
\draw[latex-latex, thick] (2,-2) -- (4,-2) node[above] at (3,-2){$C^{n_{r}}_{n_r}$};
\draw[thick] (4,2.4) rectangle (8,1.6) node[] at (6, 2){$M^{N_{r}}_{1}$, $\lambda^{n_{r}}_{1}$};
\draw[thick] (4,1.4) rectangle (8,0.6) node[] at (6, 1){$M^{N_{r}}_{2}$, $\lambda^{n_{r}}_{2}$};
\draw[thick] (4,-1.6) rectangle (8,-2.4) node[] at (6, -2){$M^{N_{r}}_{n_{r}}$, $\lambda^{n_{r}}_{n_r}$};
\fill (6,0) circle (1pt);
\fill (6,-0.4) circle (1pt);
\fill (6,-0.8) circle (1pt);
\node at (6, 2.8){Seed-banks (size, rate)};
\end{tikzpicture}
\captionof{figure}{The $r$-th discrete-time model in Theorem \ref{th:scaling}}	
\end{center}

\begin{proof}[\textbf{Proof of Theorem \ref{th:scaling}}]

We consider the restriction of the generator $\mathcal{L}$ to $H|_{(D,d)}$, and still denote it as  $\mathcal{L}$. Since $(D, v)$ is a compact metric space and $H|_{(D,d)}$ is dense in $C(D,d)$ (see Proposition \ref{prop:stone}), then by Corollary 3.9.3 and Theorem 3.9.4 in \cite{ethier2009markov}, we know that the Markov chains $\{\eta_{N_{r},n_{r}}(X^{N_{r}}, Y^{\vec{M_{r}}})\}_{r\in\mathbb{N}}$ are relatively compact. Furthermore, the $D_{\mathbb{R}_{+}}(D,d)$-martingale problem for $(\mathcal{L},\nu)$ has a unique solution by the same proof as that of Proposition \ref{prop:markovw}, hence by Corollary 4.8.11 in \cite{ethier2009markov} which is based on Corollary 4.8.9 and Theorem 4.8.10 therein, we only need to prove that:
\begin{eqnarray}\label{eq:target}
\lim\limits_{r\rightarrow \infty}\sup_{z\in E_{N_{r},n_{r}}}|\mathcal{L}^{N_{r},n_{r}}(h\circ \eta_{N_{r},n_{r}})(z)-(\mathcal{L}h)\circ \eta_{N_{r},n_{r}}(z)|=0,	
\end{eqnarray}
where 
\begin{equation*}
\mathcal{L}^{N,n}(h\circ \eta_{N,n})(z)=N(E_{z}[(h\circ \eta_{N,n})(X^{N}_{1}, Y^{\vec{M}^{N}}_{1})-(h\circ \eta_{N,n})(z)]),	
\end{equation*}
and $E_{z}$ denotes that the initial condition is $z=(x,\vec{y})$.

We are going to estimate
$$\sup_{z\in E_{N,n}}|\mathcal{L}^{N,n}(h\circ \eta_{N,n})(z)-(\mathcal{L}h)\circ \eta_{N,n}(z)|.$$
By the Taylor expansion and Definition \ref{defn:mapping}, we have
\begin{eqnarray*}\label{eq:es}
&&E_{z}[(h\circ \eta_{N,n})(X^{N}_{1}, Y^{\vec{M}^{N}}_{1})-(h\circ \eta_{N,n})(z)]
\nonumber\\
&=&E_{z}[\underbrace{\sum_{i=1}^{m}\partial_{x_{i}}h_{c}(\cdots)\langle (X^{N}_{1}-x, (\tilde{Y}^{\vec{M}^{N}}_{1}-\tilde{y}).\mu), f_{i}\rangle}_{\text{(1)}}+\\
&&\underbrace{\frac{1}{2}\sum_{i,j=1}^{m}\partial_{x_{i}x_{j}}h_{c}(\cdots)\langle (X^{N}_{1}-x, (\tilde{Y}^{\vec{M}^{N}}_{1}-\tilde{y}).\mu), f_{i}\rangle \langle (X^{N}_{1}-x, (\tilde{Y}^{\vec{M}^{N}}_{1}-\tilde{y}).\mu), f_{j}\rangle}_{\text{(2)}}+R_{N}],\nonumber
\end{eqnarray*}
where ``$\cdots$" refers to ``$\langle z, f_{i}\rangle$ for $i=1,2,\cdots, n$",
and the residual term 
$$R_{N}=\sum_{|\alpha|=3}\frac{3}{\alpha !}\int_{0}^{1}(1-t)^{2}\partial_{\alpha}h_{c}(\langle (x+t(X^{N}_{1}-x), (\tilde{y}+t(\tilde{Y}^{\vec{M}^{N}}_{1}-\tilde{y})).\mu), f_{i}\rangle, \text{ for } i)dt\cdot \langle(X^{N}_{1}-x, (\tilde{Y}^{\vec{M}^{N}}_{1}-\tilde{y}).\mu), \vec{f}~\rangle ^{\alpha}.$$
In the expression of $R_{N}$, the multi-index notations are employed, and $\langle(X^{N}_{1}-x, (\tilde{Y}^{\vec{M}^{N}}_{1}-\tilde{y}).\mu), \vec{f}~\rangle$ is a vector of which the $i$-th component is $\langle(X^{N}_{1}-x, (\tilde{Y}^{\vec{M}^{N}}_{1}-\tilde{y}).\mu), f_{i}\rangle$. 
Throughout this proof, ``for $i$" is the shorthand for ``$i=1,2,\cdots,m$", and 
``for $j$" is the shorthand for ``$j=1,2,\cdots,n$".

In the following estimations, the aforementioned properties of random variables $U, V^{n}_{j}, Z^{n}_{j}$, for $j=1,2,\cdots,n$, and the fact that $U$,$V^n_j$,$Z^n_j$ are conditionally independent with respect to $C^n_j$ will be used repeatedly.

\textbf{Estimation of (1):}

First, for $f_{i}=(f^{(1)}_{i},f^{(2)}_{i})$, we have
$$
E_{z}[\langle (X^{N}_{1}-x, (\tilde{Y}_{1}^{\vec{M}^{N}}-\tilde{y}).\mu), f_{i}\rangle]=E_{z}[(X^{N}_{1}-x)f^{(1)}_{i}+\int_{(0,\infty)}(\tilde{Y}_{1}^{\vec{M}^{N}}-\tilde{y})(\lambda)f^{(2)}_{i}(\lambda)\mu(d\lambda)].
$$

Then, by $X^{N}_{1}=\frac{U+\sum\limits_{j=1}^{n}Z^{n}_{j}}{N}$, and noting that
\begin{eqnarray*}
&&E_{z}[U]=(N-c)x, E_{z}[Z^{n}_{j}|C^{n}_{j},\text{ for }j]=C^{n}_{j}y_{j}, E_{z}[C^{n}_{j}]=c^{n}_{j},\\
&&||\tilde{y}.\mu_{n}||_{TV}=\int_{(0,\infty)}\tilde{y}(\lambda)\mu_{n}(d\lambda)
=\sum\limits_{j=1}^{n}c^{n}_{j}y_{j},
\end{eqnarray*}

we get
\begin{eqnarray*}
E_{z}[(X^{N}_{1}-x)]&=&E_{z}[\frac{U+\sum\limits_{j=1}^{n}Z^{n}_{j}}{N}-x]=E_{z}[E_{z}[\frac{U+\sum\limits_{j=1}^{n}Z^{n}_{j}}{N}-x|C^{n}_{j},\text{ for }j]]\\
&=& E_{z}[\frac{(N-c)x+\sum\limits_{j=1}^{n}C^{n}_{j}y_{j}}{N}-x]= \frac{||\tilde{y}.\mu_{n}||_{TV}-cx}{N}.
\end{eqnarray*}

In addition, by $Y^{M^N_{i}}_{1}=\frac{y_{i}M^{N}_{i}-Z^n_{i}+V^n_{i}}{M^{N}_{i}}$ and $\lambda^{n}_{i}=\frac{c^{n}_{i}N}{M^{N}_{i}}$, for $i=1,\cdots, n$, and noting that

$$E_{z}[\frac{V^{n}_{i}-Z^{n}_{i}}{M^{N}_{i}}]=E_{z}[E_{z}[\frac{V^{n}_{i}-Z^{n}_{i}}{M^{N}_{i}}|C^{n}_{j},\text{ for }j]]=E_{z}[\frac{C^{n}_{i}(x-y_{i})}{M^{N}_{i}}]=\frac{c^{n}_{i}(x-y_{i})}{M^{N}_{i}},$$
we have   
\begin{eqnarray*}
E_{z}[\int_{(0,\infty)}(\tilde{Y}_{1}^{\vec{M}^{N}}-\tilde{y})(\lambda)f^{(2)}_{i}(\lambda)\mu(d\lambda)]&=&\sum_{i=1}^{n}\int_{(\lambda^n_{i-1}, \lambda^{n}_{i}]}E_{z}[(\tilde{Y}_{1}^{\vec{M}^{N}}-\tilde{y})(\lambda)]f^{(2)}_{i}(\lambda)\mu(d\lambda)\\
&=&	\sum_{i=1}^{n}\int_{(\lambda^n_{i-1}, \lambda^{n}_{i}]}E_{z}[\frac{V^n_i-Z^n_i}{M^N_{i}}]f^{(2)}_{i}(\lambda)\mu(d\lambda)\\
&=& \sum_{i=1}^{n}\int_{(\lambda^n_{i-1}, \lambda^{n}_{i}]}\frac{c^{n}_{i}(x-y_{i})}{M^N_{i}}f^{(2)}_{i}(\lambda)\mu(d\lambda)\\
&=& \sum_{i=1}^{n}\int_{(\lambda^n_{i-1}, \lambda^{n}_{i}]}\frac{\lambda^{n}_{i}(x-y_{i})}{N}f^{(2)}_{i}(\lambda)\mu(d\lambda),
\end{eqnarray*}

Therefore, we conclude that 
\begin{eqnarray}\label{eq:es1}
E_{z}[(1)]&=&E_{z}[\sum_{i=1}^{m}\partial_{x_{i}}h_{c}(\cdots)\langle (X^{N}_{1}-x, (\tilde{Y}^{\vec{M}^{N}}_{1}-\tilde{y}).\mu), f_{i}\rangle]\\
&=& \sum_{i=1}^{m}\partial_{x_{i}}h_{c}(\cdots)f^{(1)}_{i}\frac{||\tilde{y}.\mu_{n}||_{TV}-cx}{N}+\sum_{i=1}^{m}\partial_{x_{i}}h_{c}(\cdots)\sum_{j=1}^{n}\int_{(\lambda^n_{i-1}, \lambda^{n}_{i}]}\frac{\lambda^{n}_{j}(x-y_{j})}{N}f^{(2)}_{i}(\lambda)\mu(d\lambda).\nonumber
\end{eqnarray}

\textbf{Estimation of (2):}

First, by definition, 
\begin{eqnarray}\label{eq:three}
&&\frac{1}{2}\sum_{i,j=1}^{m}\partial_{x_{i}x_{j}}h_{c}(\cdots)\langle (X^{N}_{1}-x, (\tilde{Y}^{\vec{M}^{N}}_{1}-\tilde{y}).\mu), f_{i}\rangle \langle (X^{N}_{1}-x, (\tilde{Y}^{\vec{M}^{N}}_{1}-\tilde{y}).\mu), f_{j}\rangle\nonumber\\
&=& \frac{1}{2}\sum_{i,j=1}^{m}\partial_{x_{i}x_{j}}h_{c}(\cdots)(X^{N}_{1}-x)^{2}f^{(1)}_{i}f^{(1)}_{j}\\
&&+\sum_{i,j=1}^{m}\partial_{x_{i}x_{j}}h_{c}(\cdots)(X^{N}_{1}-x)f^{(1)}_{i}\int_{(0,\infty)}(\tilde{Y}^{\vec{M}^{N}}_{1}-\tilde{y})(\lambda)f^{(2)}_j(\lambda)\mu(d\lambda)\nonumber\\
&&+\frac{1}{2}\sum_{i,j=1}^{m}\partial_{x_{i}x_{j}}h_{c}(\cdots)\int_{(0,\infty)}(\tilde{Y}^{\vec{M}^{N}}_{1}-\tilde{y})(\lambda)f^{(2)}_i(\lambda)\mu(d\lambda)\int_{(0,\infty)}(\tilde{Y}^{\vec{M}^{N}}_{1}-\tilde{y})(\lambda)f^{(2)}_j(\lambda)\mu(d\lambda),\nonumber
\end{eqnarray}

For the first term of equation (\ref{eq:three}), by $c=\sum\limits_{i=1}^{n}c^{n}_{i}$, we have
\begin{eqnarray*}
&&E_{z}[(X^{N}_{1}-x)^{2}]\\
&=&\frac{1}{N^{2}}E_{z}[(U-(N-c)x+\sum_{i=1}^{n}(Z^{n}_{i}-c^{n}_{i}x))^{2}]\\
&=& \frac{1}{N^{2}}(\underbrace{E_{z}[(U-(N-c)x)^{2}]+2E_{z}[(U-(N-c)x)\sum_{i=1}^{n}(Z^{n}_{i}-c^{n}_{i}x)]+E_{z}[\sum_{i,j=1}^{n}(Z^{n}_{i}-c^{n}_{i}x)(Z^{n}_{j}-c^{n}_{j}x)])}_{\text{(3)}},
\end{eqnarray*}

Next, we need to further estimate (3):

First we know that 
\begin{eqnarray*}
&&E_{z}[(U-(N-c)x)^{2}]=(N-c)x(1-x),\\
&&E_{z}[(U-(N-c)x)\sum_{i=1}^{n}(Z^{n}_{i}-c^{n}_{i}x)]=0.
\end{eqnarray*}
Since $Z^{n}_{i}\leq C^{n}_{i}$ for $i=1,2,\cdots,n$, and $c=\sum\limits_{i=1}^{n}C^{n}_{i}$, then
\begin{eqnarray*}
E_{z}[|\sum_{i,j=1}^{n}(Z^{n}_{i}-c^{n}_{i}x)(Z_{j}-c^{n}_{j}x)|]&=&E_{z}[E_{z}[|\sum_{i,j=1}^{n}(Z^{n}_{i}-c^{n}_{i}x)(Z^{n}_{j}-c^{n}_{j}x)|C^{n}_{j},\text{ for }j]]\\
&\leq & E_{z}[\sum_{i,j=1}^{n}(C^{n}_{i}+c^{n}_{i})(C^{n}_{j}+c^{n}_{j})]\\
&=& 4c^{2}.
\end{eqnarray*}
Therefore, 
\begin{equation}\label{eq250}
E_{z}[(X^{N}_{1}-x)^{2}]=\frac{1}{N^{2}}E_{z}[(3)]=\frac{1}{N}x(1-x)+O(\frac{1}{N^{2}}).
\end{equation}

For the second term of (\ref{eq:three}), by definition, we have
\begin{eqnarray*}
&&E_{z}[(X^{N}_{1}-x)\int_{(0,\infty)}(\tilde{Y}^{\vec{M}^{N}}_{1}-\tilde{y})(\lambda)f^{(2)}_{i}(\lambda)\mu(d\lambda)]\\
&=&\sum_{j=1}^{n}\int_{(\lambda^n_{j-1}, \lambda^{n}_{j}]}E_{z}[(X^{N}_{1}-x)(\frac{V^{n}_{j}-Z^{n}_{j}}{M^N_{j}})]
f^{(2)}_{i}(\lambda)\mu(d\lambda),
\end{eqnarray*}
in which
\begin{eqnarray*}
&&E_{z}[(X^{N}_{1}-x)(\frac{V^{n}_{j}-Z^{n}_{j}}{M^{N}_{j}})]\\
&=& \frac{1}{NM^{N}_{j}}E_{z}[(U-(N-c)x)(V^{n}_{j}-Z^{n}_{j})]+\frac{1}{NM^{N}_{j}}
E_{z}[\sum_{k=1}^{n}(Z^{n}_{k}-c^{n}_{k}x)(V^{n}_{j}-Z^{n}_{j})]\\
&=& O(\frac{1}{NM^{N}}),
\end{eqnarray*}
where $M^{N}=\min\{M^{N}_{i}, i=1,2,\cdots,n\}$.

Consquently, \begin{equation}\label{eq251}
	E_{z}[(X^{N}_{1}-x)\int_{(0,\infty)}(\tilde{Y}^{\vec{M}^{N}}_{1}-\tilde{y})(\lambda)f^{(2)}_{i}(\lambda)\mu(d\lambda)]=O(\frac{1}{NM^{N}})\cdot \int_{(0,\infty)}f^{(2)}_{i}(\lambda)\mu(d\lambda).
\end{equation}

For the last term of (\ref{eq:three}), by $|V^{n}_{i}-Z^{n}_{i}|\leq 2c$, 
\begin{eqnarray}\label{eq252}
&&E_{z}[\int_{(0,\infty)}(\tilde{Y}^{M^{N}}_{1}-\tilde{y})(\lambda)f^{(2)}_{i}(\lambda)\mu(d\lambda)\int_{(0,\infty)}(\tilde{Y}^{M^{N}}_{1}-\tilde{y})(\lambda)f^{(2)}_{j}(\lambda)\mu(d\lambda)]\nonumber\\
&=&\sum_{k,l=1}^{n}\int_{(\lambda^n_{k-1}, \lambda^{n}_{k}]}\int_{(\lambda^n_{l-1}, \lambda^{n}_{l}]}E_{z}[(\frac{V^{n}_{k}-Z^{n}_{k}}{M^N_{k}})(\frac{V^{n}_{l}-Z^{n}_{l}}{M^N_{l}})]
f^{(2)}_{i}(\lambda_{1})f^{(2)}_{j}(\lambda_{2})\mu(d\lambda_{1})\mu(d\lambda_{2})\\
&=& O(\frac{1}{(M^{N})^{2}})\cdot \int_{(0,\infty)}f^{(2)}_{i}(\lambda)\mu(d\lambda)\int_{(0,\infty)}f^{(2)}_{j}(\lambda)\mu(d\lambda),\nonumber
\end{eqnarray}

Finally, by (\ref{eq250}), (\ref{eq251}) and (\ref{eq252}), we conclude that 

\begin{eqnarray}\label{eq:es2}
	E_{z}[(2)]&=&\frac{1}{2}E_{z}[\sum_{i,j=1}^{m}\partial_{x_{i}x_{j}}h_{c}(\cdots)\langle (X^{N}_{1}-x, (\tilde{Y}^{\vec{M}^{N}}_{1}-\tilde{y}).\mu), f_{i}\rangle \langle (X^{N}_{1}-x, (\tilde{Y}^{\vec{M}^{N}}_{1}-\tilde{y}).\mu), f_{j}\rangle]\nonumber\\
	&=& \frac{1}{2}\sum_{i,j=1}^{m}\partial_{x_{i}x_{j}}h_{c}(\cdots)f^{(1)}_{i}f^{(1)}_{j}\cdot (\frac{x(1-x)}{N}+O(\frac{1}{N^{2}}))\\
	&&+\sum_{i,j=1}^{m}\partial_{x_{i}x_{j}}h_{c}(\cdots)\int_{(0,\infty)}f^{(2)}_{j}(\lambda)\mu(d\lambda) f^{(1)}_{i}\cdot O(\frac{1}{NM^{N}})\nonumber\\
	&&+\frac{1}{2}\sum_{i,j=1}^{m}\partial_{x_{i}x_{j}}h_{c}(\cdots)\int_{(0,\infty)}f^{(2)}_{i}(\lambda)\mu(d\lambda)\int_{(0,\infty)}f^{(2)}_{j}(\lambda)\mu(d\lambda)\cdot O(\frac{1}{(M^{N})^{2}})\nonumber
\end{eqnarray}

\textbf{Estimation of $\mathbf{R_{N}}$:}

By the mean value theorem for integrals, we know that
$$|E_{z}[R_{N}]|=|E_{z}[\sum_{|\alpha|=3}\frac{3}{\alpha !}\partial_{\alpha}h_{c}(\langle (x+t_{0}(X^{N}_{1}-x), (\tilde{y}+t_{0}(\tilde{Y}^{\vec{M}^{N}}_{1}-\tilde{y})).\mu), f_{i}\rangle, \text{ for } i)\langle(X^{N}_{1}-x, (\tilde{Y}^{M^{N}}_{1}-\tilde{y}).\mu), \vec{f}~\rangle ^{\alpha}]|,$$
for some $t_{0}=t_{0}(\omega)\in (0,1)$.

In order to simplify the notation\footnote{Note that any $h\in H$ is also an infinitely differentiable mapping on $E$, and its Fr\'{e}chet derivatives maintain the same forms.}, we consider the third order Fr\'{e}chet derivative $\nabla^{3}h$ for $h\in H$, of which the value is a bounded trilinear functional on $(\mathbb{R}\times \mathcal{M}(0,\infty))^3$. By Definition \ref{defn:frechet}, and following the proof of Lemma \ref{lem:derivative}, we have 
\begin{eqnarray*}
|E_{z}[R_{N}]|&\leq & E_{z}[||\nabla^{3}h((x+t_{0}(X^{N}_{1}-x), (\tilde{y}+t_{0}(\tilde{Y}^{\vec{M}^{N}}_{1}-\tilde{y})).\mu)||_{3}\cdot ||(X^{N}_{1}-x, (\tilde{Y}_{1}^{\vec{M}^{N}}-\tilde{y}).\mu)||^{3}]\\
&\leq & \sup_{z\in (D,d)}||\nabla^{3}h(z)||_{3}\cdot E_{z}[||(X^{N}_{1}-x, (\tilde{Y}_{1}^{\vec{M}^{N}}-\tilde{y}).\mu)||^{3}]\\
&=& \sup_{z\in (D,d)}||\nabla^{3}h(z)||_{3}\cdot E_{z}[(|X^{N}_{1}-x|+\int_{(0,\infty)} |\tilde{Y}_{1}^{\vec{M}^{N}}-\tilde{y}(\lambda)|\mu(d\lambda))^{3}]
\end{eqnarray*}
where $||\cdot||_{3}$ denotes the norm of a bounded trilinear functional.

Then as before,
\begin{eqnarray*}
	&&E_{z}[(|X^{N}_{1}-x|+\int_{(0,\infty)} |(\tilde{Y}_{1}^{\vec{M}^{N}}-\tilde{y})(\lambda)|\mu(d\lambda))^{3}]\\
	&\leq & E_{z}[(|\frac{U-(N-c)x}{N}|+|\frac{\sum\limits_{j=1}^{n}(Z^{n}_{j}-c^{n}_{j}x)}{N}|+\sum_{j=1}^{n}\int_{(\lambda^n_{j-1}, \lambda^{n}_{j}]}|\frac{V^{n}_{j}-Z^{n}_{j}}{M^{N}_{j}}|\mu(d\lambda))^{3}]
\end{eqnarray*}

Apply the Marcinkiewicz-Zygmund inequality (\cite{marcinkiewicz1938quelques}) to the binomial distributed random variable $U$, we get
$$E_{z}[|U-(N-c)x|^{3}]=O(N^{\frac{3}{2}}),$$

and then by the boundedness of $V^{n}_{j}$ and $Z^{n}_{j}$ for $j=1,2,\cdots,n$,
it can be shown that 
\begin{equation}\label{eq:es3}
	E_{z}[R_{N}]= \sup_{z\in (D,d)}||\nabla^{3}h(z)||_{3}\cdot(O(\frac{1}{N^{\frac{3}{2}}})+O(\frac{1}{(M^{N})^{2}})+O(\frac{1}{NM_{N}})).
\end{equation}

Now, combining equation (\ref{eq:es}),(\ref{eq:es1}),(\ref{eq:es2}) and (\ref{eq:es3}) together, and by Lemma \ref{lem:derivative}, we have the following estimation:

\begin{eqnarray*}
&&\mathcal{L}^{N,n}(h\circ \eta_{N,n})(z)\\
&=&N(E_{z}[(h\circ \eta_{N,n})(X^{N}_{1}, Y^{\vec{M}^{N}}_{1})-(h\circ \eta_{N,n})(z)])\\
&=& \sum_{i=1}^{m}\partial_{x_{i}}h_{c}(\cdots)f^{(1)}_{i}(||\tilde{y}.\mu_{n}||_{TV}-cx)+\sum_{i=1}^{m}\partial_{x_{i}}h_{c}(\cdots)\sum_{j=1}^{n}\int_{(\lambda^n_{j-1}, \lambda^{n}_{j}]}\lambda^{n}_{j}(x-y_{j})f^{(2)}_{i}(\lambda)\mu(d\lambda)\\
&&+\frac{1}{2}\sum_{i,j=1}^{m}\partial_{x_{i}x_{j}}h_{c}(\cdots)f^{(1)}_{i}f^{(1)}_{j}x(1-x)+\sup_{z\in (D,d)}||\nabla^{2}h(z)||_{2}\cdot (O(\frac{1}{N})+O(\frac{1}{M^{N}})+O(\frac{N}{(M^{N})^{2}}))\\
&&+\sup_{z\in (D,d)}||\nabla^{3}h(z)||_{3}\cdot(O(\frac{1}{N^{\frac{1}{2}}})+O(\frac{N}{(M^{N})^{2}})+O(\frac{1}{M_{N}})),
\end{eqnarray*}
where $||\cdot||_{2}$ denotes the norm of a bounded bilinear functional.

By (\ref{eq:generatorw}), we have
\begin{eqnarray*}
(\mathcal{L}h)\circ \eta_{N,n}(z)&=&\sum_{i=1}^{m}\partial_{x_{i}}h_{c}(\cdots)\{(||\tilde{y}.\mu||_{TV}-cx)f^{(1)}_{i}+\sum_{j=1}^{n}\int_{(\lambda^n_{j-1}, \lambda^{n}_{j}]}\lambda(x-y_{j})f^{(2)}_{i}(\lambda)\mu(d\lambda)\\	
&&+\int_{(\lambda^{n}_{n},\infty)}\lambda(x-y_{j})f^{(2)}_{i}(\lambda)\mu(d\lambda)\}+\frac{1}{2}\sum_{i,j=1}^{m}\partial_{x_{i}x_{j}}h_{c}(\cdots)x(1-x)f^{(1)}_{i}f^{(1)}_{j}.
\end{eqnarray*}

Moreover, recall that if $c^{n}_{i}=\mu(\lambda^{n}_{i-1}, \lambda^{n}_{i}]$, then (\ref{eq:equal}) holds i.e. $||\tilde{y}.\mu_{n}||_{TV}=||\tilde{y}.\mu||_{TV}$.

Consequently, we get
\begin{eqnarray*}
&&\sup_{z\in E_{N,n}}|\mathcal{L}^{N,n}(h\circ \eta_{N,n})(z)-(\mathcal{L}h)\circ \eta_{N,n}(z)|\\
&=& \sup_{z\in E_{N,n}}|\sum_{i=1}^{m}\partial_{x_{i}}h_{c}(\cdots)(\sum_{j=1}^{n}\int_{(\lambda^n_{j-1}, \lambda^{n}_{j}]}(\lambda^{n}_{j}-\lambda)(x-y_{j})f^{(2)}_{i}(\lambda)\mu(d\lambda)+\int_{(\lambda^{n}_{n},\infty)}\lambda(x-y_{j})f^{(2)}_{i}(\lambda)\mu(d\lambda))|\\
&&+(\sup_{z\in (D,d)}||\nabla^{2}h(z)||_{2}+\sup_{z\in (D,d)}||\nabla^{3}h(z)||_{3})\cdot (O(\frac{1}{N^{\frac{1}{2}}})+O(\frac{1}{M^{N}})+O(\frac{N}{(M^{N})^{2}}))\\
&\leq & |||h|||_{C^{3}}\cdot(\sum_{j=1}^{n}\int_{(\lambda^n_{j-1}, \lambda^{n}_{j}]}(\lambda^{n}_{j}-\lambda)\mu(d\lambda)+\int_{(\lambda^{n}_{n},\infty)}\lambda\mu(d\lambda))+O(\frac{1}{N^{\frac{1}{2}}})+O(\frac{1}{M^{N}})+O(\frac{N}{(M^{N})^{2}}))\\
&\leq & |||h|||_{C^{3}}\cdot(\underbrace{c\max_{1\leq i\leq n}|\lambda^{n}_{i}-\lambda^{n}_{i-1}|+\int_{(\lambda^{n}_{n},\infty)}\lambda\mu(d\lambda))}_\text{(4)}+O(\frac{1}{N^{\frac{1}{2}}})+O(\frac{1}{M^{N}})+O(\frac{N}{(M^{N})^{2}})),
\end{eqnarray*}
where $|||h|||_{C^{3}}=\sum\limits_{i=0}^{3}||\nabla^{i}h||_{i}$.

Take $\{\lambda^{n}_{i}\}_{i=1,2,\cdots,n}$ such that $\lim\limits_{n\rightarrow \infty}\lambda^{n}_{n}=\infty$ and $\lim\limits_{n\rightarrow\infty}\max\limits_{1\leq i\leq n}|\lambda^{n}_{i}-\lambda^{n}_{i-1}|=0$, they by the Condition (\ref{condition}), we know that $\lim\limits_{n\rightarrow \infty}(4)=0$. For each $r\in\mathbb{N}$, first take some $n_{r}$ and $\{\lambda^{n_{r}}_{i}\}_{i=1,2,\cdots,n_{r}}$ such that $|(4)|\leq \frac{1}{2^{r+1}}$. Then, take $c^{n_{r}}_{i}=\mu(\lambda^{n_{r}}_{i-1}, \lambda^{n_{r}}_{i}]$ as required by the above estimations, and finally let $M^{N}_{i}=\frac{c^{n_{r}}_{i}N}{\lambda^{n_{r}}_{i}}$ for $i=1,2,\cdots,n_{r}$, which is also indispensable.
Now, we have
$$\sup_{z\in E_{N,n_{r}}}|\mathcal{L}^{N,n_{r}}(h\circ \eta_{N,n_{r}})(z)-(\mathcal{L}h)\circ \eta_{N,n_{r}}(z)|\leq |||h|||_{C^{3}}\cdot (\frac{1}{2^{r+1}}+O(\frac{1}{N^{\frac{1}{2}}})+O(\frac{1}{N})),$$
which implies that we can take some $N_{r}$, and accordingly let $M^{N_{r}}_{i}=\frac{c^{n_{r}}_{i}N_{r}}{\lambda^{n_{r}}_{i}}$ to get the following desired result:
\begin{eqnarray}\label{eq:estimation}
\sup_{z\in E_{N_{r},n_{r}}}|\mathcal{L}^{N_{r},n_{r}}(h\circ \eta_{N_{r},n_{r}})(z)-(\mathcal{L}h)\circ \eta_{N_{r},n_{r}}(z)|\leq \frac{1}{2^{r}}|||h|||_{C^{3}},
\end{eqnarray}
and at the same we need $c\leq \min\{N_{r}, M^{N_{r}}_{i}, i=1,2,\cdots, n_{r}\}$, which is required by the discrete-time model.

Letting $r\rightarrow\infty$ in the estimation (\ref{eq:estimation}), we get (\ref{eq:target}), and the proof is completed.
\end{proof}

\section{Continuum seed-bank coalescent}\label{sec:coalescent}

The coalescent process refers to a stochastic model used in population genetics to study the genealogical history of a sample of individuals from a population, which is particularly useful for understanding the effects of various evolutionary forces and their interactions during lineage formation.
In this section, the coalescent process of the continuum seed-bank diffusion i.e. the solution $Z$ to equation ($\ref{eq:SEE}$) will be constructed. 

The primary task is to identify the block counting process of the coalescent, which should be the dual process of the Markov process $Z$. Unfortunately, neither $\mathbb{R}\times L^{1}(\mu)$ nor $\mathbb{R}\times \mathcal{M}(0,\infty)$ is a suitable space for establishing a duality relation which is supposed to degenerate into the case when the measure $\mu$ is discrete i.e. there are at most countably many seed-banks.

However, by Remark \ref{rem:mu}, when the initial value $\eta(\lambda)$ in equation (\ref{eq:SEE}) is defined everywhere without dependence on a measure $\mu$, the solution $Z$ takes valued in $\mathbb{R}\times L^{\infty}$\footnote{Note that $Z$ does not need to be a Markov process in this new state space.}. It is well-known that $(\mathbb{R}\times L^{\infty})^{\star}\cong(\mathbb{R}\times ba((0,\infty),\mathcal{B}(0,\infty))$, where $ba((0,\infty),\mathcal{B}(0,\infty))$ includes all finite measures. As the result, the following dual function can have a measure valued first order Fr\'{e}chet derivative with respect to $y$ by adding a perturbation:
\begin{equation}\label{eq:dualfunction}
F[(x,y(\lambda)),(n,m(d\lambda))]=x^{n}e^{\int_{(0,\infty)} lny(\lambda)m(d\lambda)},
\end{equation}
where $(x,y(\lambda))\in \mathbb{R}\times L^{\infty}$, $y$ is non-negative, $n\in\mathbb{N}_{0}$, and $m$ is a finite measure on $((0,\infty),\mathcal{B}(0,\infty))$. Since $ln y(\lambda)$ is upper bounded, $F$ is well defined even if $y(\lambda)$ may be $0$. Specifically, if $m$ is integer valued, then $F$ is of the same form as (\ref{dualfunction}) which is employed by \cite{blath2016} and \cite{greven2022spatial}. Actually, $m$ can be represented as a weighted sum of Dirac measures (see e.g. Theorem 2.1.6.2 in \cite{kadets2018course}). Therefore, in the subsequent text, $m$ is of the form 
\begin{equation}\label{eq:form}
m(d\lambda)=\sum_{i=1}^{K}m_{i}\delta_{\lambda_{i}}(d\lambda),	
\end{equation}
for $K\in \mathbb{N}_{0}$, $m_{i}\in\mathbb{N}$ and $\lambda_{i}\in (0,\infty)$.
Clearly, $\int_{(0,\infty)}\lambda m(d\lambda)<\infty$.

The classical method for deriving duality relations is to find certain martingale statements. For fixed $(n, m)$, It\^{o}'s formula should be applied to $F((X_{t}, Y_{t}(\lambda)),(n, m(d\lambda)))$ at first, where $\{(X_{t}, Y_{t}(\lambda))\}_{t\geq0}$ is the 
$$[0,1]\times \{y\in L^{\infty}: 0\leq y(\lambda)\leq 1\}$$
valued solution to equation (\ref{eq:SEE}). The differentiation with respect to $y$ seems not feasible as $y(\lambda)$ may be $0$, but this can resolved by adding suitable perturbations.

For the sequence of functions $\{F[(x,y(\lambda)+\epsilon_{k}),(n,m(d\lambda))]\}_{k\in\mathbb{N}}$, where $\epsilon_{k}>0$ is a constant function and $\epsilon_{k}\downarrow 0$ as $k\rightarrow \infty$, the following result can be proved by direct calculations.

\begin{lemma}\label{lem:frechet}
	The function $\{F[(x,y(\lambda)+\epsilon_{k}),(n,m(d\lambda))]\}_{k\in\mathbb{N}}\in C^{2,1}(\mathbb{R}\times L^{\infty})$,
	and \begin{equation}
	D_{y}F[(x, y(\lambda)+\epsilon_{k}), (n, m(d\lambda))]=x^{n}e^{\int ln(y(\lambda^{\prime})+\epsilon_{k}) (m-\delta_{\lambda})(d\lambda^{\prime})}. m(d\lambda)\in ba((0,\infty),\mathcal{B}(0,\infty)).	
	\end{equation}\end{lemma}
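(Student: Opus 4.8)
The plan is to reduce everything to the elementary calculus of real polynomials, exploiting that the $\epsilon_k$-shift prevents the logarithm from ever acting on a vanishing argument. Fix $x\in\mathbb{R}$, $n\in\mathbb{N}_0$ and $m=\sum_{i=1}^{K}m_i\delta_{\lambda_i}$, and for $k\in\mathbb{N}$ abbreviate $\Phi_k(x,y):=F[(x,y(\lambda)+\epsilon_k),(n,m(d\lambda))]$. Since $\int_{(0,\infty)}\ln(y(\lambda)+\epsilon_k)\,m(d\lambda)=\sum_{i=1}^{K}m_i\ln(y(\lambda_i)+\epsilon_k)$, I would first record the closed form
\[
\Phi_k(x,y)=x^{n}\prod_{i=1}^{K}\big(y(\lambda_i)+\epsilon_k\big)^{m_i}.
\]
This is legitimate because here $L^{\infty}=L^{\infty}((0,\infty),\mathcal{B}(0,\infty))$ is the space of genuine bounded Borel functions, so the evaluation $\pi_i:y\mapsto y(\lambda_i)$ is a well-defined bounded linear functional on $L^{\infty}$, in fact $\pi_i=\delta_{\lambda_i}\in ba((0,\infty),\mathcal{B}(0,\infty))$ with operator norm $1$; and $0\le y\le 1$ forces $y(\lambda_i)+\epsilon_k\in[\epsilon_k,1+\epsilon_k]$, so every factor stays bounded and bounded away from $0$.

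For the $x$-variable, $\Phi_k$ is the monomial $x^{n}$ times a factor independent of $x$, hence $C^{\infty}$ in $x\in\mathbb{R}$ with $\partial_x\Phi_k=nx^{n-1}\prod_i(y(\lambda_i)+\epsilon_k)^{m_i}$ and $\partial_x^{2}\Phi_k=n(n-1)x^{n-2}\prod_i(y(\lambda_i)+\epsilon_k)^{m_i}$. For the $y$-variable, each map $y\mapsto(y(\lambda_i)+\epsilon_k)^{m_i}$ is the composition of the continuous linear map $\pi_i$ with the real polynomial $t\mapsto(t+\epsilon_k)^{m_i}$, so by the chain rule for Fr\'echet derivatives it is smooth from $L^{\infty}$ to $\mathbb{R}$ with first derivative $m_i(y(\lambda_i)+\epsilon_k)^{m_i-1}\delta_{\lambda_i}$; the Leibniz rule for the finite product then shows that $\Phi_k$ is Fr\'echet $C^{\infty}$ in $y$, and in particular
\[
D_y\Phi_k(x,y)=x^{n}\sum_{j=1}^{K}m_j\,(y(\lambda_j)+\epsilon_k)^{m_j-1}\prod_{i\neq j}(y(\lambda_i)+\epsilon_k)^{m_i}\,\delta_{\lambda_j}\ \in\ ba((0,\infty),\mathcal{B}(0,\infty)).
\]

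It then remains to identify this with the expression in the statement. Evaluating $\ln(y+\epsilon_k)$ against the signed measure $m-\delta_{\lambda_j}=(m_j-1)\delta_{\lambda_j}+\sum_{i\neq j}m_i\delta_{\lambda_i}$ gives $\exp\!\big(\int\ln(y(\lambda')+\epsilon_k)(m-\delta_{\lambda_j})(d\lambda')\big)=(y(\lambda_j)+\epsilon_k)^{m_j-1}\prod_{i\neq j}(y(\lambda_i)+\epsilon_k)^{m_i}$, so the measure $e^{\int\ln(y(\lambda')+\epsilon_k)(m-\delta_{\lambda})(d\lambda')}.m(d\lambda)$ equals $\sum_j m_j(y(\lambda_j)+\epsilon_k)^{m_j-1}\prod_{i\neq j}(y(\lambda_i)+\epsilon_k)^{m_i}\,\delta_{\lambda_j}$, which is precisely $x^{-n}D_y\Phi_k(x,y)$; this is in essence the computation already carried out for the class $H$ in Lemma \ref{lem:derivative}. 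Finally, joint continuity in $(x,y)\in\mathbb{R}\times L^{\infty}$ of $\Phi_k$ and of its derivatives $\partial_x\Phi_k$, $\partial_x^{2}\Phi_k$, $D_y\Phi_k$, $\partial_xD_y\Phi_k$ follows from continuity of the evaluation maps $\pi_i$ and of polynomials, which yields $\Phi_k\in C^{2,1}(\mathbb{R}\times L^{\infty})$. I expect the only delicate point to be the bookkeeping in passing from the abstract chain and product rules in the non-reflexive space $L^{\infty}$ to the explicit $ba$-valued formula — in particular checking that each evaluation functional really is the Dirac measure $\delta_{\lambda_i}\in ba$, and keeping the $\epsilon_k$-regularisation in place everywhere so that no $\ln 0$ ever occurs; the rest of the argument is routine.
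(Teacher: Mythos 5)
Your proposal is correct, and it reaches the stated formula by a genuinely different route from the paper. The paper's proof works directly with the general expression $x^{n}e^{\int \ln(y+\epsilon_{k})\,dm}$: it Taylor-expands $\ln(y+\epsilon_{k}+h)$ and then the outer exponential for an arbitrary increment $h\in L^{\infty}$, and controls the remainder terms by hand using $y+\epsilon_{k}+\theta h\ge \epsilon_{k}/2$ and $m(0,\infty)<\infty$, so that argument never uses the atomic structure of $m$ and would apply verbatim to any finite measure. You instead exploit the standing convention, declared just before the lemma, that $m=\sum_{i=1}^{K}m_{i}\delta_{\lambda_{i}}$ with $m_{i}\in\mathbb{N}$: this collapses $F$ to the finite product $x^{n}\prod_{i}(y(\lambda_{i})+\epsilon_{k})^{m_{i}}$, a polynomial in the evaluation functionals $\pi_{i}=\delta_{\lambda_{i}}$, which are bounded on $L^{\infty}$ precisely because the paper's $L^{\infty}$ consists of genuine bounded Borel functions rather than $\mu$-equivalence classes; the abstract chain and Leibniz rules then give smoothness for free, and your identification of $\sum_{j}m_{j}(y(\lambda_{j})+\epsilon_{k})^{m_{j}-1}\prod_{i\neq j}(y(\lambda_{i})+\epsilon_{k})^{m_{i}}\,\delta_{\lambda_{j}}$ with $e^{\int\ln(y(\lambda')+\epsilon_{k})(m-\delta_{\lambda})(d\lambda')}.m(d\lambda)$ is exactly right, including the point (cf.\ Remark \ref{rem:ob}) that $m-\delta_{\lambda}$ only ever appears for $\lambda$ in the support of $m$. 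What each approach buys: yours is shorter, avoids all remainder estimates, and makes the $ba$-valued derivative transparent as a finite combination of Dirac measures; the paper's is more robust, since it would survive a relaxation of the integrality or atomicity assumption on $m$, whereas your reduction to a finite product would not. Within the paper's actual setting both proofs are complete; the only care needed in yours, which you correctly flag, is that the polynomial representation coincides with the logarithmic one only on the region where $y+\epsilon_{k}>0$ pointwise, which the $\epsilon_{k}$-regularisation guarantees on a $\|\cdot\|_{\infty}$-ball of radius less than $\epsilon_{k}$ around any admissible $y$.
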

\begin{proof}
The differentiability with respect to $x$ and the continuity of $D_{y}F[(x, y(\lambda)+\epsilon_{k}), (n, m(d\lambda))]$ is obvious. By definition, for any increment $h\in L^{\infty}$,
\begin{eqnarray*}
&&F[(x, y(\lambda)+\epsilon_{k}+h), (n, m(d\lambda))]-F[(x, y(\cdot)+\epsilon_{k}), (n, m(d\lambda))]\\
&=& x^{n}[e^{\int ln(y(\lambda)+\epsilon_{k}+h(\lambda))m(d\lambda)}-e^{\int ln(y(\lambda)+\epsilon_{k})m(d\lambda)}]\\
&=& x^{n}e^{\int ln(y(\lambda)+\epsilon_{k})m(d\lambda)}[e^{\int (\frac{h(\lambda)}{y(\lambda)+\epsilon+{k}}-\frac{h^{2}(\lambda)}{(y(\lambda)+\epsilon_{k}+\theta h(\lambda))^{2}})m(d\lambda)}-1]\\
&=& x^{n}e^{\int ln(y(\lambda)+\epsilon_{k})m(d\lambda)}\cdot [\int (\frac{h(\lambda)}{y(\lambda)+\epsilon_{k}}-\frac{h^{2}(\lambda)}{(y(\lambda)+\epsilon_{k}+\theta h(\lambda))^{2}})m(d\lambda)\\
&&+\frac{1}{2}\tilde{\theta}(\int (\frac{h(\lambda)}{y(\lambda)+\epsilon_{k}}-\frac{h^{2}(\lambda)}{(y(\lambda)+\epsilon_{k}+\theta h(\lambda))^{2}})m(d\lambda))^{2}]\\
&=& x^{n}e^{\int ln(y(\lambda)+\epsilon_{k})m(d\lambda)}\int \frac{h(\lambda)}{y(\lambda)+\epsilon_{k}}m(d\lambda)+R_{1}(\epsilon_{k})+R_{2}(\epsilon_{k}),
\end{eqnarray*}

where $\theta, \tilde{\theta}\in (0,1)$, and 
$$R_{1}(\epsilon_{k})=-x^{n}e^{\int ln(y(\lambda)+\epsilon_{k})m(d\lambda)}\int \frac{h^{2}(\lambda)}{(y(\lambda)+\epsilon_{k}+\theta h(\lambda))^{2}}m(d\lambda),$$
$$R_{2}(\epsilon_{k})=\frac{1}{2}\tilde{\theta}x^{n}e^{\int ln(y(\lambda)+\epsilon_{k})m(d\lambda)}[\int (\frac{h(\lambda)}{y(\lambda)+\epsilon_{k}}-\frac{h^{2}(\lambda)}{(y(\lambda)+\epsilon_{k}+\theta h(\lambda))^{2}})m(d\lambda)]^{2}.$$

As $||h||_{\infty}\rightarrow 0$, we have $\epsilon_{k}+\theta h(\lambda)\geq\frac{\epsilon_{k}}{2}$, thus $\frac{h^{2}(\lambda)}{(y(\lambda)+\epsilon_{k}+\theta h(\lambda))^{2}}\leq \frac{4}{\epsilon_{k}^{2}}||h||^{2}_{\infty}$. Since $m(0,\infty)<\infty$, we have $R_{1}(\epsilon_{k})+R_{2}(\epsilon_{k})=O(||h||_{\infty}^{2})$

Therefore,
\begin{eqnarray*}
&&F[(x, y(\cdot)+\epsilon_{k}+h), (n, m(d\lambda))]-F[(x, y(\cdot)+\epsilon_{k}), (n, m(d\lambda))]\\
&=& \langle h, x^{n}e^{\int ln(y(\lambda^{\prime})+\epsilon_{k}) (m-\delta_{\lambda})(d\lambda^{\prime})}. m(d\lambda)\rangle+o(||h||_{\infty}),
\end{eqnarray*} 
which completes the proof.
\end{proof}

\begin{rem}\label{rem:ob}
There are two important observations regarding Lemma \ref{lem:frechet}.

\begin{enumerate}
	\item By the monotone convergence theorem, 
	\begin{equation}
	\lim_{k\rightarrow\infty}F[(x,y(\lambda)+\epsilon_{k}),(n,m(d\lambda))]=F[(x,y(\lambda)),(n,m(d\lambda))];	
	\end{equation}	
	\item In the expression of $D_{y}F[(x,y(\lambda)+\epsilon_{k}),(n,m(d\lambda))]$, $m-\delta_{\lambda}$ is still a measure since ``$.m(d\lambda)$'' allows ``$-\delta_{\lambda}$" to occur only on the support of $m$.
\end{enumerate}
\end{rem}

Now, apply It\^{o}'s formula to $F[(X_{t},Y_{t}(\lambda)+\epsilon_{k}),(n,m(d\lambda))]$, and then take the limit, the following Proposition is the first martingale statement required for the duality relation:

\begin{prop}\label{prop:firststate}

\begin{eqnarray*}
F[(X_{t}, Y_{t}(\lambda)), (n, m(d\lambda))]-F[(X_{0}, Y_{0}(\lambda)), (n, m(d\lambda))]-\int_{0}^{t}\mathcal{G}F[(X_{s}, Y_{s}(\lambda)),(n,m(d\lambda))]ds	
\end{eqnarray*}
is a continuous martingale, where
\begin{eqnarray}
&&\mathcal{G}F[(X_{t}, Y_{t}(\lambda)),(n,m(d\lambda))] \nonumber \\
&=&n\int_{(0,\infty)}(F[(X_{t}, Y_{t}(\lambda)),(n-1,m+\delta_{\lambda})]-F[(X_{t}, Y_{t}(\lambda)),(n,m(d\lambda))] )\mu(d\lambda) \nonumber \\
&&+\int_{(0,\infty)}(F[(X_{t}, Y_{t}(\lambda)), (n+1, m-\delta_{\lambda})]-F[(X_{t}, Y_{t}(\lambda)),(n,m(d\lambda))])\lambda m(d\lambda)\\
&&+ C^{2}_{n}(F[(X_{t}, Y_{t}(\lambda)), (n-1, m(d\lambda))]-F[(X_{t}, Y_{t}(\lambda)),(n,m(d\lambda))]).\nonumber
\end{eqnarray}
\end{prop}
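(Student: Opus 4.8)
The plan is to apply It\^o's formula to the perturbed dual function $F[(X_t, Y_t(\lambda)+\epsilon_k),(n,m(d\lambda))]$ along the $[0,1]\times\{y\in L^\infty: 0\le y\le 1\}$-valued solution $(X,Y)$ of equation (\ref{eq:SEE}), then pass to the limit $k\to\infty$. Here $n$ and $m=\sum_{i=1}^K m_i\delta_{\lambda_i}$ are fixed, so the only stochastic dependence is through $(X_t,Y_t)$. Write $\Phi_k(t):=F[(X_t,Y_t(\lambda)+\epsilon_k),(n,m(d\lambda))]=X_t^n\,e^{\int \ln(Y_t(\lambda)+\epsilon_k)m(d\lambda)}$. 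First I would invoke Lemma \ref{lem:frechet}, which says $\Phi_k$ is $C^{2,1}$ in $(x,y)$ with Fr\'echet derivative $D_yF[(x,y+\epsilon_k),(n,m)]=x^n e^{\int \ln(y(\lambda')+\epsilon_k)(m-\delta_\lambda)(d\lambda')}.m(d\lambda)$; since $Y_t$ has the absolutely continuous (in $t$) dynamics $dY_t(\lambda)=\lambda(X_t-Y_t(\lambda))dt$ with no noise, the It\^o formula from Section 2 (Definition \ref{defn:frechet}-type calculus, cf. the argument used for $\tilde{\mathcal{L}}h$ in Section 4) applies: the $y$-component contributes only a bounded-variation term $\int_0^t \langle \lambda(X_s-Y_s(\cdot)), D_yF[(X_s,Y_s(\cdot)+\epsilon_k),(n,m)]\rangle\,ds$, and the $x$-component contributes the usual drift $\partial_x F\cdot(\int Y_s\,\mu(d\lambda)-cX_s)$, the quadratic term $\tfrac12\partial_{xx}F\cdot X_s(1-X_s)$, and the martingale term $\int_0^t \partial_xF[(X_s,Y_s+\epsilon_k),(n,m)]\sqrt{X_s(1-X_s)}\,dW_s$.

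Next I would compute each drift term explicitly and recognize it as a difference operator. Using $\partial_x(x^n e^{\cdots})=n x^{n-1}e^{\cdots}$, the term $\partial_xF\cdot\int Y_s(\lambda)\mu(d\lambda)$ becomes $n\int_{(0,\infty)} X_s^{n-1}Y_s(\lambda)\,\mu(d\lambda)\cdot e^{\int \ln(Y_s+\epsilon_k)m}$; comparing with the definition of $F$ at $(n-1,m+\delta_\lambda)$ (which inserts a factor $(Y_s(\lambda)+\epsilon_k)$ under the exponential and drops the power of $X$ by one) one sees this is, up to $\epsilon_k$-corrections, $n\int (F[(n-1,m+\delta_\lambda)]-F[(n,m)])\mu(d\lambda)+nX_s^n\,e^{\cdots}\int\mu(d\lambda)=\ldots$; the term $-c\,\partial_xF\cdot X_s$ supplies exactly the missing $-nX_s^{n-1}\cdot cX_s\cdot e^{\cdots}=-n\mu(0,\infty)F[(n,m)]$ since $c=\mu(0,\infty)$, so the two combine cleanly into $n\int_{(0,\infty)}(F[(n-1,m+\delta_\lambda)]-F[(n,m)])\mu(d\lambda)$ after the limit is taken. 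The $Y$-drift $\langle \lambda(X_s-Y_s),D_yF\rangle=\sum_i \lambda_i m_i\,\frac{X_s-Y_s(\lambda_i)}{Y_s(\lambda_i)+\epsilon_k}X_s^n e^{\int\ln(Y_s+\epsilon_k)(m-\delta_{\lambda_i})}$; the ratio $\frac{X_s}{Y_s(\lambda_i)+\epsilon_k}$ times $e^{\ln(Y_s(\lambda_i)+\epsilon_k)}=Y_s(\lambda_i)+\epsilon_k$ rebuilds an extra factor $X_s$, matching $F[(n+1,m-\delta_\lambda)]$, while the $-Y_s(\lambda_i)$ part rebuilds $F[(n,m)]$, giving $\int_{(0,\infty)}(F[(n+1,m-\delta_\lambda)]-F[(n,m)])\lambda m(d\lambda)$ in the limit. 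Finally $\tfrac12\partial_{xx}F\cdot X_s(1-X_s)=\tfrac12 n(n-1)X_s^{n-2}X_s(1-X_s)e^{\cdots}=C_n^2(X_s^{n-1}-X_s^n)e^{\cdots}=C_n^2(F[(n-1,m)]-F[(n,m)])$, the Kingman coalescence term. So $\Phi_k(t)-\Phi_k(0)-\int_0^t \mathcal{G}_k\Phi_k(s)\,ds$ is a continuous martingale, with $\mathcal{G}_k$ an $\epsilon_k$-perturbed version of $\mathcal{G}$.

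The last step is the limit $k\to\infty$. By Remark \ref{rem:ob}(1) and monotone convergence, $\Phi_k(t)\to F[(X_t,Y_t),(n,m)]$ pointwise and boundedly (all quantities lie in $[0,1]$ up to the $X^n$ factor, which is bounded by $1$), so $\Phi_k(0)\to F[(X_0,Y_0),(n,m)]$ and the martingale property passes to the limit by dominated convergence once we control the integrated generator terms. For the drift, each of the three families of terms converges pointwise in $s$ as $\epsilon_k\downarrow 0$ — the $n$-term and the $C_n^2$-term by monotone/bounded convergence directly, and the $\lambda m(d\lambda)$-term needs the observation in Remark \ref{rem:ob}(2) that $m-\delta_{\lambda_i}$ stays a genuine (nonnegative) measure so all integrands remain well-defined — and they are dominated uniformly in $k$ by integrable bounds using $\ln(y+\epsilon_k)\le \ln 2$ and $\int\lambda\,m(d\lambda)<\infty$; note however the integrand of $\mathcal{G}_k$ for small $\epsilon_k$ near $\{Y_s(\lambda_i)=0\}$ involves $\frac{1}{Y_s(\lambda_i)+\epsilon_k}$, so the clean cancellation described above — the $\frac{1}{Y_s+\epsilon_k}$ against the $e^{\ln(Y_s+\epsilon_k)}$ inside $F[(n+1,m-\delta_\lambda)]$ — must be performed \emph{before} taking the limit, which is the delicate point. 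Once the integrand of $\mathcal{G}_k\Phi_k(s)$ is rewritten in this cancelled form it is uniformly bounded and converges to $\mathcal{G}F[(X_s,Y_s),(n,m)]$, so $\int_0^t\mathcal{G}_k\Phi_k(s)ds\to\int_0^t\mathcal{G}F[(X_s,Y_s),(n,m)]ds$, and the limit process is a continuous martingale as the a.s. (and $L^1$) limit of continuous martingales uniformly bounded on $[0,t]$. The main obstacle is exactly this: ensuring the perturbed generator terms are manipulated into a form where the $\epsilon_k\to 0$ limit is legitimate despite the $\ln y$ and $1/y$ singularities — i.e. doing the algebraic cancellations at the level of $\epsilon_k>0$ rather than interchanging limit and the ill-defined $\epsilon_k=0$ expressions.
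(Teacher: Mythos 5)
Your proposal follows essentially the same route as the paper's proof: perturb the dual function by $\epsilon_{k}$, apply It\^{o}'s formula using the Fr\'{e}chet derivative from Lemma \ref{lem:frechet}, identify the three drift contributions with the jump terms of $\mathcal{G}$ (performing the $\frac{1}{Y_{s}+\epsilon_{k}}$ versus $e^{\ln(Y_{s}+\epsilon_{k})}$ cancellation at the level of $\epsilon_{k}>0$), and pass to the limit by monotone and dominated convergence using the uniform boundedness of the martingales $M^{(k)}$. You also correctly single out the same delicate point the paper handles, namely that the algebraic recombination into the forms $F[(n-1,m+\delta_{\lambda})]$ and $F[(n+1,m-\delta_{\lambda})]$ must precede the limit $\epsilon_{k}\downarrow 0$.
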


\begin{proof}
By It\^{o}'s formula and equation (\ref{eq:SEE}), we have
\begin{eqnarray*}
&&F[(X_{t}, Y_{t}(\lambda)+\epsilon_{k}), (n, m(d\lambda))]\\
&=&F[(X_{0}, Y_{0}(\lambda)+\epsilon_{k}), (n, m(d\lambda))]+\underbrace{\int_{0}^{t}nX_{s}^{n-1}e^{
\int_{(0,\infty)} ln(Y_{s}(\lambda)+\epsilon_{k}) m(d\lambda)}dX_{s}}_\text{(1)}\\
&&+\underbrace{\int_{0}^{t}\langle dY_{s}(\lambda), X_{s}^{n}e^{\int_{(0,\infty)} ln(Y_{s}(\lambda^{\prime})+\epsilon_{k}) (m-\delta_{\lambda})(d\lambda^{\prime})}. m(d\lambda)\rangle}_\text{(2)}\\
&&+\underbrace{\frac{1}{2}n(n-1)\int_{0}^{t}X_{s}^{n-2}e^{
\int_{(0,\infty)} ln(Y_{s}(\lambda)+\epsilon_{k}) m(d\lambda)}X_{s}(1-X_{s})ds}_\text{(3)}+M^{(k)}_{t},
\end{eqnarray*}
where $M^{(k)}$ is a continuous martingale due to the boundedness of the integrand.

Then, replacing $c$ with $\int_{(0,\infty)}\mu(d\lambda)$, 
\begin{eqnarray*}
	(1)&=&n\int_{0}^{t}\int_{(0,\infty)}X_{s}^{n-1}e^{
\int_{(0,\infty)} ln(Y_{s}(\lambda^{\prime})+\epsilon_{k}) m(d\lambda^{\prime})}Y_{s}(\lambda)\mu(d\lambda)ds\\
&&-n\int_{0}^{t}\int_{(0,\infty)}X_{s}^{n-1}e^{
\int_{(0,\infty)} ln(Y_{s}(\lambda^{\prime})+\epsilon_{k}) m(d\lambda^{\prime})}\mu(d\lambda)X_{s}ds\\
&=& n\int_{0}^{t}\int_{(0,\infty)}X_{s}^{n-1}e^{
\int_{(0,\infty)} ln(Y_{s}(\lambda^{\prime})+\epsilon_{k}) m(d\lambda^{\prime})} Y_{s}(\lambda)-F[(X_{s}, Y_{s}(\lambda)+\epsilon_{k}),(n,m(d\lambda))] \mu(d\lambda)ds
\end{eqnarray*}

and by the pairing between $L^{\infty}$ and $ba((0,\infty),\mathcal{B}(0,\infty))$ which is defined as $\langle y(\lambda), m(d\lambda)\rangle=\int_{(0,\infty)}y(\lambda)m(d\lambda)$,

\begin{eqnarray*}
	(2)&=&\int_{0}^{t}\langle X_{s}, \lambda X_{s}^{n}e^{\int_{(0,\infty)} ln(Y_{s}(\lambda^{\prime})+\epsilon_{k}) (m-\delta_{\lambda})(d\lambda^{\prime})}. m(d\lambda)\rangle ds\\
	&&-\int_{0}^{t}\langle Y_{s}(\lambda), \lambda X_{s}^{n}e^{\int_{(0,\infty)} ln(Y_{s}(\lambda^{\prime})+\epsilon_{k}) (m-\delta_{\lambda})(d\lambda^{\prime})}. m(d\lambda)\rangle ds\\
	&=&\int_{0}^{t}\int_{(0,\infty)}(F[(X_{s}, Y_{s}(\lambda)+\epsilon_{k}), (n+1, m-\delta_{\lambda})]-X_{s}^{n}e^{\int_{(0,\infty)} ln(Y_{s}(\lambda^{\prime})+\epsilon_{k})(m-\delta_{\lambda})(d\lambda^{\prime})}Y_{s}(\lambda))\lambda m(d\lambda)ds,
\end{eqnarray*}

In addition, it is obvious that
\begin{eqnarray*}
	(3)=C^{2}_{n}(F[(X_{t}, Y_{t}(\lambda)+\epsilon_{k}), (n-1, m(d\lambda))]-F[(X_{t}, Y_{t}(\lambda)+\epsilon_{k}),(n,m(d\lambda))]).
\end{eqnarray*}

Combining (1), (2), (3) together, we get
\begin{eqnarray*}
M^{(k)}_{t}&=&F[(X_{t}, Y_{t}(\lambda)+\epsilon_{k}), (n, m(d\lambda))]-F[(X_{0}, Y_{0}(\lambda)+\epsilon_{k}), (n, m(d\lambda))]\\
&&-\int_{0}^{t}\mathcal{G}_{k}F[(X_{s}, Y_{s}(\lambda)+\epsilon_{k}),(n,m(d\lambda))]ds,
\end{eqnarray*}
where 
\begin{eqnarray*}
&&\mathcal{G}_{k}F[(X_{t}, Y_{t}(\lambda)+\epsilon_{k}),(n,m(d\lambda))]\\
&=&n\int_{(0,\infty)}X_{t}^{n-1}e^{
\int ln(Y_{t}(\lambda^{\prime})+\epsilon_{k}) m(d\lambda^{\prime})} Y_{t}(\lambda)-F[(X_{t}, Y_{t}(\cdot)+\epsilon_{k}),(n,m(d\lambda))] \mu(d\lambda)\\
&&+\int_{(0,\infty)}(F[(X_{t}, Y_{t}(\lambda)+\epsilon_{k}), (n+1, m-\delta_{\lambda})]-X_{t}^{n}e^{\int_{(0,\infty)} ln(Y_{t}(\lambda^{\prime})+\epsilon_{k})(m-\delta_{\lambda})(d\lambda^{\prime})}Y_{t}(\lambda)) \lambda m(d\lambda)\\
&&+ C^{2}_{n}(F[(X_{t}, Y_{t}(\lambda)+\epsilon_{k}), (n-1, m(d\lambda))]-F[(X_{t}, Y_{t}(\lambda )+\epsilon_{k}),(n,m(d\lambda))]).
\end{eqnarray*}

Finally, note that the martingales $\{M^{(k)}\}_{k\in\mathbb{N}}$ are uniformly bounded by a constant. Letting $k\rightarrow \infty$, by the monotone convergence theorem and the dominated convergence theorem, it can be verified that the pointwise limit 
\begin{equation*}
M_{t}=F[(X_{t}, Y_{t}(\lambda)), (n, m(d\lambda))]-F[(X_{0}, Y_{0}(\lambda)), (n, m(d\lambda))]-\int_{0}^{t}\mathcal{G}F[(X_{s}, Y_{s}(\lambda)),(n,m(d\lambda))]ds
\end{equation*}
is a continuous martingale, where the expression of $\mathcal{G}$ comes from the fact that $Y_{s}=e^{\int_{(0,\infty)}Y_{s}(\lambda^{\prime})\delta_{\lambda}(d\lambda^{\prime})}$. The proof is completed.
\end{proof}

By observing Proposition \ref{prop:firststate}, the dual process should be a Markov jump process $\{(N_{t}, M_{t})\}_{t\geq 0}$ with the following bounded generator
\begin{eqnarray}
	&&\mathcal{H}f(n, m(d\lambda))\nonumber\\
	&=& n\int_{(0,\infty)}(f(n-1, m+\delta_{\lambda})-f(n, m(d\lambda)))\mu(d\lambda)+\int_{(0,\infty)}(f(n+1, m-\delta_{\lambda})-f(n, m(d\lambda)))\lambda m(d\lambda)\nonumber\\
	&&+ C^{2}_{n}(f(n-1, m(d\lambda))-f(n, m(d\lambda))),
\end{eqnarray}
for any bounded measurable function $f$ on the state space.

Since $N_{t}\in \mathbb{N}$ and $M_{t}=\sum\limits_{i=1}^{K_{t}}m_{i,t}\delta_{\lambda_{i,t}}$ for some $K_{t}\in\mathbb{N}_{0}$, $m_{i,t}\in\mathbb{N}$ and $\lambda_{i,t}\in(0,\infty)$, the state space for this Markov jump process will be denoted by $\mathbb{N}_{0}\times \bigoplus\limits_{(0,\infty)}\mathbb{N}_{0}$, where the direct sum $\bigoplus\limits_{(0,\infty)}\mathbb{N}_{0}$ is the subspace of $\mathbb{N}_{0}^{(0,\infty)}$ for whose elements all but finitely many components are $0$. 

The $\mathbb{N}_{0}\times \bigoplus\limits_{(0,\infty)}\mathbb{N}_{0}$-valued Markov jump process $\{(N_{t}, M_{t})\}_{t\geq 0}$ with initial distribution $\nu$ can be constructed as follows:

Let $\{\tilde{M}_{k},k\in \mathbb{N}_{0}\}$ be a discrete-time time-homogeneous Markov chain in $\mathbb{N}_{0}\times \bigoplus\limits_{(0,\infty)}\mathbb{N}_{0}$ with initial distribution $\nu$, and its transition kernel is 
\begin{equation}
\mu((n, m), \Gamma)=\left\{\begin{array}{l}
\frac{n \mu(B)}{c_{n,m}}, \Gamma=\left\{\left(n-1, m+\delta_\lambda\right), \lambda \in B\right\}, \\
\frac{\lambda m(\{\lambda \})}{c_{n,m}}, \Gamma=\left(n+1, m-\delta_\lambda\right), \\
\frac{C_n^2}{c_{n,m}}, \Gamma=(n-1, m),
\end{array}\right.
\end{equation}
where $c_{n,m}=cn+\int_{(0,\infty)} \lambda m(d\lambda)+C^2_n$. Then, $\{(N_{t}, M_{t})\}_{t\geq 0}$ can be represented as 
\begin{eqnarray}\label{eq:constructed}
(N_{t}, M_{t})=\sum_{k=0}^{\infty}\tilde{M}_{k}I_{\{[\sum\limits_{j=0}^{k-1}\frac{\Delta_{j}}{c_{Y_{j}}}, \sum\limits_{j=0}^{k}\frac{\Delta_{j}}{c_{Y_{j}}})\}},	
\end{eqnarray}
where $c_{Y_{j}}=c_{n,m}$ if $Y_{j}=(n,m)$, $\{\Delta_{j}\}_{j\in\mathbb{N}_{0}}$ are mutually independent and independent of $\{Y_{j}\}_{j\in\mathbb{N}_{0}}$, and they are exponentially distributed with parameter $1$. 

$\{(N_{t}, M_{t})\}_{t\geq 0}$ can also be described by its transition rates, see (\ref{eq:char1}). By the Markov property and the measurability of $F$ for fixed $(x, y(\lambda))$, 
it is well-known that 
\begin{equation}\label{secondstate}
F[(x, y(\lambda)), (N_{t}, M_{t}(d\lambda))]-F[(x, y(\lambda)), (N_{0}, M_{0}(d\lambda))]-\int_{0}^{t}\mathcal{H}F[(x, y(\cdot)),(N_{s},M_{s}(d\lambda))]ds
\end{equation}
is a martingale. Consequently, Theorem \ref{th:dual} can be proved immediately.

\begin{proof}[\textbf{Proof of Theorem \ref{th:dual}}]
Put $(X,Y)$ and $(N,M)$ into the product filered probability space so that they are independent. By Proposition \ref{prop:firststate}, the martingale 	statement (\ref{secondstate}), and Theorem 4.4.11 in \cite{ethier2009markov}, the duality relation follows. Since all of the processes are bounded, the conditions of Theorem 4.4.11 are satisfied.
\end{proof}

\begin{rem}
	In Theorem \ref{th:dual}, since $Y_{0}(\lambda)$ is a random variable for all $\lambda\in (0,\infty)$, then $Y_{t}(\lambda)$ is $\mathcal{F}_{t}$-measurable for all $t>0$ and $\lambda\in (0,\infty)$. By the definition of the integral, $\int_{(0,\infty)}Y_{t}(\lambda)m(d\lambda)$ is also $\mathcal{F}_{t}$-measurable for all $m\in ba((0,\infty), \mathcal{B}(0,\infty))$. Therefore, if we adopt the $\sigma$-algebra generated by $ba((0,\infty), \mathcal{B}(0,\infty))$ on $L^{\infty}$, then $Y$ is an $L^{\infty}$-valued adapted process. Note that $Y$ is not uniform continuous in $\lambda$, but we can still apply It\^{o}'s formula in Proposition \ref{prop:firststate} by the pointwise continuity and the particular finite-sum form (\ref{eq:form}) of $m$. 
	\end{rem}

The next step is to construct the coalescent process based on its block counting process $\{(N_{t}, M_{t})\}_{t\geq 0}$. Since $M_{t}$ is a finite measure, only samples of finite size $K$ can be considered at first.

For $K\in\mathbb{N}$, let $\mathcal{P}_{K}$ be the set of partitions of $\{1,\cdots,K\}$. Define the space of marked partitions as 
\begin{equation}
\mathcal{P}_{K}^{f}=\{(P_{K}, f)|P_{K}\in \mathcal{P}_{K}, f\in [0,\infty)^{|P_{K}|}\},	
\end{equation}
where $0$ represents ``active'', $f$ represents ``flag'', and $|\cdot|$ denote s the number of blocks. For example, when $k=6$, an element $\pi=(P_{6},f)$ may be $\{\{1\}^{0}\{2\}^{0.1}\{5\}^{1}\{3,4\}^{10}\{6\}^{0}\}$.
For two marked partitions $\pi, \pi^{\prime}$, $\pi\sqsupset \pi^{\prime}$ denotes that $\pi^{\prime}$ is obtained by merging two $0$-blocks in $\pi$ e.g. $$\{\{1\}^{0}\{2\}^{0.1}\{5\}^{1}\{3,4\}^{10}\{6\}^{0}\}\sqsupset \{\{1,6\}^{0}\{2\}^{0.1}\{5\}^{1}\{3,4\}^{10}\},$$
and $\pi\leadsto\pi^{\prime}$ denotes that $\pi^{\prime}$ is obtained by changing the flag of one block of $\pi$, e.g.,
$$\{\{1\}^{0}\{2\}^{0.1}\{5\}^{1}\{3,4\}^{10}\{6\}^{0}\}\leadsto 
\{\{1\}^{0}\{2\}^{0.1}\{5\}^{1}\{3,4\}^{10}\{6\}^{100}\}.
$$

\begin{center}
\begin{tikzpicture}[global scale=0.5]
\draw[-, thick](0,0)--(0,-1);
\draw[-, thick](-2,-1)--(2,-1);
\draw[-, thick](-2,-1)--(-2,-15);
\draw[-, thick](2,-1)--(2,-2);
\draw[-, thick](3,-2)--(1,-2);
\draw[-, thick](3,-2)--(3,-4);
\draw[dashed, thick](3,-4)--(3,-8.5);
\draw[-, thick](3,-8.5)--(3,-10.2);
\draw[dashed, thick](3,-10.2)--(3,-11.4);
\draw[-, thick](3,-11.4)--(3,-13.5);
\draw[-, thick](3,-13.5)--(3,-15);
\draw[-, thick](1,-2)--(1,-3);
\draw[dashed, thick](1,-3)--(1,-5);
\draw[-, thick](1,-5)--(1,-6);
\draw[-, thick](0,-6)--(2,-6);
\draw[-, thick](0,-6)--(0,-7);
\draw[-, thick](2,-6)--(2,-9.5);
\draw[dashed, thick](2,-9.5)--(2,-12);
\draw[-, thick](2,-12)--(2,-15);
\draw[dashed, thick](2,-14.5)--(2,-15);
\draw[dashed, thick](0,-7)--(0,-8);
\draw[-, thick](0,-8)--(0,-9);
\draw[-, thick](-1,-9)--(1,-9);
\draw[-, thick](-1,-9)--(-1,-10);
\draw[-, thick](1,-9)--(1,-11);
\draw[dashed, thick](-1,-10)--(-1,-15);
\draw[dashed, thick](1,-11)--(1,-13);
\draw[-, thick](1,-13)--(1,-14);
\draw[-, thick](0.5,-14)--(1.5,-14);
\draw[-, thick](0.5,-14)--(0.5,-15);
\draw[-, thick](1.5,-14)--(1.5,-15);
\draw[dotted, thick](-3,-11.6)--(4,-11.6);
\node at (-2,-15.2){1};
\node at (-1,-15.2){2};
\node at (0.5,-15.2){3};
\node at (1.5,-15.2){4};
\node at (2,-15.2){5};
\node at (3,-15.2){6};
\node at (6.1,-11.6){$\{1\}^{0}\{2\}^{0.1}\{3,4\}^{10}\{5\}^{1}\{6\}^{0}$};
\node at (-1.3,-13){$0.1$};
\node at (0.7,-12){$10$};
\node at (1.8,-10.6){$1$};
\node at (2.6,-10.8){$100$};
\node at (-0.4,-7.5){$200$};
\node at (0.8,-4){$2$};
\node at (2.7,-6.5){$0.5$};
\end{tikzpicture}

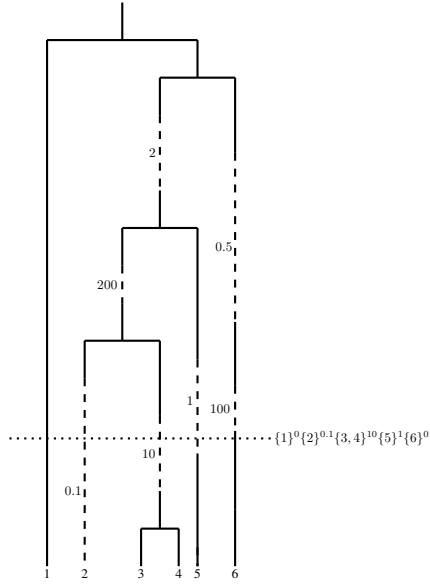
\captionof{figure}{A realization of continuum seed-bank $6$-coalescent}
\end{center}

Now, the continuum seed-bank $K$-coalescent process $\{\Pi_{t}^K\}_{t\geq 0}$ can be defined by (\ref{eq:K}) or constructed in a similar way to equation (\ref{eq:constructed}). It is obvious that $\{(N_{t}, M_{t})\}$ is indeed the block counting process of $\{\Pi_{t}^K\}_{t\geq 0}$ when $N_{0}+||M_{0}||_{TV}=K$. Furthermore, when $K$ is not finite, the following Proposition demonstrates the existence and uniqueness in distribution of the continuum seed-bank coalescent process $\{\Pi^{\infty}_{t}\}_{t\geq 0}$.

\begin{prop}\label{prop:projective}
There exists a unique process $\{\Pi^{\infty}_{t}\}_{t\geq 0}$ that has the same distribution as $\{\Pi^{K}_{t}\}_{t\geq 0}$ when it is restricted to take values in $\mathcal{P}^{f}_{K}$.		
\end{prop}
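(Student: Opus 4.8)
The plan is to realize $\{\Pi^{\infty}_{t}\}_{t\geq 0}$ as a projective limit of the finite-sample coalescents $\{\Pi^{K}_{t}\}_{t\geq 0}$, $K\in\mathbb{N}$, via a Kolmogorov consistency argument. For $L\geq K$ introduce the restriction map $\rho_{K,L}:\mathcal{P}^{f}_{L}\to\mathcal{P}^{f}_{K}$ that sends a marked partition of $\{1,\dots,L\}$ to the marked partition of $\{1,\dots,K\}$ obtained by intersecting each block with $\{1,\dots,K\}$, discarding the blocks that become empty, and keeping the flag of each surviving block. One checks directly that $\rho_{K,L}$ is continuous and surjective and that $\rho_{K,M}=\rho_{K,L}\circ\rho_{L,M}$ for $K\leq L\leq M$, so that $(\mathcal{P}^{f}_{K},\rho_{K,L})_{K\leq L}$ is a projective system of locally compact Polish spaces; its projective limit $\mathcal{P}^{f}_{\infty}$ is Polish and is identified naturally with the space of marked partitions of $\mathbb{N}$, with the topology under which convergence means that all finite restrictions eventually stabilize and the flags converge.

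The key step is \emph{sampling consistency}: for $L\geq K$, if $\Pi^{L}_{0}$ is distributed so that $\rho_{K,L}(\Pi^{L}_{0})\overset{d}{=}\Pi^{K}_{0}$, then $\{\rho_{K,L}(\Pi^{L}_{t})\}_{t\geq 0}\overset{d}{=}\{\Pi^{K}_{t}\}_{t\geq 0}$. I would prove this by exhibiting $\rho_{K,L}$ as a Markov function for the $L$-coalescent with image generator $\mathcal{L}^{(K)}$: for $f\in B(\mathcal{P}^{f}_{K})$ and $g:=f\circ\rho_{K,L}$, I would compute $\mathcal{L}^{(L)}g(\pi)$ from the rates in (\ref{eq:K}) and verify the identity $\mathcal{L}^{(L)}g(\pi)=(\mathcal{L}^{(K)}f)(\rho_{K,L}(\pi))$. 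This rests on three observations. First, the activation move ``a $0$ becomes $\lambda\in B$'' at rate $\mu(B)$ and the deactivation move ``a $\lambda$ becomes $0$'' at rate $\lambda$ act on a single block, and they change $\rho_{K,L}(\pi)$ exactly when that block meets $\{1,\dots,K\}$, in which case the corresponding block of $\rho_{K,L}(\pi)$ has its flag changed at the same rate. Second, a coagulation of two $0$-blocks at rate $1$ changes $\rho_{K,L}(\pi)$ exactly when both blocks meet $\{1,\dots,K\}$, and then it merges the two corresponding $0$-blocks of $\rho_{K,L}(\pi)$, again at rate $1$. Third, every other ``phantom'' transition, involving a block disjoint from $\{1,\dots,K\}$, leaves $\rho_{K,L}(\pi)$ unchanged and contributes nothing to $\mathcal{L}^{(L)}g(\pi)$. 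Hence $\mathcal{L}^{(L)}g(\pi)$ depends on $\pi$ only through $\rho_{K,L}(\pi)$ and equals $(\mathcal{L}^{(K)}f)(\rho_{K,L}(\pi))$. Since $\Pi^{L}$ solves the martingale problem for $\mathcal{L}^{(L)}$ and $\rho_{K,L}$ is continuous, it follows that the càdlàg process $\{\rho_{K,L}(\Pi^{L}_{t})\}_{t\geq 0}$ solves the martingale problem for $\mathcal{L}^{(K)}$ with the prescribed initial law; because the $K$-coalescent is a conservative, non-explosive pure jump process on the locally compact Polish space $\mathcal{P}^{f}_{K}$, its law is the unique solution of this martingale problem (equivalently, the one delivered by the jump-chain construction analogous to (\ref{eq:constructed})), so $\{\rho_{K,L}(\Pi^{L}_{t})\}_{t\geq 0}\overset{d}{=}\{\Pi^{K}_{t}\}_{t\geq 0}$.

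Granted the consistency, the conclusion follows from the Kolmogorov extension theorem. Fix a consistent family of initial laws $\nu_{K}$ on $\mathcal{P}^{f}_{K}$, e.g. the push-forwards of a single law on $\mathcal{P}^{f}_{\infty}$. The finite-dimensional distributions of the processes $\{\Pi^{K}_{t}\}_{t\geq 0}$ are then consistent both under time-marginalization, since each $\Pi^{K}$ is Markov, and under the maps $\rho_{K,L}$, by sampling consistency; they therefore define a consistent family of finite-dimensional distributions on $\mathcal{P}^{f}_{\infty}$, and Kolmogorov's theorem produces a process $\{\Pi^{\infty}_{t}\}_{t\geq 0}$ on $\mathcal{P}^{f}_{\infty}$ with these finite-dimensional distributions. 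Since each projection $\rho_{K}(\Pi^{\infty})$ has the law of the càdlàg process $\Pi^{K}$, and right limits and left limits in the metrizable space $\mathcal{P}^{f}_{\infty}$ can be taken coordinatewise, one obtains a càdlàg modification of $\Pi^{\infty}$ whose restriction to $\mathcal{P}^{f}_{K}$ is $\Pi^{K}$ in distribution for every $K$. Uniqueness in distribution is immediate: the law of any $D_{\mathbb{R}_{+}}(\mathcal{P}^{f}_{\infty})$-valued process is determined by its finite-dimensional distributions, and those are forced by the requirement that the restriction to each $\mathcal{P}^{f}_{K}$ have the law of $\Pi^{K}$.

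The main obstacle is the sampling-consistency step. The delicate points are (i) the lumpability identity $\mathcal{L}^{(L)}(f\circ\rho_{K,L})=(\mathcal{L}^{(K)}f)\circ\rho_{K,L}$, where one must check that the rate of every \emph{visible} transition of $\Pi^{L}$ depends on the full state only through its restriction, which is exactly where the per-block form of the activation and deactivation rates and the pairwise form of the coagulation rate are used, and where one verifies that the invisible blocks genuinely decouple; and (ii) upgrading the generator identity to an identity of path laws, which needs uniqueness for the $\mathcal{L}^{(K)}$-martingale problem. The latter is not completely automatic since $\mathcal{L}^{(K)}$ is unbounded (the deactivation rate $\lambda$ is unbounded in the flag), so one should record that the $K$-coalescent is non-explosive — the total flag mass evolves by $\tfrac{\mu}{c}$-distributed increments and there are at most $K$ blocks, so under Condition (\ref{condition}) it cannot blow up in finite time — and then invoke the standard well-posedness of the martingale problem for conservative non-explosive jump processes.
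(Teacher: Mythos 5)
Your proof is correct and follows essentially the same route as the paper: both realize $\{\Pi^{\infty}_{t}\}_{t\geq 0}$ as a projective limit over restriction maps between the Polish spaces $\mathcal{P}^{f}_{K}$ and conclude via the Kolmogorov extension theorem. The only substantive difference is that you prove the sampling-consistency condition in detail (through the lumpability identity $\mathcal{L}^{(L)}(f\circ\rho_{K,L})=(\mathcal{L}^{(K)}f)\circ\rho_{K,L}$, uniqueness of the $\mathcal{L}^{(K)}$-martingale problem, and a non-explosivity check), whereas the paper asserts this compatibility in a single sentence and applies the extension theorem directly to the path-space laws $Q_{K}$ on $D_{\mathbb{R}_{+}}(\mathcal{P}^{f}_{K})$, which also spares the c\`adl\`ag-modification step your finite-dimensional-distribution formulation requires.
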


\begin{proof}
First, note that the state space   i.e. the number of different ways to partition a set with $K$-elements. It is closed since when a partition is given, the individuals in the same block should have the same flag, which is characterized by the equivalence relations of the coordinate mappings on $[0,\infty)^{K}$. As the result, $\mathcal{P}^{f}_{K}$ is a Polish space.

Then, for $K\in\mathbb{N}$, let $Q_{K}$ be the distribution of the coalescent process for the first $K$ individuals $\Pi^{K}$ on $D_{\mathbb{R}_{+}}(\mathcal{P}^{f}_{K})$ which is also Polish. Then, take the product space $\prod\limits_{K=1}^{\infty}D_{\mathbb{R}_{+}}(\mathcal{P}^{f}_{K})$ and consider its projective limit subset 
$$PL=\{\omega\in \prod\limits_{K=1}^{\infty}D_{\mathbb{R}_{+}}(\mathcal{P}^{f}_{K}):p^{J}_{L}(\omega_{J})=\omega_{L} \text{ for }J>L\},$$
where $p^{J}_{K}$ maps a $\mathcal{P}^{f}_{J}$-valued (first $J$ individuals) c\`{a}dl\`{a}g path to a $\mathcal{P}^{f}_{K}$-valued (first $K$ individuals) one as the restriction. Actually, $PL$ can be viewed as the space $D_{\mathbb{R}_{+}}(\mathcal{P}^{f}_{\infty})$ through a one-to-one correspondence. Since adding new individuals into the sample will not affect the coalescent process of the existing individuals, then we have the compatibility condition: $Q_{K}=Q_{J}\circ (p^{J}_{K})^{-1}, J>K$. 

Finally, by the Kolmogorov extension theorem, it is known that there exists a unique probability measure $Q_{\infty}$ on $PL$ such that $Q_{K}=Q_{\infty}\circ (p_{K})^{-1}$, where $p_{K}$ is the $K$-th coordinate mapping on $PL$. The proof is completed.
\end{proof}

As the dual of Theorem \ref{th:scaling}, it can be shown that the partition-valued ancestral process for a sample of size $K$ from the discrete-time model weakly converges to the continuum seed-bank $K$-coalescent process.

\begin{prop}\label{prop:ancestral}
For the sequence of discrete-time models in Theorem \ref{th:scaling}, their ancestral processes $\{\Pi^{N_{r},K}_{n}\}_{n\in\mathbb{N}_{0}}$ for a sample of size $K$ satisfy the following property:

As the initial distributions weakly converge to $\mathcal{L}(\Pi^{K}_{0})$,
$\{\Pi^{N_{r},K}(\lfloor N_{r}t\rfloor)\}_{t\geq 0}$ converges in distribution to the continuum seed-bank $K$-coalescent $\{\Pi^{K}_{t}\}_{t\geq 0}$ on $D_{\mathbb{R}_{+}}(\mathcal{P}^{f}_{K})$.
\end{prop}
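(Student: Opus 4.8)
The plan is to run the Markov-chain-to-martingale-problem scheme already used for Theorem \ref{th:scaling}, now with the marked-partition-valued ancestral chains replacing the allele-frequency chains and the generator of the continuum seed-bank $K$-coalescent from (\ref{eq:K}) replacing $\mathcal{L}$. First I would describe the ancestral chain explicitly: tracing the genealogy of a sample of size $K$ backwards through the $r$-th discrete model of Section 5 produces, generation by generation, an element of $\mathcal{P}^f_K$ in which a block carries flag $0$ while its current ancestor is active and flag $\lambda^{n_r}_i$ while its current ancestor sits in the $i$-th seed-bank; write $\hat\Pi^{N_r,K}_n$ for this chain. No auxiliary embedding is needed here, since the flags already live in the finite set $\{0,\lambda^{n_r}_1,\dots,\lambda^{n_r}_{n_r}\}\subseteq[0,\infty)$, so $\hat\Pi^{N_r,K}_n$ is genuinely $\mathcal{P}^f_K$-valued. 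From the reproduction rules and the laws of $U,V^{n_r}_j,Z^{n_r}_j$ recorded in Section 5 one reads off the one-generation transition probabilities: for each unordered pair of active blocks, the probability they pick the same active ancestor (hence merge) is $\tfrac1{N_r}+O(N_r^{-2})$; an active block acquires flag $\lambda^{n_r}_i$ (its ancestor having been one of the $C^{n_r}_i$ revived individuals) with mean probability $c^{n_r}_i/N_r=\mu(\lambda^{n_r}_{i-1},\lambda^{n_r}_i]/N_r$; a block with flag $\lambda^{n_r}_i$ returns to flag $0$ with mean probability $c^{n_r}_i/M^{N_r}_i=\lambda^{n_r}_i/N_r$ (using $\lambda^{n_r}_i=c^{n_r}_iN_r/M^{N_r}_i$); and every other one-generation event — notably the merging of two blocks in the same seed-bank, or a block performing two moves at once — has probability $O(N_r^{-2})$ uniformly in the state.

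Next, with $\hat{\mathcal{L}}^{N_r}f(\pi)=N_r\,E_\pi[f(\hat\Pi^{N_r,K}_1)-f(\pi)]$ the generator of the accelerated chain and $\mathcal{A}_K$ the (level-by-level bounded) generator of (\ref{eq:K}), I would prove
\[
\lim_{r\to\infty}\ \sup_{\pi\in\mathcal{P}^f_K}\ \bigl|\hat{\mathcal{L}}^{N_r}f(\pi)-\mathcal{A}_Kf(\pi)\bigr|=0
\]
for $f$ ranging over a convergence-determining family of bounded continuous functions on $\mathcal{P}^f_K$ — for instance $f(\pi)=\phi(P_\pi)\prod_{\text{blocks}}g(\text{flag})$ with $g\in C_c([0,\infty))$, or the moment-dual functions $\pi\mapsto x^{n(\pi)}\prod_i y(\lambda_i(\pi))^{m_i(\pi)}$. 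Grouping the transitions as above, the active$\to$dormant contribution is $\sum_j\mu(\lambda^{n_r}_{j-1},\lambda^{n_r}_j]\bigl(f(\pi\leadsto\lambda^{n_r}_j)-f(\pi)\bigr)+O(N_r^{-1/2})$, which converges to $\int_{(0,\infty)}\bigl(f(\pi\leadsto\lambda)-f(\pi)\bigr)\mu(d\lambda)$ by exactly the Riemann-sum estimate controlling term (4) in the proof of Theorem \ref{th:scaling}, once $\max_i|\lambda^{n_r}_i-\lambda^{n_r}_{i-1}|\to0$ and $\lambda^{n_r}_{n_r}\to\infty$ (the leftover $\int_{(\lambda^{n_r}_{n_r},\infty)}\lambda\,\mu(d\lambda)$ vanishing by Condition (\ref{condition})); the dormant$\to$active contribution is already an exact match, the $i$-th seed-bank's flag being its reviving rate $\lambda^{n_r}_i$; the coalescence contribution converges to $\sum_{\pi'\,:\,\pi\sqsupset\pi'}\bigl(f(\pi')-f(\pi)\bigr)$; and all residual terms are $O(N_r^{-1/2})$ by the boundedness of $V^{n_r}_i,Z^{n_r}_i$ and a Marcinkiewicz--Zygmund bound on $U$, as in Section 5. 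As there, one fixes $n_r$ and the grid $\{\lambda^{n_r}_i\}$ first (to make the Riemann-sum error $\le 2^{-r}$, say), then sets $c^{n_r}_i=\mu(\lambda^{n_r}_{i-1},\lambda^{n_r}_i]$ and $M^{N_r}_i=c^{n_r}_iN_r/\lambda^{n_r}_i$, and finally takes $N_r$ large enough to absorb the $O(N_r^{-1/2})$ terms while preserving $c\le\min\{N_r,M^{N_r}_i\}$.

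Finally, I would conclude with the standard convergence theorem for Markov chains, using the same results from \cite{ethier2009markov} as in the proof of Theorem \ref{th:scaling}. The space $\mathcal{P}^f_K$ is locally compact and Polish (Proposition \ref{prop:projective}); the $D_{\mathbb{R}_+}(\mathcal{P}^f_K)$-martingale problem for $\mathcal{A}_K$ is well posed because $\Pi^K$ is a non-explosive pure jump process — its active sojourns are $\mathrm{Exp}(c)$-distributed, so only finitely many transitions can occur in finite time even though the flags, being i.i.d.\ $\tfrac1c\mu$-draws with $\int\lambda\,\mu(d\lambda)<\infty$, may be unboundedly large; and the generator convergence together with compact containment of $\{\hat\Pi^{N_r,K}(\lfloor N_r\cdot\rfloor)\}_r$ (immediate, since at each fixed time only $\le K$ flags are present and each freshly created one is $\tfrac1c\mu$-distributed, a tight family) yields relative compactness and identification of the limit. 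Hence, whenever the initial laws converge to $\mathcal{L}(\Pi^K_0)$, $\{\Pi^{N_r,K}(\lfloor N_r t\rfloor)\}_{t\ge0}$ converges in distribution to $\{\Pi^K_t\}_{t\ge0}$ on $D_{\mathbb{R}_+}(\mathcal{P}^f_K)$.

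The hard part is not the Riemann-sum limit — that is verbatim the Section 5 computation — but two bookkeeping issues. First, one must bound uniformly in $r$ the ``forbidden'' one-generation events, above all the merging of two lineages in the same discrete seed-bank, which as a double-counting of fresh offspring of a single active parent has probability of order $(C^{n_r}_i)^2/((M^{N_r}_i)^2N_r)$ per pair; with $M^{N_r}_i=c^{n_r}_iN_r/\lambda^{n_r}_i$ this becomes $O((\lambda^{n_r}_{n_r})^2/N_r^2)$ after acceleration, harmless only because $N_r$ is chosen last and can be taken huge relative to the already-fixed grid — precisely the role of the constraint $\min_iM^{N_r}_i\ge c$ and of the order in which the parameters are selected. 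Second, unlike the compact $(D,d)$ of Section 5, the flag space $[0,\infty)$ is noncompact, so the Feller/compact-state-space version of the limit theorem does not apply directly and compact containment together with non-explosion of $\Pi^K$ must be verified by hand (via the finite first moment of $\mu$, as above). An alternative that avoids the second issue is to use the exact moment duality between the discrete allele-frequency chain and its block-counting chain: combining Theorem \ref{th:scaling}, Theorem \ref{th:dual}, and relative compactness of the block-counting chains forces every subsequential limit to be the block-counting process of $\Pi^K$, after which the full marked-partition structure is recovered by consistency in $K$ exactly as in Proposition \ref{prop:projective}.
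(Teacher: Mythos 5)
Your proposal is correct and follows essentially the same route as the paper: read off the one-generation transition probabilities of the $\mathcal{P}^{f}_{K}$-valued ancestral chain from the reproduction mechanism (merge at rate $\approx 1/N_{r}$ per pair of $0$-blocks, $0\to\lambda^{n_{r}}_{i}$ at $c^{n_{r}}_{i}/N_{r}$, $\lambda^{n_{r}}_{i}\to 0$ at $\lambda^{n_{r}}_{i}/N_{r}$, everything else $O(N_{r}^{-2})$), establish uniform convergence of the rescaled discrete generator to the generator of (\ref{eq:K}) by the same Riemann-sum estimate and choice of parameters as in the proof of Theorem \ref{th:scaling}, and conclude with the convergence theorems of \cite{ethier2009markov}. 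The only substantive difference is at the last step, where the paper invokes the bounded-generator results (Theorems 1.6.5 and 4.2.6 of \cite{ethier2009markov}) while you instead verify compact containment and non-explosion by hand; since the wake-up rate $\lambda$ is unbounded over $\mathcal{P}^{f}_{K}$, your extra care here is a fair point rather than a detour, and your duality-based alternative is a valid fallback but is not needed.
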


\begin{proof}
Note that all of $\{\Pi^{N_{r},K}_{n}\}_{n\in\mathbb{N}_{0}}$ and $\{\Pi^{K}_{t}\}_{t\geq 0}$ are $\mathcal{P}^{f}_{K}$-valued Markov processes with bounded generators, and by the proof of Proposition \ref{prop:projective}, we will regard them as  $\{1,2,\cdots,B_{k}\}\times[0,\infty)^{K}$-valued. According to the dynamics of the discrete-time Wright-Fisher model given in Section 4, we know that the transition probability is:
\begin{eqnarray*}
	P(\Pi^{N_{r},K}_{1}=\pi^{\prime}|\Pi^{N_{r},K}_{0}=\pi)=\left\{\begin{array}{cl}
\frac{c^{n_{r}}_{i}}{N_{r}}, & \pi \leadsto \pi^{\prime}, \text{a}~0~\text{becomes}~\lambda^{n_{r}}_{i}, \text{ for } i=1,2,\cdots, n_{r},\\
\frac{\lambda^{n_{r}}_{i}}{N_{r}}, & \pi \leadsto \pi^{\prime}, \text{a}~\lambda^{n_{r}}_{i} ~\text{becomes}~0, \text{ for } i=1,2,\cdots,n_{r},\\
(1-\frac{c}{N_{r}})^{2}\frac{1}{N_{r}}, & \pi \sqsupset \pi^{\prime},\\
O(\frac{1}{N_{r}^{2}}),& \text{ otherwise,} 
\end{array}\right.
\end{eqnarray*}
where ``a'' means ``exactly one'' if there exists at least one.

Actually, similar to the proof of Proposition 3.4. in \cite{blath2016}, while transitioning from $\pi$ to $\pi^{\prime}$, multiple mergers and multiple flag-changings may happen at the same time, but most of them are of the order $O(\frac{1}{N_{r}^2})$ except that:

\begin{enumerate}
	\item $\pi \leadsto \pi^{\prime}$ and a $\lambda^{n_{r}}_{i}$ becomes $0$: the probability is $\sum\limits_{j=1}^{c}\frac{j}{N_{r}}P\{C^{n}_{i}=j\}=\frac{E[C^{n}_{i}]}{N_{r}}=\frac{c^{n}_{i}}{N_{r}}$;
	\item $\pi \leadsto \pi^{\prime}$ and a $0$ becomes $\lambda^{n_{r}}_{i}$: the probability is $\sum\limits_{j=1}^{c}\frac{j}{M^{N_{r}}_{i}}P\{C^{n}_{i}=j\}=\frac{E[C^{n}_{i}]}{M^{N_{r}}_{i}}=\frac{\lambda^{n}_{i}}{N_{r}}$;
	\item  $\pi \sqsupset \pi^{\prime}$ i.e. two 0-blocks merge: the probability is $(1-\frac{c}{N_{r}})^{2}\frac{1}{N_{r}^{2}}\cdot N_{r}$.
\end{enumerate} 

For any continuous bounded function $f$ on the locally compact Polish space $\mathcal{P}^{f}_{K}$, define
\begin{eqnarray*}
	\mathcal{A}_{n_{r}}f(\pi)&=&N_{r}E_{\pi}[f(\Pi^{N_{r},K}_{1})-f(\pi)]\\
	&=& \sum(f(\pi^{\prime}_{coalesce})-f(\pi))+\sum\sum_{i=1}^{n_{r}}\int_{(\lambda^{n^{r}}_{i-1},\lambda^{n_{r}}_{i}]}(f(\pi^{\prime}_{i})-f(\pi))\mu(d\lambda)+\sum\sum_{i=1}^{n_{r}}(f(\pi^{\prime\prime}_{i})-f(\pi))\lambda^{n_{r}}_{i}\\
	&&+O(\frac{1}{N_{r}}),
\end{eqnarray*}
where $\pi^{\prime}_{coalesce}$ denotes the partition after merging, $\pi^{\prime}_{i}$ denotes the partition after changing flag from $0$ to $\lambda^{n_{r}}_{i}$, $\pi^{\prime\prime}_{i}$ denotes the partition after changing flag from $\lambda^{n_{r}}_{i}$ to $0$, and ``$\sum$" refers to summing different cases up. Then, recall that by the transition rates (\ref{eq:K}), the generator $\mathcal{A}$ for $\{\Pi^{K}_{t}\}_{t\geq 0}$ is: 
\begin{eqnarray*}
\mathcal{A}f(\pi)=\sum(f(\pi^{\prime}_{coalesce})-f(\pi))+\sum\int_{(0,\infty)}(f(\pi^{\prime}_{\lambda})-f(\pi))\mu(d\lambda)+\sum\sum_{i=1}^{n_{r}}(f(\pi^{\prime}_{i})-f(\pi))\lambda^{n_{r}}_{i},	
\end{eqnarray*}
where $\pi^{\prime}_{\lambda}$ denotes the partition after changing flag from $0$ to $\lambda$. 
Therefore, we have
\begin{eqnarray*}
&&\sup_{\pi\in\mathcal{P}^{f}_{K}}|\mathcal{A}_{n_{r}}f(\pi)-\mathcal{A}f(\pi)|\\
&\leq &\sup_{\pi\in\mathcal{P}^{f}_{K}}\{\sum\sum_{i=1}^{n_{r}}\int_{(\lambda^{n^{r}}_{i-1},\lambda^{n_{r}}_{i}]}|f(\pi^{\prime}_{\lambda})-f(\pi_{i}^{\prime})|\mu(d\lambda)+\sum\int_{(\lambda^{n_{r}}_{n_{r}},\infty)}|f(\pi^{\prime}_{\lambda})-f(\pi)|\mu(d\lambda)\}+O(\frac{1}{N_{r}}).
\end{eqnarray*}
Since $f$ is bounded, the ``$\sum$" has finite terms and this number depends only on $K$, $\lim\limits_{r\rightarrow \infty}\lambda^{n_{r}}_{n^{r}}=0$, and  $\lim\limits_{n\rightarrow\infty}\max\limits_{1\leq i\leq n}|\lambda^{n}_{i}-\lambda^{n}_{i-1}|=0$, we first take a large enough $R$ such that the last two terms are of the same order of the given $\epsilon>0$. Then, by the fact that $\pi^{\prime}_{\lambda}$ and $\pi_{i}^{\prime}$ are the same except for the flag of one block i.e. $\lambda$ versus $\lambda^{n_{r}}_{i}$, we know that for each partition $i$, and each possible block which is changing flag and composed of individuals $\{i_{1},i_{2},\cdots,i_{j}\}$, the function $f(\{i\}\times(x_{1},\cdots,x_{K}))$ is uniformly continuous on $[0,\lambda^{n_{R}}_{n_{R}}]^{j}$ with respect to variables $(x_{i_{1}},\cdots, x_{i_{j}})$. Note that there are only finitely many cases, hence we can get the uniformity with respect to all of them. As the result, if $r$ is large enough, then $\max\limits_{1\leq i\leq n}|\lambda^{n}_{i}-\lambda^{n}_{i-1}|$ is small, and $|f(\pi^{\prime}_{\lambda})-f(\pi_{i}^{\prime})|$ are uniformly bounded by $\epsilon$, and then we obtain $\sup\limits_{\pi\in\mathcal{P}^{f}_{K}}|\mathcal{A}_{n_{r}}f(\pi)-\mathcal{A}f(\pi)|=O(\epsilon)$. 

Finally, for these Feller processes with bounded generators, we only need to apply Theorem 4.2.6 and Theorem 1.6.5 in \cite{ethier2009markov} to get the desired result.
\end{proof}



\section*{Appendix}

\begin{proof}[\textbf{Proof of Proposition \ref{prop:domain}}]~

For 1, it is obvious that $D\subseteq D(A)$ and $D$ is closed. $D(A)$ contains $\mathbb{R}\times \mathcal{S}$ which is dense in $E$, hence $D(A)$ is also a dense subset. $D(A)$ is Borel measurable as $D(A)=\bigcup\limits_{n\in\mathbb{N}}\Gamma_{n}$, where 
$$\Gamma_{n}=\mathbb{R}\times \{y\in L^{1}(\mu): ||\lambda y(\lambda)||_{L^{1}}\leq n\}$$
is closed by the Fatou's lemma. For 2, it can be verified directly by $|p(x)-p(y)|\leq|x-y|$.
\end{proof}

\begin{proof}[\textbf{Proof of Proposition \ref{prop:semigroup}}]~

It can be verified by definition that $A$ is a closed operator. In addition, by 1 of Proposition \ref{prop:domain}, we know that $D(A)$ is dense. Then, by the Hille-Yosida Theorem (see e.g. Theorem 1.3.1 in \cite{pazy2012semigroups}), we only need to show that the resolvent set $\rho(A)$ contains $(0,\infty)$ and for every $\alpha>0$, $||R(\alpha, A)||\leq\frac{1}{\alpha}$. They are obvious as $R(\alpha, A)(x, y(\lambda))=(\frac{x}{\alpha+c}, \frac{y(\lambda)}{\alpha+\lambda})$ for $\alpha>0$. Moreover, for every $\sigma>0$ and $\tau\in\mathbb{R}\backslash\{0\}$, we have $||R(\sigma+i\tau, A)||\leq\frac{1}{|\tau|}$. Consequently, by Theorem 2.5.2 and Theorem 1.7.7 in \cite{pazy2012semigroups}, $\{S(t)\}_{t\geq 0}$ can be extended to an analytic semigroup in a sector
$\Delta_{\delta}=\{z\in\mathbb{C}:|arg~z|<\delta\}$ for some $\delta\in (0,\frac{\pi}{2})$ such that
\begin{eqnarray}\label{eq:a1}
S(t)=\frac{1}{2\pi i}\int_{\Gamma_{\epsilon, \theta}}e^{\alpha t}R(\alpha, A)d\alpha,
\end{eqnarray}
where the curve $\Gamma_{\epsilon,\theta}=\Gamma_{\epsilon, \theta}^{+}\cup\Gamma_{\epsilon,\theta}^{-}\cup\Gamma_{\epsilon, \theta}^{0}$ is oriented counterclockwise, 
$\Gamma_{\epsilon,\theta}^{\pm}=\{z\in\mathbb{C}:z=re^{\pm i\theta},r\geq\epsilon\}$, and $\Gamma_{\epsilon, \theta}^{0}=\{z\in\mathbb{C}:z=\epsilon e^{i\phi},|\phi|\leq\theta\}$, for any $\epsilon>0$ and $\theta\in(\frac{\pi}{2},\frac{\pi}{2}+\delta)$. By the inversion formula (\ref{eq:a1}) and the Cauchy's integral formula, we have $S(t)=(e^{-ct}, e^{-\lambda t})$. Alternatively, since $A$ generates a strongly continuous semigroup, and it can be verified that $D(A)\subseteq D(B)$, where $B$ denotes the generator of $\{S(t)\}_{t\geq 0}$. Therefore, $A$ and $B$ must be the same, and then we conclude that $A$ indeed generates $\{S(t)\}_{t\geq 0}$.
\end{proof}

\begin{proof}[\textbf{Proof of Proposition \ref{prop:compactsemigroup}}]~

Any finite measure $\mu$ on $(0, \infty)$ can be uniquely decomposed as $\mu=\mu_{c}+\mu_{d}$, where $\mu_{c}$ is a non-atomic finite measure, and $\mu_{d}=\sum\limits_{i=1}^{\infty}c_{i} \delta_{\lambda_{i}}$ for $c_{i}\geq0$, $\lambda_{i}\neq\lambda_{j}$ when $i\neq j$. 

\textit{Case 1} : If $\mu=\sum\limits_{i=1}^{n}c_{i} \delta_{\lambda_{i}}$ for some $n\in \mathbb{N}$, then $E$ is finite-dimensional, and thus $S(t)$ is compact for $t\geq 0$.

\textit{Case 2} : If $\mu=\sum\limits_{i=1}^{\infty}c_{i} \delta_{\lambda_{i}}$ for $c_{i}>0$ and $\{\lambda_{i}\}_{i\in \mathbb{N}}$ is unbounded, then there is a subsequence $\lambda_{n_{k}}\rightarrow \infty$ as $k\rightarrow \infty$. Note that $E\cong\mathbb{R}\times l^{1}(w)$, where $l^{1}(w)$ is the $w$-weighted $l^{1}$ space. For the compactness of $S(t)$, $t>0$, we only need to show that the image of the closed unit ball $\{||y||_{l^{1}(w)}\leq 1\}$ under the mapping $\{y_{i}\}_{i\in\mathbb{N}}\mapsto \{e^{-\lambda_{i}t}y_{i}\}_{i \in\mathbb{N}}$ is totally bounded.
By a weighted version of Theorem 4 in \cite{hanche2010kolmogorov}, a subset of $l^{1}(w)$ is totally bounded if and only if it is pointwise bounded and for any $\epsilon>0$, there is some $n\in\mathbb{N}$ such that for all $y$ in the set, $\sum\limits_{i=n+1}^{\infty}c_{i}|y_{k}|<\epsilon$. Since $\sup\limits_{||y||_{l^{1}(w)}\leq 1}|e^{-\lambda_{i}t}y_{i}|\leq \frac{1}{c_{k}}$ (pointwise bounded), and 
$\sup\limits_{||y||_{l^{1}(w)}\leq 1}\sum\limits_{i=n_{k}+1}^{\infty}c_{i}|e^{-\lambda_{i}t}y_{i}|\leq e^{-\lambda_{n_{k}}t}<\epsilon$ when $k$ is large enough, the conclusion follows.

Otherwise, if $\{\lambda_{i}\}_{i\in \mathbb{N}}$ is bounded by $\lambda>0$, take $y^{k}$ as $y^{k}_{i}=\frac{1}{c_{i}}\delta_{ik}$ for each $k\in\mathbb{N}$. Then for $\epsilon_{0}=e^{-\lambda t}$ and any $n\in\mathbb{N}$, we have
$\sup\limits_{||y||_{l^{1}(w)}\leq 1}\sum\limits_{i=n+1}^{\infty}c_{i}|e^{-\lambda_{i}t}y_{i}|\geq \sum\limits_{i=n+1}^{\infty}c_{i}|e^{-\lambda_{i} t}y^{n+1}_{i}|\geq \epsilon_{0}$, thus $S(t)$ is not compact for $t>0$.

\textit{Case 3} : If $\mu\neq 0$ is non-atomic, then by the Sierpinski theorem (\cite{sierpinski1922fonctions}), it takes a continuum of values. Take some $\epsilon\in (0,1)$ such that $M_{\epsilon}=(0, -\frac{ln(\epsilon)}{t})$ has positive $\mu$ measure, and then take a decreasing sequence of sets $\{M_{n}\}_{n\in\mathbb{N}}$ such that $M_{1}\subseteq M_{\epsilon}$, $0<\mu(M_{1})<\mu(M_{\epsilon})$, and $0<\mu(M_{n+1})<\frac{1}{2}\mu(M_{n})$. Let $f_{n}(\lambda)=\frac{I_{M_{n}}(\lambda)}{\mu(M_{n})}$, then $\{z_{n}=(0, f_{n})\}_{n\in\mathbb{N}}$ is contained in the closed unit ball of $E$, and for $m>n$, 
\begin{eqnarray*}
	||S(t)(z_{m}-z_{n})||&=&\int_{(0, \infty)}e^{-\lambda t}|f_{m}(\lambda)-f_{n}(\lambda)|\mu(d\lambda),\\
	&\geq& \epsilon \int_{M_{m}}(\frac{1}{\mu(M_{m})}-\frac{1}{\mu(M_{n})  })\mu(d\lambda)+\epsilon \int_{M_{n}\backslash M_{m}}\frac{1}{\mu(M_{n})}\mu(d\lambda),\\ 
	&=& 2\epsilon \frac{\mu(M_{n} \backslash M_{m})}{\mu(M_{n})}\\
	&\geq &  \epsilon,
\end{eqnarray*}
which implies that $S(t), t>0$ is not compact.

\textit{Case 4} : 
For general $\mu=\mu_{c}+\mu_{d}$, if $\mu_{d}=\sum\limits_{i=1}^{\infty}c_{i}\delta_{\lambda_{i}}$ with bounded $\{\lambda_{i}\}_{i\in\mathbb{N}}$, take 
$y^{k}(\lambda)=\frac{1}{c_{i}}I_{\{\lambda_{k}\}}(\lambda)$ as in Case 2, then $\{e^{-\lambda t}y^{k}(\lambda)\}_{k\in\mathbb{N}}$ has no $L^{1}(\mu_{d})$-convergent subsequence. Note that $y^{k}=0,\mu_{c}-a.e.$, thus $||y^{k}||_{L^{1}(\mu)}=||y^{k}||_{L^{1}(\mu_{d})}$, and
$\{e^{-\lambda t}y^{k}(\lambda)\}_{k\in\mathbb{N}}$ has no $L^{1}(\mu)$-convergent subsequence, either. 

Finally, 
 if $\mu_{c}\neq 0$ and $\mu_{d}=\sum\limits_{i=1}^{n}c_{i}\delta_{\lambda_{i}}$ or $\mu_{d}=\sum\limits_{i=1}^{\infty}c_{i}\delta_{\lambda_{i}}$ with unbounded $\{\lambda_{i}\}_{i\in\mathbb{N}}$, then we take $M_{\epsilon}$ and $\{M_{n}\}_{n\in\mathbb{N}}$ as in Case 3 but remove all $\lambda_{i}$'s. Since the restriction of $\mu_{d}$ to $M_{\epsilon}$ is $0$, the same argument follows. \end{proof}
 
\begin{proof}[\textbf{Proof of Proposition \ref{prop:adjoint}}]~

1. For any $z=(x, y)\in D(A)$ and $f=(f^{(1)}, f^{(2)})\in E^{\star}$, $\langle Az, f\rangle= (-cf^{(1)}x, -\int_{(0,\infty)}\lambda y(\lambda)f^{(2)}(\lambda)\mu(d\lambda))$. By definition, 
$D(A^{*})=\{f\in E^{\star}: z \mapsto \langle Az,f\rangle~\text{is continuous}\}$. Then ``$\supseteq$" is obvious. For ``$\subseteq$'', $f\in D(A^{\star})$ implies that 
$\int_{(0,\infty)}\lambda h(\lambda)y(\lambda)\mu(d\lambda)=\int_{(0,\infty)}\tilde{h}(\lambda)y(\lambda)\mu(d\lambda)$ for some $\tilde{h}\in L^{\infty}(\mu)$. Since $(x, I_{B})\in D(A)$ for any $x\in \mathbb{R}$ and $B\in\mathcal{B}(0, \infty)$, then we know that $\lambda h(\lambda)=\tilde{h}(\lambda)$, $\mu$-a.e.

2. $D(A^{\star})$ is not dense since the constant function $1$ can not be approximated by functions in $\{h \in L^{\infty}(\mu): \lambda h(\lambda)\in L^{\infty}(\mu)\}$. $D(A^{\star})$ is also not closed e.g. $f_{n}(\lambda)=\frac{1}{1+\sqrt{\lambda}}I_{\{\lambda\leq n\}}\rightarrow f(\lambda)=\frac{1}{1+\sqrt{\lambda}}$ in $L^{\infty}(\mu)$, and $\lambda f_{n}(\lambda)\in L^{\infty}$, but $\lambda f(\lambda)\notin L^{\infty}$.

3. For $\alpha\in \rho(A)$, by Lemma 1.10.2. in \cite{pazy2012semigroups}, $\alpha\in\rho(A^{\star})$ and $R(\alpha, A^{\star})=R(\alpha, A)^{\star}$. Then by Theorem 1.10.4. in \cite{pazy2012semigroups}, we have $R(\alpha, A^{+})=R(\alpha, A^{\star})|_{\overline{D(A^{\star})}}=R(\alpha, A)^{\star}|_{\overline{D(A^{\star})}}$, thus
$D(A^{+})=R(\alpha, A)^{*}(\overline{D(A^{\star})})=\{(\frac{f^{(1)}}{\alpha+c}, \frac{f^{(2)}(\lambda)}{\alpha+\lambda}): (f^{(1)}, f^{(2)}(\lambda))\in \overline{D(A^{\star})}, \alpha\in\rho(A)\}$ by the proof of Proposition \ref{prop:semigroup}.

$D(A^{+})\subsetneq D(A^{\star})$: e.g. $f(\lambda)=\left\{\begin{array}{ll}1, & \lambda \leqslant 1 \\ \frac{1}{\lambda}, & \lambda>1\end{array}\right.\in \{h \in L^{\infty}(\mu): \lambda h(\lambda)\in L^{\infty}(\mu)\}$ but $\lambda f(\lambda)$ can not be approximated by functions in $\{h \in L^{\infty}(\mu): \lambda h(\lambda)\in L^{\infty}(\mu)\}$.

$D(A^{\star 2})\subsetneq D(A^{+})$: e.g. $g_{n}(\lambda)=\left\{\begin{array}{lll} \lambda, & \lambda \leqslant 1 \\ \frac{1}{\sqrt{\lambda}}, & 1<\lambda\leq n\\ 0, &, \lambda>n \end{array}\right.\in D(A^{\star})$ converges to $\lambda f(\lambda)$, where $f(\lambda)=\left\{\begin{array}{ll}1, & \lambda \leqslant 1 \\ \frac{1}{\lambda^{\frac{3}{2}}}, & \lambda>1\end{array}\right.$, thus $f\in D(A^{+})$ by definition, but $\lambda^{2}f(\lambda)\notin L^{\infty}(\mu)$.
\end{proof}

\begin{proof}[\textbf{Proof of Proposition \ref{prop:equi}}]~

1 $\Rightarrow$ 2 : It is known that $\int_{0}^{t}S(s){Z_{0}}ds\in D(A)$, and by the Fubini theorem and change of variables, we have
$$\int_{0}^{t}\int_{0}^{s}S(s-r)\tilde{F}(Z_{r})drds=\int_{0}^{t}\int_{0}^{t-r}S(s)\tilde{F}(Z_{r})dsdr.$$
Moreover, $\int_{0}^{t-r}S(s)\tilde{F}(Z_{r})ds\in D(A)$ and $A\int_{0}^{t-r}S(s)\tilde{F}(Z_{r})ds=S(t-r)\tilde{F}(Z_{r})-\tilde{F}(Z_{r})$ is Bochner integrable on $(0,t)$, thus $\int_{0}^{t}\int_{0}^{s}S(s-r)\tilde{F}(Z_{r})drds\in D(A)$ and
\begin{equation}\label{eq:1}
A\int_{0}^{t}\int_{0}^{s}S(s-r)\tilde{F}(Z_{r})drds=\int_{0}^{t}S(t-r)\tilde{F}(Z_{r})dr-\int_{0}^{t}\tilde{F}(Z_{r})dr.
\end{equation}
Similarly, by the stochastic Fubini theorem and change of variables, we have
$$\int_{0}^{t}\int_{0}^{s}S(s-r)\tilde{B}(Z_{r})dW_{r}ds=\int_{0}^{t}\int_{0}^{t-r}S(s)\tilde{B}(Z_{r})dsdW_{r}, a.s.$$
Then, it follows from Proposition 4.30 in \cite{da2014stochastic} that $\int_{0}^{t}\int_{0}^{s}S(s-r)\tilde{B}(Z_{r})dW_{r}ds\in D(A), a.s.$, and
\begin{equation}\label{eq:2}
A\int_{0}^{t}\int_{0}^{s}S(s-r)\tilde{B}(Z_{r})dW_{s}ds=\int_{0}^{t}S(t-r)\tilde{B}(Z_{r})dW_{r}-\int_{0}^{t}\tilde{B}(Z_{r})dW_{r}, a.s.
\end{equation}

In summary, $Z_{t}$ is Bochner integrable on $(0,t)$, a.s., and
$$\int_{0}^{t}Z_{s}ds=\int_{0}^{t}S(s)Z_{0}ds+\int_{0}^{t}\int_{0}^{s}S(s-r)\tilde{F}(Z_{r})drds+\int_{0}^{t}\int_{0}^{s}S(s-r)\tilde{B}(Z_{r})dW_{r}ds\in D(A), a.s.,$$
and then combining (\ref{eq:1}) and (\ref{eq:2}) together, we have
\begin{eqnarray*}
&&A\int_{0}^{t}Z_{s}ds\\
&=&\int_{0}^{t}S(s)Z_{0}ds+\int_{0}^{t}S(t-r)\tilde{F}(Z_{r})dr-\int_{0}^{t}\tilde{F}(Z_{r})dr+\int_{0}^{t}S(t-r)\tilde{B}(Z_{r})dW_{r}-\int_{0}^{t}\tilde{B}(Z_{r})dW_{r}\\
&=&Z_{t}-Z_{0}-\int_{0}^{t}\tilde{F}(Z_{r})dr-\int_{0}^{t}\tilde{B}(Z_{r})dW_{r}, a.s.
\end{eqnarray*}

2 $\Rightarrow$ 3 : 
For $t\geq 0$ and $f\in D(A^{+})$, we have
\begin{eqnarray*}
\langle Z_{t}, f\rangle &=& \langle Z_{0}, f\rangle+\langle A\int_{0}^{t}Z_{s}ds, f\rangle +\int_{0}^{t}\langle \tilde{F}(Z_{s}), f\rangle ds+\int_{0}^{t}\langle \tilde{B}(Z_{s})dW_{s},f\rangle\\
&=&\langle Z_{0}, f\rangle+\int_{0}^{t}\langle Z_{s}, A^{+}f\rangle ds+\int_{0}^{t}\langle \tilde{F}(Z_{s}), f\rangle ds+\int_{0}^{t}\langle \tilde{B}(Z_{s})dW_{s},f\rangle.
\end{eqnarray*}
Since $D(A^{+})$ is dense in $\overline{D(A^{\star})}$, then for any $f\in D(A^{*})$, it still holds by approximation.

3 $\Rightarrow$ 1 : 
First, by definition, for any $f\in D(A^{+2})=\{f\in D(A^{+}): A^{+}f\in D(A^{+})\}$, 
\begin{eqnarray*}
\langle Z_{t}, f\rangle &=& \langle Z_{0}, f\rangle+\int_{0}^{t}\langle Z_{s}, A^{*}f\rangle ds+\int_{0}^{t}\langle \tilde{F}(Z_{s}), f\rangle ds+\int_{0}^{t}\langle \tilde{B}(Z_{s})dW_{s},f\rangle\\
&=& \langle Z_{0}, f\rangle+\int_{0}^{t}\langle Z_{s}, A^{*}f\rangle ds+\int_{0}^{t}\langle \tilde{F}(Z_{s}), f\rangle ds+\int_{0}^{t}\langle \tilde{B}(Z_{s})dW_{s},S^{*}(t-s)f-\int_{0}^{t-s}S^{*}(u)A^{+}fdu\rangle.
\end{eqnarray*}

By the stochastic Fubini theorem and change of variables, we have
$$\int_{0}^{t}\langle \tilde{B}(Z_{s})dW_{s}, \int_{0}^{t-s}S^{*}(u)A^{+}fdu\rangle=\int_{0}^{t}\int_{0}^{s}\langle \tilde{B}(Z_{u})dW_{u}, S^{*}(t-s)A^{+}f\rangle ds, a.s.$$

Since $S^{*}(t-s)A^{+}f\in D(A^{+})$, then by definition,
\begin{eqnarray*}
&&\int_{0}^{s}\langle \tilde{B}(Z_{u})dW_{u}, S^{*}(t-s)A^{+}f\rangle \\
&=&\langle Z_{s}-Z_{0}, S^{*}(t-s)A^{+}f\rangle-\int_{0}^{s}\langle Z_{u}, A^{+}S^{*}(t-s)A^{+}f\rangle du-\int_{0}^{s}\langle \tilde{F}(Z_{u}), S^{*}(t-s)A^{+}f\rangle du,
\end{eqnarray*}
and by the Fubini theorem and change of variables, we get 
\begin{eqnarray*}
\int_{0}^{t}\int_{0}^{s}\langle Z_{u}, A^{+}S^{*}(t-s)A^{+}f\rangle duds&=&\int_{0}^{t}\langle Z_{u}, \int_{u}^{t}A^{+}S^{*}(s-u)A^{+}fds\rangle du,\\
&=& \int_{0}^{t}\langle Z_{u}, S^{*}(t-u)A^{+}f-A^{+}f\rangle du,
\end{eqnarray*}
and
\begin{eqnarray*}
\int_{0}^{t}\int_{0}^{s}\langle \tilde{F}(Z_{u}), S^{*}(t-s)A^{+}f\rangle duds&=&\int_{0}^{t}\langle \tilde{F}(Z_{u}), \int_{u}^{t}S^{*}(s-u)A^{+}fds\rangle du,\\
&=&\int_{0}^{t}\langle \tilde{F}(Z_{u}), S^{*}(t-u)f-f\rangle du,
\end{eqnarray*}
Therefore, 
\begin{eqnarray*}
\langle Z_{t}, f\rangle &=& \langle Z_{0}, f\rangle +\int_{0}^{t}\langle Z_{s}, A^{*}f\rangle ds+\int_{0}^{t}\langle \tilde{F}(Z_{s}), f\rangle ds+\int_{0}^{t}\langle S(t-s)\tilde{B}(Z_{s})dW_{s},f\rangle\\
&&-\int_{0}^{t}\langle Z_{s}-Z_{0}, S^{\star}(t-s)A^{+}f\rangle ds+\int_{0}^{t}\langle Z_{s}, S^{*}(t-s)A^{+}f-A^{+}f\rangle ds\\
&&+\int_{0}^{t}\langle \tilde{F}(Z_{s}), S^{*}(t-s)f-f\rangle ds\\
&=&\langle Z_{0},f\rangle +\langle \int_{0}^{t}S(t-s)\tilde{B}(Z_{s})dW_{s}, f\rangle + \langle Z_{0}, \int_{0}^{t}S^{\star}(t-s)A^{+}f\rangle ds+\langle \int_{0}^{t}S(t-s)\tilde{F}(Z_{s})ds, f\rangle\\
&=&\langle S(t)Z_{0}+\int_{0}^{t}S(t-s)\tilde{F}(Z_{s})ds+\int_{0}^{t}S(t-s)\tilde{B}(Z_{s})dW_{s}, f\rangle, a.s.
\end{eqnarray*} 

Finally, by Theorem 1.2.7 in \cite{pazy2012semigroups}, $D(A^{+2})$ is dense in $\overline{D(A^{*})}$, hence it is also weak$^{*}$ dense in $\overline{D(A^{*})}$. 
By Theorem 4.7.2 in \cite{schaefer1971locally}, $D(A^{*})$ is weak$^{\star}$ dense in $E^{\star}$ since $A$ is closed. As the result, $D(A^{+2})$ is weak$^{\star}$ dense in $E^{\star}$, and then the above equality holds for any $f\in E^{\star}$ by approximation. The proof is completed.
\end{proof}

\begin{proof}[\textbf{Proof of proposition \ref{prop:pathwise}}]~

As in the proof of Theorem 4.3.2 in \cite{ikeda2014stochastic}, there exist functions $\phi_{n}\in C^{2}(\mathbb{R})$ such that $|\phi_{n}^{\prime}(x)|\leq 1$, $\phi_{n}(x)\geq|x|-1$, $\phi_{n}(x)\uparrow |x|$ as $n\rightarrow \infty$, and $0\leq \phi^{\prime\prime}_{n}(x)\leq \frac{2}{n|x|}$. 

For any two solutions $Z=(X, Y)$ and $\tilde{Z}=(\tilde{X}, \tilde{Y})$ to equation (\ref{eq:SEE}) on the same $(\Omega, \mathcal{F}, P, \mathfrak{F}, W)$, since 
$$Y_{t}(\lambda)-\tilde{Y}_{t}(\lambda)=\int_{0}^{t}e^{-\lambda(t-s)}\lambda(X_{s}-\tilde{X}_{s})ds,$$ we only need to show the pathwise uniqueness of $X$. By It\^{o}'s formula, we have 
\begin{eqnarray*}
\phi_{n}(X_{t}-\tilde{X}_{t})&=&\int_{0}^{t}\phi^{\prime}_{n}(X_{s}-\tilde{X}_{s})\int_{(0,\infty)}(Y_{s}(\lambda)-\tilde{Y}_{s}(\lambda))\mu(d\lambda)ds-c\int_{0}^{t}\phi^{\prime}_{n}(X_{s}-\tilde{X}_{s})(X_{s}-\tilde{X}_{s})ds\\
&&+\int_{0}^{t}\phi^{\prime}_{n}(X_{s}-\tilde{X}_{s})(\sqrt{X_{s}(1-X_{s})}-\sqrt{\tilde{X}_{s}(1-\tilde{X}_{s})} )dW_{s}\\
&&+\frac{1}{2}\int_{0}^{t}\phi_{n}^{\prime\prime}(X_{s}-\tilde{X}_{s})|\sqrt{X_{s}(1-X_{s})}-\sqrt{\tilde{X}_{s}(1-\tilde{X}_{s})}|^{2}ds.
\end{eqnarray*}
Therefore, 
$$E[\phi_{n}(X_{t}-\tilde{X}_{t})]\leq \int_{0}^{t}E[|\int_{(0,\infty)}(Y_{s}(\lambda)-\tilde{Y}_{s}(\lambda))\mu(d\lambda)|]ds+c\int_{0}^{t}E{|X_{s}-\tilde{X}_{s}|}ds+\frac{t}{n}$$.
By $\phi_{n}(x)\geq|x|-1$ and the monotone convergence theorem, letting $n\rightarrow \infty$, we have 
\begin{equation}\label{t1}
E[|X_{t}-\tilde{X}_{t}|]\leq c\int_{0}^{t}E{|X_{s}-\tilde{X}_{s}|}ds+\int_{0}^{t}E[||Y_{s}-\tilde{Y}_{s}||_{L^{1}}]ds.
\end{equation}
Moreover,
\begin{eqnarray}\label{t2}
	E[||Y_{t}-\tilde{Y}_{t}||_{L^{1}}]=E[\int_{(0,\infty)}|\int_{0}^{t}e^{-\lambda(t-s)}\lambda(X_{s}-\tilde{X}_{s})ds|\mu(d\lambda)]\leq c^{\prime}\int_{0}^{t}E[|X_{s}-\tilde{X}_{s}|]ds.
\end{eqnarray}

Combining (\ref{t1}) and (\ref{t2}) together, and by the Gr\"{o}nwall's inequality, we know that $E[|X_{t}-\tilde{X}_{t}|]=0$ for any $t\geq 0$. The proof is completed.
\end{proof}

\begin{proof}[\textbf{Proof of Proposition \ref{prop:semigroups}}]~
	
1. As a corollary of 2 in Theorem \ref{th:wellposed}, for fixed $t>0$, if $Z^{n}_{0}\rightarrow Z_{0}$ in $D$, a.s., then $Z^{n}_{t}$ convergences in probability to $Z_{t}$. By the dominated convergence theorem	, we have the Feller property.

2. Let $Z^{z}$, $Z^{z^{\prime}}$ be two solutions. For any $T>0$ and $t,t+h\in [0, T]$, by 2 of Theorem \ref{th:wellposed}, we have
\begin{eqnarray*}
E[||Z^{z^{\prime}}_{t+h}-Z^{z}_t||]&\leq& E[||Z^{z^{\prime}}_{t+h}-Z^{z}_{t+h}||]+E[||Z^{z}_{t+h}-Z^{z}_t||]\\
&\leq & E[\sup_{t\in[0,T]}||Z^{z^{\prime}}_t-Z^{z}_t||]+E[||Z^{z}_{t+h}-Z^{z}_t||]\\
&\leq & C|z^{\prime}-z|+E[||Z^{z}_{t+h}-Z^{z}_t||].
\end{eqnarray*}
Since 
$$Z^{z}_{t+h}-Z^{z}_{t}=(S(t+h)-S(t))Z_{0}+\int_{0}^{t}(S(t+h-s)-S(t-s))dM
^{z}_{s}+\int_{t}^{t+h}S(t+h-s)dM^{z}_{s},$$
where $dM_{s}=F(Z^{z}_{s})ds+B(Z^{z}_{s})dW_{s}$, and $||S(t+h)-S(t)||\leq (c+c^{\prime})|h|$, then by the Burkholder-Davis-Gundy inequality, we have $E[||Z^{z}_{t+h}-Z^{z}_t||]=O(|h|+\sqrt{|h|})$.

By the fact that $Z^{z^{\prime}}_{t+h}$ converges to $Z^{z}_t$ in probability as $(z^{\prime}, t+h)$ converges to $(z, t)$, and the dominated convergence theorem, we know that $T_{t}f(z)$ is jointly continuous in $(z, t)$ when $f\in C_{b}(D)$. For $f=I_{A}$, where $A$ is a closed subset of $D$, it can be approximated by a sequence $\{f_{n}\}_{n\in\mathbb{N}}\subseteq C_{b}(D)$, and hence $E[I_{A}(Z^{z}_{t})]$ is jointly measurable in $(z, t)$. Finally, by a monotone class argument, $T_{t}f(z)$ is jointly measurable for any $f\in B(D)$.

3. Let $D_{0}$ be a countable dense subset of $D$, by 2 of Theorem \ref{th:wellposed}, and 
$\{\sup\limits_{z\in D}|T_{t}f(z)|\leq r\}=\bigcap\limits_{z\in D_{0}}\{T_{t}f(z)\leq r\}$, the conclusion follows.
\end{proof}

\begin{proof}[\textbf{Proof of Proposition \ref{prop:markovs}}]~

We prove the Markov property i.e. 
$E[f(Z_{t+s}^{z})|\overline{\mathcal{F}}_{s}^{W}]=T_{t}f(Z^{z}_{s}),a.s.$ for $s, t\geq 0 $ at first. 

Since
$$\begin{aligned} Z_{t+s}^z &=z+\int_0^{t+s} AZ_u^z d u+\int_0^{t+s} F\left(Z_u^z\right) d u+\int_0^{v+s} B\left(Z_u^z\right) d W_u \\ &=Z_s^z+\int_s^{t+s} A Z_u^z d u+\int_s^{t+s} F\left(Z_u^z\right) d u+\int_s^{t+s} B\left(Z_u^z\right) d W_u \\&=Z_s^z+\int_0^t A Z_{s+u}^z d u+\int_0^t F\left(Z_{s+u}^z\right) d u+\int_0^t B\left(Z_{s+u}^z\right) d W_u^s \end{aligned},$$
where $W^{s}_{u}=W_{s+u}-W_{s}$ is a standard Brownian Motion which is independent of $\overline{\mathcal{F}}^{W}_{s}$, then $(Z, W^{s})$ is a weak solution on $(\Omega, \mathcal{F}, P, \{\overline{\mathcal{F}}^{W}_{s}\vee \overline{\mathcal{F}}^{W^{s}}_{t}\}_{t\geq 0})$, and thus by Definition \ref{defn:strongp} and Theorem \ref{th:wellposed}, we have
$$Z^{z}_{t+s}(\omega)=\Phi(Z^{z}_{s}(\omega), t, W^{s}_{\cdot}(\omega))~\text{for}~t\geq0, a.s.$$
We need to show that $E[f(\Phi(Z^{z}_{s}, t, W^{s}_{\cdot})|\overline{\mathcal{F}}^{W}_{s}]=T_{t}f(Z^{z}_{s}), a.s.$ for all $f\in B(D)$, and this is true since $f(\Phi(\cdot,t,\cdot))$ is a bounded $\mathcal{B}(D)\times \overline{\mathcal{B}}_{\infty}/ \mathcal{B}(D)$-measurable function, $Z^{z}_{s}$ is $\overline{\mathcal{F}}^{W}_{s}$-measurable, and $W^{s}$ is independent of $\overline{\mathcal{F}}^{W}_{s}$.

Then, for any $\mathfrak{F}$-stopping time $\tau<\infty$, a.s., we need to show that
$$\int_{A}f(Z^{z}_{t+\tau})dP=\int_{A}T_{t}f(Z^{z}_{\tau})dP,~\text{for}~A\in \overline{\mathcal{F}}^{W}_{\tau}.$$
For discrete $\tau$ taking values in $\{t_{1},t_{2},\cdots\}$, we have
$$\int_{A\bigcap \{\tau=t_{i}\}}f(Z^{z}_{t+\tau})dP=\int_{A\bigcap\{\tau=t_{i}\}}f(Z^{z}_{t+t_{i}})dP=\int_{A\bigcap\{\tau=t_{i}\}}T_{t}f(Z^{z}_{t_{i}})dP=\int_{A\bigcap\{\tau=t_{i}\}}T_{t}f(Z^{z}_{\tau})dP$$
as $A\bigcap\{\tau=t_{i}\}\in \overline{\mathcal{F}}^{W}_{t_{i}}$. Adding them up yields the desired result.

For general $\tau$, it is the pathwise limit of a decreasing sequence $\{\tau_{n}\}$ of discrete stopping times, thus the result follows by the continuity of sample paths. By the Markov property, $T_{t+s}f=T_{t}\circ T_{s}f$ can be verified directly. 	
\end{proof}

\begin{proof}[\textbf{Proof of Proposition \ref{prop:mps}}]~

By Definition \ref{defn:martingale} and 1 of Proposition \ref{prop:full}, any weak solution to equation (\ref{eq:SEE}) with initial distribution $\nu$ is a solution to the $C_{\mathbb{R}_{+}}(D)$-martingale problem for $(\hat{\mathcal{L}}, \mu)$. 

Conversely, for any solution $Z$ to the martingale problem, let $\tau_{n}=\inf\{t\geq 0: ||Z_{t}||\geq n\}$ for $n\in\mathbb{N}$, by the D\'{e}but theorem (see e.g. 4-50 in \cite{dellacherie1975probabilities}), $\tau_{n}$ is an $\mathfrak{F}$-stopping time since the filtered probability space is assumed to be normal. For any $f\in D(A^{+})$, take a smooth bump function $h_{c}\in C_{c}^{\infty}(\mathbb{R})$ such that $h_{c}|_{\{|x|\leq n||f||\}}=x$, then by (\ref{gene1}), we know that
$$\langle Z_{t}, f\rangle-\langle Z_{0}, f\rangle-\int_{0}^{t}[\langle Z_{s}, A^{+}f\rangle+\langle F(Z_{s}),f\rangle]ds$$ 
is a continuous local martingale. Similarly, if we take $h_{c}\in C_{c}^{\infty}(\mathbb{R})$ such that $h_{c}|_{\{|x|\leq n||f||\}}=x^{2}$, then 
$$\langle Z_{t}, f\rangle^{2}-\langle Z_{0}, f\rangle^{2}-2\int_{0}^{t}\langle Z_{s}, f\rangle (\langle Z_{s}, A^{+}f_{i}\rangle+\langle F(Z_{s}), f_{i}\rangle)ds-\int_{0}^{t}\langle B(Z_{s}), f_{i}\rangle \langle B(Z_{s}), f_{j}\rangle ds$$
is a continuous local martingale. 
 
In particular, if $f^{(1)}=0$, then we can show that
$$\langle Y_{t}, f^{(2)}\rangle=\langle Y_{0}, f^{(2)}\rangle+\int_{0}^{t}[\langle Y_{s}, A^{+}(0, f^{(2)})\rangle+\langle F^{(2)}(Z_{s}),f^{(2)}\rangle]ds,$$
where $F^{(2)}(Z_{s})$ is the second component of $F(Z_{s})$.

If $f^{(1)}\neq 0$ and $f^{(2)}=0$, then
$$\langle X_{t}, f^{(1)}\rangle-\langle X_{0}, f^{(1)}\rangle-\int_{0}^{t}[\langle X_{s}, A^{+}(f^{(1)}, 0)\rangle+\langle F^{(1)}(Z_{s}), f^{(1)}\rangle]ds$$ 
is a continuous local martingale of which the quadratic variation process is $\int_{0}^{t}X_{s}(1-X_{s})ds$, thus by the martingale representation theorem, there exists a normal filtered probability space $(\hat{\Omega}, \hat{\mathcal{F}}, \hat{P},\hat{\mathfrak{F}})$ and a standard $\mathfrak{F}\times\hat{\mathfrak{F}}$-Brownian motion $\hat{W}$ defined on $(\Omega\times\hat{\Omega}, \mathcal{F}\times\hat{\mathcal{F}}, P\times\hat{P})$ such that
\begin{equation}\label{eq:01}
\langle \tilde{X}_{t}, f^{(1)}\rangle=\langle \tilde{X}_{0}, f^{(1)}\rangle+\int_{0}^{t}[\langle \tilde{X}_{s}, A^{+}(f^{(1)}, 0)\rangle+\langle F^{(1)}(\tilde{Z}_{s}), f^{(1)}\rangle]ds+\int_{0}^{t}\sqrt{\tilde{X}_{s}(1-\tilde{X}_{s})}f^{(1)}d\hat{W}_{s},	
\end{equation}
where $\tilde{Z}_{t}(\omega, \hat{\omega})=(\tilde{X}_{t}(\omega, \hat{\omega}),\tilde{Y}_{t}(\omega, \hat{\omega}))=(X_{t}(\omega), Y_{t}(\omega))$ for $t\geq 0$ and $(\omega, \hat{\omega}) \in \Omega\times\hat{\Omega}.$
Note that we still have 
\begin{equation}\label{eq:02}
\langle \tilde{Y}_{t}, f^{(2)}\rangle=\langle \tilde{Y}_{0}, f^{(2)}\rangle-\int_{0}^{t}[\langle \tilde{Y}_{s}, A^{+}(0, f^{(2)})\rangle+\langle F^{(2)}(\tilde{Z}_{s}),f^{(2)}\rangle]ds.	
\end{equation}
Combining (\ref{eq:01}) and (\ref{eq:02}) together, we conclude that $Z$ is a weak solution to equation (\ref{eq:SEE}). Finally, by Theorem \ref{th:wellposed}, the weak uniqueness holds, and by a monotone class argument and the disintegration theorem, the last statement follows. 
\end{proof}

\begin{proof}[\textbf{Proof of Lemma \ref{lem:derivative}}]~

For any $h(z)=h_{c}\left(\left\langle z, f_{1}\right\rangle, \cdots,\left\langle z, f_{n}\right\rangle\right)\in H$ and $e\in (D, v)$, let
$$F_{z}(e)=h(z+e)-h(z)-\langle e, \sum\limits_{i=1}^{n} \partial_{x_{i}} h_{c}(\langle z, f_{1}\rangle \cdots \langle z, f_{n}\rangle) f_{i}\rangle.$$
Since 
\begin{eqnarray*}
|F_{z}(te)|&=&|h(z+te)-h(z)-t\langle e, \sum\limits_{i=1}^{n} \partial_{x_{i}} h_{c}(\langle z, f_{1}\rangle \cdots \langle z, f_{n}\rangle) f_{i}\rangle|\\
&=&|\sum_{i=1}^{n}[\partial_{x_{i}} h_{c}(\langle z+\theta t e, f_{1}\rangle \cdots \langle z+\theta t e, f_{n}\rangle)-\partial_{x_{i}} h_{c}(\langle z, f_{1}\rangle \cdots \langle z, f_{n}]\langle te, f_{i}\rangle|\\
&=&|\sum_{i,j=1}^{n}\partial_{x_{i}x_{j}} h_{c}(\langle z+\theta^{\prime} t e, f_{1}\rangle \cdots \langle z+\theta^{\prime} t e, f_{n}\rangle)\langle \theta t e, f_{j}\rangle\langle te, f_{i}\rangle|,
\end{eqnarray*}
where $\theta, \theta^{\prime}\in (0,1)$, then for $e\in V=\bigcap\limits_{i=1}^{n}\{z\in (D, v): |\langle z, f_{i}\rangle|\leq \frac{1}{n}\}$, we have
$|F_{z}(te)|\leq M\frac{t^{2}}{n^{2}}$
for some constant $M>0$.

By the definition of the weak$^{\star}$ topology, $V$ is an open $0$-neighborhood. Take $o(t)=Mt^{2}$ and $W=[-\frac{1}{n^{2}},\frac{1}{n^{2}}]$, then $F_{z}(te)\subseteq o(t)W$. Therefore, $h$ is Fr\'{e}chet differentiable by Definition \ref{defn:frechet}, and $\nabla h(z)=\sum\limits_{i=1}^{n} \partial_{x_{i}} h_{c}(\langle z, f_{1}\rangle \cdots \langle z, f_{n}\rangle) f_{i}$. The claim on the second order Fr\'{e}chet derivative $\nabla^{2} h(z)$ can be verified similarly.
\end{proof}

\bibliographystyle{alpha}  
\bibliography{references}  

\newcommand{\etalchar}[1]{$^{#1}$}
\begin{thebibliography}{GCAvWE{\etalchar{+}}14}

\bibitem[AJEE19]{abi2019multifactor}
Eduardo Abi~Jaber and Omar El~Euch.
\newblock Multifactor approximation of rough volatility models.
\newblock {\em SIAM journal on financial mathematics}, 10(2):309--349, 2019.

\bibitem[BBGCWB19]{blath2019structural}
Jochen Blath, Eugenio Buzzoni, Adri{\'a}n Gonz{\'a}lez~Casanova, and Maite
  Wilke-Berenguer.
\newblock Structural properties of the seed bank and the two island diffusion.
\newblock {\em Journal of Mathematical Biology}, 79:369--392, 2019.

\bibitem[BCKS13]{blath2013ancestral}
Jochen Blath, Adri{\'a}n~Gonz{\'a}lez Casanova, Noemi Kurt, and Dario Spano.
\newblock The ancestral process of long-range seed bank models.
\newblock {\em Journal of Applied Probability}, 50(3):741--759, 2013.

\bibitem[BEGCK15]{blath2015genealogy}
Jochen Blath, Bjarki Eldon, Adri{\'a}n Gonz{\'a}lez~Casanova, and Noemi Kurt.
\newblock Genealogy of a wright-fisher model with strong seedbank component.
\newblock In {\em XI Symposium on Probability and Stochastic Processes: CIMAT,
  Mexico, November 18-22, 2013}, pages 81--100. Springer, 2015.

\bibitem[Ber29]{bernstein1929fonctions}
Serge Bernstein.
\newblock Sur les fonctions absolument monotones.
\newblock {\em Acta Mathematica}, 52(1):1--66, 1929.

\bibitem[BGCKWB16]{blath2016}
Jochen Blath, Adri{\'a}n Gonz{\'a}lez~Casanova, Noemi Kurt, and Maite
  Wilke-Berenguer.
\newblock A new coalescent for seed-bank models.
\newblock {\em The Annals of Applied Probability}, 26(2):857--891, 2016.

\bibitem[BM80]{berger1980volterra}
Marc~A Berger and Victor~J Mizel.
\newblock Volterra equations with it{\^o} integrals i,ii.
\newblock {\em The Journal of Integral Equations}, 1980.

\bibitem[Br{\'e}11]{brezis2011functional}
Haim Br{\'e}zis.
\newblock {\em Functional analysis, Sobolev spaces and partial differential
  equations}, volume~2.
\newblock Springer, 2011.

\bibitem[CC98]{coutin1998fractional}
Laure Coutin and Philippe Carmona.
\newblock Fractional brownian motion and the markov property.
\newblock {\em Electronic Communications in Probability}, 3:12, 1998.

\bibitem[Coh13]{cohn2013measure}
Donald~L Cohn.
\newblock {\em Measure theory}, volume~5.
\newblock Springer, 2013.

\bibitem[CSF21]{cuchiero2021infinite}
Christa Cuchiero and Sara Svaluto-Ferro.
\newblock Infinite-dimensional polynomial processes.
\newblock {\em Finance and Stochastics}, 25(2):383--426, 2021.

\bibitem[DM75]{dellacherie1975probabilities}
Claude Dellacherie and Paul~Andr{\^a}e Meyer.
\newblock {\em Probabilities and Potential: Chapitres I {\^A}a IV}.
\newblock Hermann, 1975.

\bibitem[DPZ14]{da2014stochastic}
Giuseppe Da~Prato and Jerzy Zabczyk.
\newblock {\em Stochastic equations in infinite dimensions}.
\newblock Cambridge university press, 2014.

\bibitem[EK09]{ethier2009markov}
Stewart~N Ethier and Thomas~G Kurtz.
\newblock {\em Markov processes: characterization and convergence}.
\newblock John Wiley \& Sons, 2009.

\bibitem[Fis23]{fisher1923}
Ronald~A Fisher.
\newblock On the dominance ratio.
\newblock {\em Proceedings of the royal society of Edinburgh}, 42:321--341,
  1923.

\bibitem[FV79]{fleming1979some}
Wendell~H Fleming and Michel Viot.
\newblock Some measure-valued markov processes in population genetics theory.
\newblock {\em Indiana University Mathematics Journal}, 28(5):817--843, 1979.

\bibitem[GCAvWE{\etalchar{+}}14]{gonzalez2014strong}
Adri{\'a}n Gonz{\'a}lez-Casanova, Eneas Aguirre-von Wobeser, Guadalupe
  Esp{\'\i}n, Luis Serv{\'\i}n-Gonz{\'a}lez, Noemi Kurt, Dario Span{\`o},
  Jochen Blath, and Gloria Sober{\'o}n-Ch{\'a}vez.
\newblock Strong seed-bank effects in bacterial evolution.
\newblock {\em Journal of Theoretical Biology}, 356:62--70, 2014.

\bibitem[GdHO22]{greven2022spatial}
Andreas Greven, Frank den Hollander, and Margriet Oomen.
\newblock Spatial populations with seed-bank: well-posedness, duality and
  equilibrium.
\newblock {\em Electronic Journal of Probability}, 27:1--88, 2022.

\bibitem[GG94]{gkatarek1994weak}
Dariusz G{\k{a}}tarek and Beniamin Go{\l}dys.
\newblock On weak solutions of stochastic equations in hilbert spaces.
\newblock {\em Stochastics: An International Journal of Probability and
  Stochastic Processes}, 46(1-2):41--51, 1994.

\bibitem[G{\'o}r14]{gorajski2014equivalence}
Mariusz G{\'o}rajski.
\newblock On the equivalence of solutions for a class of stochastic evolution
  equations in a banach space.
\newblock {\em Integral Equations and Operator Theory}, 78(4):451--481, 2014.

\bibitem[HOH10]{hanche2010kolmogorov}
Harald Hanche-Olsen and Helge Holden.
\newblock The kolmogorov--riesz compactness theorem.
\newblock {\em Expositiones Mathematicae}, 28(4):385--394, 2010.

\bibitem[IW14]{ikeda2014stochastic}
Nobuyuki Ikeda and Shinzo Watanabe.
\newblock {\em Stochastic differential equations and diffusion processes}.
\newblock Elsevier, 2014.

\bibitem[JK14]{jansen2014notion}
Sabine Jansen and Noemi Kurt.
\newblock On the notion (s) of duality for markov processes.
\newblock {\em Probability Surveys}, 11:59--120, 2014.

\bibitem[Kad18]{kadets2018course}
Vladimir Kadets.
\newblock {\em A course in functional analysis and measure theory}.
\newblock Springer, 2018.

\bibitem[Kal97]{kallenberg1997foundations}
Olav Kallenberg.
\newblock {\em Foundations of modern probability}, volume~2.
\newblock Springer, 1997.

\bibitem[Kin82a]{kingman1982coalescent}
John~FC Kingman.
\newblock The coalescent.
\newblock {\em Stochastic processes and their applications}, 13(3):235--248,
  1982.

\bibitem[Kin82b]{kingman1982genealogy}
John~FC Kingman.
\newblock On the genealogy of large populations.
\newblock {\em Journal of applied probability}, 19(A):27--43, 1982.

\bibitem[KKL01]{kaj2001coalescent}
Ingemar Kaj, Stephen~M Krone, and Martin Lascoux.
\newblock Coalescent theory for seed bank models.
\newblock {\em Journal of Applied Probability}, 38(2):285--300, 2001.

\bibitem[Lan12]{lang2012differential}
Serge Lang.
\newblock {\em Differential and Riemannian manifolds}, volume 160.
\newblock Springer Science \& Business Media, 2012.

\bibitem[LdHWBB21]{lennon2021principles}
Jay~T Lennon, Frank den Hollander, Maite Wilke-Berenguer, and Jochen Blath.
\newblock Principles of seed banks and the emergence of complexity from
  dormancy.
\newblock {\em Nature Communications}, 12(1):4807, 2021.

\bibitem[LJ11]{lennon2011microbial}
Jay~T Lennon and Stuart~E Jones.
\newblock Microbial seed banks: the ecological and evolutionary implications of
  dormancy.
\newblock {\em Nature reviews microbiology}, 9(2):119--130, 2011.

\bibitem[MZ38]{marcinkiewicz1938quelques}
J{\'o}zef Marcinkiewicz and Antoni Zygmund.
\newblock Quelques th{\'e}oremes sur les fonctions ind{\'e}pendantes.
\newblock {\em Studia Mathematica}, 7(1):104--120, 1938.

\bibitem[Paz12]{pazy2012semigroups}
Amnon Pazy.
\newblock {\em Semigroups of linear operators and applications to partial
  differential equations}, volume~44.
\newblock Springer Science \& Business Media, 2012.

\bibitem[Sch71]{schaefer1971locally}
Helmut~H Schaefer.
\newblock {\em Locally convex topological vector spaces}.
\newblock Springer, 1971.

\bibitem[Sie22]{sierpinski1922fonctions}
Wac{\l}aw Sierpi{\'n}ski.
\newblock Sur les fonctions d'ensemble additives et continues.
\newblock {\em Fundamenta Mathematicae}, 3(1):240--246, 1922.

\bibitem[SL18]{shoemaker2018evolution}
William~R Shoemaker and Jay~T Lennon.
\newblock Evolution with a seed bank: the population genetic consequences of
  microbial dormancy.
\newblock {\em Evolutionary applications}, 11(1):60--75, 2018.

\bibitem[SS80]{shiga1980infinite}
Tokuzo Shiga and Akinobu Shimizu.
\newblock Infinite dimensional stochastic differential equations and their
  applications.
\newblock {\em Journal of Mathematics of Kyoto University}, 20(3):395--416,
  1980.

\bibitem[Wak09]{wakeley2009coalescent}
John Wakeley.
\newblock Coalescent theory.
\newblock {\em Roberts \& Company}, 834, 2009.

\bibitem[Wid15]{widder2015laplace}
David~Vernon Widder.
\newblock {\em Laplace transform (PMS-6)}, volume~61.
\newblock Princeton university press, 2015.

\bibitem[Wri31]{wright1931}
Sewall Wright.
\newblock Evolution in mendelian populations.
\newblock {\em Genetics}, 16(2):97, 1931.

\bibitem[YW71]{yamada1971uniqueness}
Toshio Yamada and Shinzo Watanabe.
\newblock On the uniqueness of solutions of stochastic differential equations.
\newblock {\em Journal of Mathematics of Kyoto University}, 11(1):155--167,
  1971.

\end{thebibliography}
\end{document}